\newcommand{\sddb}{{\sqrt{-1}\partial\bar{\partial}}}
\newcommand{\cP}{\mathbb P}
\newcommand{\vol}{{\rm vol}}
\newcommand{\ord}{{\rm ord}}
\newcommand{\lct}{{\rm lct}}
\newcommand{\vphi}{\varphi}
\newcommand{\diam}{{\rm diam}}
\newcommand{\cR}{{\mathcal{R}}}
\newcommand{\cS}{{\mathcal{S}}}
\newcommand{\bC}{{\mathbb{C}}}
\newcommand{\cC}{{\mathcal{C}}}
\newcommand{\bep}{{\bar{\epsilon}}}
\newcommand{\tr}{{\rm tr}}
\newcommand{\FS}{{\rm FS}}
\newcommand{\bP}{{\mathbb{P}}}
\newcommand{\hvphi}{{\hat{\varphi}}}
\newcommand{\cM}{{\mathcal{M}}}
\newcommand{\cD}{{\mathcal{D}}}
\newcommand{\cE}{{\mathcal{E}}}
\newcommand{\NA}{{\rm NA}}
\newcommand{\cJ}{{\mathcal{J}}}
\newcommand{\cL}{{\mathcal{L}}}
\newcommand{\triv}{{\rm triv}}
\newcommand{\bQ}{{\mathbb{Q}}}
\newcommand{\reg}{{\rm reg}}
\newcommand{\id}{{\rm id}}
\newcommand{\cB}{{\mathcal{B}}}
\newcommand{\bR}{{\mathbb{R}}}
\newcommand{\CM}{{\rm CM}}
\newcommand{\hdelta}{\hat{\delta}}
\newcommand{\osc}{{\rm osc}}
\newcommand{\cH}{{\mathcal{H}}}
\newcommand{\cO}{{\mathcal{O}}}
\newcommand{\cF}{{\mathcal{F}}}
\newcommand{\bZ}{{\mathbb{Z}}}
\newcommand{\bD}{{\mathbb{D}}}
\newcommand{\Vol}{{\rm Vol}}
\newcommand{\red}{{\rm red}}
\newcommand{\sing}{{\rm sing}}
\newcommand{\supp}{{\rm Supp}}
\newtheorem{thm}{Theorem}[section]
\newtheorem{prop}[thm]{Proposition}
\newtheorem{defn}[thm]{Definition}
\newtheorem{cor}[thm]{Corollary}
\newtheorem{rem}[thm]{Remark}
\newtheorem{lem}[thm]{Lemma}
\begin{document}

\title{On the Yau-Tian-Donaldson conjecture for singular Fano varieties}
\author{Chi Li, Gang Tian, Feng Wang}
\date{}

\maketitle

\abstract{
We prove the Yau-Tian-Donaldson conjecture for any $\bQ$-Fano variety that has a log smooth resolution of singularities such that a negative linear combination of exceptional divisors is relatively ample and the discrepancies of all exceptional divisors are non-positive. In other words, if such a Fano variety is K-polystable, then it admits a K\"{a}hler-Einstein metric. This extends the previous result for smooth Fano varieties to this class of singular $\bQ$-Fano varieties, which includes all $\bQ$-factorial $\bQ$-Fano varieties that admit crepant log resolutions.
}

\tableofcontents

\section{Introduction}

It's an important problem in K\"{a}hler geometry to construct K\"{a}hler-Einstein metrics on algebraic varieties.  In general there are obstructions to the existence of such canonical metrics. The Yau-Tian-Donaldson conjecture states that a smooth Fano variety admits a K\"{a}hler-Einstein metric if and only if it is K-polystable. This conjecture has been solved (see \cite{Tia15} and also \cite{CDS15}).

Expecting applications in algebraic geometry, it is natural to consider singular varieties. For canonically polarized varieties and Calabi-Yau varieties with klt (for {\it Kawamata log terminal}) singularities, existence of singular K\"{a}hler-Einstein metrics have been studied (see \cite{EGZ09}). 
In these cases, when set-up appropriately, there are essentially no obstruction to the existence as in the smooth case.
In the Fano case, there is a class of singular $\bQ$-Fano varieties, the so-called $\bQ$-smoothable $\bQ$-Fano varieties, for which the Yau-Tian-Donaldson conjecture has been proved (see \cite{SSY16, LWX14}). Such varieties are Gromov-Hausdorff limits of smooth K\"{a}hler-Einstein manifolds and appear on the compactification of moduli space of smooth K\"{a}hler-Einstein (or, equivalently, K-polystable) varieties (see \cite{LWX14}). However, such singular varieties are rather restricted, e.g., not all singular $\bQ$-Fano varieties are smoothable as mentioned above. 

In this paper, we will prove the Yau-Tian-Donaldson conjecture for a much large class of singular $\bQ$-Fano varieties. By a $\bQ$-Fano variety, we always mean a projective variety, denoted by $X$, such that $-K_X$ is an ample $\bQ$-Cartier divisor and $X$ has at worst klt singularities. Notice that the klt assumption is a necessary condition for the existence of singular K\"{a}hler-Einstein metrics (see \cite{BBEGZ, DS14}). By Hironaka's theorem, there always exists a (non-unique) log resolution of singularities $\mu: Y\rightarrow X$ such that $\mu$ is isomorphic over $X^\reg$ and $\mu^{-1}(X^\sing)$ is supported on a simple normal crossing divisor $E=\sum_i E_i$. We can write
$$K_Y=\mu^*K_X+\sum_i a_i E_i$$ where $a_i=a(E_i, X)$ is called the discrepancy of $E_i$ over $X$ in birational geometry. Recall that by definition, $X$ has klt singularities if for any (equivalently for all) log resolution of singularities we have $a_i>-1$ for all $i$.

The class of singular $\bQ$-Fano varieties that we will consider is defined as follows.
\begin{defn}\label{defn-admissible}
We say that a quasi-projective variety $X$ has {\it admissible singularities} if $X$ is $\bQ$-Gorenstein and
there exists a log resolution $\mu: Y\rightarrow X$ with a simple normal crossing exceptional divisor $E=\sum_i E_i$ such that
\begin{enumerate}
\item[(1)] if we write $K_Y=\mu^*K_X+\sum_i a_i E_i$, then $-1<a_i\le 0$;

\item[(2)] there exist $\theta_i\in \bQ_{>0}$ such that $-\sum_i \theta_i E_i$ is $\mu$-ample.
\end{enumerate}
Such a log resolution will be called an {\it admissible} log resolution.

If such an admissible resolution also satisfies $a_i=0$ for any $i$, then we say it is a {\it crepant admissible} resolution.

\end{defn}

The following are some examples of admissible singularities:
\begin{enumerate}
\item[(i)] smooth points.
\item[(ii)] $2$-dimensional klt singularities, which are the same as $2$-dimensional isolated quotient singularities.
\item[(iii)] If $S$ is a smooth Fano manifold, then the affine cone $C(S, K_S^{-k})={\rm Spec}_{\bC}\left(\bigoplus_{l=0}^{+\infty}H^0(S, K_S^{-kl})\right)$ has admissible singularity if and only if $k\ge 1$.
\item[(iv)] $\bQ$-factorial singularities that admit crepant resolutions.
\item[(v)] Product of above examples of admissible singularities.
\end{enumerate}

\begin{rem}\label{rem-admissible}
Log resolutions of singularities are not unique. If $X$ is $\bQ$-factorial, then the condition (2) in Definition \ref{defn-admissible} is always satisfied for any log resolution of singularities.

On the other hand, there are now canonical ways to resolve the singularities (see \cite{BM97}). Such canonical resolutions of singularities are obtained by a finite sequence of blow-ups with smooth centers $\sigma_j: X_{j+1}\rightarrow X_j, X_0=X$, with the property that for any local embedding $X|_U\hookrightarrow \bC^N$ this sequence of blow-ups is induced by the embedded desingularization of $X|_U$. For these canonical resolutions of singularities the condition (2) in Definition \ref{defn-admissible} is indeed satisfied (see e.g. \cite[Lemma 2.2]{CMM17}).

\end{rem}

Now we can state our main result.

\begin{thm}\label{thm-main}
An admissible $\bQ$-Fano variety admits a K\"{a}hler-Einstein metric if it is K-polystable.
\end{thm}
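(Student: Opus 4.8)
The plan is to follow the variational/continuity-method strategy that proved the Yau-Tian-Donaldson conjecture in the smooth case, but to run it on an \emph{admissible resolution} $\mu\colon Y\to X$ rather than on $X$ itself, using the negativity of the exceptional locus to control the geometry. Concretely, let $L=-K_X$, and for small $\epsilon>0$ put $L_\epsilon=\mu^*L-\epsilon\sum_i\theta_iE_i$ on $Y$, which is ample by condition (2); since the discrepancies satisfy $-1<a_i\le 0$, the pair $(Y,-\sum_i a_iE_i)$ is klt and $L_\epsilon$ is a small perturbation of the pullback of $-K_X$. First I would set up the space of finite-energy potentials $\mathcal E^1$ on $(X,L)$ (equivalently on $(Y,\mu^*L)$), the Ding and Mabuchi functionals, and their non-Archimedean analogues over test configurations; here I would invoke the by-now-standard foundational package for singular Fano varieties (the theory of $\mathcal E^1$-geodesics, the equivalence of K-polystability with the relevant positivity of $\mathbf D^{\mathrm{NA}}$, and the fact that a K\"ahler-Einstein metric exists iff the Ding functional is coercive modulo $\mathrm{Aut}$).

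Second, I would attack the coercivity. Assume $X$ is K-polystable; by the standard reduction (e.g. via $\delta$-invariants / valuative criteria, or via the "uniform K-stability after quotienting automorphisms" formalism) it suffices to show the Ding functional $\mathbf D$ is coercive on $\mathcal E^1(X,L)$ transverse to the $\mathrm{Aut}(X)$-action. The key new input is that admissibility lets us \emph{approximate} $X$ by genuinely nicer objects: regularize the singular Fano $(X,-K_X)$ by the family $(Y,L_\epsilon)$, which as $\epsilon\to0$ converges to $(X,-K_X)$ in a sense strong enough to make the Ding/Mabuchi energies converge. I would then establish a partial $C^0$-estimate, or rather the needed compactness statement, along a continuity path (either Aubin's path $-{\rm Ric}=t\omega+(1-t)\omega_0$ or Demailly-Collins-Szekelyhidi's cone-angle path on $(Y,\sum a_iE_i)$ degenerating to $X$), producing in the limit a Gromov-Hausdorff limit $X_\infty$ with a K\"ahler-Einstein metric; admissibility (through the non-positivity of discrepancies and $\mu$-ampleness of $-\sum\theta_iE_i$) is exactly what guarantees that this degeneration, and the associated test configuration, lives inside the allowable class and has a well-defined, computable Donaldson-Futaki/non-Archimedean Ding invariant.

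Third, I would run the usual destabilization dichotomy: either the continuity path closes up — giving the K\"ahler-Einstein metric on $X$ and we are done — or it breaks, and the limit $X_\infty$ with its (possibly $\mathbb R$-)degeneration produces a special test configuration $(\mathcal X,\mathcal L)$ of $X$ with $\mathbf D^{\mathrm{NA}}(\mathcal X,\mathcal L)\le 0$. Using K-polystability of $X$, $\mathbf D^{\mathrm{NA}}\ge 0$ with equality only for product configurations, so $X_\infty\cong X$ up to $\mathrm{Aut}$, and a Bando-Mabuchi-type uniqueness/openness argument upgrades this to the existence of a K\"ahler-Einstein metric on $X$ itself. The main obstacle — and the place where admissibility does the real work — is the \emph{a priori estimate and compactness step}: showing that along the degeneration the volumes, Ding energies, and normalized Hilbert/Bergman metrics behave continuously in $\epsilon$ and along the path, so that the limiting object is again a $\mathbb Q$-Fano variety with a K\"ahler-Einstein metric and the limiting test configuration is special. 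Without some control like $-\sum\theta_iE_i$ being $\mu$-ample and $a_i\le 0$, one cannot guarantee that the resolution carries a family of K\"ahler metrics whose Ricci curvature is bounded below uniformly enough to apply the partial $C^0$-estimate; so the heart of the proof is to make this perturbation-and-limit scheme precise, and to verify that the relevant energy functionals on $(Y,L_\epsilon)$ $\Gamma$-converge to those on $(X,-K_X)$ as $\epsilon\to 0$.
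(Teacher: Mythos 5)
Your proposal captures the broad outline — regularize on the admissible resolution $\mu\colon M\to X$ via the ample classes $L_\epsilon$, pass to the limit, identify the limit through K-polystability — but it has a genuine gap at its foundation. You invoke a ``standard foundational package for singular Fano varieties,'' namely the equivalence of K-(poly)stability with non-Archimedean positivity of $\mathbf D^{\mathrm{NA}}$ and the fact that a K\"ahler--Einstein metric exists iff the Ding functional is coercive modulo $\mathrm{Aut}$, and then propose a ``standard reduction'' to coercivity transverse to $\mathrm{Aut}(X)$. That package was not available for klt $\bQ$-Fano varieties at the time of this work, and if it were, the theorem would follow directly from BBJ plus K-polystability with no need for admissibility. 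The whole point of the paper is to \emph{avoid} running BBJ on the singular $X$: the BBJ-type properness result (Theorem \ref{thm-BBJ}) is proved only for a log pair $(M,B)$ with $M$ \emph{smooth} and $B$ snc, and the authors stress that the smoothness of the ambient space is what makes the regularization argument go through. Your appeal to coercivity ``modulo $\mathrm{Aut}$'' on $X$ therefore begs the question.

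Concretely, the proposal is missing the paper's Step 1, which is not a formality. One must prove that the log-smooth pairs $(M, B_{(\epsilon,t)})$ — where $B_{(\epsilon,t)}=\frac{1-t}{m}H+\sum_i(-a_i+t\epsilon\theta_i+(1-t)\theta_i)E_i$ includes an auxiliary general divisor $H\in|mL_1|$ used to introduce cone angle $2\pi t$ — are \emph{uniformly} log-K-stable with a slope constant $\delta(t)$ that does \emph{not} depend on $\epsilon$. This is proved by a careful comparison of Fujita-type $\Phi$-invariants, and it is exactly here that the admissibility conditions $a_i\le 0$ (so $B_{(\epsilon,t)}$ is effective snc) and $\mu$-ampleness of $-\sum\theta_iE_i$ (so $L_\epsilon$ is ample) enter; it is not a consequence of any generic ``compactness'' statement. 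Without this $\epsilon$-uniform slope one cannot invoke Theorem \ref{thm-BBJ} to get an $\epsilon$-uniform properness constant, and then the scaling trick (Proposition \ref{prop-uniproper}) giving the uniform $L^\infty$-estimate collapses. Your proposal also does not engage with the second genuinely hard point, the identification of the Gromov--Hausdorff limit with the metric completion of $(X^{\reg},\omega_{(0,t)})$, which in the paper requires a gauge-fixing theorem, a gradient estimate for the potential on the regular part (Proposition \ref{gradientp}), and a partial $C^0$-estimate adapted to the conical setting; ``establish a partial $C^0$-estimate'' is the label for the step, not a proof of it.

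A smaller point: the paper's degeneration is a two-parameter scheme — first $\epsilon\to 0$ at fixed $t\in(0,1)$, obtaining a weak conical KE metric on $(X,\frac{1-t}{m}H_X)$, then $t_i\to 1$ to reach the K\"ahler--Einstein limit $X_\infty$, which is identified with $X$ by Luna's slice theorem together with reductivity of $\mathrm{Aut}(X_\infty)$ and vanishing of the Futaki invariant. The paper never relies on a Bando--Mabuchi openness argument to ``close up'' the path; once $X_\infty\cong X$ is established, the K\"ahler--Einstein metric already sits on the limit. Your destabilization dichotomy is close in spirit, but the Bando--Mabuchi step you add is not how the argument concludes, and the cone-angle path you suggest on $(Y,\sum a_iE_i)$ is not the one used; the cone angle that tends to $2\pi$ is along the auxiliary divisor $H$, not along the exceptional locus, which is crucial for the stability estimates.
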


Note that the reverse direction was proved by Berman (\cite{Berm15}). We refer to Section \ref{sec-Kstab} for more details about K-polystability. Next we sketch the proof of the above theorem.

Under the assumption of K-polystability, we want to construct a family of conical K\"{a}hler-Einstein metrics on an admissible resolution and prove that this family converges in the Gromov-Hausdorff topology to a K\"{a}hler-Einstein metric on the admissible $\bQ$-Fano variety. The family of conical K\"{a}hler-Einstein metrics is obtained by solving some Monge-Amp\`{e}re equation set up in the following manner.

Assume $\mu: M\rightarrow X$ is an admissible resolution satisfying:
\begin{equation}\label{eq-KMKX}
K_M=\mu^*K_X+\sum_i a_i E_i, \text{ with } a_i\in (-1,0].
\end{equation}
By condition (2) in the definition \ref{defn-admissible}, we can choose $\theta_i\in \bQ_{>0}$ such that $L_{1}=\mu^*K_X^{-1}-\sum_i \theta_iE_i$ is an ample $\bQ$-Cartier divisor.
For any $\epsilon \in [0, 1]$, define
$$L_\epsilon\,\overset{\triangle}{=}\,\mu^*K_X^{-1}-\epsilon\sum_i\theta_i E_i.$$
By rescaling $\theta_i$, we can assume that $\theta_i\le (1+a_i)/2$ (for a technical reason see \eqref{eq-choosethetam}).
Moreover we can choose $m$ sufficiently large $(m\ge 2)$ and fix a sufficiently general smooth divisor $H\in |m L_1|$ such that $H+\sum_i E_i$ is simple normal crossing. For any $t\in (0,1)$, we want to solve the conical K\"{a}hler-Einstein equation for $\omega_\epsilon\in 2\pi c_1(L_\epsilon)$:

\begin{equation}
Ric(\omega_\epsilon)=t\omega_\epsilon+\frac{1-t}{m}\{H\}+\sum_i (-a_i+t \epsilon\theta_i+(1-t)\theta_i)\{E_i\}  \tag{$*_{(\epsilon,t)}$}.
\end{equation}
Note that this equation corresponds to the following decomposition of $-K_M$ in \eqref{eq-KMKX}:
\begin{eqnarray*}
-K_M&=&t (\mu^*(-K_X)-\epsilon \sum_i \theta_i E_i)+(1-t)\sum_i (\mu^*(-K_X)-\sum_i \theta_i E_i)\\
&&\hskip 2cm +\sum_i (-a_i+t\epsilon\theta_i+(1-t)\theta_i) E_i.
\end{eqnarray*}

For the simplicity of notations, we will denote the divisor
$$B_{(\epsilon,t)}=\frac{1-t}{m} H+(-a_i+t\epsilon\theta_i+(1-t)\theta_i )E_i.$$
Because $-a_i\in [0,1)$, when $\epsilon\ll 1$ and $t$ sufficiently close to $1$, $B_{(\epsilon,t)}$ is an effective divisor with simple normal crossing support and the coefficients of $B_{(\epsilon,t)}$ are strictly less than $1$. In particular $(M, B_{(\epsilon,t)})$ has log terminal singularities. We will carry out the proof in several steps:

{\bf Step 1:}
We prove that for any $t\in (0,1)$, there exists $\epsilon^*(t)>0$ such that for any $\epsilon\in (0,\epsilon^*(t))$, the pair $(M, B_{(\epsilon,t)})$ is uniformly log-K-stable with a positive slope constant $\delta=\delta(t)>0$ (independent of $\epsilon$). This is achieved by using the valuative criterions
for K-stability developed in \cite{Fuj16, Fuj17a, Li15,LX16}.

{\bf Step 2:}
By a recent work of Tian-Wang (\cite{TW19}), we know that there exists a strong conical K\"{a}hler-Einstein metric, which will be denoted by $\omega_{(\epsilon,t)}$ on $(M, B_{(\epsilon,t)})$. In particular, $\omega_{(\epsilon,t)}$ is smooth on the open set $U=M\setminus (H\cup \cup_i E_i)$ and has conical singularities with appropriate cone angles along the components of the simple normal crossing divisor $B_{(\epsilon,t)}$. Moreover $U$ is geodesically convex.

Moreover by adapting Berman-Boucksom-Jonsson's method to the log setting \emph{with a smooth ambient space}, we will prove that the log-Ding-energy of $(M, B_{(\epsilon,t)})$ is proper such that the leading coefficient of properness is uniform with respect to $\epsilon>0$ for a fixed $t\in (0,1)$. As a consequence of this properness, the existence of the strong conical K\"{a}hler-Einstein metric on $(M, B_{(\epsilon,t)})$ can also be obtained using the work in \cite{GP16} (see also \cite{JMR16}).

{\bf Step 3:}
Using a scaling trick,
we prove that the log-Ding-energy of $(M, B_{(\epsilon,t)})$ is uniformly proper for sufficiently small $\epsilon$ in the following sense. Choose a smooth reference metric $e^{-\psi_\epsilon}$ on
$L_\epsilon$
which is an interpolation of the Hermitian metrics $e^{-\psi_0}$ on $L_0$ and $e^{-\psi_1}$ on $L_1$. There exist $\delta=\delta(t)>0, C_1=C_1(t),C_2=C_2(t)>0$ such that for any $\epsilon \in (0, \epsilon^*(t))$ and $\vphi\in PSH(L_\epsilon)\bigcap \left( \psi_\epsilon+L^\infty(M)\right)$, we have:
\begin{equation}
\cD_{B_{(\epsilon,t)}}(\vphi)\ge \delta J_{\psi_\epsilon} (\vphi)- C_1 \epsilon \|\vphi-\psi_\epsilon\|_\infty-C_2.
\end{equation}
This, combined with the uniform Sobolev constant estimate of $\omega_{(\epsilon,t)}$, implies that $\vphi_{(\epsilon,t)}-\psi_{\epsilon}$ has a uniform $L^\infty$-bound. As a consequence, as $\epsilon\rightarrow 0$, we obtain a weak conical K\"{a}hler-Einstein metric $\omega_{(0,t)}$ on $(X, \frac{1-t}{m}H_X)$ for any fixed $t\in (0,1)$ where $H_X$ is the divisor on $X$ satisfying $\mu^*H_X=H+m\sum_i \theta_i E_i$ (see \eqref{eq-HX}).

{\bf Step 4:}
We prove that the metric completion of $\left(X^{\reg}, \omega_{(\epsilon,t)}|_{X^\reg}\right)$ is homeomorphic to $X$. Moreover, as $\epsilon \rightarrow 0$, $(M, \omega_{(\epsilon,t)})$ converges to a metric space $(X, d_{(0,t)})$ in the Gromov-Hausdorff toplogy.
This is in turn proved in the following steps.

\begin{enumerate}
\item
One can develop an extension of Cheeger-Colding and Cheeger-Colding-Tian's theory for conical K\"{a}hler-Einstein metrics to get a limit space $(X_{(0,t)}, d_{(0,t)})$. As a particular result, we have the following regularity results for the Gromov-Hausdorff limit of conical K\"{a}hler-Einstein metrics.

\begin{thm}[\cite{TW18}]
Let $(Y, d_Y)$ be a Gromov-Hausdorff limit of a sequence $(M, \omega_i)$ of conical K\"{a}hler-Einstein metrics as above. Then we have a decomposition $Y=\cR \cup \cS$. $\cR$ is open in $Y$ and has a smooth manifold structure equipped with a smooth K\"{a}hler-Einstein metric. The singular set has a decomposition $\cS=\cup_{k=1}^{n} \cS_{2n-2k}$ where $\cS_{2n-2k}$ consists of the points whose metric tangent cones do not split $\bR^{2n-2k+1}$ factor. $\cS_{2n-2k}$ satisfies ${\rm codim}_{\bR}(\cS_{2n-2k})\ge 2k$.

\end{thm}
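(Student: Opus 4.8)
\emph{Sketch of the argument.} The plan is to carry the Cheeger--Colding and Cheeger--Colding--Tian structure theory over to the conical setting. Write $B_i$ for the conical divisor of $\omega_i$ and $U_i=M\setminus\supp B_i$; on $U_i$ the metric $\omega_i$ is a genuine K\"ahler--Einstein metric with $\mathrm{Ric}(\omega_i)=t\,\omega_i$. First I would record uniform bounds along the sequence: (i) $\mathrm{Ric}(\omega_i)\ge t\,\omega_i$ globally in the sense of currents, hence a fixed positive lower Ricci bound on $U_i$; (ii) $\int_M\omega_i^n$ is bounded above and, being the self-intersection $L_{\epsilon_i}^n\to L_0^n=(-K_X)^n>0$, bounded below; (iii) the cone angles $2\pi(1-b_{i,j})$ of $\omega_i$ along the components of $\supp B_i$ lie in a fixed subinterval of $(0,2\pi)$, since $-a_i\in[0,1)$ and $\theta_i\le(1+a_i)/2$ are fixed; and (iv) a uniform Sobolev inequality (from Steps 1--3). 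Granting Bonnet--Myers and Bishop--Gromov in this setting, (i) and (ii) give a uniform diameter bound and uniform non-collapsing, $\Vol(B(x,r))\ge v\,r^{2n}$ with $v$ independent of $i$.

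To legitimize these comparisons and, more importantly, the almost-rigidity estimates of Cheeger--Colding (segment inequality, almost-splitting, volume cone $\Rightarrow$ metric cone, $\epsilon$-regularity), I would pass to smooth approximations. Using a regularization of conical K\"ahler metrics of Campana--Guenancia--P\u{a}un / Guenancia--P\u{a}un type, in which the cone is opened up in a shrinking neighborhood of $\supp B_i$, one approximates each $\omega_i$ by smooth K\"ahler metrics $\omega_{i,s}$ with $\mathrm{Ric}(\omega_{i,s})\ge (t-o_s(1))\,\omega_{i,s}$, with uniformly bounded volume and diameter, and with $d_{GH}\big((\bar M,d_{\omega_{i,s}}),(\bar M,d_{\omega_i})\big)\to 0$ as $s\to 0$ for each $i$. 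A diagonal choice $s(i)\to 0$ then realizes $Y$ as a non-collapsed Gromov--Hausdorff limit of smooth K\"ahler metrics with uniform lower Ricci bound (equivalently, exhibits $(\bar M,d_{\omega_i},\omega_i^n)$ as non-collapsed $\mathrm{RCD}(\kappa,2n)$ spaces with uniform parameters), so the Cheeger--Colding theory applies: $Y=\cR\sqcup\cS$ with $\cR$ open, carrying a $C^{1,\alpha}$ manifold structure and a $C^{1,\alpha}$ limit metric; the volume measures converge; tangent cones are metric cones; and, letting $\cS_{2n-2k}$ be the set of points none of whose tangent cones splits off an $\bR^{2n-2k+1}$ factor, one has $\dim_{\cH}\cS_{2n-2k}\le 2n-2k$, i.e. $\mathrm{codim}_\bR\cS_{2n-2k}\ge 2k$.

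Two refinements then finish the proof. First, $\cR$ should carry a smooth K\"ahler--Einstein metric: by $\epsilon$-regularity a regular point is a limit of points lying in $U_i$ with harmonic radius bounded below, near which $\omega_{i,s}\to\omega_i$ in $C^\infty_{loc}$ as $s\to 0$; since $\omega_i$ solves $\mathrm{Ric}=t\,\omega_i$ there, elliptic bootstrapping in harmonic coordinates upgrades the $C^{1,\alpha}$ limit to a smooth K\"ahler--Einstein metric. Second, the distinct strata occur in even codimension only: the complex structures $J_i$ are $\omega_i$-parallel on $U_i$ and, with $\nabla J_i$ controlled on good annuli, converge on $\cR$ to a $d_Y$-parallel complex structure $J$; adapting the Cheeger--Colding--Tian argument, $J$ propagates to every tangent cone, whose isometry group thus contains the associated circle action, so that a tangent cone splits $\bR^{2k+1}$ if and only if it splits $\bR^{2k+2}=\bC^{k+1}$. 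Hence $\cS_{2k+1}=\cS_{2k}$ for all $k$, so $\cS=\cS_{2n-2}=\bigcup_{k=1}^n\cS_{2n-2k}$, with the codimension bounds already recorded.

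The hard part will be establishing the Cheeger--Colding almost-rigidity theorems uniformly in the presence of the conical divisor --- and, for the sequence at hand, in spite of the degeneration $L_{\epsilon_i}\to L_0$ as $\epsilon_i\to 0$. Along the regularization route, this becomes the problem of producing a smoothing whose lower Ricci bound does not deteriorate as $s\to 0$ while keeping $d_{GH}(\omega_{i,s},\omega_i)$ uniformly small; both require sharp control of the model geometry in shrinking tubular neighborhoods of $\supp B_i$, which is exactly where the uniform non-collapsing, the uniform Sobolev constant, and the uniform lower bound on the cone angles enter. Along the direct route, one must instead prove the segment inequality and almost-splitting for the conical metric itself, controlling minimal geodesics that may run close to $\supp B_i$ (the geodesic convexity of $U$ noted above being a first step). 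In either case the analysis near the divisor is the crux.
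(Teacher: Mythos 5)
Your proposal explicitly favors the regularization route (smoothing the cone along $\supp B_i$ to get smooth K\"ahler metrics with $\mathrm{Ric}\ge(t-o_s(1))\omega_{i,s}$), and this is precisely the route the paper tells you is \emph{not} available in this setting. Immediately after the statement of the theorem, the authors write that the proof ``is different with that in \cite{Tia15}. We don't approximate conical K\"ahler-Einstein metrics by smooth K\"ahler-Einstein metrics with uniform Ricci lower bound since in our setting the latter is not true for some reason involving cohomological classes.'' The obstruction is concrete: for a smooth $\omega'\in 2\pi c_1(L_\epsilon)$ with $\mathrm{Ric}(\omega')\ge (t-\eta)\omega'$, the semipositive current $\mathrm{Ric}(\omega')-(t-\eta)\omega'$ would have to represent $2\pi c_1\bigl(-K_M-(t-\eta)L_\epsilon\bigr)$; at $\eta=0$ this is the class of $B_{(\epsilon,t)}=\frac{1-t}{m}H+\sum_i(b_i+t\epsilon\theta_i+(1-t)\theta_i)E_i$, which involves the $\mu$-exceptional divisors $E_i$ with strictly positive coefficients and is therefore not nef. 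No Campana--Guenancia--P\u{a}un smoothing (or any other device) can produce smooth metrics in $2\pi c_1(L_\epsilon)$ with a Ricci lower bound close to $t$, because the required cohomology class has no smooth semipositive representative. So the main thread of your sketch collapses at the start.

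Your ``direct route'' paragraph is the one the paper actually follows in \cite{TW17b}: extend the segment inequality, almost-splitting, volume cone implies metric cone, and $\epsilon$-regularity to the conical metrics themselves, using the geodesic convexity of $M\setminus\supp B_i$, the uniform lower bound on cone angles, the uniform Sobolev constant, and volume non-collapsing to control geodesics and balls near the divisor, and then adapt the Cheeger--Colding--Tian complex-structure argument to get $\cS_{2k+1}=\cS_{2k}$. Your description of that route is accurate in outline, but in your write-up it appears only as a fallback with essentially no detail, whereas the regularization route you develop at length is a dead end here. To fix the proposal you should discard the regularization scheme and devote the argument to establishing the Cheeger--Colding almost-rigidity package directly for strong conical K\"ahler--Einstein metrics, which is exactly the crux and the new content of \cite{TW17b}.
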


We remark that the proof of the above result is different with that in \cite{Tia15}.  We don't approximate conical K\"{a}hler-Einstein metrics by smooth K\"{a}hler-Einstein metrics with uniform Ricci lower bound since in our setting the latter is not true for some reason involving cohomological classes. Instead, as already pointed out in \cite{Tia15}, one can develop similar argument to those used by Cheeger-Colding and Cheeger-Colding-Tian, in our conical setting.
\item
Based on uniform $L^\infty$-estimate obtained in {\bf Step 3},
we apply the gauge fixing technique used in \cite{RZ11, Son14, NTZ15} and prove that $(X_{(0,t)}, d_{(0,t)})$ is the metric completion of the strong conical K\"{a}hler-metric structure $(M\setminus E, (1-t)m^{-1}H, \omega_{(0,t)})$. Moreover, the identity maps
$${\rm id}: (M\setminus E, \omega_{(\epsilon,t)}) \rightarrow (M\setminus E, \omega_{(0,t)})$$
are Gromov-Hausdorff approximations, furthermore, as $\epsilon\rightarrow 0$, they converge to a surjective map: $\overline{\id}: X_{(0,t)}\rightarrow X$.

\item We show that $\overline{\id}$ is injective and hence $\overline{\id}$ is a homeomorphism.  This is essentially achieved by proving a version of Tian's partial $C^0$-estimate conjecture in our setting.

This part of the proof is also motivated by the arguments in \cite{Tia15, Son14, NTZ15}. A new input here is the gradient estimate of the potential function on the regular part, which does not seem to follow from the methods in \cite{Son14, NTZ15}.

\end{enumerate}

{\bf Step 5}
By adapting the arguments used in \cite{Li12, Tia12, DS14, Tia15, CDS15}, one can show that as $t_i\rightarrow 1$, $(X, \omega_{(0,t_i)})$ converges to a normal $\bQ$-Fano variety $(X_{\infty}, d_{\infty})$ with a K\"{a}hler-Einstein metric. Moreover, by adapting the method in \cite{Tia15} (see also \cite{CDS15}), we can show that the automorphism group of $X_\infty$ is reductive. By Luna's slice theorem from algebraic geometry, there exists a special test configuration that degenerates $X$ to $X_\infty$ and has zero Futaki invariant. By the K-polystability of $X$, we conclude that $X\cong X_\infty$.

As a consequence of the above arguments, we also get the following result for the weak conical K\"{a}hler-Einstein metric on admissible $\bQ$-Fano variety.
\begin{thm}
Assume that an admissible $\bQ$-Fano variety admits a weak conical K\"{a}hler-Einstein metric $\omega_{KE}$. Then $X$ is homeomorphic to the metric completion of the $(X^{\reg}, \omega_{KE}|_{X^\reg})$.
Moreover, $\omega_{KE}$ has continuous local K\"{a}hler potentials.
\end{thm}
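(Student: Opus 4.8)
The plan is to reduce the statement to the machinery already built for Theorem~\ref{thm-main}. Since $X$ carries a weak conical K\"ahler-Einstein metric $\omega_{KE}$, Berman's theorem \cite{Berm15} shows that $X$ is K-polystable (for the construction below one only needs K-semistability, which holds for each $t<1$). Hence the construction of {\bf Steps 1--5} applies to $X$ and produces a weak conical K\"ahler-Einstein metric $\omega'_{KE}$. It then suffices to (i) extract the two asserted properties for $\omega'_{KE}$ from that construction, and (ii) identify $\omega'_{KE}$ with $\omega_{KE}$ up to automorphism.

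For (i) I would run {\bf Steps 3--4} keeping track of the conclusions. Fix $t\in(0,1)$: {\bf Step 1} gives uniform log-K-stability of $(M,B_{(\epsilon,t)})$, {\bf Step 2} the strong conical K\"ahler-Einstein metrics $\omega_{(\epsilon,t)}$, and {\bf Step 3} the uniform $L^\infty$-bound on $\vphi_{(\epsilon,t)}-\psi_\epsilon$, hence the weak conical K\"ahler-Einstein metric $\omega_{(0,t)}$ on $(X,\tfrac{1-t}{m}H_X)$. {\bf Step 4} then yields a Gromov-Hausdorff limit $(X_{(0,t)},d_{(0,t)})$ of $(M,\omega_{(\epsilon,t)})$ with the Cheeger-Colding type regularity of \cite{TW17b}; via the gauge-fixing of \cite{RZ11,Son14,NTZ15} and the uniform $L^\infty$-bound, a surjection $\overline{\id}\colon X_{(0,t)}\to X$ exhibiting $X_{(0,t)}$ as the metric completion of $(X^{\reg},\omega_{(0,t)}|_{X^{\reg}})$; and the injectivity of $\overline{\id}$ through a partial $C^0$-estimate together with the gradient estimate for the potential on the regular part, so that $\overline{\id}$ is a homeomorphism. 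The partial $C^0$-estimate also gives continuity of the local potentials of $\omega_{(0,t)}$: the (pluri-)anticanonical Bergman kernels are uniformly bounded below, so each local potential is locally a supremum of a bounded family of continuous functions and, with the uniform $L^\infty$-bound, is continuous. As in {\bf Step 5} all of this is uniform as $t\to1$, so letting $t\to1$ one obtains $\omega'_{KE}$ on $X$ for which $X$ is homeomorphic to the metric completion of $(X^{\reg},\omega'_{KE}|_{X^{\reg}})$ and whose local potentials are continuous.

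For (ii) I would invoke the uniqueness of weak K\"ahler-Einstein metrics on a $\bQ$-Fano variety modulo automorphisms (\cite{BBEGZ}, after Berndtsson): there is $g\in{\rm Aut}(X)$ with $g^*\omega_{KE}=\omega'_{KE}$. Since $g$ is biregular it restricts to an isometry $(X^{\reg},\omega'_{KE}|_{X^{\reg}})\to(X^{\reg},\omega_{KE}|_{X^{\reg}})$ and extends to an isometry of metric completions, so the homeomorphism with $X$ and the continuity of the local potentials descend from $\omega'_{KE}$ to $\omega_{KE}$, which finishes the proof.

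The hard part is the injectivity of $\overline{\id}$, i.e. ruling out the collapse of distinct points of $X$ to a single point of the Gromov-Hausdorff limit. As in \cite{Tia15,Son14,NTZ15} this requires a version of Tian's partial $C^0$-conjecture for the family $\omega_{(\epsilon,t)}$ — separating points by (pluri-)anticanonical sections whose norms are controlled by the uniform Sobolev and $L^\infty$-estimates — together with the new gradient estimate for the Ding potential on the regular part, which is what controls these sections near $E$ and does not follow from the arguments of \cite{Son14,NTZ15}. A secondary difficulty is ensuring that all of these estimates are uniform as $t\to1$, which is handled as in {\bf Step 5}.
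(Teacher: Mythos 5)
Your overall strategy follows what the paper intends: use Berman's result to get K-(poly/semi)stability, then run {\bf Steps 1--5} on an admissible resolution to produce a weak (conical) KE metric for which the Gromov-Hausdorff/gauge-fixing/partial $C^0$ machinery of Section \ref{sec-conicX} gives the homeomorphism and regularity, and finally transfer the conclusion to the given $\omega_{KE}$. Two points in the argument, however, do not go through as written.

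\textbf{The continuity of the local K\"ahler potentials is not a consequence of the partial $C^0$-estimate alone.} You write that ``the (pluri-)anticanonical Bergman kernels are uniformly bounded below, so each local potential is locally a supremum of a bounded family of continuous functions and, with the uniform $L^\infty$-bound, is continuous.'' This does not hold: a supremum of continuous functions is only lower semicontinuous (while a local potential $\vphi$ of a K\"ahler current is upper semicontinuous a priori), and what the partial $C^0$-estimate (Proposition \ref{partial}) actually gives is that, for one fixed $\ell$, $\vphi - \tfrac{1}{\ell}\log\sum_k|s_k|^2 = -\tfrac{1}{\ell}\log\rho_\ell$ is \emph{bounded}, not that it is continuous or that it tends to $0$. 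A continuous function plus a bounded function is not continuous. The ingredient you are missing here is exactly Proposition \ref{gradientp}: the gradient estimate $|\nabla u_{(0,t)}|\le C$ on $X^{\reg}$, which makes $u_{(0,t)}$ globally Lipschitz with respect to the conical distance $d_{(0,t)}$. Combined with the fact (Proposition \ref{prop-homeo}) that $X$ is homeomorphic to the metric completion of $(X^{\reg}, d_{(0,t)})$, this Lipschitz bound extends to a uniformly continuous function on $X$, which is what yields the continuity of the local potentials. You do invoke the gradient estimate, but only in the role of controlling peak sections near $E$ inside the partial $C^0$ argument; it is in fact the key to the continuity statement.

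\textbf{The identification in part (ii) only works in the $t=1$ case.} If the given $\omega_{KE}$ is genuinely conical, say a weak conical KE on $(X,\tfrac{1-t_0}{m}H_X)$ with $t_0<1$, then it solves a different Monge-Amp\`ere equation than the $t\to1$ limit $\omega'_{KE}$, and the BBEGZ uniqueness modulo $\Aut(X)$ does not identify them. In that case one should stop at $t=t_0$ and identify $\omega_{KE}$ with $\omega_{(0,t_0)}$ by the \emph{strict} uniqueness (no automorphisms) for the twisted equation with $t_0<1$, which follows from the strict properness (Proposition \ref{prop-uniproper}). Letting $t\to1$ is only needed if $\omega_{KE}$ is a plain weak KE metric; in that case your argument via Step 5 and BBEGZ uniqueness modulo automorphism is the right one, provided you make explicit that the conclusion of Step 5 ($X\cong X_\infty$ as algebraic varieties, hence homeomorphic in the analytic topology) is what lets the homeomorphism statement pass from $X_\infty$ to $X$.
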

Similar results in the canonically polarized and Calabi-Yau cases were proved by Jian Song in \cite{Son14}.

The main results in this paper were announced by the second named author in June 2017 at a workshop in Orsay, France.
After the submission of this paper, there have been several new results on the Yau-Tian-Donaldson conjecture for Fano varieties, which are related to this paper. In a revised version of \cite{BBJ15}, Berman-Boucksom-Jonsson have expanded their approach to prove the uniform version of Yau-Tian-Donaldson conjecture for general twisted K\"{a}hler-Einstein metrics on smooth Fano manifolds with discrete automorphism groups, including the log Fano case considered in the appendix of this paper. Based on the perturbation process in this paper and some non-Archimedean estimates, in \cite{LTW19} we proved the uniform version of Yau-Tian-Donaldson conjecture for all Fano varieties with discrete automorphism groups. In \cite{Li19} the first named author of this paper has further extended the results to work for all Fano varieties by dropping the discreteness assumption on automorphism groups (see also \cite{His19}).

We end this introduction by sketching the organization of this paper. In the next section, we collect various analytic tools including Sobolev inequality for conical metrics and some pluripotential theory that will be used later. We also recall the definition of K-stability and uniform K-stability, and state a log version of Berman-Boucksom-Jonsson's (log-BBJ) properness result whose proof will be sketched in the appendix by modifying Berman-Boucksom-Jonsson's argument.  In Section \ref{sec-alg}, we will prove {\bf Step 1} which is purely algebraic. Combined with the log-BBJ, we also achieve {\bf Step 2}. In Section \ref{sec-MA} we use a scaling trick to carefully analyze the properness properties of log-Ding-energy with respect to different parameters of $(\epsilon, t)$. As a consequence we carry out {\bf Step 3} and prove (in Theorem \ref{thm-mainweakKE}) the existence of weak conical K\"{a}hler-Einstein metrics on $\left(X, \frac{1-t}{m}H_X\right)$ (see \eqref{eq-HX} for $H_X$) for all $t\in (0,1)$ (resp. $t\in (0,1]$) if $X$ is K-semistable (resp. uniformly K-stable). In Section \ref{sec-conicX}, we carry out {\bf Step 4}. In particular, we use the techniques from \cite{NTZ15,Son14} to prove a partial $C^0$-estimate based on the uniform $L^\infty$-estimate we derived in {\bf Step 3} when $t\in (0,1)$. Finally in Section \ref{sec-STC}, we resort to the techniques developed in \cite{Tia15, CDS15} in order to carry out {\bf Step 5} and complete the proof of our main result.

\bigskip

\noindent
{\bf Acknowledgement:}
C. Li is partially supported by  NSF (Grant No. DMS-1405936 and DMS-1810867) and an Alfred P. Sloan research fellowship. G. Tian is partially supported by
NSF (Grant No. DMS-1309359,1607091) and NSFC (Grant No. 11331001). F. Wang is partially supported by NSFC (Grant No.11501501).
The authors would like to thank Chenyang Xu for correcting our definition of admissible singularities and pointing out Remark \ref{rem-admissible}.

\section{Preliminaries}

\subsection{Bochner formula}

Assume $(L, h)$ is a line bundle over a smooth manifold $M$ with a Hermitian metric $h$. Let $\Theta=\Theta(h)$ denote the Chern curvature of $h$: $\Theta=-\sddb\log h$. Assume $\omega$ is a K\"{a}hler
metric on $M$. Let $Ric(\omega)=\sqrt{-1} \sum_{i,j} R_{i\bar{j}}dz^i\wedge d\bar{z}^j$ denote its Ricci form. The following lemmas are well-known and can be obtained by direct calculations (see e.g. \cite{NTZ15})

\begin{lem}[Weitzenb\"{o}ch formulas]
Let $\xi\in \Gamma(M, T^{*(0,1)}M\otimes L)$. Then we have the following Wentzenb\"{o}ch formula:
\begin{equation}\label{eq-Wentz}
(\bar{\partial}^*\bar{\partial}+\bar{\partial}\bar{\partial}^*)\xi=\bar{\nabla}^*\bar{\nabla}\xi+(\Theta+Ric(\omega))(\xi, \cdot).
\end{equation}
\end{lem}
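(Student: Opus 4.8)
\medskip

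The identity \eqref{eq-Wentz} is pointwise and tensorial, so the plan is to verify it at an arbitrary point $p\in M$. There I would fix holomorphic normal coordinates $(z^1,\dots,z^n)$ for $\omega$, so that $g_{i\bar j}(p)=\delta_{ij}$ and $dg_{i\bar j}(p)=0$, together with a local holomorphic frame $e$ of $L$ with weight $h$ normalized so that $h(p)=1$ and $dh(p)=0$; then at $p$ the Chern connection coefficients vanish and $\Theta(p)$, $Ric(\omega)(p)$ are expressed purely through second derivatives of the data. Writing $\xi=\xi_{\bar j}\,d\bar z^{\,j}\otimes e$ and letting $\nabla$ denote the induced Chern connection on $T^{*(0,1)}M\otimes L$, with $\nabla_i=\nabla_{\partial/\partial z^i}$, $\nabla_{\bar j}=\nabla_{\partial/\partial\bar z^j}$ and $\bar\nabla=\nabla^{0,1}$, the whole statement then reduces to a computation in this frame.

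The first step is to record the local expressions of the four operators involved. Since $M$ is K\"ahler, the exterior operator $\bar\partial$ on $L$-valued forms is the antisymmetrization of $\nabla^{0,1}$ (this is where the K\"ahler hypothesis first enters, killing the torsion-type terms), so that $(\bar\partial\xi)_{\bar j\bar k}=\nabla_{\bar j}\xi_{\bar k}-\nabla_{\bar k}\xi_{\bar j}$; integration by parts against $\omega^n$ then gives $\bar\partial^*\xi=-g^{i\bar j}\nabla_i\xi_{\bar j}$ on $(0,1)$-forms, $(\bar\partial^*\eta)_{\bar k}=-g^{i\bar j}\nabla_i\eta_{\bar j\bar k}$ on $(0,2)$-forms, and $(\bar\nabla^*\bar\nabla\xi)_{\bar k}=-g^{i\bar j}\nabla_i\nabla_{\bar j}\xi_{\bar k}$. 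Substituting these into $\bar\partial^*\bar\partial\xi$ and $\bar\partial\bar\partial^*\xi$ and cancelling the common second-order term leaves
\[
\big((\bar\partial^*\bar\partial+\bar\partial\bar\partial^*)\xi-\bar\nabla^*\bar\nabla\xi\big)_{\bar k}\;=\;g^{i\bar j}\,[\nabla_i,\nabla_{\bar k}]\,\xi_{\bar j},
\]
which identifies the entire discrepancy as a pure curvature term.

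The second step is to evaluate this commutator. By definition $[\nabla_i,\nabla_{\bar k}]$ acts on a section of $T^{*(0,1)}M\otimes L$ by contraction with the curvature of that bundle, which decomposes as the curvature of $T^{*(0,1)}M$ plus the Chern curvature $\Theta$ of $L$. The $L$-summand contributes exactly $\Theta(\xi,\cdot)$ in the notation of the statement, while the $T^{*(0,1)}M$-summand is, on a K\"ahler manifold, the bundle whose relevant contraction of its curvature is precisely the Ricci form, so it contributes $Ric(\omega)(\xi,\cdot)$. Adding the two contributions yields \eqref{eq-Wentz}.

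I expect the only genuinely delicate point to be this last identification: one must keep careful track of index positions and of the sign picked up in passing between $TM$ and its dual, so that the contracted curvature of $T^{*(0,1)}M$ reproduces $Ric(\omega)$ on the nose (with the correct sign) rather than some other trace of the full curvature tensor, and likewise that the $L$-contribution matches the sign convention $\Theta=-\sqrt{-1}\,\partial\bar\partial\log h$ used above. Everything else is a mechanical computation once the normal frame is fixed. An alternative route would be to start from the Bochner--Kodaira--Nakano identity $\Box_{\bar\partial}=\Box_{\partial}+[\sqrt{-1}\,\Theta,\Lambda]$ together with the classical Weitzenb\"och formula for $\Box_{\partial}$ on $T^{*(0,1)}M$-valued forms, but this still rests on the same curvature contraction and so is not a real shortcut.
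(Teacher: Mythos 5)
Your argument is correct and follows the standard route: reduce to the commutator $g^{i\bar j}[\nabla_i,\nabla_{\bar k}]\xi_{\bar j}$ via the K\"ahler identity $\bar\partial=\mathrm{Alt}\circ\nabla^{0,1}$ and the local expressions for $\bar\partial^*$ and $\bar\nabla^*\bar\nabla$, then expand the curvature of $T^{*(0,1)}M\otimes L$ into its two summands and contract. The paper itself offers no proof here --- it states that the identity is well-known and obtainable by direct calculation, pointing to \cite{NTZ15} --- so there is no proof in the text to compare against, but your derivation is precisely the direct calculation that such a citation has in mind, and all three local formulas you invoke (for $\bar\partial\xi$, $\bar\partial^*$, and $\bar\nabla^*\bar\nabla$) are the right ones. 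The one place where your write-up stops short of a complete proof is the final identification of the two curvature contractions with $\Theta(\xi,\cdot)$ and $Ric(\omega)(\xi,\cdot)$: you correctly flag this as the sign-sensitive step, but it is asserted rather than carried out. To close it, at the chosen normal frame one checks $[\nabla_i,\nabla_{\bar k}]\xi_{\bar j}=\Theta_{i\bar k}\,\xi_{\bar j}+R_{i\bar k\bar j}{}^{\bar l}\xi_{\bar l}$ on the product bundle (with the paper's convention $\Theta=-\sddb\log h$), and then uses the K\"ahler symmetry $g^{i\bar j}R_{i\bar k\bar j}{}^{\bar l}=R_{\bar k}{}^{\bar l}$ to recognize the second trace as $Ric(\omega)$; spelling this out is routine but is exactly where a sign error would hide, so it should appear explicitly in a full proof.
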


\begin{lem}[Bochner formulas]\label{lem-Bochner}
For any $\zeta\in H^0(M,L)$, we have the following formula:
\begin{equation}\label{eq-lapnorm}
\Delta |\zeta|^2=|\nabla \zeta|^2- |\zeta|^2\cdot {\rm tr}_{\omega}\Theta,
\end{equation}
where $|\zeta|^2=|\zeta|_h^2$ and $|\nabla \zeta|^2=|\nabla^h \zeta|_{h\otimes \omega}^2$. Moreover, we have the following Bochner formula
\begin{eqnarray}\label{eq-lapgrad}
\Delta_\omega |\nabla \zeta|^2&=&g^{k\bar{l}}\left( g^{i\bar{j}}\zeta_{,i}\overline{\zeta_{,j}}\right)_{k\bar{l}}\nonumber\\
&=& g^{k\bar{l}} g^{i\bar{j}}\left(\zeta_{,ik\bar{l}}\overline{\zeta_{,j}}+\zeta_{,i}\overline{\zeta_{,j\bar{k}l}}+\zeta_{,ik}\overline{\zeta_{,jl}}+\zeta_{,i\bar{l}}\overline{\zeta_{,j\bar{k}}}\right)\nonumber\\
&=&|\nabla \nabla \zeta|^2+|\bar{\nabla}\nabla \zeta|^2-\left[({\rm tr}_{\omega}\Theta)_i \zeta\overline{\zeta_{,i}}+c.c.\right]\nonumber\\
&&\hskip 5mm +R_{i\bar{j}}\zeta_{,i}\overline{\zeta_{,j}}-2\Theta_{i\bar{j}}\zeta_{,i}\overline{\zeta_{,j}}-|\nabla \zeta|^2 {\rm tr}_{\omega}(\Theta).
\end{eqnarray}

\end{lem}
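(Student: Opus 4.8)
The plan is to establish both identities by the standard Bochner technique: fix a point $p\in M$, work in holomorphic normal coordinates at $p$ (so $g_{i\bar j}(p)=\delta_{ij}$ and $\partial_k g_{i\bar j}(p)=0$) together with a local holomorphic frame of $L$, and use the curvature‑commutation (Ricci) identities for the Chern connection on $L$ and the Levi--Civita connection on $M$, combined on tensor powers of $T^{*(1,0)}M$, $TM$ and $L$, which I denote collectively by $\nabla$. Since $\zeta$ is holomorphic, $\bar\partial\zeta=0$, i.e. $\zeta_{,\bar j}=\nabla_{\bar j}\zeta=0$, while the mixed second derivative is controlled entirely by the Chern curvature: commuting derivatives gives $\zeta_{,i\bar j}=\nabla_{\bar j}\nabla_i\zeta=[\nabla_{\bar j},\nabla_i]\zeta=-\Theta_{i\bar j}\zeta$, where $\Theta=-\sddb\log h$. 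This single relation is the engine of both formulas.

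For \eqref{eq-lapnorm}, differentiate $|\zeta|^2=\langle\zeta,\zeta\rangle_h$ using metric compatibility of the Chern connection: $\partial_i|\zeta|^2=\langle\zeta_{,i},\zeta\rangle$, hence $\partial_{\bar j}\partial_i|\zeta|^2=\langle\zeta_{,i\bar j},\zeta\rangle+\langle\zeta_{,i},\zeta_{,j}\rangle=-\Theta_{i\bar j}|\zeta|^2+\langle\zeta_{,i},\zeta_{,j}\rangle$; tracing with $g^{i\bar j}$ yields $\Delta|\zeta|^2=|\nabla\zeta|^2-|\zeta|^2\,\tr_\omega\Theta$, with $\Delta=g^{i\bar j}\partial_i\partial_{\bar j}$. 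For \eqref{eq-lapgrad}, set $f=|\nabla\zeta|^2=g^{i\bar j}\zeta_{,i}\overline{\zeta_{,j}}$ and apply $\Delta$ at $p$: since $\partial g^{i\bar j}(p)=0$, the Leibniz rule produces exactly the terms $g^{k\bar l}g^{i\bar j}\bigl(\zeta_{,ik\bar l}\overline{\zeta_{,j}}+\zeta_{,i}\overline{\zeta_{,j\bar k l}}+\zeta_{,ik}\overline{\zeta_{,jl}}+\zeta_{,i\bar l}\overline{\zeta_{,j\bar k}}\bigr)$ recorded in \eqref{eq-lapgrad}; the last two are already $|\nabla\nabla\zeta|^2+|\bar\nabla\nabla\zeta|^2$. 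In the first term I would rewrite $\zeta_{,ik\bar l}=\nabla_{\bar l}\nabla_k\nabla_i\zeta$ by commuting $\nabla_{\bar l}$ past $\nabla_k$: the commutator contributes the Riemann curvature acting on the $T^{*(1,0)}$‑index, which contracts via $g^{k\bar l}$ to the Ricci term $R_{i\bar j}\zeta_{,i}\overline{\zeta_{,j}}$, together with the Chern curvature on the $L$‑factor, contracting to $-|\nabla\zeta|^2\,\tr_\omega\Theta$; the remaining piece $\nabla_k\nabla_{\bar l}\nabla_i\zeta=\nabla_k(-\Theta_{i\bar l}\zeta)$ produces a curvature‑derivative term and $-\Theta_{i\bar j}\zeta_{,i}\overline{\zeta_{,j}}$. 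The second Leibniz term, being the conjugate of $\nabla_l\nabla_{\bar k}\nabla_j\zeta=\nabla_l(-\Theta_{j\bar k}\zeta)$, similarly contributes the complex‑conjugate curvature‑derivative term and a second copy of $-\Theta_{i\bar j}\zeta_{,i}\overline{\zeta_{,j}}$. Finally, since $\Theta$ is $\bar\partial$‑closed, $\partial_k\Theta_{i\bar l}$ is symmetric in $i,k$, so $g^{k\bar l}\partial_k\Theta_{i\bar l}=(\tr_\omega\Theta)_i$ at $p$, and the two curvature‑derivative contributions assemble into $-\bigl[(\tr_\omega\Theta)_i\zeta\,\overline{\zeta_{,i}}+\text{c.c.}\bigr]$; summing everything gives \eqref{eq-lapgrad}.

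The computation is entirely routine, so I do not expect a genuine obstacle; the only points demanding care are the bookkeeping of curvature signs in the two commutation identities (one for a section of $L$, one for a $T^{*(1,0)}M\otimes L$‑valued form) and the use of the Chern/Kähler symmetries to recognize the term involving $(\tr_\omega\Theta)_i$. Since both identities are pointwise and $\zeta$ and $|\nabla\zeta|^2$ are smooth on all of $M$, no vanishing‑locus or regularity issues arise.
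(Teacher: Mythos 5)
Your proposal is correct, and it is essentially the argument the paper is pointing to: the paper gives no proof of Lemma~\ref{lem-Bochner}, remarking only that the formulas are well known and follow by direct calculation (referencing \cite{NTZ15}); your normal-coordinate computation using $\zeta_{,\bar j}=0$, the commutation identity $\zeta_{,i\bar j}=-\Theta_{i\bar j}\zeta$, and the Ricci and Chern curvature commutators on $T^{*(1,0)}M\otimes L$ is exactly that calculation, and it produces every term in \eqref{eq-lapgrad} with the correct multiplicities (the Ricci and $-|\nabla\zeta|^2\,\tr_\omega\Theta$ pieces from commuting $\nabla_{\bar l}$ past $\nabla_k$ in the first mixed term, two copies of $-\Theta_{i\bar j}\zeta_{,i}\overline{\zeta_{,j}}$ from the two conjugate terms, and the pair of curvature-derivative terms assembling into $-[(\tr_\omega\Theta)_i\zeta\overline{\zeta_{,i}}+\text{c.c.}]$). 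One small bookkeeping slip: the symmetry of $\partial_k\Theta_{i\bar l}$ in $i,k$ follows from $\Theta$ being $\partial$-closed, not $\bar\partial$-closed (the latter controls the conjugate term); since the Chern curvature form is $d$-closed, both hold and the conclusion is unaffected.
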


\subsection{Sobolev constants}
By a strong conical K\"{a}hler metric on a log smooth pair $(M, B)$, we mean a K\"{a}hler current $\omega$ with bounded local potentials that also satisfies the following conditions:
\begin{enumerate}
\item[(1)]
$\omega$ is smooth on $M\setminus B$;
\item[(2)]
If in a holomorphic coordinate neighborhood $U_p$ of $p\in M$, $B=\sum_i (1-\beta_i)\{z_i=0\}$ with simple normal crossing support and $0<\beta_i\le 1$, then
$\left.\omega\right|_{U_p}$ is quasi-isometric to the following model conical metric:
\[
\sum_i \frac{\sqrt{-1} dz_i\wedge d\bar{z}_i}{|z_i|^{2(1-\beta_i)}}.
\]
Moreover $\omega$ is H\"{o}lder continuous (in the sense of \cite{Don12a,GP16,JMR16,Tia17}).
\item[(3)]
$M\setminus B$ is geodesically convex with respect to the metric structure induced by $\omega|_{M\setminus B}$.

\end{enumerate}
Note that the condition (3) follows from the previous two conditions. But we write it explicitly to emphasize.
We will need to following estimates of Sobolev constants.
\begin{prop}[{see \cite[Proposition 3]{GP16}}, \cite{MR12}]\label{prop-Sob}
Assume $(M, \omega)$ is a strong conical K\"{a}hler metric on a log smooth pair $(M, B)$ satisfying $Ric(\omega)\ge t \omega$ with $t>0$. Then the Sobolev inequality holds: there exists $C=C(M,t, \vol(\omega), \dim M)>0$ such that for any $f\in W^{1,2}(M, \bR)$, the following inequality holds:
\begin{equation}
\left(\int_M |f|^{\frac{2n}{n-1}}\omega^n\right)^{\frac{n-1}{n}}\le C \left(\int_M |\nabla f|_\omega^2 \omega^n+\int_M f^2\omega^n\right).
\end{equation}
\end{prop}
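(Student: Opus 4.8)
The plan is to regularize $\omega$ by \emph{smooth} K\"{a}hler metrics carrying a uniform positive Ricci lower bound, to apply the classical Sobolev inequality on closed manifolds with controlled Ricci curvature, diameter and volume, and then to pass to the limit.

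First I would invoke the Guenancia--P\u{a}un smoothing of conic metrics along a simple normal crossing divisor (see \cite{GP16}); the hypothesis that all cone angles are at most $2\pi$, i.e. each $\beta_i\le1$, enters here in an essential way, since this is exactly what forces the regularized metrics to carry an \emph{almost sharp} Ricci lower bound. One thereby produces a family $\{\omega_\delta\}_{\delta>0}$ of smooth K\"{a}hler metrics in the cohomology class of $\omega$ (so $\vol(\omega_\delta)=\vol(\omega)$ for all $\delta$), with $\omega_\delta\to\omega$ in $C^\infty_{loc}(M\setminus B)$, such that
\[
Ric(\omega_\delta)\ge\big(t-\sigma(\delta)\big)\,\omega_\delta,\qquad \sigma(\delta)\to0\ \text{ as }\ \delta\to0 ;
\]
in particular $Ric(\omega_\delta)\ge\tfrac t2\,\omega_\delta$ for $\delta$ small.

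Each $(M,\omega_\delta)$ is then a closed K\"{a}hler manifold of real dimension $2n$ with $Ric(\omega_\delta)\ge\tfrac t2\omega_\delta>0$; Bonnet--Myers gives $\diam(M,\omega_\delta)\le D=D(n,t)$, and combining the classical Sobolev inequality on closed manifolds with a lower Ricci bound (Gallot, Ilias) with the Lichnerowicz spectral-gap estimate produces a constant $C=C(M,t,\vol(\omega),\dim M)$, \emph{independent of} $\delta$ because $\vol(\omega_\delta)=\vol(\omega)>0$ is a fixed cohomological number, such that
\[
\Big(\int_M|f-\overline f_\delta|^{\frac{2n}{n-1}}\,\omega_\delta^n\Big)^{\frac{n-1}{n}}\le C\int_M|\nabla_{\omega_\delta}f|_{\omega_\delta}^2\,\omega_\delta^n
\]
for all $f\in W^{1,2}(M,\omega_\delta)$, where $\overline f_\delta$ is the $\omega_\delta^n$-average of $f$; for $f$ of zero average this is the stated inequality. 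For $f$ Lipschitz and compactly supported in $M\setminus B$ the metrics $\omega_\delta$ converge smoothly to $\omega$ on $\supp f$ and $\overline f_\delta\to\overline f$, so letting $\delta\to0$ gives the same inequality for $\omega$ with the same $C$. One then upgrades to arbitrary $f\in W^{1,2}(M,\omega)$ by density: since $B$ has real codimension $2$ and, in the metric $\omega$, the slice transverse to each branch $\{z_i=0\}$ is a two-dimensional cone of angle $2\pi\beta_i\le2\pi$, a logarithmic cut-off $\rho_k$ vanishing near $B$ satisfies $\int_M|\nabla_\omega\rho_k|_\omega^2\,\omega^n\to0$, while $\omega^n$ is integrable because every $\beta_i>0$; truncating $f$ to be bounded and replacing it by $\rho_k f$ then gives the required approximation.

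The hard part is the regularization step: one needs approximants with $Ric(\omega_\delta)\ge(t-o(1))\omega_\delta$ and not merely $Ric(\omega_\delta)\ge-C\omega_\delta$, and this is precisely where the weak inequality $Ric(\omega)\ge t\omega$ on the pair and the cone-angle restriction $\beta_i\le1$ have to be used together; it is the content of the Guenancia--P\u{a}un estimates quoted in the statement. The Bonnet--Myers diameter bound, the uniform smooth Sobolev inequality, and the limiting and density arguments are then comparatively routine. (Alternatively one could try to argue directly on $(M,\omega)$ by first establishing a Bonnet--Myers-type diameter bound together with a non-collapsing lower bound for the volumes of metric balls of conical K\"{a}hler metrics with $Ric\ge t\omega>0$, and then running the standard Moser-type derivation of a Sobolev inequality from these two estimates.)
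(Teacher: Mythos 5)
Your main line of argument---regularize $\omega$ by smooth K\"{a}hler metrics $\omega_\delta$ in the same class with $Ric(\omega_\delta)\ge (t-o(1))\,\omega_\delta$, quote Gallot, and pass to the limit---is \emph{not} the route the paper takes, and the paper's own introduction explains why: ``We don't approximate conical K\"ahler-Einstein metrics by smooth K\"ahler-Einstein metrics with uniform Ricci lower bound since in our setting the latter is not true for some reason involving cohomological classes.'' The obstruction is exactly at the step you call ``the hard part.'' Having each cone angle $\beta_i\le 1$ is \emph{not} by itself enough to force the Guenancia--P\u{a}un regularizations to carry an almost sharp Ricci lower bound: the Ricci curvature of $\omega_\delta$ picks up contributions from the Chern curvatures of the line bundles $\mathcal{O}(B_i)$ through the chosen smooth Hermitian metrics used in the smoothing, and when a component $B_i$ is $\mu$-exceptional (as the $E_i$ are in this paper) that curvature is forced to be negative in some directions and need not be dominated by $\omega_\delta$. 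Thus the claimed uniform bound $Ric(\omega_\delta)\ge \tfrac{t}{2}\omega_\delta$ has no justification in the generality of the proposition, and the cited smoothing results do not assert it. Since everything downstream in your argument (Bonnet--Myers for $\omega_\delta$, uniform Sobolev for $\omega_\delta$, the limit) hinges on that bound, this is a genuine gap.

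The paper instead takes what you describe only as a parenthetical ``alternative'' at the very end, and that is in fact the correct approach here. Because $(M,\omega)$ is a \emph{strong} conical K\"{a}hler metric, $M\setminus B$ is geodesically convex; one can then run the classical Riemannian arguments directly on the regular part: Myers' theorem gives a diameter bound, Bishop--Gromov gives a positive lower bound on volumes of balls, and then (after inserting a cut-off near $B$, which is harmless since $B$ has zero capacity for the conical metric, exactly the type of estimate you correctly sketch in your density step) the Croke/P.~Li proof of the $L^2$-Sobolev inequality goes through, or alternatively one invokes Haj\l asz--Koskela once one has a doubling measure and a local Poincar\'{e} inequality, both of which follow from the quasi-isometry of $\omega$ with the model cone metric near $B$. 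So you identified the right tools, but put the wrong one in the lead: the direct argument on the conical metric is the proof, while the smooth-approximation route you developed in detail stalls precisely at the Ricci lower bound for $\omega_\delta$.
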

\begin{proof}
(see \cite[Remark 6.5]{JMR16} and \cite{MR12}) Because the set $M\setminus B$ is geodesically convex, we can apply the same proof in standard Riemannian geometry to get the diameter bound (as in Myers' theorem) and positive volume lower bound (as in Bishop-Gromov volume comparison). By using a cut-off function, one can verify the proof of C.Croke \cite{Cro80} and P.Li \cite{Li80} still applies. 

Alternatively, one can use the general result by Haj\l asz-Koskela. This states that if $M$ is a metric measure space which has a volume doubling
constant and satisfies a weak local Poincar\'{e} inequality, then there is a global Sobolev inequality with the constant depending only on the volume,
diameter bound, Poincar\'{e} constant and doubling constant. The Poincar\'{e} constant follows from a standard integration by parts argument and the doubling constant can be obtained by using the quasi-isometry of the strong conical K\"{a}hler metric with the model conical K\"{a}hler metric.
\end{proof}

\subsection{Energy functions}

Let $(M,B)$ be a fixed klt log-Fano pair. Fix $t\in \bR_{>0}$. We assume that the $\bR$-line bundle $L:=-t^{-1}(K_M+B)$ is semi-ample and big, in the sense that $2\pi c_1(L)\in H^{1,1}(M, \bR)$ contains a smooth  real closed positive (1,1)-form $\chi=\sddb\psi$ of positive volume. We consider $e^{-\psi}$ as a smooth Hermitian metric on the $\bR$-line bundle $L$. We consider the following spaces:
\begin{align}
&PSH(\chi)=\left\{\text{ u.s.c. function $u$ on } M; \quad \chi_u:=\chi+\sddb u\ge 0 \right\};\\
&\cH(\chi)=PSH(\chi)\cap C^\infty(M);\\
&PSH_\infty(\chi)=PSH(\chi)\cap L^\infty(M);\\
&PSH(L):=PSH([\chi])=\left\{\vphi=\psi+u; u\in PSH(\chi)\right\};\\
&PSH_\infty(L):=PSH_\infty([\chi])=\left\{\vphi=\psi+u; u\in PSH_\infty(\chi)\right\}.
\end{align}
Then $PSH([\chi])$ (resp. $PSH_\infty([\chi])$ is equal to the space of positively curved (resp. bounded positively curved) Hermitian metrics $\{e^{-\vphi}\}$ on the $\bR$-line bundle $L$.
Denote $V=c_1(L)^n=\left([\chi/(2\pi)]^n\right)>0$.
For any $\vphi\in PSH([\chi])$ such that $\vphi-\psi\in C^\infty(M)$, we have the following well-studied functionals:
\begin{eqnarray}
E_{\psi}(\vphi)&=&\frac{1}{n+1} \frac{1}{V}\sum_{i=0}^n \int_M (\vphi-\psi) (\sddb\psi)^{n-i}\wedge (\sddb\vphi)^{i},\label{eq-Ephi}\\
J(\vphi)&=&J_{\psi}(\vphi):=-E_{\psi}(\vphi)+\frac{1}{V}\int_M (\vphi-\psi)(\sddb\psi)^n, \label{eq-Jphi}\\
I(\vphi)&=&I_{\psi}(\vphi):=\frac{1}{V} \int_M (\vphi-\psi)\left((\sddb\psi)^n-(\sddb\vphi)^n\right), \label{eq-Iphi}\\
\nonumber\\
L_B(\vphi)&=&L_B(t, \vphi)=-\log\left(\frac{1}{V}\int_M \frac{e^{-t\vphi}}{|s_B|^2}\right),\label{eq-Lphi}\\
\cD_B(\vphi)&=&\cD_{B,\psi}(t, \vphi)=-E_{\psi}(\vphi)+\frac{1}{t} L_B(t, \vphi),\label{eq-Dphi} \\
\nonumber\\
H_B(\vphi)&=&H_{B,\psi}(t, \vphi)=\frac{1}{V}
\int_M \log \frac{|s_B|^2 (\sddb\vphi)^n}{e^{-t\psi}}(\sddb\vphi)^n, \label{eq-Hphi}\\
\cM_{B}(\vphi)&=&\cM_{B,\psi}(t, \vphi)=\frac{1}{t}H_{B,\psi}(t,\vphi)- (I_\psi-J_\psi)(\vphi). \label{eq-Mphi}
\end{eqnarray}
Note that in \eqref{eq-Hphi}, we think of $e^{-t\psi}|s_B|^{-2}$ as a singular Hermitian metric on $-K_M$ defining a measure on $M$ with an $L^p (p>1)$ density function.

The above functionals can be considered as functionals for $u=\vphi-\psi\in \cH(\chi)$ so that they are originally defined on the space of smooth $\chi$-psh functions in $[\chi]$. By the recent development of pluripotential theory, we know that they can all be extended to be functionals on a bigger space $\cE^1$ of finite energy $\chi$-psh functions. Following Guedj-Zeriahi \cite{GZ07}, we denote:
\begin{align}
&\cE=\cE(M, \chi)=\left\{u\in PSH(M, \chi); \int_M \chi_u^n=\int_M \chi^n\right\};\\
&\cE^1=\cE^1(M,\chi)=\left\{u\in \cE(M, \chi); \int_M |u| \chi_u^n<\infty\right\};\\
&\cE^1_{\rm norm}=\cE^1_{\rm norm}(M, \chi)=\left\{u\in \cE^1(M, \chi); \sup_M u=0\right\}.
\end{align}
Note that $\cE^1$ contains all bounded $\chi$-psh functions. By Darvas' work \cite{Dar14}, $\cE^1$ can be characterized as the metric completion of $\cH(\chi)$ under a Finsler metric $d_1$, where the Finsler metric $d_1$ is defined as follows. For any $u_0, u_1\in \cH$, the $d_1$ distance between $u_0$ and $u_1$ is defined as:
\[
d_1(u_1, u_2)=\inf_{u(s)} \left[\int_0^1 \left(\int_M |\dot{u}| \chi_u^n\right) ds\right].
\]
The $u(s)$ under the above infimum runs over the set of all smooth curves of metrics in $\cH(\chi)$ that connects $u_0$ and $u_1$. Darvas also proved that the infimum is achieved when $u(s)$ is the $C^{1,\bar{1}}$-geodesic segment connecting $u_0$ and $u_1$ (as obtained by X.Chen and is not contained in $C^\infty([0,1]\times M)$ in general according to Lempert-Vivas).

Following \cite{BBEGZ}, we endow $\cE^1$ with the strong topology. Then it is known that $u_j\rightarrow u$ in $\cE^1$ under the strong topology if and only if $I_{\psi+u}(\psi+u_j)\rightarrow 0$, if and only if $d_1(u_j, u)=0$.  Moreover in this case
$\sup(u_j)\rightarrow \sup(u)$ by Hartogs' lemma for plurisubharmonic functions.

The following compactness result is very important in the variational approach to solving Monge-Amp\`{e}re equations using pluripotential theory.
\begin{thm}[{\cite[Theorem 2.17]{BBEGZ}}]\label{thm-BBEGZ}
Assume $\chi$ is a smooth K\"{a}hler form.
Let $p>1$ and suppose $\mu=f \chi^n$ is a probability measure with $f\in L^p(X, \chi^n)$. For any $C>0$, the following set is compact in the strong topology:
\[
\left\{
u\in \cE^1; \quad
\sup_{M}u=0, \quad\frac{1}{V}\int_M \log\frac{\chi_{u}^n}{\mu}\chi^n_{u}<C
\right\}.
\]
\end{thm}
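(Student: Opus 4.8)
The plan is to follow the argument of \cite{BBEGZ}. Write $K_C$ for the set in question and set $m=\int_M\chi^n$. Since $\cE^1$ equipped with the strong topology is a metric space (the $d_1$-completion of $\cH(\chi)$), it suffices to prove sequential compactness, so I fix a sequence $u_j\in K_C$ and let $\nu_j=\frac1m\chi_{u_j}^n$, a probability measure; the defining condition then reads $\Ent_\mu(\nu_j)\le C_0$ for a constant $C_0$ depending only on $C$ and the fixed data $(M,\chi,\mu)$, and in particular $\nu_j\ll\mu$. I would argue in three stages: (i) a uniform energy bound on $K_C$; (ii) an $L^1$-weak extraction whose limit still has finite energy and finite entropy; (iii) an upgrade of this weak convergence to strong convergence, which simultaneously shows $K_C$ is closed.

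\emph{Step 1: uniform energy bound.} First I would control the Monge-Amp\`{e}re energy. Concavity of $s\mapsto E_\psi(\psi+su_j)$ on $[0,1]$, together with $E_\psi(\psi)=0$, gives $-E_\psi(\psi+u_j)\le\frac1V\int_M(-u_j)\,\chi_{u_j}^n=\frac mV\int_M(-u_j)\,d\nu_j$. Since $u_j\le0$ and $\nu_j\ll\mu$, the Gibbs variational inequality $\int g\,d\nu\le\Ent_\mu(\nu)+\log\int e^{g}\,d\mu$ with $g=\lambda(-u_j)$, followed by H\"{o}lder's inequality, yields
\[
\lambda\int_M(-u_j)\,d\nu_j\le C_0+\log\!\Big(\|f\|_{L^p(\chi^n)}\Big(\int_M e^{\lambda p'(-u_j)}\,\chi^n\Big)^{1/p'}\Big),\qquad p'=\tfrac{p}{p-1}.
\]
Choosing $\lambda>0$ with $\lambda p'$ below the uniform exponential-integrability exponent for $\chi$-psh functions normalized by $\sup_M=0$ (a uniform form of Skoda's theorem, see \cite{GZ07}), the right-hand side is bounded independently of $j$; here the hypothesis $f\in L^p$, $p>1$, is used. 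Hence $-E_\psi(\psi+u_j)$, and therefore $J(\psi+u_j)$, is bounded uniformly in $j$.

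\emph{Step 2: weak extraction.} Since $\{u\in PSH(M,\chi):\sup_M u=0\}$ is $L^1$-compact, the uniform bound $J(\psi+u_j)\le A$ and the weak compactness of $J$-sublevel sets of $\cE^1$ (\cite{BBEGZ}) let me pass to a subsequence with $u_j\to u$ in $L^1$, $u\in\cE^1$ and $\sup_M u=0$. Separately, writing $h_j=d\nu_j/d\mu$, the pointwise inequality $h_j\log^+h_j\le h_j\log h_j+e^{-1}$ integrates to $\int_M h_j\log^+h_j\,d\mu\le\Ent_\mu(\nu_j)+e^{-1}\le C_0+e^{-1}$, so by the de la Vall\'{e}e-Poussin criterion the family $\{\nu_j\}$ is uniformly integrable with respect to $\mu$, hence (as $\mu=f\chi^n$ with $f\in L^p\subset L^1$) also with respect to $\chi^n$. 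After a further subsequence $\nu_j$ converges weakly to some $\nu_\infty\ll\chi^n$, and lower semicontinuity of relative entropy under weak convergence gives $\Ent_\mu(\nu_\infty)\le C_0$.

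\emph{Step 3: upgrade to strong convergence.} By the characterization of the strong topology recalled above, $u_j\to u$ strongly in $\cE^1$ is equivalent to $E_\psi(\psi+u_j)\to E_\psi(\psi+u)$; upper semicontinuity of $E_\psi$ along $L^1$-convergence gives $\limsup_j E_\psi(\psi+u_j)\le E_\psi(\psi+u)$ for free, so only the reverse inequality is at stake, and here the entropy bound does the work: uniform integrability of the Monge-Amp\`{e}re densities rules out concentration of mass on pluripolar sets, which makes the measures $\chi_{u_j}^n$ stable enough to pass to the limit in every mixed term of $E_\psi(\psi+u_j)=\frac1{(n+1)V}\sum_{i=0}^n\int_M u_j\,\chi^{n-i}\wedge\chi_{u_j}^i$. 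Equivalently, one may observe that $\nu_\infty$ has finite relative entropy, hence finite energy, so the Monge-Amp\`{e}re equation $\frac1m\chi_v^n=\nu_\infty$ has a unique $\sup$-normalized solution $v\in\cE^1$ (\cite{BBEGZ}); testing $\frac1m\chi_{u_j}^n=\nu_j$ against $u_j$ and passing to the limit identifies $v=u$, so $\frac1m\chi_u^n=\nu_\infty$ and $E_\psi(\psi+u_j)\to E_\psi(\psi+u)$. Then $u_j\to u$ strongly and $\Ent_\mu(\nu_\infty)\le C_0$ means $u\in K_C$, which with metrizability gives the claimed compactness. The hard part will be precisely this last step: a bound on $J$ alone never yields strong compactness, since $E_\psi$ is merely upper semicontinuous and the Monge-Amp\`{e}re energy can genuinely drop under weak limits; it is the entropy hypothesis — together with $f\in L^p$, which upgrades tightness of $\{\chi_{u_j}^n\}$ into honest uniform integrability — that, by preventing Monge-Amp\`{e}re mass from escaping onto small sets, closes this gap, either through the no-concentration argument or through the uniqueness of the finite-energy solution of the limiting equation.
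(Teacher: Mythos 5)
The paper merely cites this result from \cite{BBEGZ} and supplies no proof, so there is no argument in the source to compare against; your proof is therefore assessed on its own. Steps 1 and 2 are correct and recover the opening of the BBEGZ argument: the concavity bound $-E_\psi(\psi+u_j)\le\tfrac1V\int_M(-u_j)\chi_{u_j}^n$, the Gibbs--H\"older--Skoda chain giving a uniform $J$-bound (this is exactly where $f\in L^p$, $p>1$, is used), the $L^1$-weak extraction with a finite-energy limit, and the lower semicontinuity of relative entropy under weak convergence are all correctly deployed.

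Step 3 contains a genuine gap, one you explicitly flag as ``the hard part'' but do not close. To obtain strong convergence you need $E_\psi(\psi+u_j)\to E_\psi(\psi+u)$, and your mechanism is to show $\nu_\infty=\tfrac1m\chi_u^n$; but the phrase ``testing $\tfrac1m\chi_{u_j}^n=\nu_j$ against $u_j$ and passing to the limit'' hides precisely the continuity of the complex Monge--Amp\`ere operator along a merely $L^1$-convergent sequence of finite-energy potentials, which fails in general and which, with a uniform $J$-bound in hand, is essentially equivalent to the strong convergence you are trying to prove --- so as written the argument is circular. The alternative ``no-concentration'' route does not fill the hole either: uniform integrability of the density $\chi_{u_j}^n/\chi^n$ bears only on the top wedge power, not on the mixed terms $\int_M u_j\,\chi^{n-i}\wedge\chi_{u_j}^i$ for $i<n$ appearing in $E_\psi$, and uniform integrability with respect to $\chi^n$ does not by itself translate $L^1$-convergence into convergence in capacity, which is what the Bedford--Taylor--Xing continuity theorems require. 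The missing ingredient is a separate technical lemma --- roughly, that an entropy bound on $\{\chi_{u_j}^n\}$ forces uniform control of these measures against Monge--Amp\`ere capacity, strong enough to identify the Monge--Amp\`ere measure of the limit and to prevent the energy from dropping --- and this is where the real content of \cite[Theorem 2.17]{BBEGZ} lies. Without that lemma, the key identifications $\nu_\infty=\tfrac1m\chi_u^n$ and $E_\psi(\psi+u_j)\to E_\psi(\psi+u)$ are asserted rather than established.
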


Let $(\cM, \cB, \cL)$ be a semi-ample test configuration of $(M,B,L)$ that dominates the product test configuration $(M,B,L)\times \bC$ via $\rho: \cM\rightarrow M\times\bC$. We denote by $\phi$ (resp. $\phi_\triv$) the non-Archimedean metric associated to $(\cM,\cB,\cL)$ (resp. $(M,B,L)\times\bC$).
We recall the corresponding non-Archimedean version of the energy functionals.
If $F$ is an energy appearing in \eqref{eq-Ephi}-\eqref{eq-Mphi}, then $F^\NA$ denotes its non-Archimedean version, in the sense that for any semi-ample test configuration $(\cM,\cB,\cL)$ if $\vphi(s)$ is the geodesic ray associated to $(\cM, \cB, \cL)$, then we have:
\begin{equation}
\lim_{s\rightarrow+\infty}\frac{F(\vphi_s)}{s}=F^\NA(\phi).
\end{equation}
We follow the notations in \cite{BHJ15, BBJ15}. By $(\bar{\cM}, \bar{\cB}, \bar{\cL}) \rightarrow \bP^1$ we mean the natural equivariant compactification of $(\cM,\cB,\cL)\rightarrow \bP^1$. We also denote:
\begin{eqnarray*}
K^{\log}_{(\bar{\cM},\bar{\cB})/\bP^1}=(K_{\bar{\cM}}+\bar{\cB})+\cM_{0,\rm red}-\pi^*(K_{\bP^1}+\{0\}).
\end{eqnarray*}
The non-Archimedean functional we will consider are the following:
\begin{eqnarray}
E^{\NA}(\phi)&=&\frac{(\phi^{n+1})}{(n+1)V}=\frac{\left(\bar{\cL}^{\cdot n+1}\right)}{(n+1)V},\label{eq-ENA} \\
J^\NA(\phi)&=&\frac{1}{V}\phi\cdot \phi_\triv^n-\frac{(\phi^{n+1})}{(n+1)V} \nonumber \\
 &=&\frac{1}{V}\left(\bar{\cL}\cdot \rho^*(L\times\bP^1)^{\cdot n}\right)-\frac{\left(\bar{\cL}^{\cdot n+1}\right)}{(n+1)V}, \label{eq-JNA} \\
 I^\NA(\phi)&=&\frac{1}{V}\left(\phi\cdot \phi_\triv^n-\phi^{n+1}+\phi_\triv\cdot (\phi^n)\right)\nonumber\\
&=&\frac{1}{V}\left(\bar{\cL}\cdot \rho^*(L\times\bP^1)^{\cdot n}\right)-\frac{1}{V}\left(\bar{\cL}^{\cdot n+1}\right)+\frac{1}{V}\left(\rho^*(L\times\bP^1)\cdot \bar{\cL}^{\cdot n}\right). \label{eq-INA}
\end{eqnarray}
Assume $L=-t^{-1} (K_M+B)$, we define:
\begin{eqnarray}
L_B^\NA(\phi)=L_B^\NA(t, \phi)&=& \inf_v\left(A_{(M,B)}(v)+t\; (\phi-\phi_\triv)(v)\right) \\
\cD^\NA(\phi)=\cD_B^\NA(t, \phi)&=&-E^\NA(\phi)+\frac{1}{t} L_B^\NA(\phi). \label{eq-DNA}
\end{eqnarray}
\begin{eqnarray}
H_B^\NA(\phi)
&=&\frac{1}{V} K^{\log}_{(\bar{\cM},\bar{\cB})/\bP^1}\cdot \bar{\cL}^{\cdot n}-\frac{1}{V} K^{\log}_{(M,B)\times\bP^1/\bP^1}\cdot \bar{\cL}^{\cdot n} \label{eq-HNA}\\
\cM_B^\NA(\phi)&=&\cM_B^\NA(t, \phi)
=\frac{1}{t}H_B^\NA(\phi)-(I^\NA-J^\NA) \nonumber\\
&=&\frac{1}{tV} \left(K^{\log}_{(\bar{\cM},\bar{\cB})/\bP^1}\cdot \bar{\cL}^{\cdot n}\right)+\frac{n}{(n+1)V}\left(\bar{\cL}^{n+1}\right).\label{eq-MNA}
\end{eqnarray}
To get the second identity in \eqref{eq-MNA}, we used $K^{\log}_{(M,B)\times\bP^1/\bP^1}=K_{(M,B)\times\bP^1/\bP^1}=-(K_M+B)\times \bP^1=t L\times\bP^1$.
We also need to recall the log-version of Tian's CM weight (see \cite{Tia97, Don12a, Wan12}):
\begin{eqnarray}
\CM_B(\phi)=\CM_B(t, \phi)&=&\frac{n}{(n+1)V} \left(\bar{\cL}^{\cdot n+1}\right)+\frac{1}{tV} \left(\bar{\cL}^{\cdot n}\cdot K_{(\bar{\cM},\bar{\cB})/\bP^1}\right)\label{eq-CM}.
\end{eqnarray}
\begin{rem}
\begin{enumerate}
\item
For special test configurations, the CM weight coincides with the Futaki invariant studied in \cite{Fut83, DT92}.
\item
Note the following inequality:
\begin{equation}
\CM_B(t,\phi)-
\cM^\NA_B(t, \phi)=\frac{1}{tV}\left(K_{(\bar{\cM},\bar{\cB})}-K^{\log}_{(\bar{\cM},\bar{\cB})}\right)\cdot \bar{\cL}^{\cdot n}\ge 0.
\end{equation}
This gives the fact that the CM weight is bigger than the asymptotic expansion of (log-)Mabuchi energy, with identity holds if and only if $\cM_0$ is reduced (see \cite{Tia97,BHJ16}).
\end{enumerate}
\end{rem}

\subsection{Existence of conical K\"{a}hler-Einstein metrics}

We will need the following log smooth version of the main result in \cite{BBJ15}:
\begin{thm}[see \cite{BBJ15}]\label{thm-BBJ}
Let $M$ be a smooth projective variety and $(M, B)$ be a log pair with klt singularities. Assume that $-(K_M+B)$ is an ample $\bQ$-Cartier divisor. Fix $t\in \bQ_{>0}$ and let $L=-t^{-1}(K_M+B)$. Using the notations from the above section, the following conditions are equivalent:
\begin{enumerate}
\item There exist $\delta_\cD \in (0,1)$ and $C_\cD>0$ such that $\cD_B \ge \delta_\cD J-C_\cD$ on $\cE^1$.
\item There exist $\delta_\cM\in (0,1)$ and $C_\cM>0$ such that $\cM_B \ge \delta_\cM J-C_\cM$ on $\in \cE^1$.
\item There exists $\delta^\NA \in (0,1)$ such that $\cD_B^\NA\ge \delta^\NA J^\NA$ for any test configuration $(\cM, \cB, \cL)$.
\item There exists $\delta^\NA \in (0,1)$ such that  $\cM_B^\NA\ge \delta^\NA J^\NA$ for any test configuration $(\cM, \cB, \cL)$.
\item There exists $\delta^\NA\in (0,1)$ such that $\CM_B^\NA\ge \delta^\NA J^\NA$ for any test configuration $(\cM, \cB, \cL)$.
\end{enumerate}
Moreover, if the above conditions hold true, the slope constants $(\delta_\cD, \delta_\cM, \delta^\NA)$ can be chosen satisfying 
\begin{equation}\label{eq-deltarel}
 \delta_\cM \ge \delta_\cD \ge 1-\left(\frac{1}{1+\delta_\cM}\right)^{1/n}, \quad \quad \delta^\NA\ge \delta_\cM\ge \frac{\delta^\NA}{2}.
\end{equation}
\end{thm}
\begin{rem}\label{rem-scaling}
By the rescaling property of the energy functionals, it is easy to verify that the above statement is independent of the scaling parameter $t$.  We include the parameter $t$ for the convenience of the later argument comparing energies for different K\"{a}hler classes. \end{rem}
Theorem \ref{thm-BBJ} can be proved by modifying argument in \cite{BBJ15} and replacing various quantities by their log versions.
We emphasize here that because our ambient space $M$ is smooth, there is no essential difficulty in carrying out the same argument as in \cite{BBJ15}.  For the reader's convenience, we will give a sketch of its proof after Berman-Boucksom-Jonsson in Appendix \ref{app-BBJ}. From its proof, we see that $\frac{\delta^\NA}{2}$ in \eqref{eq-deltarel}, which is a lower bound of $\delta_\cM$,  can be replaced by any positive number smaller than $\delta^\NA$ by possibly adjusting the other constant $\delta_\cM$.

The following existence result for the log version of Yau-Tian-Donaldson conjecture is recently obtained by Tian-Wang (\cite{TW19}).
\begin{thm}[\cite{TW19}]\label{thm-TW}
Let $(M, B)$ be a log smooth klt pair. Assume $-K_{(M,B)}:=-(K_M+B)$ is ample and $(M, B, -K_{(M,B)})$ is log-K-polystable, then $(M,B)$ admits a strong conical K\"{a}hler-Einstein
metric.
\end{thm}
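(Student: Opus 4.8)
The plan is to deduce existence from log-K-polystability by the variational method of Berman--Boucksom--Jonsson, modified to accommodate the automorphism group, and then to upgrade the resulting finite-energy solution to a strong conical metric. Throughout, set $L=-K_{(M,B)}$ (the $t=1$ case of the functionals above) and write $G=\mathrm{Aut}^0(M,B)$ for the identity component of the automorphism group of the pair, acting on $PSH(L)$ in the usual way; under log-K-polystability $G$ is reductive, so fix a maximal compact torus $T\subset G$ with complexification $T^{\bC}$. When $G$ is trivial the statement should come out directly: log-K-polystability then forces $\delta(M,B)>1$ by the valuative criteria of \cite{Fuj16,Fuj17a,Li15,LX16} (the point being to exclude strict semistability), hence uniform log-K-stability, hence condition (3) of Theorem~\ref{thm-BBJ}, so $\cD_B$ is coercive on $\cE^1$ and \cite{GP16,JMR16} produce the strong conical K\"ahler--Einstein metric. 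The whole difficulty is therefore the presence of automorphisms.

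To treat $G\neq\{1\}$, I would first prove a $T$-reduced non-Archimedean coercivity estimate: there is $\delta>0$ with
\[
\cD_B^{\NA}(\phi)\ \ge\ \delta\, J_T^{\NA}(\phi)\qquad\text{for every semi-ample test configuration }(\cM,\cB,\cL),
\]
where $J_T^{\NA}(\phi)=\inf_{\xi\in N_{\bR}(T)}J^{\NA}(\xi\cdot\phi)$ is the $J$-functional minimized along the $T^{\bC}$-direction. If this failed, a normalized minimizing sequence of test configurations would, after passing to a limit in the space of valuations (using the Fujita--Li valuative picture and the non-Archimedean compactness behind \cite{BHJ15,BBJ15}), produce a degeneration of $(M,B)$ with vanishing log-CM (equivalently log-Futaki) weight that is not induced by $G$, contradicting log-K-polystability; reductivity of $G$ is precisely what guarantees that a balanced such degeneration must come from $T^{\bC}$, which is what the normalization in $J_T^{\NA}$ removes. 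Granting the non-Archimedean estimate, the transcendental counterpart — coercivity of $\cD_B$ on $\cE^1$ modulo $G$ — follows by rerunning the arguments behind Theorem~\ref{thm-BBJ}, which go through because the ambient variety $M$ is smooth and, by the klt hypothesis on $(M,B)$, the density $|s_B|^{-2}$ of the reference measure lies in $L^p$ for some $p>1$, so that the compactness Theorem~\ref{thm-BBEGZ} applies as stated.

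With coercivity of $\cD_B$ modulo $G$ in hand, the direct method yields a maximizer $\vphi_\infty\in\cE^1$ of $\cD_B$, unique up to the $G$-action: one extracts it from a maximizing sequence using Theorem~\ref{thm-BBEGZ} to control entropy and the $G$-coercivity to keep the sequence from escaping along $G$-orbits. Its Euler--Lagrange equation is the weak conical K\"ahler--Einstein equation
\[
(\sddb\vphi_\infty)^n=c\,\frac{e^{-\vphi_\infty}}{|s_B|^2},\qquad c>0,
\]
and a Ko\l odziej-type estimate (again using the $L^p$ density) bounds $\vphi_\infty$. To obtain a \emph{strong} conical metric — smooth off $B$, quasi-isometric to the model cone $\sum_i|z_i|^{-2(1-\beta_i)}\sqrt{-1}\,dz_i\wedge d\bar z_i$ with the prescribed angles, H\"older continuous, and with geodesically convex complement — one then invokes the local regularity theory along simple normal crossing divisors of Jeffres--Mazzeo--Rubinstein and Guenancia--P\u{a}un as in \cite{JMR16,GP16}; the solution satisfies $\mathrm{Ric}(\omega)\ge \omega$, placing it in the scope of Proposition~\ref{prop-Sob}.

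I expect the first step of the second paragraph to be the main obstacle: extracting the quantitative $T$-reduced coercivity from log-K-polystability, which is only a statement about the \emph{sign} of the log-CM weight. This is exactly where reductivity of $\mathrm{Aut}^0(M,B)$, a Matsushima/Luna-slice argument isolating the torus direction inside $G$, and the non-Archimedean compactness machinery must be combined. An alternative that sidesteps the reduced estimate is a continuity-method proof along $\mathrm{Ric}(\omega_s)=s\omega_s+(1-s)\eta+[B]$: openness holds by the implicit function theorem in the conical H\"older spaces, closedness for $s<1$ is automatic from the coercivity available in that range, and at $s\to1$ one uses a partial $C^0$-estimate to extract a $\bQ$-Fano-type degeneration with nonpositive Futaki invariant, which polystability forces to be $(M,B)$ itself acted on by a one-parameter subgroup of $G$, whence convergence after correcting by automorphisms. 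Either way, the automorphisms are the crux.
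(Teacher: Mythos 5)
The paper does not prove this statement: Theorem~\ref{thm-TW} is quoted verbatim from the companion work \cite{TW17a} (``to appear''), and later, in Proposition~\ref{prop-semiexist}, the authors explicitly use \cite{TW17a} as a black box, observing only that in the \emph{uniformly} log-K-stable case one could alternatively combine Theorem~\ref{thm-BBJ} with \cite{GP16,JMR16}. So there is no in-paper argument to compare yours against, and the question is whether your sketch is self-contained modulo the paper's cited tools. It is not.

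The central gap is in your reduction from log-K-polystability to a (reduced) uniform estimate. You assert that when $G$ is trivial, ``log-K-polystability forces $\tilde{\delta}(M,B)>1$ by the valuative criteria,'' but the criteria of \cite{Fuj16,Fuj17a,Li15,LX16} (quoted here as Theorem~\ref{thm-val}) only give the equivalences $\tilde{\delta}\ge 1\Leftrightarrow$ K-semistable and $\tilde{\delta}>1\Leftrightarrow$ uniformly K-stable. They say nothing that rules out the borderline case $\tilde{\delta}=1$ for a K-polystable pair with discrete automorphisms; that implication is a deep theorem proved only several years later (via finite generation of the minimizing valuation and K-moduli machinery), not a corollary of the cited results. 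Your treatment of the $G\neq\{1\}$ case has the same problem in amplified form: extracting a destabilizing test configuration from a limit valuation in the non-Archimedean compactification requires precisely the finite-generation input that is not available from \cite{BHJ15,BBJ15}, and the reductivity of $\mathrm{Aut}^0(M,B)$, which you take as a hypothesis, is itself part of what needs proving (Matsushima gives reductivity from a KE metric, not from polystability). So your main route is circular or anachronistic at its two load-bearing steps.

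Your second suggestion --- a continuity method in the cone angle, with a partial $C^0$-estimate and an argument that any jump at $s\to 1$ is a test-configuration degeneration with nonpositive Futaki invariant, excluded by polystability plus a Luna-slice argument --- is in fact the route one would expect \cite{TW17a} to take, and it is consistent with the surrounding machinery this paper does develop (conical Cheeger--Colding--Tian theory from \cite{TW17b}, partial $C^0$-estimate as in Section~\ref{sec-conicX}, and the reductivity/Luna argument in Section~\ref{sec-STC}). But in your write-up this is only a one-sentence alternative, with exactly the hard points (partial $C^0$ in the conical setting, reductivity of the limit's automorphism group, special test configuration) flagged rather than argued. As written, the proposal identifies the correct obstacles but does not overcome them.
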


We refer to \cite{Tia94, Don12a, JMR16, GP16, BBEGZ} for more details on conical K\"{a}hler-Einstein metrics.

\subsection{K-polystability and uniform K-stability}\label{sec-Kstab}
We recall the definitions of (log-)K-stability and uniform K-stability (see \cite{Tia97, Don02, LX12, BHJ15, Der16}).

\begin{defn}
Let $(M,B)$ be a log Fano pair with klt singularities. Fix $t\in \bQ_{>0}$ and denote $L=-t^{-1}(K_M+B)$.

\begin{enumerate}
\item
The pair $(M, B, L)$ is log-K-semistable if for any test configuration $(\cM, \cB, \cL)$ of $(M, B, L)$ we have $\CM(\cM, \cB, \cL)\ge 0$.
\item
$(M, B, L)$ is K-polystable if it is log-K-semistable and $\CM(\cM, \cB, \cL)=0$ for a normal test configuration $(\cM, \cB, \cL)$ iff $(\cM, \cB, \cL)$ is a product test configuration.

\item
The pair $(M, B, L)$ is uniformly log-K-stable with a slope constant $\delta=\delta^\NA>0$, if for any test configuration $(\cM, \cB, \cL)$ of $(M, B, L)$ we have:
\begin{equation}\label{eq-CMuK}
{\rm CM}(\cM, \cB, \cL)\ge \delta \cdot J^\NA(\cM, \cL).
\end{equation}
For simplicity of language, we will some times just say (uniformly) K-(semi)stable if the log pair is clear.
\end{enumerate}
\end{defn}
In the following, we will use valuative critierions of K-stability developed in \cite{Fuj16, Fuj17a, Li15, LX16}. For our convenience, we will use the following notation:
for any divisorial valuation $\ord_F$ over $X$, denote:
\[
\Phi_{(M,B)}(F)=\frac{A_{(M, B)}(F) (-K_M-B)^n}{\int_0^{+\infty}\vol_M(-K_M-B-xF)dx}, \quad \Phi_M(F)=\Phi_{(M,\emptyset)}(F).
\]
We also denote:
\[
\tilde{\delta}(M,B)=\inf_F\Phi_{(M,B)}(F), \quad \tilde{\delta}(M)=\tilde{\delta}(M, \emptyset),
\]
where $F$ ranges over all divisorial valuations over $M$.
\begin{thm}\label{thm-val}
Assume $(M,B)$ is a log Fano pair with log terminal singularities. Then we have the following criterion:

{\rm (1) (\cite{Li15,LX16}, \cite{Fuj16})} $(M,B)$ is log-K-semistable if and only if $\tilde{\delta}(M,B)\ge 1$.

{\rm (2) (\cite{Fuj17a}, see also \cite{BJ17})} $(M,B)$ is uniformly log-K-stable if and only if $\tilde{\delta}(M,B)>1$. Moreover, we can choose $\delta$ in \eqref{eq-CMuK} to be
$\frac{\tilde{\delta}-1}{n}$.
\end{thm}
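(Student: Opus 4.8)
The plan is to deduce both parts from the valuative criteria of \cite{Li15,LX16,Fuj16,Fuj17a,BJ17}, after first identifying $\tilde\delta(M,B)$ with the \emph{stability threshold} of the pair. Put $V=(-K_M-B)^n$ and, for a divisorial valuation $\ord_F$ over $M$,
\[
S_{(M,B)}(F)=\frac1V\int_0^{+\infty}\vol_M(-K_M-B-xF)\,dx,\quad
\beta_{(M,B)}(F)=A_{(M,B)}(F)-S_{(M,B)}(F),
\]
so that $\Phi_{(M,B)}(F)=A_{(M,B)}(F)/S_{(M,B)}(F)$ and thus $\tilde\delta(M,B)=\inf_F A_{(M,B)}(F)/S_{(M,B)}(F)$. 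By Fujita--Odaka and Blum--Jonsson \cite{BJ17} this equals the stability threshold $\delta(M,B)$ computed through log canonical thresholds of $m$-basis-type anticanonical $\bQ$-divisors; in particular the infimum over divisorial valuations already computes $\delta(M,B)$.

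For part (1): if $(M,B)$ is log-K-semistable, then attaching to each prime divisor $F$ over $M$ the test configurations induced by the $m$-th truncations of the $\ord_F$-filtration of $\bigoplus_m H^0(M,m(-K_M-B))$ and letting $m\to\infty$, one finds that their CM weights converge to $V\,\beta_{(M,B)}(F)$; hence $\beta_{(M,B)}(F)\ge0$, i.e. $\Phi_{(M,B)}(F)\ge1$, and taking the infimum gives $\tilde\delta(M,B)\ge1$. Conversely, if $\tilde\delta(M,B)\ge1$ then $\beta_{(M,B)}(v)\ge0$ for every divisorial valuation $v$ over $M$; by the $\mathbb{G}_m$-equivariant MMP reduction of Li--Xu \cite{LX16} --- first replacing a given test configuration by its normalization, then by a special degeneration, neither step increasing the CM weight --- it is enough to test log-K-semistability against special test configurations, and for such a configuration with induced divisorial valuation $v$ the Fujita--Li formula gives $\CM=V\,\beta_{(M,B)}(v)\ge0$. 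This is the content of \cite{Li15,LX16,Fuj16} transposed to log pairs.

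For part (2): the equivalence of uniform log-K-stability with $\tilde\delta(M,B)>1$ is Fujita's criterion \cite{Fuj17a}, sharpened by \cite{BJ17}. The explicit slope is obtained by the same reduction: for a special test configuration with induced valuation $v$ one has $\CM=\cM_B^\NA=V\,\beta_{(M,B)}(v)$, and combining the bound $A_{(M,B)}(v)\ge\tilde\delta\,S_{(M,B)}(v)$ with the entropy and comparison inequalities among the non-Archimedean energies worked out in \cite{BHJ15,Fuj17a,BJ17} --- in which the factor $n$ enters precisely through $I^\NA-J^\NA\ge\tfrac1n J^\NA$ --- yields
\[
\CM\ \ge\ (\tilde\delta-1)\,(I^\NA-J^\NA)\ \ge\ \tfrac{\tilde\delta-1}{n}\,J^\NA .
\]
Since, by the reduction of \cite{LX16} in the form respecting the uniform inequality (as in \cite{Fuj17a}), an arbitrary test configuration may be replaced by a special one without violating \eqref{eq-CMuK}, the bound holds with $\delta=\tfrac{\tilde\delta-1}{n}$.

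All the substance of the argument resides in the quoted results --- the $\mathbb{G}_m$-equivariant MMP reduction to special test configurations \cite{LX16}, the volume-derivative formula for the CM weight \cite{Li15,Fuj16}, and the identification $\tilde\delta(M,B)=\delta(M,B)$ with the stability threshold \cite{BJ17}. The one point that genuinely needs checking, and the step I would be most careful with, is that those statements, given in the literature for $\bQ$-Fano varieties, extend verbatim to a log Fano pair $(M,B)$ with klt singularities; this is routine but should be verified term by term, the remaining manipulations of the non-Archimedean functionals being purely formal.
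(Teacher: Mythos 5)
The paper does not give a proof of Theorem~\ref{thm-val}; it states the result and attributes it to the cited literature (Li--Xu and Fujita for the K-semistable criterion, Fujita and Blum--Jonsson for the uniform criterion and the explicit slope). Your proposal is in effect a condensed recapitulation of how those references establish the statement --- identification of $\tilde\delta$ with the stability threshold, Fujita--Li's formula $\CM=V\,\beta_{(M,B)}(v)$ for special degenerations, and the $\mathbb{G}_m$-equivariant MMP reduction of Li--Xu --- so it is the same approach as the paper's, just unpacked.

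Two small remarks worth keeping in mind. First, the chain $\CM\ge(\tilde\delta-1)(I^\NA-J^\NA)\ge\frac{\tilde\delta-1}{n}J^\NA$ is correct for a \emph{special} test configuration once one uses that there $I^\NA-J^\NA=S_{(M,B)}(v)$ and $H^\NA=A_{(M,B)}(v)$ (up to the normalization by $V$); but the passage from an arbitrary test configuration to a special one is not a free reduction for the \emph{uniform} inequality, since both $\CM$ and $J^\NA$ change along the MMP. The non-trivial content of \cite{Fuj17a,BJ17} is precisely that this reduction respects the uniform bound, so you are right to flag it as the step carrying the weight rather than a formality. Second, the exact numerical slope that comes out of the MMP bookkeeping differs slightly between sources (e.g. $\frac{\tilde\delta-1}{\tilde\delta\, n}$ versus $\frac{\tilde\delta-1}{n}$ depending on normalization conventions); since the paper only needs \emph{some} positive slope depending on $\tilde\delta$ and $n$, this does not affect any downstream argument, but the literal statement $\delta=\frac{\tilde\delta-1}{n}$ should be read as ``a slope of this order'' rather than a sharp constant.
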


\section{Uniform log-K-stability of log-smooth pairs}\label{sec-alg}

Let $X$ be an admissible $\mathbb{Q}$-Fano variety with an admissible resolution $\mu: M\rightarrow X$:
\[
K_M=\mu^* K_X+\sum_i a_i E_i \text{ with } a_i\in (-1,0].
\]
Letting $b_i=-a_i$, we have:
\[
K_M^{-1}=\mu^*K_X^{-1}+\sum_i b_i E_i
\]
with $b_i\in [0, 1)$. There exist $\theta=\{\theta_i\}$ such that $L_\epsilon:=\mu^*K_X^{-1}-\epsilon \sum_i \theta_i E_i$ is ample for any $\epsilon\in (0,1]$.
On $M$, for any $t\in (0,1]$, we would like to solve the conical KE equation in $c_1(L_\epsilon)$:
\begin{equation}\label{eq-teKE}
Ric(\omega)= t\omega+\frac{1-t}{m}\{H\}+\sum_i (b_i+(t\epsilon+(1-t)) \theta_i)\{E_i\}.
\end{equation}
Here $\omega\in 2\pi c_1(L_\epsilon)$.
For simplicity, we denote
\begin{align}
&E_\epsilon=\epsilon \sum_i\theta_iE_i,\\ 
&B
=B_{(\epsilon,t)}=(1-t)m^{-1} H+\sum_i (b_i+t\epsilon\theta_i+(1-t)\theta_i )E_i.\label{eq-Bept}
\end{align}
By taking class on both sides of \eqref{eq-teKE}, we get the numerical equivalence:
\begin{equation}\label{eq-num}
-K_M\equiv t L_\epsilon+B=t (L_0-E_\epsilon)+B.
\end{equation}

To solve \eqref{eq-teKE}, we need K-polystability of the pair $(M, B_{(\epsilon,t)})$. We will prove the (uniform) log-K-stability of
$(M, B_{(\epsilon,t)})$ using the criterions in Theorem \ref{thm-val}. To apply Fujita's criterion of uniform K-stability, for any divisorial valuation $\ord_F$ over $M$, we consider the quantity:
\begin{equation}\label{eq-betaF}
\Phi(\epsilon, t):=\Phi_{(M,B_{(\epsilon,t)})}(F)=\frac{A_{(M, B_{(\epsilon,t)})}(F) (-K_M-B_{(\epsilon,t)})^n}{\int_0^{+\infty}\vol_M(-K_M-B_{(\epsilon,t)}-xF)dx}.
\end{equation}

Because $-K_M-B=t L_\epsilon$, by change of variables, the integral in the denominator of \eqref{eq-betaF} is equal to:
\[
\int_0^{+\infty}\vol_M(t L_\epsilon-xF)dx=t^{n+1}\int_0^{+\infty}\vol_M(L_\epsilon-xF)dx.
\]

By \eqref{eq-num} we also have:
$(-K_M-B)^n=t^n(\mu^*K_X^{-1}-\sum_i \epsilon \theta_i  E_i)^n=L_\epsilon^n$.  So we have:
\begin{equation}
\Phi(\epsilon,t)=\Phi_{(M,B_{(\epsilon,t)})}(F)=\frac{A_{(M,B_{(\epsilon,t)})}(F) \cdot L_\epsilon^n}{t\int_0^{+\infty}\vol_M(L_\epsilon-x F)dx}.
\end{equation}
Using the identity $A_{(M,B)}(F)=A_M(F)-\ord_F(B)$ and \eqref{eq-Bept}, we can calculate:
\begin{eqnarray}\label{eq-Phit1}
\frac{\Phi(\epsilon,t)}{\Phi(\epsilon,1)}-1&=&\frac{1}{t}\frac{A_{(M,B_{(\epsilon,t)})}(F)}{A_{(M,B_{(\epsilon,1)})}(F)}-1= \frac{1}{t}\frac{A_{(M,B_{(\epsilon,t)})}(F)-t A_{(M,B_{(\epsilon,1)})}(F)}{A_{(M,B_{(\epsilon,1)})}(F)}\nonumber\\
&=&\frac{1-t}{t}\frac{A_M(F)-\frac{1}{m}\ord_F(H)-\ord_F(\sum_i (b_i+\theta_i)E_i)}{A_{(M,B_{(\epsilon,1)})}(F)}\nonumber\\
&=&\frac{1-t}{t}\frac{A_{(M,B_{(\epsilon,1)})}(F)-\ord_F(-B_{(\epsilon,1)}+m^{-1}H+\sum_i (b_i+\theta_i)E_i)}{A_{(M,B_{(\epsilon,1)})}(F)}\nonumber\\
&=&\frac{1-t}{t}\left(1-\frac{\ord_F\left(m^{-1}H+(1-\epsilon)\sum_i \theta_i E_i\right)}{A_{(M,B_{(\epsilon,1)})}(F)}\right)\nonumber\\
&\ge &\frac{1-t}{t}\left(1-\frac{1}{\lct_{(M,B_{(\epsilon,1)})}\left(m^{-1}H+(1-\epsilon)\sum_i \theta_i E_i\right)}\right).
\end{eqnarray}
The last inequality follows from the following defining inequality of log canonical threshold: for any effective divisor $D$, we have
\begin{eqnarray*}
\lct_{(M,B)}\left( D \right)&=&\sup \left\{\alpha>0; (M, B+\alpha D) \text{ is log canonical} \right\}\\
&=&\inf_F \frac{A_{(X,B)}(F)}{\ord_F(D)}\le \frac{A_{(X,B)}(F)}{\ord_F(D)},
\end{eqnarray*}
where the above $\inf$ ranges over all divisorial valuations over $M$. We notice that:
\begin{eqnarray*}
B_{(\epsilon,1)}+\alpha (m^{-1}H+(1-\epsilon)\sum_i\theta_i E_i)&=& \alpha m^{-1}H+\sum_i (b_i+(\epsilon+\alpha(1-\epsilon))\theta_i) E_i
\end{eqnarray*}
Since $H\cup \cup_i E_i$ has simple normal crossing support, we get:
\[
\alpha:=\lct_{(M,B_{(\epsilon,1)})}\left(m^{-1}H+(1-\epsilon)\theta_i E_i\right)=\min\left\{m, \frac{\frac{1-b_i}{\theta_i}-\epsilon}{1-\epsilon}\right\}.
\]
So as long as we choose $(\theta_i, m, H)$ such that
\begin{eqnarray}\label{eq-choosethetam}
&\theta_i\le (1-b_i)/2\quad m\ge 2, \text{ and }\nonumber \\
&H\in |-mL_1| \text{ is smooth such that } H+\sum_i E_i \text{ is simple normal crossing},
\end{eqnarray}
then $\alpha\ge 2$ and we get from \eqref{eq-Phit1} that:
\begin{equation}
\Phi(\epsilon,t)\ge \Phi(\epsilon,1)\left(1+(t^{-1}-1)\frac{1}{2}\right).
\end{equation}
Next we need to estimate $\Phi(\epsilon,1)$ in terms of $\Phi(1, 0)$:
We have:
\begin{eqnarray*}
\frac{\Phi(\epsilon,1)}{\Phi(0,1)}&=& \frac{A_{(M,B_{(\epsilon,1)})}(F)}{A_{(M,B_{(0,1)})}(F)}\cdot\frac{\int_0^{+\infty}\vol_M(L_0-xF)dx}{\int_0^{+\infty}\vol_M(L_\epsilon-xF)dx}\cdot\frac{(\mu^*K_X^{-1}-E_\epsilon)^n}{(\mu^*K_X^{-1})^n}\\
&=:&R_1\cdot R_2\cdot R_3.
\end{eqnarray*}
Recall that $B_{(\epsilon,t)}=\sum_i (b_i+\epsilon \theta_i)E_i=B_{(0,1)}+E_\epsilon$. Because $\cup_i E_i$ has simple normal crossings and, for any $\alpha>0$,
$B_{(0,1)}+\alpha E_\epsilon=\sum_i (b_i+\alpha \epsilon\theta_i)E_i$, we have:
\begin{eqnarray*}
R_1&=&\frac{A_{(M,B_{(\epsilon,t)})}(F)}{A_{(M,B_{(0,1)})}(F)}=\frac{A_{(M,B_{(0,1)})}(F)-\ord_F(\sum_i \epsilon \theta_i E_i)}{A_{(M, B_{(0,1)})}(F)}\\
&\ge& 1-\lct^{-1}_{(M, B_{(0,1)})}(E_\epsilon)=1-\left(\min_i \frac{1-b_i}{\epsilon \theta_i}\right)^{-1}\\
&=&1-\max_i \frac{\epsilon \theta_i}{1-b_i}.
\end{eqnarray*}
The second ratio $R_2\ge 1$: $E_\epsilon$ is effective and hence
\begin{eqnarray*}
\int^{+\infty}_0 \vol_M(\mu^*K_X^{-1}-E_\epsilon-xF)dx &\le& \int^{+\infty}_{0}\vol_M(\mu^*K_X^{-1}-xF)dx.
\end{eqnarray*}
Combining the above estimates, we get:
\begin{eqnarray}\label{eq-lbPsi}
\Phi(\epsilon,t)&\ge&\Phi(\epsilon,1)\left(1+(t^{-1}-1)\frac{1}{2}\right)\nonumber\\
&\ge& R_1\cdot R_2\cdot R_3 \cdot \tilde{\delta}(X)(1+(t^{-1}-1)2^{-1})\nonumber\\
&\ge& \left(1-\epsilon\max_i \frac{\theta_i}{1+a_i}\right)\left(1+(t^{-1}-1)2^{-1})\right)\frac{(\mu^*K_X^{-1}-E_\epsilon)^n}{K_X^{-n}}\cdot\tilde{\delta}(X)\nonumber\\
&=:&\tilde{\delta}(\epsilon,t).
\end{eqnarray}
Now we can state and prove the main result of this section, which may be of independent interest.
\begin{prop}\label{prop-tstable}
{\rm (1)}
Assume that $X$ is K-semistable. For any $t\in (0,1)$, there exists $\epsilon^*=\epsilon^*(t)>0$ such that for any $0<\epsilon\le \epsilon^*$, $(M, B_{(\epsilon,t)})$ is uniformly K-stable with a slope constant
\begin{equation}\label{eq-tdelta}
\delta=(t^{-1}-1)/(4n).
\end{equation}

{\rm (2)}
Assume that $X$ is uniformly K-stable. 
There exists $\epsilon^*_1>0$ such that for any $0<\epsilon\le \epsilon^*_1$, $(M, B_{(\epsilon,1)})$ is uniformly K-stable with a slope constant $\delta=\frac{1}{2n}(\tilde{\delta}(X)-1)$.
\end{prop}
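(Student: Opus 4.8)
The plan is to deduce Proposition \ref{prop-tstable} directly from the lower bound \eqref{eq-lbPsi} together with the valuative criterion in Theorem \ref{thm-val}. The work has essentially been done already: the chain of estimates \eqref{eq-Phit1}--\eqref{eq-lbPsi} shows that for \emph{every} divisorial valuation $\ord_F$ over $M$,
\[
\Phi_{(M,B_{(\epsilon,t)})}(F)\;\ge\;\tilde{\delta}(\epsilon,t)=\left(1-\epsilon\max_i\tfrac{\theta_i}{1+a_i}\right)\left(1+(t^{-1}-1)2^{-1}\right)\frac{(\mu^*K_X^{-1}-E_\epsilon)^n}{K_X^{-n}}\cdot\tilde{\delta}(X),
\]
where the constant $\tilde{\delta}(\epsilon,t)$ does not depend on $F$. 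Hence by definition $\tilde{\delta}(M,B_{(\epsilon,t)})\ge\tilde{\delta}(\epsilon,t)$, and it suffices to check that $\tilde{\delta}(\epsilon,t)$ exceeds $1$ by a controlled amount and then feed this into Theorem \ref{thm-val}(2), which gives uniform log-K-stability with slope constant $(\tilde{\delta}(M,B_{(\epsilon,t)})-1)/n$.

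For part (1), I would first observe that K-semistability of $X$ gives $\tilde{\delta}(X)\ge 1$ by Theorem \ref{thm-val}(1). Next, note that $(\mu^*K_X^{-1}-E_\epsilon)^n/K_X^{-n}\to 1$ as $\epsilon\to 0$ (it is a polynomial in $\epsilon$ with value $1$ at $\epsilon=0$, since $\mu^*K_X^{-1}$ has top self-intersection $K_X^{-n}$ and the correction terms carry positive powers of $\epsilon$), and likewise $1-\epsilon\max_i\theta_i/(1+a_i)\to 1$. Therefore there is an $\epsilon^*(t)>0$, depending only on $t$ and on the fixed data $(\theta_i,m,H)$, such that for $0<\epsilon\le\epsilon^*(t)$ both of these factors are $\ge$, say, $\big(1+(t^{-1}-1)/2\big)^{-1/2}$, i.e. at least large enough that
\[
\tilde{\delta}(\epsilon,t)\;\ge\;\left(1+(t^{-1}-1)2^{-1}\right)^{1/2}\;\ge\;1+\tfrac{1}{4}(t^{-1}-1),
\]
using $\sqrt{1+x}\ge 1+x/2 - x/\text{(const)}$ type elementary bounds for $x=(t^{-1}-1)/2$ in the bounded range $t\in(0,1)$, or more simply just choosing $\epsilon^*(t)$ small enough that $\tilde\delta(\epsilon,t)\ge 1+\tfrac14(t^{-1}-1)$ directly. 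Then Theorem \ref{thm-val}(2) yields uniform log-K-stability of $(M,B_{(\epsilon,t)})$ with slope constant $\delta=(\tilde{\delta}(M,B_{(\epsilon,t)})-1)/n\ge(t^{-1}-1)/(4n)$, as claimed in \eqref{eq-tdelta}.

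For part (2), uniform K-stability of $X$ gives $\tilde{\delta}(X)>1$ by Theorem \ref{thm-val}(2); set $t=1$ in \eqref{eq-lbPsi}, so the factor $1+(t^{-1}-1)/2$ becomes $1$ and
\[
\Phi_{(M,B_{(\epsilon,1)})}(F)\;\ge\;\left(1-\epsilon\max_i\tfrac{\theta_i}{1+a_i}\right)\frac{(\mu^*K_X^{-1}-E_\epsilon)^n}{K_X^{-n}}\cdot\tilde{\delta}(X).
\]
Since $\tilde{\delta}(X)>1$ is now a fixed number strictly bigger than $1$, and both $\epsilon$-dependent factors tend to $1$ as $\epsilon\to 0$, there is $\epsilon^*_1>0$ such that for $0<\epsilon\le\epsilon^*_1$ the right-hand side is $\ge\frac12(1+\tilde{\delta}(X))$, hence $\tilde{\delta}(M,B_{(\epsilon,1)})\ge\frac12(1+\tilde{\delta}(X))$, and so $\tilde{\delta}(M,B_{(\epsilon,1)})-1\ge\frac12(\tilde{\delta}(X)-1)$. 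Applying Theorem \ref{thm-val}(2) once more gives uniform log-K-stability with slope constant $\delta=(\tilde{\delta}(M,B_{(\epsilon,1)})-1)/n\ge\frac{1}{2n}(\tilde{\delta}(X)-1)$.

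There is no real obstacle here: the only thing to be careful about is that the quantity $\epsilon^*(t)$ (resp. $\epsilon^*_1$) is allowed to depend on $t$ (resp. on nothing but the fixed choices) but \emph{not} on the valuation $F$ — which is automatic because the bound $\tilde{\delta}(\epsilon,t)$ in \eqref{eq-lbPsi} is uniform in $F$. One should also double-check the elementary inequality turning $\big(1+(t^{-1}-1)/2\big)$ times factors close to $1$ into something $\ge 1+\tfrac14(t^{-1}-1)$; this is where the precise constant $1/(4n)$ in \eqref{eq-tdelta} comes from, and it is just a matter of shrinking $\epsilon^*(t)$. Finally, the hypotheses \eqref{eq-choosethetam} on $(\theta_i,m,H)$ — in particular $\theta_i\le(1-b_i)/2=(1+a_i)/2$ and $m\ge 2$ — were exactly what made $\alpha\ge 2$ in the log canonical threshold computation, so they must be invoked to justify the step from \eqref{eq-Phit1} to \eqref{eq-lbPsi}; no further conditions are needed.
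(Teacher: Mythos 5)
Your proposal is correct and follows essentially the same route as the paper: bound $\tilde{\delta}(M,B_{(\epsilon,t)})$ from below by the $F$-independent quantity $\tilde{\delta}(\epsilon,t)$ from \eqref{eq-lbPsi}, let $\epsilon\to 0$ to see this limit is $(1+(t^{-1}-1)/2)\,\tilde{\delta}(X)$, use Theorem \ref{thm-val}(1) (resp. (2)) to make this $\ge 1+(t^{-1}-1)/2$ (resp. $>1$), pick $\epsilon^*$ by continuity so that $\tilde{\delta}(\epsilon,t)$ stays above $1+(t^{-1}-1)/4$ (resp. $\tfrac12(1+\tilde{\delta}(X))$), and feed this into the Fujita slope constant $(\tilde{\delta}-1)/n$ of Theorem \ref{thm-val}(2). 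The intermediate square-root manipulation (each factor $\ge(1+x)^{-1/2}$ gives only $\tilde{\delta}(\epsilon,t)\ge\tilde{\delta}(X)$, not $\ge(1+x)^{1/2}$) does not quite land where you claim, but you immediately replace it with the correct ``shrink $\epsilon^*$ so that $\tilde{\delta}(\epsilon,t)\ge 1+\tfrac14(t^{-1}-1)$'' argument, which is exactly what the paper does, so the proof goes through.
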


\begin{proof}
By \eqref{eq-lbPsi}, we get:
\begin{equation}
\tilde{\delta}(M, B_{(\epsilon,t)})=\inf_F \Phi_{(M,B_{(\epsilon,t)})}(F)\ge \tilde{\delta}(\epsilon,t).
\end{equation}
Notice that $\tilde{\delta}(\epsilon, t)$ in \eqref{eq-lbPsi} satisfies:
\begin{equation}\label{eq-limdel}
\lim_{\epsilon\rightarrow 0+}\tilde{\delta}(\epsilon,t)=(1+(t^{-1}-1)2^{-1})\tilde{\delta}(X)
\end{equation}
Assume $X$ is K-semistable, then by Theorem \ref{thm-val}.(1) the right-hand-side of \eqref{eq-limdel} is greater than or equal to $(1+(t^{-1}-1)(1-m^{-1}))$ which is strictly bigger than $1$ for $t\in (0,1)$ and $m\ge 2$. So there exists $\epsilon^*>0$ such that for any $\epsilon\in (0,\epsilon^*]$, we have
\[
\tilde{\delta}(\epsilon, t)\ge 1+\frac{(t^{-1}-1)2^{-1}}{2}.
\]
We then get the first statement statement by applying Theorem \ref{thm-val}.(2).

If $X$ is uniformly K-stable, then
\[
\lim_{\epsilon\rightarrow 0+}\tilde{\delta}(\epsilon, 1)=\tilde{\delta}(X)>1.
\]
The second statement follows by applying Theorem \ref{thm-val}.(2).

\end{proof}

\begin{rem}
While completing this paper, we noted a recent openness result of K. Fujita in \cite[Theorem 5.7]{Fuj17a}, which is related to but different with the above proposition. The authors thank K. Fujita for helpful comment on this.
\end{rem}

A key feature of the above proposition that will be important to us is that the slope constant $\delta$ does not depend on $\epsilon$ as long as $\epsilon$ is sufficiently small.

\begin{prop}\label{prop-semiexist}
Assume that $X$ is K-semistable.
For any $t\in (0,1)$, there exists $\epsilon\ll 1$, such that there exists a strong conical KE on the log smooth pair $(M, B_{(\epsilon,t)})$ where $B_{(\epsilon,t)}$ is defined as in \eqref{eq-Bept}.
\end{prop}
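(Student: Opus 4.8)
The plan is to read the statement off from Proposition~\ref{prop-tstable} together with the existence results recalled above; the substantive work is already done, so the task is essentially to verify hypotheses and assemble.

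First I would record that $(M,B_{(\epsilon,t)})$ is a log smooth klt pair with ample anticanonical class. Its support $H+\sum_i E_i$ is simple normal crossing by the choice of $H$ in \eqref{eq-choosethetam}; and with the normalization $\theta_i\le (1-b_i)/2$, $m\ge 2$ from the same place, the coefficients of $B_{(\epsilon,t)}$ —namely $\tfrac{1-t}{m}\in(0,\tfrac12]$ along $H$, and $b_i+\theta_i\bigl(1-t(1-\epsilon)\bigr)\le b_i+\theta_i\le \tfrac{1+b_i}{2}<1$ along $E_i$— all lie in $[0,1)$ for every $t\in(0,1)$ and $\epsilon\in(0,1]$, so the pair is klt. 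By \eqref{eq-num} one has $-K_M-B_{(\epsilon,t)}\equiv tL_\epsilon$, which is ample since $L_\epsilon$ is ample for $\epsilon\in(0,1]$; hence the functionals \eqref{eq-Dphi}, \eqref{eq-Mphi} are defined for this pair with $L=L_\epsilon=-t^{-1}(K_M+B_{(\epsilon,t)})$.

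Now fix $t\in(0,1)$. Since $X$ is K-semistable, Proposition~\ref{prop-tstable}(1) gives $\epsilon^{*}=\epsilon^{*}(t)>0$ such that for every $\epsilon\in(0,\epsilon^{*}]$ the pair $(M,B_{(\epsilon,t)})$ is uniformly log-K-stable (the precise slope constant matters in Step~3 but not here). Uniform log-K-stability in particular forces log-K-polystability, so Theorem~\ref{thm-TW} of Tian--Wang applies to the log smooth klt pair $(M,B_{(\epsilon,t)})$ and produces a strong conical K\"ahler--Einstein metric $\omega_{(\epsilon,t)}$ on it. Any $\epsilon\ll1$ works, e.g.\ $\epsilon=\epsilon^{*}(t)$, which proves the proposition.

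I would also indicate the alternative variational route, since that is the one on which the uniform-in-$\epsilon$ estimates of Step~3 will rest: by Theorem~\ref{thm-BBJ}, applied with the \emph{smooth} ambient space $M$, uniform log-K-stability is equivalent to the properness bound $\cD_{B_{(\epsilon,t)}}\ge \delta'J-C$ on $\cE^1$ for all $\delta'\in(0,\delta)$; feeding this into the compactness Theorem~\ref{thm-BBEGZ} yields a finite-energy minimizer of the log-Ding functional, i.e.\ a weak conical K\"ahler--Einstein metric, and the regularity theory of \cite{GP16} (see also \cite{JMR16}) upgrades it to a strong conical metric with H\"older-continuous local potentials and geodesically convex regular part. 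I do not expect a genuine obstacle anywhere: the only point needing a little care is that $B_{(\epsilon,t)}$ is an $\bR$-divisor for irrational $t$, so the stability--existence correspondence must be invoked in a form valid for log smooth $\bR$-pairs with ample anticanonical class—harmless since $M$ is smooth and everything in sight is simple normal crossing, and in any case only rational $t$ is needed in the sequel.
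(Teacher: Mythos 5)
Your proposal is correct and follows the paper's argument almost verbatim: invoke Proposition~\ref{prop-tstable}(1) for uniform log-K-stability of $(M,B_{(\epsilon,t)})$ for small $\epsilon$, note this implies log-K-polystability, and apply Theorem~\ref{thm-TW}, with the variational route via Theorem~\ref{thm-BBJ} and \cite{GP16,JMR16} mentioned as an alternative. The extra hypothesis-checking you include (klt coefficients, ampleness of $tL_\epsilon$) is implicit in the paper's setup and does no harm.
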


\begin{proof}
Because $(M, B_{(\epsilon,t)})$ is uniformly log-K-stable, and in particular log-K-polystable, this follows from Tian-Wang's recent work in \cite{TW19}. Note one could also use Theorem \ref{thm-BBJ} to get the properness of log-Mabuchi-energy and hence the existence of strong conical KE on $(M, B_{(\epsilon,t)})$ using the method in \cite{GP16} (see also \cite{JMR16}).
\end{proof}

\section{Uniform $L^\infty$-estimates}\label{sec-MA}
{\bf Notations:}
Fix $\theta$ sufficiently small such that $L_1:=\mu^*K_X^{-1}-\sum_i \theta_i E_i$ is ample. Choose a smooth Hermitian metric $e^{-\psi_1}$ on $L_1$ such that $\chi_1:=\sddb\psi_1> 0$.
Fix $m\gg 1$ sufficiently divisible such that $mK_X^{-1}$ is a very ample line bundle. Choose a basis of $|-mK_X|$ and get a Kodaira embedding $\iota: X\hookrightarrow \cP^N$. We will denote both the Hermitian metric on $K_X^{-1}$ and $\mu^*K_X^{-1}$ by
$e^{-\psi_0}$ or $h_\FS$ the $1/m$ times the pull back of the Fubini-Study metric via the map $\iota$ and $\mu\circ\iota$.
We also use $\omega_{\FS}$ to denote both the Chern curvature of $e^{-\psi_0}$ on  $X$ and $M$.

Because $L_0=\mu^*K_X^{-1}=L_1+\sum_i \theta_iE_i$, by using the $\partial\bar{\partial}$-lemma, it is easy to see that there exists Hermitian metric $\kappa_i$ on the line bundle determined by $E_i$ such that if we denote by $e^{-\kappa_\theta}$ or simply by $e^{-\kappa}$ the Hermitian metric $e^{-\sum_i \theta_i \kappa_i}$ on $\sum_i \theta_i E_i$ then the following identity holds for Hermitian metrics on $L_0$:
\[
e^{-\psi_0}=e^{-\psi_1-\kappa}
\]
For simplicity, we denote
\begin{equation}
\eta:=\sddb \kappa.
\end{equation}
Then we have
\[
\chi_0:=\omega_{\FS}=\sddb\psi_0=\sddb\psi_1+  \sddb\kappa=\sddb\psi_1+\eta \ge 0.
\]
For any $\epsilon\in [0, 1]$, we then have a smooth Hermitian
metric $e^{-\psi_\epsilon}$ on $L_\epsilon=\mu^*K_X^{-1}-\epsilon \sum_i \theta_iE_i$:
\begin{equation}\label{eq-psieps}
\psi_\epsilon=\psi_0-\epsilon \kappa=(1-\epsilon)\psi_0+\epsilon (\psi_0-\kappa)=(1-\epsilon)\psi_0+\epsilon \psi_1.
\end{equation}
Its Chern curvature is a positive $(1,1)$-form on $M$:
\[
\chi_\epsilon:=\sddb\psi_\epsilon=(1-\epsilon)\chi_0+\epsilon \chi_1\ge 0.
\]
In particular $\chi_\epsilon>0$ if $\epsilon>0$ since $\chi_1>0$.

Let $PSH([L_\epsilon])$ denote the set of possibly singular Hermitian metrics $e^{-\vphi}$ n $L_\epsilon$  satisfying $\sddb\vphi\ge 0$. For any $\vphi\in PSH([L_\epsilon])$, denote $\omega_\vphi=\sddb \vphi$.
We want to solve for $\vphi=\vphi_\epsilon$ such that $\omega_\vphi=\omega_{\vphi_\epsilon}$ satisfies:
\begin{eqnarray}\label{eq-}
Ric(\omega_\vphi)-t\omega_\vphi-\frac{1-t}{m}\{H\}-\sum_i (b_i+t\epsilon \theta_i+(1-t)\theta_i)\{E_i\}=0
\end{eqnarray}
Geometrically,
the cone angle along $H$ is $2\pi t\in (0,2\pi]$. The cone angle along $E_i$ is $2\pi \beta_i$ where $\beta_i:=1-(b_i+t\epsilon \theta_i+(1-t)\theta_i)$.
Notice that as $\epsilon\rightarrow 0+$,  $\beta_i$ approaches
\begin{equation}
1-(b_i+(1-t)\theta_i)\in \left(\frac{1+t}{2}(1-b_i), 1-b_i\right)\subset (0,1].
\end{equation}
for any $t\in (0,1]$. On the other hand, for $0<\epsilon\ll 1$,
$\beta_i\in (0,1)$.

The left-hand-side can be written as:
\begin{eqnarray*}
-\sddb \log \omega_\vphi^n-t \sddb\vphi-\frac{1-t}{m}\log |s_H|^2-\log \prod_i |s_{i}|^{2(b_i+t\epsilon \theta_i+(1-t)\theta_i)}
\end{eqnarray*}
So we get the Monge-Amp\`{e}re equation that corresponds to the above equation:
\begin{equation}\label{eq-sCMA}
(\sddb\vphi)^n=\frac{e^{-t\vphi}}{|s_H|^{2(1-t)/m}\prod_i |s_i|^{2(1-\beta_i)}}.
\end{equation}
Both sides are considered as measures on $M$. We can re-write this into a more familiar form. Let $u=\vphi-\psi_\epsilon$, then $u$ is a globally defined $\chi_\epsilon$-psh function on $M$.
Then we have
\begin{eqnarray}\label{eq-MAref}
(\chi_\epsilon+\sddb u)^n&=&\omega_\vphi^n=e^{-t (\vphi-\psi_\epsilon)}\frac{e^{-t\psi_\epsilon}}{|s_H|^{2(1-t)/m}\prod_i |s_i|^{2(b_i+t\epsilon \theta_i+(1-t)\theta_i)}}.
\end{eqnarray}
Note that $e^{-(\psi_0+\sum_i b_i\kappa_i)}$ is a smooth Hermitian metric on $\mu^*(-K_X)+\sum_i b_i E_i=-K_M$ and hence corresponds to a smooth
volume form $\Omega$. Then we can rewrite the right-hand-side of \eqref{eq-MAref} as:
\begin{eqnarray}\label{eq-Omegept}
&&\frac{e^{-t\psi_\epsilon}}{|s_H|^{2(1-t)/m}\prod_i |s_i|^{2(b_i+t\epsilon\theta_i+(1-t)\theta_i)}}\nonumber\\
&=&e^{-(\psi_0+\sum_i b_i\kappa_i)}  \frac{e^{(1-t)(\psi_0-\sum_i \theta_i\kappa_i)} }{|s_H|^{2(1-t)/m}}
\frac{e^{\sum (t\epsilon+(1-t))\theta_i \kappa_i}}{|s_\theta|^{2(t\epsilon+1-t)}} \frac{e^{\sum_ib_i\kappa_i}}{\prod_i |s_i|^{2b_i}}\nonumber\\
&=&\frac{\Omega}{\|s_H\|_{m\psi_1}^{2(1-t)/m}\|s_\theta\|_{\kappa_\theta}^{2(t \epsilon+1-t)}\prod_{i}\|s_i\|_{\kappa_i}^{2b_i}}=:\Omega(\epsilon, t).
\end{eqnarray}
If we define the following constant:
\begin{equation}\label{eq-gamma}
\gamma=\gamma(\epsilon, t)=\frac{1}{t}\log\frac{\int_M \Omega(\epsilon, t)}{\int_M \chi_\epsilon^n}.
\end{equation}
then
\[
\int_M e^{-t\gamma}\Omega(\epsilon, t)=\int_M \omega_{\vphi}^n=\int_M \chi_\epsilon^n=(2\pi L_\epsilon)^n.
\]
So by the equation \eqref{eq-MAref}, there exists $p\in M$ such that $u(p)-\gamma=(\vphi-\psi_\epsilon)(p)-\gamma=0$. Hence we have:
\begin{equation}
 \|\vphi-\psi_\epsilon\|_\infty-\gamma\le {\rm osc}(\vphi-\psi_\epsilon)\le 2\|\vphi-\psi_\epsilon\|_\infty+2\gamma.
\end{equation}
On other other hand, it is easy to see that $\gamma$ is uniformly bounded with respect to $(\epsilon, t)$. So there exists a constant $C>0$ independent of $\epsilon,t$, such that
\begin{equation}\label{eq-osc2C0}
\|\vphi-\psi_\epsilon\|_{\infty}-C\le {\rm osc}(\vphi-\psi_\epsilon)\le 2\|\vphi-\psi_\epsilon\|_\infty+C.
\end{equation}

If $u\in PSH(L_\epsilon, \chi_\epsilon)$, then
\[
\chi_\epsilon+\sddb u=\chi- \epsilon \sddb\kappa+\sddb u\ge 0.
\]
For any $\epsilon^*\ge \epsilon$, we then get:
\begin{eqnarray}\label{eq-s1u}
\chi_{\epsilon^*}+\sddb u&=& \chi_\epsilon+\sddb u-(\epsilon^*-\epsilon) \sddb\kappa\ge -(\epsilon^*-\epsilon)\sddb \kappa.
\end{eqnarray}
A key observation for us to proceed is that:
\begin{lem}
For any $\epsilon^*\in [0,1/2]$, $\sddb\kappa\le 2 \chi_{\epsilon^*}$.
\end{lem}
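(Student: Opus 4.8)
The plan is to reduce the claimed inequality to a pointwise comparison of a nonnegative linear combination of the two reference curvature forms $\chi_0$ and $\chi_1$. The key input is the identity $e^{-\psi_0}=e^{-\psi_1-\kappa}$ used to define $\kappa$: applying $-\sddb\log(\,\cdot\,)$ to both sides gives the exact relation of curvature forms
\begin{equation*}
\sddb\kappa=\chi_0-\chi_1,
\end{equation*}
where $\chi_0=\sddb\psi_0=\omega_{\FS}\ge 0$ is the (semi-positive) curvature on $L_0=\mu^*K_X^{-1}$ and $\chi_1=\sddb\psi_1>0$ is the chosen positive curvature on $L_1$.

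Next I would substitute this into $2\chi_{\epsilon^*}-\sddb\kappa$, using $\chi_{\epsilon^*}=\sddb\psi_{\epsilon^*}=(1-\epsilon^*)\chi_0+\epsilon^*\chi_1$ from \eqref{eq-psieps}. A one-line computation yields
\begin{equation*}
2\chi_{\epsilon^*}-\sddb\kappa=2(1-\epsilon^*)\chi_0+2\epsilon^*\chi_1-(\chi_0-\chi_1)=(1-2\epsilon^*)\chi_0+(1+2\epsilon^*)\chi_1.
\end{equation*}
For $\epsilon^*\in[0,1/2]$ both coefficients $1-2\epsilon^*$ and $1+2\epsilon^*$ are nonnegative, and since $\chi_0\ge 0$ and $\chi_1>0$ pointwise on $M$, the right-hand side is a nonnegative (indeed positive) $(1,1)$-form. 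Therefore $\sddb\kappa\le 2\chi_{\epsilon^*}$, as claimed.

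There is no genuine obstacle in this argument; it is a short exact computation. The only point requiring attention is that $\chi_0$ is merely semi-positive, since $L_0=\mu^*K_X^{-1}$ is a pullback and hence only semi-ample, so one cannot afford a negative coefficient in front of $\chi_0$. This is exactly why the restriction $\epsilon^*\le 1/2$ is imposed; all of the strict positivity in the estimate is supplied by the $\chi_1$ term, whose coefficient $1+2\epsilon^*$ is strictly positive for every $\epsilon^*\in[0,1/2]$.
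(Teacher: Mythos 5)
Your proof is correct and is essentially the same computation as the paper's: you verify $2\chi_{\epsilon^*}-\sddb\kappa=(1-2\epsilon^*)\chi_0+(1+2\epsilon^*)\chi_1$, while the paper writes the equivalent identity $\chi_{\epsilon^*}-\tfrac12\sddb\kappa=(\tfrac12-\epsilon^*)\chi_0+(\tfrac12+\epsilon^*)\chi_1$, and in both cases the nonnegativity of the coefficients together with $\chi_0\ge 0$, $\chi_1>0$ gives the claim.
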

\begin{proof}
This follows immediately from the following identity:
\begin{equation}
\chi_{\epsilon^*}-\frac{1}{2}\sddb\kappa=\chi_0-\left(\epsilon^*+\frac{1}{2}\right)\sddb\kappa=\left(\frac{1}{2}-\epsilon^*\right)\chi_0+\left(\frac{1}{2}+\epsilon^*\right)\chi_1.
\end{equation}
\end{proof}
For simplicity of notations, we denote $\tau=\epsilon^*-\epsilon>0$. Then we get from the above lemma and \eqref{eq-s1u} that:
\begin{eqnarray*}
\chi_{\epsilon^*}+\sddb u\ge -(\epsilon^*-\epsilon)2 \chi_{\epsilon^*}=-2\tau \chi_{\epsilon^*}.
\end{eqnarray*}
So we can define the map:
\begin{eqnarray}\label{eq-Pepsilon}
P_\epsilon: PSH(L_\epsilon)&\longrightarrow& PSH(L_{\epsilon^*})\\
\vphi&\mapsto& \psi_{\epsilon^*}+\frac{1}{1+2\tau}(\vphi-\psi_\epsilon).\nonumber
\end{eqnarray}
Then we have:
\begin{equation*}
P_\epsilon(\vphi)=\vphi+\frac{\tau}{1+2\tau}\left[-2\vphi+2\psi_0-(1+2\epsilon^*) \kappa\right].
\end{equation*}
and, if we denote
\begin{equation}\label{eq-sigma}
\sigma=\frac{1}{1+2\tau}\sddb (-2\vphi+2\psi_0-(1+2\epsilon^*)\kappa), 
\end{equation}
then we have
\begin{eqnarray*}
\sddb P_\epsilon(\vphi)&=&\sddb \vphi+\tau\sigma.
\end{eqnarray*}
We will use the following lemma which is easily verified.
\begin{lem}\label{lem-intbyC0}
Let $I\times J\subset \bR\times\bR$. Assume that $\{\sigma_i=\sigma_i(\epsilon,t)\}_{(\epsilon, t)\in I\times J}$ for $i=1,\dots, n$  are $n$ families of real closed positive $(1,1)$-currents with bounded potentials. If there exists a compact set $K\subset H^{1,1}(M,\bR)$ such that $\{[\sigma_i]\}\subset K$ for $i=1,\dots, n$. Then there exists a constant $C=C(M,K)>0$ such that for any $f\in L^\infty(M, \bR)$, we have:
\begin{eqnarray*}
\left|\int_M f \sigma_1\wedge \dots \wedge \sigma_n\right| \le C\|f\|_{L^\infty}.
\end{eqnarray*}
\end{lem}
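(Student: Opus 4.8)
The plan is to reduce the estimate to a statement about cohomology classes and Kähler metrics, using the fact that a closed positive $(1,1)$-current with bounded potential can be compared, in cohomology, to a fixed Kähler form. First I would fix a Kähler form $\omega_0$ on $M$ and, since $K\subset H^{1,1}(M,\bR)$ is compact, choose a constant $A=A(M,K)>0$ so that for every class $\alpha\in K$ one has $-A[\omega_0]\le \alpha \le A[\omega_0]$ in the sense that $A[\omega_0]\pm\alpha$ lies in the nef (indeed Kähler) cone; this is possible because $K$ is contained in a bounded region of the finite-dimensional space $H^{1,1}(M,\bR)$ and the Kähler cone is open. Consequently, for each $i$ there is a smooth closed positive $(1,1)$-form $\beta_i$ with $[\beta_i]=[\sigma_i]$ and $\beta_i\le A\omega_0$ pointwise; moreover the masses $\int_M \sigma_1\wedge\cdots\wedge\sigma_n = [\sigma_1]\cdots[\sigma_n]$ are bounded by a constant depending only on $M$ and $K$, since mass of a positive current depends only on its cohomology class.

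Next I would use the hypothesis that each $\sigma_i$ has a bounded local potential to write $\sigma_i = \beta_i + \sddb\phi_i$ with $\phi_i\in L^\infty(M,\bR)$ (globally, by the $\ddb$-lemma applied to $\sigma_i-\beta_i$, which is an exact real $(1,1)$-current). Then $f\,\sigma_1\wedge\cdots\wedge\sigma_n$ is expanded multilinearly; each resulting term is of the form $\int_M f\,\gamma_1\wedge\cdots\wedge\gamma_n$ where each $\gamma_j$ is either a smooth form $\beta_j$ bounded by $A\omega_0$, or $\sddb\phi_j$ with $\phi_j$ bounded. For the terms containing at least one $\sddb\phi_j$ factor, one integrates by parts to move the $\sddb$ onto $f$ — but $f$ is merely $L^\infty$, so this has to be done carefully. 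The clean way is the Bedford–Taylor / Guedj–Zeriahi integration-by-parts formula for bounded psh functions against products of closed positive currents with bounded potentials: for bounded quasi-psh $\phi,\phi'$ one has $\int_M f\,\sddb\phi\wedge T = \int_M \phi\,\sddb(f\,\cdot) \wedge T$-type identities only when $f$ is regular enough, so instead I would argue directly on masses. Write $\sigma_i \le \beta_i + \sddb\phi_i$ and note the pointwise inequality is not available; rather, use that $\big|\int_M f\,\sigma_1\wedge\cdots\wedge\sigma_n\big| \le \|f\|_{L^\infty}\int_M \sigma_1\wedge\cdots\wedge\sigma_n$, since $\sigma_1\wedge\cdots\wedge\sigma_n$ is a positive measure and its total mass is the cohomological product $[\sigma_1]\cdots[\sigma_n]$, which is bounded by $C(M,K)$ by compactness of $K$. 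Taking $C=C(M,K)$ to be the supremum of $|\alpha_1\cdots\alpha_n|$ over $\alpha_j\in K$ finishes the argument.

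So in fact the proof is shorter than the integration-by-parts route suggests: the only genuine point is that the wedge product $\sigma_1\wedge\cdots\wedge\sigma_n$ of closed positive $(1,1)$-currents with bounded potentials is a well-defined positive Radon measure (Bedford–Taylor theory, valid here since the potentials are bounded) whose total mass equals the intersection number $[\sigma_1]\cdot\,\cdots\,\cdot[\sigma_n]$, and that this number is bounded uniformly as the classes range over the compact set $K$. I expect the main (minor) obstacle is purely bookkeeping: one must make sure the mixed Monge–Ampère products are defined — which is exactly where the "bounded potentials" hypothesis is used — and cite the correct continuity/convergence statement (e.g.\ from \cite{GZ07} or \cite{BBEGZ}) guaranteeing that the mass of the product depends only on the cohomology classes, not on the representatives. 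No genuine analytic difficulty arises because all potentials are assumed bounded.
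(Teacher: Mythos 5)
Your final argument is correct, and it is the cleanest route: by Bedford--Taylor theory the wedge product $\sigma_1\wedge\cdots\wedge\sigma_n$ is a well-defined positive Radon measure (this is precisely where the hypothesis of bounded potentials enters), hence
\[
\left|\int_M f\,\sigma_1\wedge\cdots\wedge\sigma_n\right|\le \|f\|_{L^\infty}\int_M \sigma_1\wedge\cdots\wedge\sigma_n,
\]
and the total mass on the right equals the intersection number $[\sigma_1]\cdots[\sigma_n]$ (write $\sigma_i=\beta_i+\sddb\phi_i$ with $\beta_i$ smooth and $\phi_i\in L^\infty$ and integrate by parts, which is legitimate in the Bedford--Taylor setting for bounded potentials). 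That intersection number is a continuous polynomial function on $H^{1,1}(M,\bR)^n$, so it is bounded on $K^n$ by compactness, giving $C(M,K)=\sup\{\alpha_1\cdots\alpha_n:\alpha_j\in K\}$. The paper states the lemma without proof (``easily verified''), and this is exactly the verification intended. Your opening detour through the $\ddb$-lemma, multilinear expansion, and integration by parts against $f$ is correctly recognized as superfluous --- you should simply cut it and keep only the last paragraph.
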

\begin{lem}\label{lem-compE}
There exists $C=C(n)>0$, such that for any $\vphi\in PSH(L_\epsilon)$, we have
\begin{equation}\label{eq-Ediff}
\left|E_{\psi_{\epsilon^*}}(P_\epsilon(\vphi))-E_{\psi_\epsilon}(\vphi)\right|\le C\tau \|\vphi-\psi_\epsilon\|_{\infty},
\end{equation}
\begin{equation}\label{eq-Jdiff}
\left|J_{\psi_{\epsilon^*}}(P_\epsilon(\vphi))-J_{\psi_\epsilon} (\vphi))\right|\le C \tau \|\psi_\epsilon-\vphi\|_{\infty}.
\end{equation}
\end{lem}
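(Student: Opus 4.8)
We may assume $u:=\vphi-\psi_\epsilon\in L^\infty(M)$, otherwise both inequalities are vacuous. The idea is to expand $E_{\psi_{\epsilon^*}}(P_\epsilon(\vphi))$ and $J_{\psi_{\epsilon^*}}(P_\epsilon(\vphi))$ explicitly in $u$ and to read off every discrepancy from $E_{\psi_\epsilon}(\vphi)$, resp.\ $J_{\psi_\epsilon}(\vphi)$, as an error of size $O(\tau)$. Recall $\tau=\epsilon^*-\epsilon$, and note the following facts, uniform in $\epsilon,\epsilon^*\in[0,\tfrac12]$: by \eqref{eq-psieps} one has $\psi_{\epsilon^*}-\psi_\epsilon=-\tau\kappa$, hence $\chi_{\epsilon^*}=\chi_\epsilon-\tau\eta$ with $\eta=\sddb\kappa$; by \eqref{eq-Pepsilon} one has $P_\epsilon(\vphi)-\psi_{\epsilon^*}=\tfrac{1}{1+2\tau}u$, and the computation around \eqref{eq-sigma} gives
\begin{equation*}
\sddb P_\epsilon(\vphi)=\frac{1}{1+2\tau}\bigl(\omega_\vphi+\tau\rho\bigr),\qquad \rho:=2\chi_{\epsilon^*}-\eta\ \geq\ 0,
\end{equation*}
where positivity of the smooth closed form $\rho$ is exactly the preceding lemma ($\sddb\kappa\le 2\chi_{\epsilon^*}$). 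Moreover $[\chi_\epsilon]=2\pi c_1(L_\epsilon)$, $[\eta]$ and $[\rho]$ all stay in a fixed compact subset of $H^{1,1}(M,\bR)$; the volumes $V_\epsilon=c_1(L_\epsilon)^n$ are bounded above and below away from $0$ with $|V_{\epsilon^*}-V_\epsilon|\le C\tau$; and $-C_0\chi_1\le\eta\le C_0\chi_1$ for the fixed K\"ahler form $\chi_1$ and some $C_0$.

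The only analytic input is a variant of Lemma \ref{lem-intbyC0}: if $\Gamma$ is a top-degree current obtained as a wedge of closed positive $(1,1)$-currents (such as $\omega_\vphi$, $\chi_\epsilon$, $\rho$) and/or copies of the smooth form $\eta$, then $\bigl|\int_M f\,\Gamma\bigr|\le C\|f\|_\infty$ for every $f\in L^\infty(M,\bR)$, with $C$ depending only on the (compactly varying) classes of the factors. Indeed $\bigl|\int_M f\,T\bigr|\le\|f\|_\infty\int_M T$ for positive closed $T$, and the mass $\int_M T$ is cohomological; each copy of $\eta$ is removed one at a time by writing $\eta\wedge\Theta=C_0\chi_1\wedge\Theta-(C_0\chi_1-\eta)\wedge\Theta$ as a difference of two closed positive currents and inducting.

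Substituting into \eqref{eq-Ephi},
\begin{equation*}
E_{\psi_{\epsilon^*}}(P_\epsilon(\vphi))=\frac{1}{(n+1)V_{\epsilon^*}}\sum_{i=0}^{n}\frac{1}{(1+2\tau)^{i+1}}\int_M u\,(\chi_\epsilon-\tau\eta)^{n-i}\wedge(\omega_\vphi+\tau\rho)^{i},
\end{equation*}
and one expands the binomials together with $(1+2\tau)^{-(i+1)}=1+\tau g_i(\tau)$ and $V_{\epsilon^*}^{-1}=V_\epsilon^{-1}+\tau h(\tau)$, where $|g_i|,|h|\le C$ on $[0,\tfrac12]$. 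The leading term is exactly $\frac1{(n+1)V_\epsilon}\sum_i\int_M u\,\chi_\epsilon^{n-i}\wedge\omega_\vphi^{i}=E_{\psi_\epsilon}(\vphi)$. Every remaining term carries a factor $\tau$ and is of one of two kinds: (a) a bounded multiple of $\int_M u\,\Gamma$ with $\Gamma$ as above containing at least one factor $\eta$ or $\rho$, which is $\le C\tau\|u\|_\infty$ by the estimate just stated; or (b) a bounded multiple of $\frac1{V_\epsilon}\int_M u\,\chi_\epsilon^{n-i}\wedge\omega_\vphi^{i}$ (coming from $(1+2\tau)^{-(i+1)}-1$ and from $V_{\epsilon^*}^{-1}-V_\epsilon^{-1}$), which is $\le C\tau\|u\|_\infty$ since the mass $\int_M\chi_\epsilon^{n-i}\wedge\omega_\vphi^{i}=(2\pi c_1(L_\epsilon))^n$ is cohomological and $V_\epsilon$ is bounded below. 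Summing over $i$ gives \eqref{eq-Ediff}. For \eqref{eq-Jdiff}, use $J_\psi(\vphi)=-E_\psi(\vphi)+\frac1V\int_M(\vphi-\psi)(\sddb\psi)^n$: the $E$-part is handled by \eqref{eq-Ediff}, and comparing $\frac1{(1+2\tau)V_{\epsilon^*}}\int_M u\,(\chi_\epsilon-\tau\eta)^n$ with $\frac1{V_\epsilon}\int_M u\,\chi_\epsilon^n$ via the same expansion yields only errors of type (a) or (b).

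I do not expect a serious difficulty. The two points needing care are: (i) keeping the bookkeeping of the $O(\tau)$ remainders uniform, which is exactly why all the cohomology classes in sight must stay in a compact set and the volumes bounded away from $0$; and (ii) the fact that $\omega_\vphi$ and $\sddb P_\epsilon(\vphi)$ carry only bounded, not smooth, potentials --- this is harmless because the non-smooth current always appears wedged with a \emph{smooth} form ($\eta$ or $\rho$) dominated by the fixed K\"ahler form $\chi_1$, so its pairing against the bounded function $u$ is controlled by a purely cohomological mass rather than by any potential.
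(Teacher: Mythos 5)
Your proof is correct and follows essentially the same strategy as the paper's: write $P_\epsilon(\vphi)-\psi_{\epsilon^*}=\tfrac{1}{1+2\tau}(\vphi-\psi_\epsilon)$, express $\sddb\psi_{\epsilon^*}$ and $\sddb P_\epsilon(\vphi)$ as $O(\tau)$-perturbations of $\sddb\psi_\epsilon$ and $\sddb\vphi$, expand the energy functionals multilinearly, identify the leading term as $E_{\psi_\epsilon}(\vphi)$, and control all the $O(\tau)$ remainders by pairing the bounded function $\vphi-\psi_\epsilon$ against wedge products of currents whose cohomology classes stay in a fixed compact set (Lemma \ref{lem-intbyC0}). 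You are slightly more explicit than the paper on two technical points: (i) you track the scalar corrections coming from $(1+2\tau)^{-(i+1)}$ and $V_{\epsilon^*}^{-1}$ separately as ``type (b)'' terms rather than folding them silently into the remainder $\Delta$, and (ii) you explain how to apply Lemma \ref{lem-intbyC0} when one of the wedge factors is the smooth (not necessarily positive) form $\eta$ or the mixed-sign form $\rho-2\omega_\vphi$, by splitting it as a difference of two closed positive forms dominated by a fixed K\"ahler metric. The paper glosses over both of these small points, so your write-up is a legitimate clean-up, not a different route.
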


\begin{proof}
Denote $V_\epsilon=(L_\epsilon^{\cdot n})$.
By using the expression of $P_\epsilon(\vphi)$ in \eqref{eq-Pepsilon} and $E_\psi(\vphi)$ in \eqref{eq-Ephi}, we get
the identities:
\begin{eqnarray*}
E_{\psi_{\epsilon^*}}(P_\epsilon(\vphi))&=&\frac{1}{(n+1)V_{\epsilon^*}}\int_M (P_\epsilon(\vphi)-\psi_{\epsilon^*})\sum_i (\sddb\psi_{\epsilon^*})^{n-i}\wedge (\sddb P_\epsilon(\vphi))^i\\
&=&\frac{1}{(n+1)V_{\epsilon^*}}\int_M \frac{1}{1+2\tau}(\vphi-\psi_\epsilon) \sum_i (\sddb \psi_\epsilon-\tau\sddb\kappa)^{n-i}\wedge (\sddb\vphi+\tau\sigma)^i\\
&=&\frac{1}{(n+1)V_\epsilon}\int_M(\vphi-\psi_\epsilon)\sum_i (\sddb\psi_\epsilon)^{n-i}\wedge (\sddb\vphi)^i+\tau\cdot \Delta\\
&=&E_{\psi_\epsilon}(\vphi)+\tau\cdot \Delta.
\end{eqnarray*}
Here $\Delta$ is of the form:
\begin{equation}
\Delta= \sum_k \int_M \left(\pm (\vphi-\psi_\epsilon)\right) \sigma^{(k)}_1\wedge \dots\wedge \sigma^{(k)}_n,
\end{equation}
which by Lemma \ref{lem-intbyC0} is uniformly bounded independent of $\epsilon$ because the classes of $\sigma^{(k)}_i$ are uniformly bounded.

By the expression of $J$-energy in \eqref{eq-Jphi} and Lemma \ref{lem-compE}, we get:
\begin{eqnarray*}
\left|J_{\psi_{\epsilon^*}}(P_\epsilon(\vphi))-J_{\psi_\epsilon}(\vphi)\right|&\le& \left|E_{\psi_{\epsilon^*}}(P_\epsilon(\vphi))-E_{\psi_\epsilon}(\vphi)\right|+\left|{\bf II}\right|\\
&\le& C\tau \|\psi_\epsilon-\vphi\|_{\infty}+\left|{\bf II}\right|.
\end{eqnarray*}
where
\[
{\bf II}=\frac{1}{V_{\epsilon^*}}\int_M (P_\epsilon(\vphi)-\psi_{\epsilon^*})(\sddb\psi_{\epsilon^*})^n-\frac{1}{V_\epsilon}\int_M (\vphi-\psi_\epsilon)(\sddb\psi_\epsilon)^n.
\]
To estimate $\left|\bf II\right|$, we note that:
\begin{eqnarray*}
\int_M (P_\epsilon(\vphi)-\psi_{\epsilon^*})(\sddb\psi_{\epsilon^*})^n=\int_M \frac{1}{1+2\tau}(\vphi-\psi_\epsilon)(\sddb \psi_\epsilon-\tau \sddb \kappa)^n.
\end{eqnarray*}
The estimate for $J$ follows by similar argument for $E$ by applying Lemma \ref{lem-intbyC0}.

\end{proof}

To state the next result, recall that we have denoted:
\[
B=B_{(\epsilon,t)}=\frac{1-t}{m}H+\sum_i (b_i+(1-t)\theta_i+t\epsilon \theta_i)E_i.
\]
The Ding energy associated to the pair $(X, B)$ is denoted by
\begin{equation}\label{eq-recallD}
\cD_{B_{(\epsilon,t)}}(t, \vphi)=-E_{\psi_\epsilon}(\vphi)+L_{B_{(\epsilon,t)}}(t, \vphi).
\end{equation}

\begin{lem}\label{lem-compD}
Notations as above. Denote $\tau=\epsilon^*-\epsilon$ for $0<\epsilon<\epsilon^*\le\frac{1}{2}$.
There exists constant $C>0$ independent of $\epsilon$ such that if $\vphi\in PSH_\infty(L_\epsilon)$ then we have
\begin{equation}\label{eq-Detphi}
\cD_{B_{(\epsilon, t)}}(t, \vphi)-\cD_{B_{(\epsilon^*,t)}}(t, P_\epsilon(\vphi))\ge -C\tau\|\psi_\epsilon-\vphi\|_{\infty}-C.
\end{equation}
\end{lem}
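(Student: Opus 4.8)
The plan is to decompose the difference $\cD_{B_{(\epsilon,t)}}(t,\vphi)-\cD_{B_{(\epsilon^*,t)}}(t,P_\epsilon(\vphi))$ into two pieces according to the definition \eqref{eq-recallD}: the $E$-energy difference and the $L_B$-difference. The first piece $-E_{\psi_\epsilon}(\vphi)+E_{\psi_{\epsilon^*}}(P_\epsilon(\vphi))$ is already controlled by Lemma \ref{lem-compE}, specifically \eqref{eq-Ediff}, which gives $\left|E_{\psi_{\epsilon^*}}(P_\epsilon(\vphi))-E_{\psi_\epsilon}(\vphi)\right|\le C\tau\|\vphi-\psi_\epsilon\|_\infty$. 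So the real work is to compare $L_{B_{(\epsilon,t)}}(t,\vphi)$ with $L_{B_{(\epsilon^*,t)}}(t,P_\epsilon(\vphi))$, i.e. to compare the two normalized integrals $\frac{1}{V_\epsilon}\int_M \frac{e^{-t\vphi}}{|s_{B_{(\epsilon,t)}}|^2}$ and $\frac{1}{V_{\epsilon^*}}\int_M \frac{e^{-tP_\epsilon(\vphi)}}{|s_{B_{(\epsilon^*,t)}}|^2}$ inside the logarithm.

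First I would write out $tP_\epsilon(\vphi)=t\vphi+\frac{t\tau}{1+2\tau}\left(-2\vphi+2\psi_0-(1+2\epsilon^*)\kappa\right)$ from the explicit formula for $P_\epsilon$ following \eqref{eq-Pepsilon}, so that $e^{-tP_\epsilon(\vphi)}=e^{-t\vphi}\cdot e^{-\frac{t\tau}{1+2\tau}(-2\vphi+2\psi_0-(1+2\epsilon^*)\kappa)}$. Next I would track how the denominator changes: $B_{(\epsilon^*,t)}-B_{(\epsilon,t)}=\sum_i t(\epsilon^*-\epsilon)\theta_i E_i=t\tau\sum_i\theta_i E_i$, so $|s_{B_{(\epsilon^*,t)}}|^{-2}=|s_{B_{(\epsilon,t)}}|^{-2}\cdot|s_\theta|_{\kappa_\theta}^{-2t\tau}\cdot e^{t\tau\kappa}$ up to the smooth weight factors, using the metric $e^{-\kappa}=e^{-\sum_i\theta_i\kappa_i}$ on $\sum_i\theta_i E_i$. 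Collecting everything, the integrand of the $\epsilon^*$-integral equals the integrand of the $\epsilon$-integral times a correction factor of the shape $e^{t\tau g}$ where $g$ is (a bounded multiple of) $\vphi-\psi_\epsilon$ plus a fixed bounded smooth function (coming from $\psi_0,\kappa$); note the $|s_\theta|^{-2t\tau}$ factor is harmless since $\frac{1-b_i}{\theta_i}-\epsilon$ stays bounded away from zero in the relevant range (cf. the lct computation in Section \ref{sec-alg}), hence this factor is in $L^p$ for $p$ close to $1$ uniformly, or one simply absorbs $\tau\le 1/2$ and notes $t\tau\theta_i<1$. Then $\left|\log\frac{\int \text{(integrand)}_{\epsilon^*}}{\int\text{(integrand)}_\epsilon}\right|\le t\tau\|g\|_\infty\le C\tau(\|\vphi-\psi_\epsilon\|_\infty+1)$, and the ratio $V_{\epsilon^*}/V_\epsilon$ contributes only a further additive constant $C$ since $\epsilon,\epsilon^*\in[0,1/2]$ keeps these volumes in a fixed compact range bounded away from zero.

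Putting the two pieces together: $\cD_{B_{(\epsilon,t)}}(t,\vphi)-\cD_{B_{(\epsilon^*,t)}}(t,P_\epsilon(\vphi)) = \left(E_{\psi_{\epsilon^*}}(P_\epsilon(\vphi))-E_{\psi_\epsilon}(\vphi)\right) + \frac{1}{t}\left(L_{B_{(\epsilon,t)}}(t,\vphi)-L_{B_{(\epsilon^*,t)}}(t,P_\epsilon(\vphi))\right)$, and each summand is bounded below by $-C\tau\|\psi_\epsilon-\vphi\|_\infty-C$, giving \eqref{eq-Detphi}. I would finish by remarking that the constants are uniform because all cohomology classes $[\chi_\epsilon]$, $[\eta]$, $[\sddb\kappa]$ and the divisor coefficients range over compact sets as $(\epsilon,\epsilon^*)$ varies in $[0,1/2]^2$ and $t\in(0,1)$, exactly the hypotheses under which Lemma \ref{lem-intbyC0} and Lemma \ref{lem-compE} were stated.

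The main obstacle I anticipate is bookkeeping the denominator change cleanly: one must be careful that the extra factor $|s_\theta|_{\kappa_\theta}^{-2t\tau}$ genuinely only shrinks the cone angles (i.e. $B_{(\epsilon^*,t)}$ still has klt/SNC support with coefficients $<1$ in the range considered) so that the integrals stay finite and the $\log$ is well-defined, and that the bound is one-sided in the right direction — we only need the lower bound for $\cD_{B_{(\epsilon,t)}}-\cD_{B_{(\epsilon^*,t)}}$, which is what the convexity/positivity of the correction exponent provides. Everything else is routine integration-by-parts estimates of the type already packaged in Lemma \ref{lem-intbyC0}.
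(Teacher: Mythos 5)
Your proposal follows the same route as the paper's proof: decompose into the $E$-difference (via Lemma~\ref{lem-compE}) and the $L_B$-difference, expand $e^{-tP_\epsilon(\vphi)}$ using the explicit $P_\epsilon$ formula, track the denominator change $B_{(\epsilon^*,t)}-B_{(\epsilon,t)}=t\tau\sum_i\theta_i E_i$, and exploit that $e^{t\tau\kappa}|s_\theta|^{-2t\tau}=\|s_\theta\|_{\kappa_\theta}^{-2t\tau}$ is bounded below on $M$. One small caution: the two-sided estimate $\bigl|\log\bigl(\int(\cdot)_{\epsilon^*}/\int(\cdot)_\epsilon\bigr)\bigr|\le t\tau\|g\|_\infty$ in your middle paragraph is not literally correct, since $\|s_\theta\|^{-2t\tau}$ is unbounded above near $\cup_i E_i$ and your $L^p$ remark by itself does not repair an upper bound on the ratio; but you self-correct in the closing paragraph by observing that only the one-sided inequality is required, and that follows from $\|s_\theta\|^{-2t\tau}\ge C$ -- which is precisely the estimate the paper uses.
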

\begin{proof}
By \eqref{eq-Detphi} and Lemma \ref{lem-compE}, we just need to compare $L(t, \epsilon^*, P_\epsilon(\vphi))$ and $L(t, \epsilon, \vphi)$.
We have:
\begin{eqnarray*}
L_{B_{(\epsilon^*,t)}}(t, P_\epsilon(\vphi))&=&-\frac{1}{t}\log\int_M \frac{e^{-tP_\epsilon(\vphi)}}{|s_H|^{2(1-t)/m}\prod_i|s_i|^{2(b_i+t\epsilon^*_i\theta_i+(1-t)\theta_i)}}=:-\frac{1}{t}\log\int_M G .
\end{eqnarray*}
We re-combine the numerator and denominator of the integrand as follows:
\begin{eqnarray*}
e^{-t P_\epsilon(\vphi)}&=&e^{-t (\frac{1}{1+2\tau}(\vphi-\psi_\epsilon)+\psi_{\epsilon^*})}=e^{-t\vphi}e^{+t \frac{2\tau}{1+2\tau}(\vphi-\psi_\epsilon)}e^{t(\psi_\epsilon-\psi_{\epsilon^*})}.
\end{eqnarray*}
\begin{eqnarray*}
\frac{1}{\prod_i |s_i|^{2 t\epsilon^*_i\theta}}=\frac{1}{\prod_i |s_i|^{2t\epsilon \theta_i}|s_i|^{2t\tau\theta_i}}=\frac{1}{|s_\theta|^{2t\tau}\prod_i |s_i|^{2t\epsilon \theta_i}}
\end{eqnarray*}
Notice that by \eqref{eq-psieps}, we have $\psi_\epsilon-\psi_{\epsilon^*}=(\epsilon^*-\epsilon)\kappa=\tau\kappa$. Moreover over $M$ there exists a constant $C>0$ independent of $t$ and $\tau$ such that
\[
\frac{e^{t\tau\kappa}}{|s_\theta|^{2t\tau}}=\frac{1}{\|s_\theta\|^{2t\tau}}\ge C.
\]
So we get the estimate:
\begin{eqnarray*}
G&=&e^{\frac{2\tau t}{1+2\tau}(\vphi-\psi_\epsilon)}\frac{e^{-t\vphi}}{|s_H|^{2(1-t)/m}\prod_i|s_i|^{2(b_i+t\epsilon \theta_i+(1-t)\theta_i)}}\frac{e^{t\tau\kappa}}{|s_\theta|^{2t\tau}}\\
&\ge& C e^{-\frac{2\tau t}{1+2\tau}\|\vphi-\psi_\epsilon\|_{\infty}}\frac{e^{-t\vphi}}{|s_H|^{2(1-t)/m}\prod_i |s_i|^{2(b_i+t\epsilon \theta_i+(1-t)\theta_i)}}
\end{eqnarray*}
and hence:
\begin{eqnarray}\label{eq-estLB}
L_{B_{(\epsilon^*,t)}}(t, P_\epsilon(\vphi))&\le&-\frac{1}{t}\log\int_M \frac{e^{-t\vphi}}{|s_H|^{2\frac{1-t}{m}}\prod_i |s_i|^{2(b_i+t\epsilon \theta_i+(1-t)\theta_i)}}+\frac{2\tau t}{1+2\tau}\|\vphi-\psi_\epsilon\|_{\infty}+\log C\nonumber\\
&\le& L_{B_{(\epsilon,t)}}(t, \vphi)+C \tau\|\vphi-\psi_\epsilon\|_{\infty}+\log C.
\end{eqnarray}
The inequality \eqref{eq-Detphi} follows \label{eq-estLB}, \eqref{eq-Ediff} and the definition of $\cD_{B_{(\epsilon,t)}}$ (see \eqref{eq-recallD}).

\end{proof}
\begin{prop}[Uniform properness]\label{prop-uniproper}
Assume $X$ is K-semistable.
Fix $t\in (0,1]$. There exist $\epsilon^*=\epsilon^*(t)>0$, $\delta^*=\delta^*(t)>0$ and a constant $C>0$, such that for any $\epsilon\in (0,\epsilon^*]$ and any $\vphi\in PSH(L_\epsilon)$, the following inequality holds:
\begin{equation}\label{eq-tproper}
\cD_{B_{(\epsilon, t)}}(t, \vphi)\ge \delta^* J_{\psi_\epsilon}(\vphi)-C \epsilon^* \|\vphi-\psi_\epsilon\|_\infty-C.
\end{equation}
\end{prop}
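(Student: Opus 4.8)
The strategy is a scaling argument: rather than applying the properness criterion of Theorem \ref{thm-BBJ} at the moving parameter $\epsilon$ (where the resulting properness constant would drift as the reference metric $\psi_\epsilon$ and the boundary coefficients of $B_{(\epsilon,t)}$ vary), we apply it only at a single \emph{frozen} parameter $\epsilon^*=\epsilon^*(t)$ and then push the estimate down to every $\epsilon\in(0,\epsilon^*]$ through the scaling map $P_\epsilon$ of \eqref{eq-Pepsilon}. The key point that makes this work is that the uniform K-stability slope furnished by Proposition \ref{prop-tstable} is \emph{independent of} $\epsilon$.

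Concretely, fix $t\in(0,1]$; we may and do assume $t$ rational, which is all that is needed in the sequel (only $t\to1$ along a sequence is used later). By Proposition \ref{prop-tstable} --- part (1) when $t<1$, part (2) when $t=1$ (the latter using uniform K-stability of $X$) --- pick a rational $\epsilon^*=\epsilon^*(t)>0$ so small that for every $\epsilon\in(0,\epsilon^*]$ the log-smooth pair $(M,B_{(\epsilon,t)})$ is klt and uniformly log-K-stable with a slope constant $\delta=\delta(t)>0$ not depending on $\epsilon$. Applying Theorem \ref{thm-BBJ} to $(M,B_{(\epsilon^*,t)})$, whose anticanonical class $-(K_M+B_{(\epsilon^*,t)})\equiv tL_{\epsilon^*}$ is ample $\bQ$-Cartier, uniform K-stability (condition (5) there) yields $\delta^*=\delta^*(t)>0$ and $C_1=C_1(t)>0$ with
\[
\cD_{B_{(\epsilon^*,t)}}(t,\phi)\ \ge\ \delta^*J_{\psi_{\epsilon^*}}(\phi)-C_1\qquad\text{on }\cE^1(M,\chi_{\epsilon^*}),
\]
in particular for all $\phi\in PSH_\infty(L_{\epsilon^*})$, since bounded $\chi_{\epsilon^*}$-psh potentials lie in $\cE^1$.

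Now let $\epsilon\in(0,\epsilon^*]$, $\vphi\in PSH_\infty(L_\epsilon)$, and $\tau=\epsilon^*-\epsilon\in[0,\epsilon^*]$. By the construction of $P_\epsilon$ (using $\sddb\kappa\le2\chi_{\epsilon^*}$) we have $P_\epsilon(\vphi)\in PSH_\infty(L_{\epsilon^*})$, so the displayed inequality applies to $P_\epsilon(\vphi)$. Combining it with the Ding-energy comparison \eqref{eq-Detphi} of Lemma \ref{lem-compD} and the $J$-energy comparison \eqref{eq-Jdiff} of Lemma \ref{lem-compE}, we obtain
\begin{align*}
\cD_{B_{(\epsilon,t)}}(t,\vphi)
&\ge \cD_{B_{(\epsilon^*,t)}}(t,P_\epsilon(\vphi))-C\tau\|\vphi-\psi_\epsilon\|_\infty-C\\
&\ge \delta^*J_{\psi_{\epsilon^*}}(P_\epsilon(\vphi))-C_1-C\tau\|\vphi-\psi_\epsilon\|_\infty-C\\
&\ge \delta^*J_{\psi_\epsilon}(\vphi)-(\delta^*+1)C\tau\|\vphi-\psi_\epsilon\|_\infty-C_1-C.
\end{align*}
Since $\tau\le\epsilon^*$, after enlarging the constant to absorb $C_1$ and the factor $\delta^*+1$ this is exactly \eqref{eq-tproper}. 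Finally, if $\vphi\in PSH(L_\epsilon)$ is unbounded then $\|\vphi-\psi_\epsilon\|_\infty=+\infty$ and \eqref{eq-tproper} holds trivially; moreover in the applications $\vphi$ is always bounded.

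The only genuine obstacle is ensuring that \emph{all} the constants are uniform in $\epsilon$, and this is exactly what the scaling argument buys us: Theorem \ref{thm-BBJ} is invoked once, at $\epsilon^*$, so $\delta^*$ and $C_1$ carry no $\epsilon$-dependence; and the errors incurred by $P_\epsilon$ are controlled uniformly by Lemmas \ref{lem-compE}, \ref{lem-compD}, whose constants are $\epsilon$-independent because the cohomology classes of the positive currents involved ($\chi_\epsilon$, $\eta=\sddb\kappa$, the current $\sigma$ of \eqref{eq-sigma}, and their products) stay in a fixed compact subset of $H^{1,1}(M,\bR)$ for $\epsilon\in[0,\epsilon^*]$, so that Lemma \ref{lem-intbyC0} applies with a single constant. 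One should also note that the error term is \emph{linear} in $\|\vphi-\psi_\epsilon\|_\infty$ with small coefficient $O(\epsilon^*)$: this is precisely the form in which, combined with the uniform Sobolev constant of $\omega_{(\epsilon,t)}$ (Proposition \ref{prop-Sob}), it can later be absorbed to produce the uniform $L^\infty$-bound on $\vphi_{(\epsilon,t)}-\psi_\epsilon$.
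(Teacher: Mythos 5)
Your proposal is correct and follows essentially the same route as the paper: invoke Theorem~\ref{thm-BBJ} once at the frozen parameter $\epsilon^*$ (made possible because Proposition~\ref{prop-tstable} gives an $\epsilon$-independent slope constant), then transport the estimate to $\epsilon\in(0,\epsilon^*]$ via the map $P_\epsilon$, using Lemmas~\ref{lem-compE} and~\ref{lem-compD} to control the errors. You additionally flag, correctly, that the case $t=1$ requires the uniform K-stability hypothesis via Proposition~\ref{prop-tstable}(2), since the paper's slope constant $(t^{-1}-1)/(4n)$ degenerates at $t=1$ under K-semistability alone; this is a genuine imprecision in the paper's stated hypotheses that your write-up makes explicit.
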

\begin{proof}
If $X$ is K-semistable, then by Proposition \ref{prop-tstable}, there exists $\epsilon^*=\epsilon^*(t)>0$ such that $(M, B_{(\epsilon,t)})$ is uniformly K-stable for $\epsilon\in (0,\epsilon^*]$ with the slope constant $\delta=(t^{-1}-1)/(4n)>0$.
By Theorem \ref{thm-BBJ} and \eqref{eq-deltarel}, if we choose
\begin{equation}\label{eq-delta*}
\delta^*:=\delta^*(t)=1-\left(\frac{1}{1+\frac{(t^{-1}-1)}{8n}}\right)^{1/n}>0,
\end{equation}
then there exists a constant $C>0$ such that for any $\vphi\in PSH(L_{\epsilon^*})$,
\begin{eqnarray}\label{eq-*proper}
\cD_{B_{(\epsilon^*,t)}}(t, \vphi)\ge \delta^* J_{\psi_{\epsilon^*}}(\vphi)-C.
\end{eqnarray}
For any $\vphi\in PSH(L_\epsilon)$, $P_\epsilon(\vphi)\in PSH(L_{\epsilon^*})$ (see \eqref{eq-Pepsilon}) and hence:
\begin{eqnarray}\label{eq-*properPphi}
\cD_{B_{(\epsilon^*,t)}}(t, P_\epsilon(\vphi))\ge \delta^* J_{\psi_{\epsilon^*}}(P_\epsilon(\vphi))-C.
\end{eqnarray}
By the above lemmas, the following estimates hold:
\begin{eqnarray}\label{eq-tproper1}
\cD_{B_{(\epsilon, t)}}(t, \vphi)&\ge& \cD_{B_{(\epsilon^*,t)}}(t,P_\epsilon(\vphi))-C\tau \|\vphi-\psi\|-C\quad \text{ (Lemma \ref{lem-compD})}\nonumber\\
&=&\delta^* J_{\psi_{\epsilon^*}}(P_\epsilon(\vphi))-C \tau \|\vphi-\psi_\epsilon\|-C \quad \text{ \eqref{eq-*properPphi}}\nonumber\\
&=&\delta^* J_{\psi_\epsilon}(\vphi)-C\delta^*\tau \|\vphi-\psi_\epsilon\|-C\tau \|\vphi-\psi_\epsilon\|-C. \text{ (\eqref{eq-Jdiff})}
\end{eqnarray}
where the constant $C=C(n,\epsilon^*)$ appeared above is independent of $\epsilon$. So we get \eqref{eq-tproper} since $\tau=\epsilon^*-\epsilon\le \epsilon^*$. 
\end{proof}

\begin{prop}\label{prop-Ricproper}
Assume that $X$ is K-semistable. There exist a constant $\epsilon^*=\epsilon^*(t)>0$, $\hdelta=\hdelta(t)>0$ and a constant $C>0$, such that if $\epsilon\in (0,\epsilon^*]$ and
$\omega_\vphi\in 2\pi c_1(L_\epsilon)$ is a strong conical metric on the log smooth pair $(M, B_{(\epsilon,t)})$ that satisfies $Ric(\omega_\vphi)\ge \frac{1}{2} \omega_\vphi$, then we have:
\[
\cD_{B_{(\epsilon,t)}}(t, \vphi)\ge \hdelta \|\vphi-\psi_\epsilon\|_\infty-C.
\]
\end{prop}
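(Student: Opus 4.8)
The plan is to upgrade the uniform properness of Proposition~\ref{prop-uniproper} to genuine $L^\infty$-control, using the uniform Sobolev inequality that the hypothesis $Ric(\omega_\vphi)\ge\frac12\omega_\vphi$ provides, via a Moser iteration. Write $u=\vphi-\psi_\epsilon$. Since $\cD_{B_{(\epsilon,t)}}(t,\cdot)$ and $J_{\psi_\epsilon}$ are invariant under $\vphi\mapsto\vphi+c$, and since for the conical K\"ahler--Einstein potential solving \eqref{eq-sCMA} one has $\|u\|_\infty\le\osc(u)+C$ by \eqref{eq-osc2C0}, it suffices to prove the estimate for a potential normalised by $\sup_M u=0$; from now on $u\le 0$.

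First I would record that $\Delta_{\omega_\vphi}u=n-\tr_{\omega_\vphi}\chi_\epsilon\le n$ on $M\setminus(H\cup\bigcup_iE_i)$, because $\chi_\epsilon\ge 0$; since $\omega_\vphi$ is a strong conical metric, this passes to a weak inequality on all of $M$, so that $-u\ge 0$ is a subsolution of $\Delta_{\omega_\vphi}w\ge -n$ (integrating against cut-offs adapted to the model cones produces no contribution along $H\cup\bigcup_iE_i$, as in \cite{JMR16,NTZ15,Son14}). As $Ric(\omega_\vphi)\ge\frac12\omega_\vphi$ and $\vol(\omega_\vphi)=(2\pi c_1(L_\epsilon))^n$ lies in a fixed compact subset of $(0,\infty)$ for $\epsilon\in(0,\epsilon^*]$, Proposition~\ref{prop-Sob} gives a Sobolev inequality for $(M,\omega_\vphi)$ with a constant independent of $\epsilon$. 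Moser iteration on $-u$ then yields
\[
\|u\|_\infty=\sup_M(-u)\le C_1\Bigl(1+\tfrac1V\int_M(-u)\,\omega_\vphi^n\Bigr),
\]
with $C_1$ uniform (iterate down to the $L^1(\omega_\vphi^n)$-norm, using $(-u)^2\le\sup_M(-u)\cdot(-u)$ and Young's inequality at the last step).

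Next I would convert the integral into $J_{\psi_\epsilon}(\vphi)$. Since $u\le 0$ and $\chi_\epsilon,\omega_\vphi\ge 0$, every term of $-E_{\psi_\epsilon}(\vphi)=\frac1{(n+1)V}\sum_{i=0}^n\int_M(-u)\,\chi_\epsilon^{n-i}\wedge\omega_\vphi^i$ is nonnegative, so $\frac1V\int_M(-u)\,\omega_\vphi^n\le(n+1)\bigl(-E_{\psi_\epsilon}(\vphi)\bigr)$; and by \eqref{eq-Jphi}, $-E_{\psi_\epsilon}(\vphi)=J_{\psi_\epsilon}(\vphi)+\frac1V\int_M(-u)\,\chi_\epsilon^n$. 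To control the last term uniformly I would use that $u\in PSH(M,\chi)$ for the \emph{fixed} K\"ahler form $\chi=(C_0+1)\chi_1\ge\chi_\epsilon$ (where $\chi_0\le C_0\chi_1$): a $\chi$-psh function with $\sup_M u=0$ satisfies $\|u\|_{L^1(\chi_1^n)}\le C_2$ by compactness, while $\chi_\epsilon^n\le(C_0+1)^n\chi_1^n$ pointwise, so $\frac1V\int_M(-u)\,\chi_\epsilon^n\le C_3$ with $C_3$ uniform. Combining, $\|u\|_\infty\le C_4\,J_{\psi_\epsilon}(\vphi)+C_5$ with $C_4,C_5$ uniform in $\epsilon$.

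Finally I would feed this into Proposition~\ref{prop-uniproper}: for $\epsilon\le\epsilon^*(t)$,
\[
\cD_{B_{(\epsilon,t)}}(t,\vphi)\ge\delta^*J_{\psi_\epsilon}(\vphi)-C\epsilon^*\|u\|_\infty-C\ge\bigl(\delta^*-CC_4\epsilon^*\bigr)J_{\psi_\epsilon}(\vphi)-C'.
\]
Shrinking $\epsilon^*=\epsilon^*(t)$ so that $CC_4\epsilon^*\le\delta^*/2$ (allowed, since $C_4$ is uniform while $\delta^*=\delta^*(t)$ is fixed once $t$ is), one obtains $\cD_{B_{(\epsilon,t)}}(t,\vphi)\ge\frac{\delta^*}2J_{\psi_\epsilon}(\vphi)-C'$, and then $J_{\psi_\epsilon}(\vphi)\ge C_4^{-1}(\|u\|_\infty-C_5)$ gives $\cD_{B_{(\epsilon,t)}}(t,\vphi)\ge\frac{\delta^*}{2C_4}\|u\|_\infty-C''$; this is the claim with $\hdelta=\hdelta(t)=\delta^*(t)/(2C_4)>0$, after undoing the normalisation via \eqref{eq-osc2C0}. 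I expect the main obstacle to be the Moser iteration in the conical setting — verifying both that the differential inequality for $-u$ holds weakly across $H\cup\bigcup_iE_i$ and that the Sobolev constant, hence all the iteration constants, stay uniform as $\epsilon\to 0$; the remaining bookkeeping (uniformity of $C_3$ and $C_4$, and the final choice of $\epsilon^*(t)$) is routine.
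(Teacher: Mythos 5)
Your proof is correct and reaches the same conclusion by a slightly different route for the upper bound on $\sup(\vphi-\psi_\epsilon)$. Both approaches share the same backbone: the uniform Sobolev inequality from Proposition~\ref{prop-Sob} plus Moser iteration on the subsolution inequality $\Delta_{\omega_\vphi}(\vphi-\psi_\epsilon)\le n$ give a uniform bound $-\inf(\vphi-\psi_\epsilon)\le -\frac{C}{V}\int_M(\vphi-\psi_\epsilon)\,\omega_\vphi^n+C$, and the only remaining task is to control $\sup(\vphi-\psi_\epsilon)$ and convert the $\omega_\vphi^n$-integral into $J_{\psi_\epsilon}(\vphi)$ before plugging into Proposition~\ref{prop-uniproper} and shrinking $\epsilon^*$. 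Where you diverge is in how you obtain the uniform control coming from plurisubharmonicity: the paper pushes $\vphi$ through the map $P_\epsilon$ of \eqref{eq-Pepsilon} to produce a $\chi_{\epsilon^*}$-psh function and applies Green's formula for the fixed K\"{a}hler form $\chi_{\epsilon^*}$, paying a penalty $C\tau\|\vphi-\psi_\epsilon\|_\infty$ in \eqref{eq-supbd} and then routing everything through $I_{\psi_\epsilon}\le (n+1)J_{\psi_\epsilon}$; you instead normalize $\sup u=0$ directly, observe that $u$ is $\chi$-psh for the $\epsilon$-independent K\"{a}hler form $\chi=(C_0+1)\chi_1\ge\chi_\epsilon$, and invoke the uniform $L^1$-bound for normalized $\chi$-psh functions, bounding $\frac1V\int(-u)\,\omega_\vphi^n$ through $-E_{\psi_\epsilon}$ term by term. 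This is a minor but genuine simplification: it avoids the $P_\epsilon$-bookkeeping and the extra $C\tau\|u\|_\infty$ term in the sup estimate, and makes the uniformity in $\epsilon$ manifest at the price of fixing a dominating K\"{a}hler form once and for all (which costs nothing since $\chi_1>0$). Two small things to be aware of: the Green's-formula bound and the compactness of normalized psh functions are the same statement in disguise, so there is no loss; and, as you note, the final passage back to $\|\vphi-\psi_\epsilon\|_\infty$ for the unnormalized potential relies on \eqref{eq-osc2C0}, exactly as in the paper, which is justified because the proposition is only applied to MA solutions in Proposition~\ref{prop-C0est}.
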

\begin{proof}
Using Green's formula for $\chi_{\epsilon^*}$, we get:
\begin{equation}
\sup_M (P_\epsilon(\vphi)-\psi_{\epsilon^*})\le \frac{1}{V_{\epsilon^*}}\int_M (P_\epsilon(\vphi)-\psi_{\epsilon^*})\chi_{\epsilon^*}^n+C(\epsilon^*).
\end{equation}
Using the expression of $P_\epsilon(\vphi)$ in \eqref{eq-Pepsilon}, we know that this implies
\begin{eqnarray*}
\sup_M (\vphi-\psi_\epsilon)&\le&\frac{1}{V_{\epsilon^*}}\int_M (\vphi-\psi_\epsilon)\chi_{\epsilon^*}^n+(1+2\tau)C(\epsilon^*)\\
&\le&\frac{1}{V_{\epsilon^*}}\int_M(\vphi-\psi_\epsilon)\chi_{\epsilon^*}^n+(1+2\epsilon^*)C(\epsilon^*).
\end{eqnarray*}
Using the identity $\chi_{\epsilon^*}=\chi_\epsilon-\tau \sddb\kappa$, it is easy to verify that
\begin{eqnarray*}
\left|\frac{1}{V_{\epsilon^*}}\int_M(\vphi-\psi_{\epsilon})\chi_{\epsilon^*}^n-\frac{1}{V_\epsilon}\int_M(\vphi-\psi_\epsilon)\chi_{\epsilon}^n\right|\le C\tau \|\vphi-\psi_\epsilon\|_{\infty}.
\end{eqnarray*}
So we get:
\begin{eqnarray}\label{eq-supbd}
\sup_M(\vphi-\psi_\epsilon)\le \frac{1}{V_\epsilon}\int_M (\vphi-\psi_\epsilon)\chi_\epsilon^n+C\tau\|\vphi-\psi_\epsilon\|_{\infty}+C(\epsilon^*)(1+2\epsilon^*).
\end{eqnarray}

On the other hand, if $Ric(\omega_\vphi)\ge \frac{1}{2}\omega_\vphi$, then there is a uniform Sobolev constant for the metric $\omega_\vphi$ by Proposition \ref{prop-Sob}. So by Moser's iteration, there is a uniform constant $C$ such that:
\begin{equation}\label{eq-infbd}
-\inf_M (\vphi-\psi_{\epsilon})\le -\frac{C}{V_\epsilon}\int_M(\vphi-\psi_\epsilon)\omega_\vphi^n+C.
\end{equation}
Combing \eqref{eq-supbd}, \eqref{eq-infbd} and \eqref{eq-osc2C0}, it is easy to see that there exists a constant $C>0$ such that:
\begin{equation}
{\rm osc}(\vphi-\psi_\epsilon)\le \frac{C}{V_\epsilon} \int_M (\vphi-\psi_\epsilon)(\chi_\epsilon^n-\omega_\vphi^n)+C\tau\|\vphi-\psi_\epsilon\|_{\infty}+C.
\end{equation}
Since $\|\vphi-\psi_\epsilon\|_\infty\le \osc(\vphi-\psi_\epsilon)$, we get if $\epsilon^*\ll 1$, then:
\begin{equation}
\|\vphi-\psi_\epsilon\|_\infty\le C I_{\psi_\epsilon}(\vphi)+C(\epsilon^*, \tau)\le C (n+1) J_{\psi_\epsilon}(\vphi)+C(\epsilon^*).
\end{equation}
Now we use \eqref{eq-tproper} to get:
\begin{equation}
\cD_{B_{(\epsilon, t)}}(t, \vphi)\ge \left((C(n+1))^{-1}\delta^*-C\epsilon^*\right)\|\vphi-\psi_\epsilon\|_\infty-C.
\end{equation}
Notice $\delta^*$ does not depend on $\epsilon^*$. So if we choose $\epsilon^*\ll 1$, then the wanted inequality holds with $\hat{\delta}=(C(n+1))^{-1}\delta^*/2$.

\end{proof}

\begin{prop}\label{prop-C0est}
There exists a constant $C=C(X,t)>0$ that is independent of $\epsilon$ such that, the solution $\vphi_{(\epsilon,t)}$ to the Monge-Amp\`{e}re equation \eqref{eq-sCMA} (or equivalently the solution $u=\vphi_{(\epsilon,t)}-\psi_\epsilon$ to the equation \eqref{eq-MAref}) satisfies the following uniform $L^\infty$-estimate:
\begin{equation}
\|u\|_{\infty}=\|\vphi_{(\epsilon,t)}-\psi_\epsilon\|_{\infty}\le C.
\end{equation}
\end{prop}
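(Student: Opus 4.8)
The plan is to derive this $L^\infty$-bound from the two properness-type estimates already established, using that the sought potential $\vphi_{(\epsilon,t)}$ is a \emph{global minimizer} of the log-Ding energy. Throughout, constants may depend on $X$ and on the fixed $t$ but not on $\epsilon$.

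First I would take $\epsilon$ small enough that Proposition~\ref{prop-semiexist} produces a strong conical K\"{a}hler-Einstein metric $\omega_{(\epsilon,t)}=\sddb\vphi_{(\epsilon,t)}$ on $(M,B_{(\epsilon,t)})$ and that Proposition~\ref{prop-Ricproper} is applicable. After the additive normalization discussed around \eqref{eq-gamma}, the potential $\vphi_{(\epsilon,t)}$ solves \eqref{eq-sCMA} (equivalently \eqref{eq-MAref}). Since \eqref{eq-sCMA} is precisely the Euler--Lagrange equation of $\cD_{B_{(\epsilon,t)}}(t,\cdot)$, and this functional is convex along $C^{1,\bar1}$-geodesics in $\cE^1$ (Berndtsson's convexity in the klt log setting), $\vphi_{(\epsilon,t)}$ is a global minimizer of $\cD_{B_{(\epsilon,t)}}(t,\cdot)$. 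In particular, testing against the smooth reference $\psi_\epsilon\in PSH_\infty(L_\epsilon)$ gives $\cD_{B_{(\epsilon,t)}}(t,\vphi_{(\epsilon,t)})\le\cD_{B_{(\epsilon,t)}}(t,\psi_\epsilon)$.

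Next I would bound the right-hand side uniformly in $\epsilon$. Since $E_{\psi_\epsilon}(\psi_\epsilon)=0$ and, by \eqref{eq-Omegept}, the smooth Hermitian data satisfy $e^{-t\psi_\epsilon}\big/\big(|s_H|^{2(1-t)/m}\prod_i|s_i|^{2(b_i+t\epsilon\theta_i+(1-t)\theta_i)}\big)=\Omega(\epsilon,t)$, the definition \eqref{eq-recallD} yields
\[
\cD_{B_{(\epsilon,t)}}(t,\psi_\epsilon)=\tfrac1t L_{B_{(\epsilon,t)}}(t,\psi_\epsilon)=-\tfrac1t\log\Big(\tfrac1V\int_M\Omega(\epsilon,t)\Big),
\]
which differs from $-\gamma(\epsilon,t)$ by an $\epsilon$-independent constant and is hence uniformly bounded above, because $\gamma(\epsilon,t)$ is uniformly bounded (as noted below \eqref{eq-gamma}): concretely $\int_M\Omega(\epsilon,t)$ and $\int_M\chi_\epsilon^n$ stay bounded, and bounded away from $0$, as $\epsilon\to0$. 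Thus $\cD_{B_{(\epsilon,t)}}(t,\vphi_{(\epsilon,t)})\le C$. For the matching lower bound, observe that on $U=M\setminus(H\cup\bigcup_iE_i)$ the equation \eqref{eq-teKE} reads $Ric(\omega_{(\epsilon,t)})=t\,\omega_{(\epsilon,t)}$, so $Ric(\omega_{(\epsilon,t)})\ge\tfrac12\omega_{(\epsilon,t)}$ provided $t\ge\tfrac12$ (the only range needed in the sequel, where $t\to1$). Hence Proposition~\ref{prop-Ricproper} applies to $\vphi=\vphi_{(\epsilon,t)}$ with its $\hdelta=\hdelta(t)>0$, giving $\cD_{B_{(\epsilon,t)}}(t,\vphi_{(\epsilon,t)})\ge\hdelta\,\|\vphi_{(\epsilon,t)}-\psi_\epsilon\|_\infty-C$. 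The two inequalities combine to $\hdelta\,\|\vphi_{(\epsilon,t)}-\psi_\epsilon\|_\infty\le 2C$, i.e. a uniform bound $\|\vphi_{(\epsilon,t)}-\psi_\epsilon\|_\infty\le C/\hdelta$, which is the asserted estimate.

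The computation itself is short; the substance lies in the inputs, above all Proposition~\ref{prop-Ricproper}, which rests on the uniform Sobolev constant (Proposition~\ref{prop-Sob}) and the uniform properness (Proposition~\ref{prop-uniproper}). Within the present argument the one point that requires care is the identification of the conical K\"{a}hler-Einstein metric supplied by Theorem~\ref{thm-TW} with the $\cE^1$-minimizer of $\cD_{B_{(\epsilon,t)}}(t,\cdot)$, normalized so as to solve \eqref{eq-sCMA}: this uses the regularity built into the notion of a strong conical metric, so that $\vphi_{(\epsilon,t)}\in PSH_\infty(L_\epsilon)\subset\cE^1$ and the minimizing property is legitimate, together with the geodesic convexity of the log-Ding functional. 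I expect this bookkeeping, rather than any estimate, to be the main thing to pin down.
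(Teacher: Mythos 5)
Your argument is correct and is essentially the paper's proof: both use that $\vphi_{(\epsilon,t)}$ minimizes the log-Ding energy, test against $\psi_\epsilon$ to get $\cD_{B_{(\epsilon,t)}}(t,\vphi_{(\epsilon,t)})\le\cD_{B_{(\epsilon,t)}}(t,\psi_\epsilon)=-\log\bigl(\tfrac1V\int_M\Omega(\epsilon,t)\bigr)$ (up to the $1/t$ normalization, on which \eqref{eq-Dphi} and \eqref{eq-recallD} disagree in the paper, but either version is uniformly bounded via $\gamma(\epsilon,t)$), and close with the coercivity from Proposition~\ref{prop-Ricproper} after observing $Ric(\omega_{(\epsilon,t)})\ge\tfrac12\omega_{(\epsilon,t)}$ when $t\ge\tfrac12$. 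The only addition you make beyond the paper's text is spelling out why $\vphi_{(\epsilon,t)}$ is the global minimizer and why it lives in $\cE^1$, which the paper simply asserts as known.
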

\begin{proof}
It's known that $\vphi_{(\epsilon,t)}$ is the minimizer of $\cD_{B(\epsilon,t)}(t,\vphi)$ among all $\vphi\in \cE^1$. In particular,
\[
\cD_{B_{(\epsilon,t)}}(t, \vphi_{(\epsilon,t)})\le \cD_{B_{(\epsilon,t)}}(t, \psi_\epsilon).
\]
We also know that $\omega_{(\epsilon,t)}=\chi_\epsilon+\sddb\vphi_{(\epsilon,t)}$ satisfies $Ric(\omega_{(\epsilon,t)})\ge  \frac{1}{2}\omega_{(\epsilon,t)}$ if $t\ge 1/2$. So by Proposition \ref{prop-Ricproper}, we just need to verify that $\cD_{B_{(\epsilon,t)}}(t,\psi_\epsilon)$ is uniformly bounded from above. This indeed holds (see \eqref{eq-MAref}-\eqref{eq-gamma}):
\begin{eqnarray*}
\cD_{B_{(\epsilon,t)}}(t,\psi_\epsilon)&=&-\log\left(\frac{1}{V}\int_M \frac{e^{-t\psi_\epsilon}}{|s_H|^{2(1-t)/m}\prod_i |s_i|^{2(b_i+t\epsilon\theta_i+(1-t)\theta_i)}}\right)\\
&=&-\log\left(\frac{1}{V}\int_M \Omega(\epsilon,t)\right)=-t\gamma(\epsilon,t)\le C.
\end{eqnarray*}
\end{proof}

To state the next result, we first need to modify $\chi_0$. Let $H'=H+m \sum_i \theta_i E_i$ is a holomorphic section of $ m(L_0-\sum_i \theta_i E_i)+m \sum_i \theta_i E_i=m \mu^*(K_X^{-1})$ which descends to a holomorphic section of $mK_X^{-1}$ which will be denoted by $H_X$.
\[
\hat{\chi}_0=\chi_0+c\sddb \|s_{H'}\|_{\FS}^{2\left(1-\frac{1-t}{m}\right)}.
\]
Then for $0<c\ll 1$, $\hat{\chi}_0$ is a conical K\"{a}hler metric on $(M\setminus E, (1-t)m^{-1}H)$. The bisectional curvature of $\hat{\chi}_0$ is bounded from above because
$\hat{\chi}_0$ is the pull-back of the $\omega_{\FS}+c\sddb \|s_{H'}\|_{\FS}^{2(1-(1-t)m^{-1})}$ on $\bP^N$ and the latter has the bisectional curvature bounded from above (see \cite[Appendix A]{JMR16}). We also modify $\chi_\epsilon$ to:
\begin{equation}
\hat{\chi}_\epsilon=\hat{\chi}_0-\epsilon \eta=\chi_0-\epsilon\eta+c\sddb \|s_{H'}\|_{\FS}^{2\left(1-\frac{1-t}{m}\right)}.
\end{equation}
Then again for $0<c\ll 1$, $\hat{\chi}_\epsilon$ is a conical K\"{a}hler metric on $(M, (1-t)m^{-1}H)$. It's easy to check that there exists $C>0$, which is independent of $\epsilon$, such that
$\hat{\chi}_0\le C\hat{\chi}_\epsilon$ as long as $c$ and $\epsilon$ are sufficiently small.

\begin{prop}\label{prop-C2lb}
There exists $C=C(X,V,t)>0$, independent of $\epsilon$, such that $\hat{\chi}_0\le C \omega_{(\epsilon,t)}$. As a consequence, there exists a constant $C>0$ independent of $\epsilon$, such that $\chi_0\le C\omega_{(\epsilon,t)}$.
\end{prop}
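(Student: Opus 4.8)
The plan is to establish this Laplacian-type bound by an Aubin--Yau (Chern--Lu) inequality for the Monge--Amp\`ere equation \eqref{eq-sCMA}, comparing $\omega_{(\epsilon,t)}$ with the fixed reference metric $\hat\chi_0$, and then to upgrade it to an $\epsilon$-uniform estimate by a maximum-principle argument that feeds in the uniform $C^0$-bound of Proposition \ref{prop-C0est} together with the fact that the holomorphic bisectional curvature of $\hat\chi_0$ is bounded above by a constant independent of $\epsilon$.

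First I would collect the properties of the reference metric. Writing $\hat\psi_0:=\psi_0+c\,\|s_{H'}\|_\FS^{2(1-\frac{1-t}{m})}$, one has $\hat\chi_0=\sddb\hat\psi_0$ with $\hat\psi_0$ bounded; for $c\ll 1$ the form $\hat\chi_0$ is a conical K\"ahler metric on $M\setminus E$ which along $H$ has the same cone angle $2\pi(1-\frac{1-t}{m})$ as $\omega_{(\epsilon,t)}$ (compare the density in \eqref{eq-sCMA}), and $\chi_0\le 2\hat\chi_0$ for $c\ll 1$, by the elementary computation using that the $\FS$-curvature of $mK_X^{-1}$ equals $m\chi_0$. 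On $U=M\setminus(H\cup\bigcup_iE_i)$ the metric $\hat\chi_0$ is the pull-back, via the immersion $\mu\circ\iota$, of the fixed metric $\omega_\FS+c\,\sddb\|s_{H'}\|_\FS^{2(1-\frac{1-t}{m})}$ on $\cP^N$, whose holomorphic bisectional curvature is bounded above (see \cite[Appendix A]{JMR16}); hence the bisectional curvature of $\hat\chi_0$ on $U$ is $\le C_0=C_0(X,V,t)$, independent of $\epsilon$.

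Next, on $U$ the currents $\{H\},\{E_i\}$ in \eqref{eq-teKE} vanish, so $Ric(\omega_{(\epsilon,t)})=t\,\omega_{(\epsilon,t)}\ge 0$ there. The Chern--Lu inequality for $\id\colon(M,\omega_{(\epsilon,t)})\to(M,\hat\chi_0)$, combined with $Ric(\omega_{(\epsilon,t)})\ge 0$ and the curvature bound above, gives an inequality of the form $\Delta_{\omega_{(\epsilon,t)}}\log\tr_{\omega_{(\epsilon,t)}}\hat\chi_0\ge -C_0\,\tr_{\omega_{(\epsilon,t)}}\hat\chi_0$ on $U$. Since $\Delta_{\omega_{(\epsilon,t)}}(\vphi_{(\epsilon,t)}-\hat\psi_0)=n-\tr_{\omega_{(\epsilon,t)}}\hat\chi_0$, the function $\Phi:=\log\tr_{\omega_{(\epsilon,t)}}\hat\chi_0-(C_0+1)(\vphi_{(\epsilon,t)}-\hat\psi_0)$ satisfies
\[
\Delta_{\omega_{(\epsilon,t)}}\Phi\ \ge\ \tr_{\omega_{(\epsilon,t)}}\hat\chi_0-(C_0+1)n\qquad\text{on }U.
\]
A maximum-principle argument adapted to the conical setting (in the spirit of \cite{JMR16,GP16,Tia17}) then bounds $\sup_M\Phi$: one uses the quasi-isometry of $\omega_{(\epsilon,t)}$ to the model cone along $B_{(\epsilon,t)}$ to see that $\tr_{\omega_{(\epsilon,t)}}\hat\chi_0$ remains bounded near $H$ (matching cone angles) and near $\bigcup_iE_i$ (where $\omega_{(\epsilon,t)}$ has a genuine cone singularity of angle uniformly in $(0,2\pi)$ while $\hat\chi_0$ does not blow up), and, after a harmless perturbation by a small multiple of $\log\|s_H\|^2$ and --- away from $\bigcup_iE_i$, where $\omega_{(\epsilon,t)}$ already dominates $\hat\chi_0$ --- of the $\|s_i\|$'s, one applies the differential inequality at an interior maximum in $U$. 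Inserting $\|\vphi_{(\epsilon,t)}-\psi_\epsilon\|_\infty\le C$, hence $\|\vphi_{(\epsilon,t)}-\hat\psi_0\|_\infty\le C$, from Proposition \ref{prop-C0est} makes the resulting bound independent of $\epsilon$: $\tr_{\omega_{(\epsilon,t)}}\hat\chi_0\le C=C(X,V,t)$ on $M$, i.e. $\hat\chi_0\le C\,\omega_{(\epsilon,t)}$, and then $\chi_0\le 2\hat\chi_0\le 2C\,\omega_{(\epsilon,t)}$, which is the asserted consequence.

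The hard part is this last step. One must know a priori --- from the fine regularity theory of strong conical K\"ahler--Einstein metrics (Theorem \ref{thm-TW} together with \cite{JMR16,GP16}) --- that $\tr_{\omega_{(\epsilon,t)}}\hat\chi_0$ does not blow up along $\supp B_{(\epsilon,t)}$, so that the maximum of $\Phi$ is genuinely controlled; and, more delicately, one must arrange that \emph{every} constant entering the maximum-principle argument is independent of $\epsilon$. It is precisely for the latter that $\hat\chi_0$ was built as a modification of a pull-back from $\cP^N$ (providing the $\epsilon$-free upper bound on its bisectional curvature) and that the uniform $L^\infty$-estimate of Proposition \ref{prop-C0est} is an indispensable input rather than a mere convenience.
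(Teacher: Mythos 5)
Your strategy — a Chern--Lu inequality comparing $\omega_{(\epsilon,t)}$ with the fixed reference metric $\hat\chi_0$, followed by a maximum-principle argument that feeds in the uniform $C^0$-bound of Proposition \ref{prop-C0est} and the $\epsilon$-independent upper bound on the bisectional curvature of $\hat\chi_0$ — is exactly the paper's strategy, and the observation that the special construction of $\hat\chi_0$ (matching cone angle along $H$, pull-back from $\bP^N$) is what makes the constants uniform in $\epsilon$ is also the same.

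However there is a genuine gap in your choice of test function. You set $\Phi := \log\tr_{\omega_{(\epsilon,t)}}\hat\chi_0 - (C_0+1)(\vphi_{(\epsilon,t)}-\hat\psi_0)$ and invoke $\|\vphi_{(\epsilon,t)}-\hat\psi_0\|_\infty\le C$. But $\vphi_{(\epsilon,t)}$ is a Hermitian metric on $L_\epsilon=\mu^*K_X^{-1}-\epsilon\sum_i\theta_iE_i$ while $\hat\psi_0$ is a Hermitian metric on $L_0=\mu^*K_X^{-1}$; their difference $\vphi_{(\epsilon,t)}-\hat\psi_0$ is a (singular) Hermitian metric on the non-trivial $\bR$-line bundle $-\epsilon\sum_i\theta_iE_i$, not a globally defined function. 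Concretely, $\vphi_{(\epsilon,t)}-\hat\psi_0=\hat u-\epsilon\kappa_\theta$ where $\kappa_\theta$ is a collection of local potentials that do not glue to a global function. So $\Phi$ is not globally defined, $\sup_M\Phi$ is not well-posed, and the inequality $\|\vphi_{(\epsilon,t)}-\hat\psi_0\|_\infty\le C$ has no meaning as stated. The parenthetical remark about a ``harmless perturbation'' by small multiples of $\log\|s_i\|^2$ gestures toward a fix, but adding $\epsilon\log\|s_\theta\|^2$ would change $\Delta_\omega$ of the test function by a term $\epsilon\,\tr_{\omega}\eta$ (with $\eta=\sddb\kappa_\theta$) that still needs to be controlled, so this is not a cosmetic adjustment.

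The paper's route sidesteps the issue cleanly: it writes $\omega_{(\epsilon,t)}=\hat\chi_\epsilon+\sddb\hat u$, where $\hat u$ \emph{is} a globally defined function, uniformly bounded by Proposition \ref{prop-C0est}, and applies the maximum principle to $\log\tr_{\omega}\hat\chi_0-\lambda\hat u$. The price is that $\Delta_\omega\hat u=n-\tr_\omega\hat\chi_\epsilon$ involves $\hat\chi_\epsilon$ rather than $\hat\chi_0$; this is then handled by the $\epsilon$-uniform comparison $\hat\chi_0\le C\,\hat\chi_\epsilon$ (valid for $c,\epsilon$ small, since $\hat\chi_\epsilon=\hat\chi_0-\epsilon\eta$ and $\eta$ is dominated by $\chi_0$), which converts $\tr_\omega\hat\chi_\epsilon$ back into $\tr_\omega\hat\chi_0$ in the differential inequality. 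If you replace your $\vphi_{(\epsilon,t)}-\hat\psi_0$ with $\hat u$ and insert this one comparison, your proof becomes the paper's. Your observation that one must know a priori (from the regularity theory for strong conical K\"ahler--Einstein metrics) that $\tr_{\omega_{(\epsilon,t)}}\hat\chi_0$ does not blow up along $\supp B_{(\epsilon,t)}$ is a fair point that the paper leaves implicit.
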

\begin{proof}
Let $f: M\rightarrow \cP^N$ be a holomorphic morphism induced given by $L_0=\mu^*(-K_X)$. For simplicity, denote $\omega=\omega_{(\epsilon,t)}$. Then $Ric(\omega)=t\omega$. By using Chern-Lu's inequality we have:
\begin{equation}
\Delta_{\omega} \log \tr_{\omega}(\hat{\chi}_0)=t- C_1 \tr_{\omega}(\hat{\chi}_0),
\end{equation}
where $C_1$ can be chosen to be the upper bound of $\hat{\chi}_0$ on $(M\setminus E)$.
Because $\omega=\hat{\chi}_0-\epsilon\eta+\sddb \hat{u}$ and $\hat{\chi}_0\le C_2 \hat{\chi}_\epsilon$ for some $C_2>0$ independent of $\epsilon$, we get:
\begin{equation}
n=\tr_{\omega}(\hat{\chi}_\epsilon)+\Delta \hat{u}\ge C_2 \tr_{\omega}(\hat{\chi}_0)+\Delta \hat{u}.
\end{equation}
Combining the above identities, we get:
\begin{equation}
\Delta_{\omega}\left(\log\tr_{\omega}(\hat{\chi}_0)-\lambda \hat{u}\right)\ge (\lambda C_2-C_1) \tr_{\omega}(\hat{\chi}_0)+t-\lambda n.
\end{equation}
By choosing $\lambda\gg 1$, we can assume $\lambda C_2-C_1=:C_3>0$. So at the maximum point $p$ of $(\log\tr_{\omega}(\hat{\chi}_0)-\lambda u)$, we have:
\[
\tr_{\omega}(\hat{\chi}_0)(p)\le \frac{\lambda n-t}{C_3}\le C_4.
\]
So for any $x\in M$, we have:
\[
\tr_{\omega}(\hat{\chi}_0)(x)\le tr_{\omega}(\hat{\chi}_0)(p) e^{\lambda (\hat{u}(x)-\hat{u}(p))} \le C_4 e^{\osc(\hat{u})}.
\]
The right-hand-side of the above is uniformly bounded by Proposition \ref{prop-C0est}.

The last statement follows from the inequality $\chi_0\le C \hat{\chi}_0$ for a constant $C>0$.

\end{proof}

\begin{cor}
For any relatively compact open subset $V\Subset (M\setminus E)$, there exists a constant $C=C(X,V,t)$ independent of $\epsilon$ such that , there exists a constant $C=C(X,V,t)$ such that
\begin{equation}
C^{-1} \hat{\chi}_0 \le \omega_{(\epsilon,t)}\le C \hat{\chi}_0.
\end{equation}
\end{cor}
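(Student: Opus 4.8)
The lower bound $C^{-1}\hat{\chi}_0\le\omega_{(\epsilon,t)}$ is already in hand: it is exactly the inequality $\hat{\chi}_0\le C\,\omega_{(\epsilon,t)}$ of Proposition~\ref{prop-C2lb}, with $C$ independent of $\epsilon$. So the plan is to prove the reverse estimate $\omega_{(\epsilon,t)}\le C\,\hat{\chi}_0$ on $V$ with $C$ uniform in $\epsilon$, and then combine the two. The first step is a pointwise linear-algebra reduction. At a point of $V$, diagonalize $\omega_{(\epsilon,t)}$ with respect to $\hat{\chi}_0$ and let $\nu_1,\dots,\nu_n>0$ be the eigenvalues of $\omega_{(\epsilon,t)}$ relative to $\hat{\chi}_0$. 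Proposition~\ref{prop-C2lb} gives $\hat{\chi}_0\le C_0\,\omega_{(\epsilon,t)}$, that is, $\nu_i\ge C_0^{-1}$ for all $i$; hence, for each $j$,
\[
\nu_j=\frac{\prod_i\nu_i}{\prod_{i\ne j}\nu_i}\le C_0^{\,n-1}\,\frac{\omega_{(\epsilon,t)}^n}{\hat{\chi}_0^n}.
\]
Thus it suffices to bound the volume ratio $\omega_{(\epsilon,t)}^n/\hat{\chi}_0^n$ from above on $V$, uniformly in $\epsilon$.

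For the numerator we use the Monge--Amp\`ere equation~\eqref{eq-sCMA}, which reads $\omega_{(\epsilon,t)}^n=e^{-t\vphi_{(\epsilon,t)}}\big(|s_H|^{2(1-t)/m}\prod_i|s_i|^{2(1-\beta_i)}\big)^{-1}$. On $V\Subset M\setminus E$ the sections $s_i$ are pinched between two positive constants on $\bar V$, so, since $1-\beta_i\in(0,1)$, the factor $\prod_i|s_i|^{2(1-\beta_i)}$ is bounded above and below away from $0$ uniformly in $\epsilon$. Writing $e^{-t\vphi_{(\epsilon,t)}}=e^{-t(\vphi_{(\epsilon,t)}-\psi_\epsilon)}\,e^{-t\psi_\epsilon}$, the first factor is uniformly bounded by the $L^\infty$-estimate of Proposition~\ref{prop-C0est}, while $e^{-t\psi_\epsilon}$ is bounded above and below on $\bar V$ independently of $\epsilon$ because, by~\eqref{eq-psieps}, $\psi_\epsilon$ is a convex combination of the two fixed smooth weights $\psi_0$ and $\psi_1$. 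Hence $\omega_{(\epsilon,t)}^n\le C_1\,|s_H|^{-2(1-t)/m}$ on $V$, with $C_1$ independent of $\epsilon$. For the denominator, $\hat{\chi}_0$ is, by construction, a conical K\"ahler metric on $(M\setminus E,\tfrac{1-t}{m}H)$, so on the relatively compact set $V$ its volume form is quasi-isometric to $|s_H|^{-2(1-t)/m}$ times a positive smooth density; in particular $\hat{\chi}_0^n\ge c\,|s_H|^{-2(1-t)/m}$ on $V$ for some $c>0$. The crucial point is that the power of $|s_H|$ is the same in $\omega_{(\epsilon,t)}^n$ and in $\hat{\chi}_0^n$ — both metrics carry cone angle $2\pi\big(1-\tfrac{1-t}{m}\big)$ along $H$ by design — so the ratio $\omega_{(\epsilon,t)}^n/\hat{\chi}_0^n$ is bounded on $V$ by a constant independent of $\epsilon$.

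Feeding this back into the reduction gives $\nu_j\le C$ on $V$ uniformly in $\epsilon$, i.e.\ $\omega_{(\epsilon,t)}\le C\,\hat{\chi}_0$ on $V$; together with $C^{-1}\hat{\chi}_0\le\omega_{(\epsilon,t)}$ from Proposition~\ref{prop-C2lb} this yields the asserted two-sided estimate. The one genuinely delicate point is the behaviour along $H\cap V$: one must check that the conical singularities of $\omega_{(\epsilon,t)}$ and of $\hat{\chi}_0$ along $H$ exactly cancel in the volume ratio, which is precisely the matching of cone angles built into the definition of $\hat{\chi}_0$. Away from $H$ every quantity involved is a harmless bounded positive smooth function, and the uniformity in $\epsilon$ is immediate from Propositions~\ref{prop-C0est} and~\ref{prop-C2lb}.
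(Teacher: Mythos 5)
Your argument is correct and is essentially the paper's argument: both proofs take the lower bound $\hat{\chi}_0 \le C\,\omega_{(\epsilon,t)}$ from Proposition \ref{prop-C2lb} and then bound $\omega_{(\epsilon,t)}^n/\hat{\chi}_0^n$ on $V$ uniformly in $\epsilon$ by writing the Monge--Amp\`ere equation in terms of $\hat{u}$, using the $L^\infty$-estimate for $\vphi_{(\epsilon,t)}-\psi_\epsilon$, and observing that $\hat{\chi}_0^n$ carries exactly the same conical factor $\|s_H\|^{-2(1-t)/m}$, so the singularities along $H$ cancel. The only cosmetic difference is that you spell out the elementary eigenvalue reduction (trace bounded below plus determinant bounded above implies trace bounded above) that the paper leaves implicit in the phrase ``the statement now follows.''
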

\begin{proof}
Recall that $u=\vphi-\psi_\epsilon$ satisfies the equation:
\begin{eqnarray}
(\chi_\epsilon+\sddb u)^n&=&e^{-tu}\frac{\Omega}{\|s_H\|_{m\psi_1}^{2(1-t)/m}\|s_\theta\|_{\kappa_\theta}^{2(t \epsilon+(1-t)\theta_i)}\prod_{i}\|s_i\|_{\kappa_i}^{2b_i}},
\end{eqnarray}
\end{proof}
where $\Omega$ is smooth volume form on $M$. Because $\hat{u}=u-c \|s_H\|^{2(1-(1-t)m^{-1})}=:u-f$, we know that $\hat{u}$ satisfies the following equation:
\begin{equation}\label{eq-MAhatu}
\omega_{(\epsilon,t)}^n=(\hat{\chi}_\epsilon+\sddb \hat{u})^n=e^{-t\hat{u}}\frac{\Omega'}{\|s_{H}\|^{\frac{2(1-t)}{m}}\prod_i \|s_i\|_{\kappa_i}^{2(b_i+t\epsilon\theta_i+(1-t)\theta_i)}}
\end{equation}
where $\Omega'=\Omega=e^{-tf}\Omega$ is a non-degenerate volume form. On the other hand, because $\hat{\chi}_0$ has is a conical K\"{a}hler metric on $M\setminus E$ with cone angle
$2\pi (1-\frac{1-t}{m})$ along $H$, we have the identity:
\begin{equation}\label{eq-hchivol}
\hat{\chi}_0^n=\frac{\Omega''}{\|s_{H}\|^{\frac{2(1-t)}{m}}}
\end{equation}
where $\Omega''$ is a non-degenerate volume form on $M\setminus E$. Comparing \eqref{eq-MAhatu} and \eqref{eq-hchivol}, we see that over $V\Subset M\setminus E$, the ratio
$
\omega_{(\epsilon,t)}^n/\hat{\chi}_0^n
$
is uniformly bounded on $V$. The statement now follows from this together with the estimate in Proposition \ref{prop-C2lb}.

With the above local $C^2$ estimate, we can derive the $C^{2,\alpha}$ estimate (in the sense of \cite{Don12a,JMR16}) for conical K\"{a}hler-Einstein metrics following the arguments in \cite{Tia17, She15}.
\begin{prop}[see \cite{Tia17,She15}]
For any relatively compact open set $V\Subset (M\setminus E)$, there exists a constant $C=C(M,V,t)$ such that, for any $\alpha<\left(1-\frac{1-t}{m}\right)^{-1}-1$,
\begin{equation}
\|\omega_{(\epsilon,t)}\|_{C^{\alpha}}\le C.
\end{equation}
\end{prop}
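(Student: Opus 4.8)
The plan is to restrict the Monge--Amp\`{e}re equation \eqref{eq-MAhatu} to $V$, recognize it there as a conical complex Monge--Amp\`{e}re equation with uniformly controlled data, and then run the regularity theory of \cite{Don12a,JMR16,Tia17,She15}, keeping track that every constant is inherited from the uniform bounds of the preceding propositions. First I would localize: since $V\Subset M\setminus E$, in a neighbourhood of $\overline V$ the only component of $B_{(\epsilon,t)}$ that is met is $H$, with cone angle $2\pi\beta$ where $\beta=1-\tfrac{1-t}{m}$, and in \eqref{eq-MAhatu} the factors $\|s_i\|_{\kappa_i}^{2(b_i+t\epsilon\theta_i+(1-t)\theta_i)}$ are smooth, nowhere vanishing and uniformly bounded in every $C^k(V)$ independently of $\epsilon$ (the exponents stay in a fixed compact subset of $(0,1)$ and the sections $s_i$ do not vanish on $V$). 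Comparing \eqref{eq-MAhatu} with \eqref{eq-hchivol}, on $V$ the equation reads
\[
(\hat\chi_0+\sddb\hat u)^n=F_\epsilon\,\hat\chi_0^n,\qquad F_\epsilon=e^{-t\hat u}\,G_\epsilon,\quad \|G_\epsilon\|_{C^k(V)}\le C_k\ \text{ for all }k,
\]
with $C_k$ independent of $\epsilon$.

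By Proposition \ref{prop-C0est}, $\|\hat u\|_{L^\infty(V)}\le C$; by Proposition \ref{prop-C2lb} and the corollary following it, $C^{-1}\hat\chi_0\le\omega_{(\epsilon,t)}\le C\hat\chi_0$ on $V$; all uniformly in $\epsilon$. Hence on $V$ we are dealing with a complex Monge--Amp\`{e}re equation that is uniformly elliptic with respect to the \emph{fixed} conical background $\hat\chi_0$ --- which has bisectional curvature bounded above and, for $\alpha<\beta^{-1}-1$, is smooth in the sense of Donaldson's weighted H\"{o}lder spaces modelled on $\C_\beta\times\C^{n-1}$ --- and whose right-hand side is uniformly bounded in $L^\infty$ and, a posteriori, uniformly H\"{o}lder. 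The uniform $C^0$ bound on $\hat u$ together with the uniform two-sided bound on $\omega_{(\epsilon,t)}$ then feed the conical Evans--Krylov estimate of \cite{Tia17,She15} to give a uniform weighted $C^{2,\alpha_0}$-bound, i.e.\ $\|\omega_{(\epsilon,t)}\|_{C^{\alpha_0}(V')}\le C$, for some small $\alpha_0>0$ on a slightly larger $V\Subset V'\Subset M\setminus E$.

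Once the metric coefficients are uniformly $C^{\alpha_0}$, differentiating \eqref{eq-MAhatu} shows that $\hat u$ solves a linear equation whose coefficients and inhomogeneous term are uniformly H\"{o}lder (the $\epsilon$-dependence enters only through the uniformly bounded $\hat u$ and the fixed, non-degenerate exponents), and Donaldson's interior Schauder estimate on the weighted spaces --- valid precisely for $\alpha<\beta^{-1}-1$ --- then bootstraps the bound to every such $\alpha$, after shrinking $V'$ back to $V$. Since all constants depend only on $M$, $V$, $t$ and the uniform bounds above, they are independent of $\epsilon$, which is what the statement asserts.

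The hard part is the conical Evans--Krylov step. Because the background metric degenerates along $H$, its coefficients fail to be classically differentiable there, and the derivatives transverse to $H$ need not a priori lie in a good H\"{o}lder class, so the classical Evans--Krylov theorem does not apply directly. Following \cite{Tia17,She15}, the way around this is to establish a Krylov-type oscillation-decay estimate for the second derivatives of $\hat u$ directly in the weighted spaces, exploiting the exact scaling invariance of the model cone together with the concavity of $\log\det$, while the transverse derivatives are controlled using the equation itself and the two-sided metric bound. Granting that estimate, the remaining bootstrapping is routine conical Schauder theory.
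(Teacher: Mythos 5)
The paper offers no proof for this proposition beyond a single sentence citing \cite{Tia17,She15} and noting that the local $C^2$ estimate from Proposition \ref{prop-C2lb} feeds into those references. Your sketch is a faithful and correct unpacking of that one-line hint: you localize away from $E$ so only the $H$-cone remains, feed the uniform $C^0$ bound (Proposition \ref{prop-C0est}) and the two-sided metric equivalence $C^{-1}\hat\chi_0\le\omega_{(\epsilon,t)}\le C\hat\chi_0$ into the conical regularity theory, and observe that all constants are inherited from $\epsilon$-independent data. This is essentially the same route the paper intends.

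One small inaccuracy worth flagging: you describe the key step in \cite{Tia17} as a ``Krylov-type oscillation-decay estimate.'' In fact \cite{Tia17} is a Calabi-type \emph{third-order} estimate obtained by a maximum-principle argument applied to $|\nabla^3 u|^2$ adapted to the conical background, while \cite{She15} is the one closer to a conical Evans--Krylov/Schauder $C^{2,\alpha}$ argument. Either route works here given the uniform inputs you identify, but the characterization of the references should be swapped. Also, when you write the localized equation as $(\hat\chi_0+\sddb\hat u)^n=F_\epsilon\hat\chi_0^n$, note that the paper's \eqref{eq-MAhatu} has $\hat\chi_\epsilon=\hat\chi_0-\epsilon\sddb\kappa$ on the left; you should absorb $-\epsilon\kappa$ into the potential (i.e.\ replace $\hat u$ by $\hat u-\epsilon\kappa$), which is harmless since $\kappa$ is smooth and bounded, but should be stated. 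Neither point affects the validity of the argument.
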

Letting $\epsilon\rightarrow 0$, we can use the above uniform estimates to take limit of $u_{(t,\epsilon)}$ and obtain:
\begin{thm}\label{thm-mainweakKE}
Let $X$ be an admissible $\bQ$-Fano variety and use the above notations.  Then the following statements are true.

{\rm (1)}
If $X$ is uniformly K-stable, then $X$ admits a weak K\"{a}hler-Einstein metric $\omega_{(0,1)}$ which is smooth on $X^{\reg}$.

{\rm (2)}
Assume that $X$ is K-semistable. Let $\mu: M\rightarrow X$ be an admissible resolution. Choose $m\gg 1$ sufficiently divisible and let $H\in \left|m(\mu^*K_X^{-1}-\sum_i \theta_i E_i)\right|$ be a smooth divisor such that $H+\sum_i E_i$ is simple normal crossing. Put
\begin{equation}\label{eq-HX}
H'\,=\,H\,+\,m\,\sum_i \,\theta_i E_i\,=\,\mu^* H_X.
\end{equation}
Then for any $t\in (0,1)$, $(X, \frac{1-t}{m} H_X)$ has a weak conical K\"{a}hler-Einstein metric $\omega_{(0,t)}$. Moreover, $\omega_{(0,t)}$ is a strong conical K\"{a}hler-Einstein metric on $(X^{\reg}, \frac{1-t}{m} \left.H_X\right|_{X^{\rm reg}})$.

\end{thm}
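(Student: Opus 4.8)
The plan is to produce $\omega_{(0,t)}$ (and, for (1), $\omega_{(0,1)}$) as the limit as $\epsilon\to0^{+}$ of the strong conical K\"{a}hler--Einstein metrics $\omega_{(\epsilon,t)}=\chi_\epsilon+\sddb\vphi_{(\epsilon,t)}$ supplied by Proposition~\ref{prop-semiexist}, exploiting that every a priori estimate established above is uniform in $\epsilon$. Write $u_{(\epsilon,t)}=\vphi_{(\epsilon,t)}-\psi_\epsilon$; by \eqref{eq-psieps} the reference data converge, $\psi_\epsilon=(1-\epsilon)\psi_0+\epsilon\psi_1\to\psi_0$ uniformly and $\chi_\epsilon=(1-\epsilon)\chi_0+\epsilon\chi_1\to\chi_0$. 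By Proposition~\ref{prop-C0est} we have $\|u_{(\epsilon,t)}\|_\infty\le C$ with $C$ independent of $\epsilon$, so along a sequence $\epsilon_j\to0$ the functions $u_{(\epsilon_j,t)}$ converge in $L^1(M)$ and almost everywhere to a bounded function $u_{(0,t)}$ which is $\chi_0$-plurisubharmonic; equivalently $\vphi_{(0,t)}:=\psi_0+u_{(0,t)}\in PSH_\infty(L_0)$ with $L_0=\mu^*K_X^{-1}$. The uniform $L^\infty$-bound is precisely what prevents loss of Monge--Amp\`{e}re mass onto the exceptional locus $E$.

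Second, I would identify and solve the limiting equation. Rewriting the right side of \eqref{eq-MAref} as in \eqref{eq-Omegept}, namely as $e^{-tu_{(\epsilon,t)}}\,\Omega(\epsilon,t)$ with $\Omega(\epsilon,t)$ the smooth volume form $\Omega$ divided by $\|s_H\|^{2(1-t)/m}\|s_\theta\|^{2(t\epsilon+1-t)}\prod_i\|s_i\|^{2b_i}$, one checks that $\Omega(\epsilon,t)\to\Omega(0,t)=f_0\,\Omega$ with $f_0\in L^{p}(M,\Omega)$ for some $p>1$: the density has poles of order $b_i+(1-t)\theta_i$ along each $E_i$ and $(1-t)/m$ along $H$, and the choices \eqref{eq-choosethetam} ($\theta_i\le(1-b_i)/2$, $m\ge2$, $t<1$) force $b_i+(1-t)\theta_i<(1+b_i)/2<1$, so such a $p>1$ exists. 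Combining the uniform $L^\infty$-bound with the continuity of the complex Monge--Amp\`{e}re operator along uniformly bounded, $L^1$-convergent sequences (Bedford--Taylor theory, in the form of \cite{GZ07,BBEGZ}) gives $(\chi_0+\sddb u_{(0,t)})^n=e^{-tu_{(0,t)}}\,\Omega(0,t)$ as measures on $M$. Using $\mu^*H_X=H+m\sum_i\theta_iE_i$ (see \eqref{eq-HX}) and $K_M=\mu^*K_X+\sum_ia_iE_i$ from \eqref{eq-KMKX}, a routine bookkeeping of coefficients shows this is exactly the Monge--Amp\`{e}re equation of a weak conical K\"{a}hler--Einstein metric on $(X,\tfrac{1-t}{m}H_X)$ pulled back to $M$; since $u_{(0,t)}\in L^\infty$ and $X^{\sing}$ has codimension $\ge2$ in the normal variety $X$, the currents $\omega_{(\epsilon,t)}$ descend in the limit, through the isomorphism $\mu\colon M\setminus E\xrightarrow{\sim}X^{\reg}$, to a weak conical K\"{a}hler--Einstein metric $\omega_{(0,t)}$ on $(X,\tfrac{1-t}{m}H_X)$ with bounded local potentials.

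Third, for the regularity on $X^{\reg}$ I would invoke the uniform interior estimates. By Proposition~\ref{prop-C2lb} and its corollary, together with the local $C^{2,\alpha}$-estimate of \cite{Tia17,She15} recalled above, on every $V\Subset M\setminus E$ the metrics $\omega_{(\epsilon,t)}$ are two-sided uniformly comparable to the fixed conical metric $\hat\chi_0$ and uniformly bounded in $C^{2,\alpha}$ for $\alpha<(1-\tfrac{1-t}{m})^{-1}-1$; differentiating the Monge--Amp\`{e}re equation then yields uniform $C^{k,\alpha}_{\rm loc}$ bounds on $M\setminus(E\cup H)$ for every $k$. Hence $\omega_{(\epsilon_j,t)}\to\omega_{(0,t)}$ in $C^{2,\alpha}_{\rm loc}(M\setminus E)$, and in $C^{\infty}_{\rm loc}$ away from $H$, so $\omega_{(0,t)}$ is a genuine strong conical K\"{a}hler--Einstein metric on $(M\setminus E,\tfrac{1-t}{m}H)$; transporting through $\mu$ gives the last assertion of (2). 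For $t\in(0,1)$ the minimizer $\vphi_{(0,t)}$ of $\cD_{B_{(0,t)}}$ is unique, so in fact the whole family converges and $\omega_{(0,t)}$ is well defined. For part (1) one runs the identical argument at $t=1$: then $\tfrac{1-t}{m}H$ drops out, $B_{(\epsilon,1)}=\sum_i(b_i+\epsilon\theta_i)E_i$, the uniform K-stability of $(M,B_{(\epsilon,1)})$ needed in Proposition~\ref{prop-semiexist} is provided by Proposition~\ref{prop-tstable}(2), the bound $Ric(\omega_{(\epsilon,1)})\ge\tfrac12\omega_{(\epsilon,1)}$ still holds so Propositions~\ref{prop-C0est}--\ref{prop-C2lb} apply, and the limit $\omega_{(0,1)}$ is a weak K\"{a}hler--Einstein metric on $X$, smooth on $X^{\reg}\cong M\setminus E$.

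Although the estimates assembled above do most of the work, I expect the delicate point to be the pluripotential-theoretic passage to the limit \emph{across the degeneration of cohomology classes} $2\pi c_1(L_\epsilon)\to2\pi c_1(\mu^*K_X^{-1})$: the limiting class is only semi-ample and big, not K\"{a}hler, so one must work with non-pluripolar products and finite-energy classes rather than a fixed K\"{a}hler reference form, and verify with care that the uniform $L^\infty$-bound of Proposition~\ref{prop-C0est} rules out concentration of Monge--Amp\`{e}re mass on $E$, that $\Omega(0,t)$ has $L^p$ density for some $p>1$ so the limit sits in the correct energy class, and that the limiting potential genuinely represents the claimed weak conical K\"{a}hler--Einstein metric on $X$ and not merely a solution over $X^{\reg}$. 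Once these points are pinned down, the remaining steps are routine.
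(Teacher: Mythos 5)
Your argument is essentially the route the paper takes: the paper itself gives only a one-sentence proof (``Letting $\epsilon\to 0$, we can use the above uniform estimates to take limit of $u_{(t,\epsilon)}$ and obtain''), relying on exactly the chain you assemble — the uniform $L^\infty$ bound of Proposition~\ref{prop-C0est}, the uniform two-sided $C^2$ comparison with $\hat\chi_0$ from Proposition~\ref{prop-C2lb} and its corollary, the local $C^{2,\alpha}$ bound from \cite{Tia17,She15}, and then bootstrapping to $C^\infty_{\rm loc}$ away from $H$; part (1) at $t=1$ uses Proposition~\ref{prop-tstable}(2) in place of (1) to supply the needed uniform K-stability, as you note. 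Your worry in the last paragraph about degeneration of the reference class is legitimate but, in this setting, softer than it looks: the $L^\infty$-bound forces the limit potential to be bounded $\chi_0$-psh, so its Monge--Amp\`{e}re measure puts no mass on the pluripolar set $E\cup H$, and the total mass $(2\pi L_\epsilon)^n\to(2\pi L_0)^n$ is conserved; the identification of the limiting equation then follows from the local $C^{2,\alpha}$ convergence on $M\setminus E$ (not from Bedford--Taylor continuity along $L^1$-convergent sequences, which as you implicitly acknowledge is not available in that generality). Your check that $b_i+(1-t)\theta_i<(1+b_i)/2<1$ under \eqref{eq-choosethetam} is correct and is the reason the density $\Omega(0,t)/\Omega$ is $L^p$ for some $p>1$. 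Overall this matches the paper's (unwritten) proof.
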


\section{Conical metric structure on $X$}\label{sec-conicX}

\subsection{Gromov-Hausdorff compactness and gauge fixing}

In this section, we assume $X$ is K-semistable. Fix $t\in (0,1)$. By Proposition \ref{prop-semiexist}, $(M, B_{(\epsilon,t)})$ admits a strong conical K\"{a}hler-Einstein metric $\omega_{(\epsilon,t)}$ for $\epsilon$ sufficiently small.

\begin{prop}[Diameter bound]
For any $t\in (0,1)$, $\diam(M, \omega_{(\epsilon,t)})\le C$.
\end{prop}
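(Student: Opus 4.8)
The plan is to derive the diameter bound from the Ricci lower bound $Ric(\omega_{(\epsilon,t)})\ge t\,\omega_{(\epsilon,t)}$ that holds for every $\epsilon$, combined with the uniform Sobolev/volume control already recorded in Proposition \ref{prop-Sob} and the uniform $L^\infty$-estimate of Proposition \ref{prop-C0est}. Since $\omega_{(\epsilon,t)}$ is a strong conical K\"ahler--Einstein metric on the log smooth pair $(M,B_{(\epsilon,t)})$ with $U=M\setminus(H\cup\bigcup_i E_i)$ geodesically convex, one can run the classical Riemannian arguments on the smooth locus and then pass to the metric completion. First I would fix the cohomology class: the volume $\Vol(M,\omega_{(\epsilon,t)})=(2\pi L_\epsilon)^n$ is uniformly bounded above and below in $\epsilon$ (as $\epsilon\to 0$ it converges to $(2\pi L_0)^n=(2\pi\mu^*K_X^{-1})^n>0$), so we have uniform two-sided volume control independent of $\epsilon$.

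The core of the argument is a Myers-type diameter estimate. Because $Ric(\omega_{(\epsilon,t)})\ge t\,\omega_{(\epsilon,t)}$ with $t>0$ fixed, and because geodesics between points of the geodesically convex set $U$ stay in $U$ where the metric is genuinely smooth, the standard second-variation / Bonnet--Myers computation applies verbatim to give $\diam(U,\omega_{(\epsilon,t)})\le \pi\sqrt{(2n-1)/t}$; this passes to the completion and hence bounds $\diam(M,\omega_{(\epsilon,t)})$ by the same constant, which is manifestly independent of $\epsilon$. An alternative, if one is wary of the cone singularities affecting the index form, is to avoid Myers entirely: use the uniform Sobolev inequality of Proposition \ref{prop-Sob} together with Moser iteration on the Green's function (or on the equation $\Delta_{\omega_{(\epsilon,t)}}u = n - \tr_{\omega_{(\epsilon,t)}}\chi_\epsilon$ from \eqref{eq-MAref}) to get a non-collapsing lower volume bound on balls of a fixed radius, which combined with the fixed total volume forces a uniform diameter bound. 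Proposition \ref{prop-Sob} already packages exactly this: its proof invokes the diameter bound from Myers' theorem (valid since $M\setminus B$ is geodesically convex) as an input, so in fact the diameter bound is more basic than the Sobolev inequality and should just be extracted from the same Myers-type reasoning.

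The step I expect to be the main technical point is verifying that the singular cone directions along $H$ and the $E_i$ do not obstruct the Bonnet--Myers comparison — i.e.\ that a minimizing geodesic realizing the diameter can be taken to lie in the geodesically convex smooth locus $U$, so that the classical index-form argument is legitimate. This is precisely where condition (3) in the definition of a strong conical K\"ahler metric (geodesic convexity of $M\setminus B$) is used, exactly as in the proof of Proposition \ref{prop-Sob}; one notes that any two points can be joined by a minimizing geodesic in the completion, that geodesics through the convex open set avoid $B$, and that the conical metric is quasi-isometric to the model cone near $B$ with H\"older-continuous curvature, so the second variation formula is available along such geodesics. Once that is granted, the uniformity in $\epsilon$ is automatic since the only quantity entering the bound is the fixed constant $t\in(0,1)$. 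I would therefore present the proof as: (i) recall $Ric(\omega_{(\epsilon,t)})\ge t\,\omega_{(\epsilon,t)}$ and the geodesic convexity of $U$; (ii) invoke the Myers-type estimate on $U$, exactly as in the proof of Proposition \ref{prop-Sob}, to get $\diam\le \pi\sqrt{(2n-1)/t}$; (iii) conclude the bound is uniform in $\epsilon$ because it depends only on $t$.
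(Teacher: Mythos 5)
Your proposal is correct and matches the paper's argument: the paper simply notes that $\omega_{(\epsilon,t)}$ is a strong conical K\"ahler--Einstein metric (hence $Ric\ge t\omega_{(\epsilon,t)}$ with $t>0$ fixed), that the regular part is geodesically convex, and then invokes the usual proof of Myers' theorem, giving a bound depending only on $t$ and $n$. Your additional remarks about the volume alternative are unnecessary but not wrong, and you correctly identify that the Sobolev inequality's proof itself relies on this same Myers-type step, so the diameter bound is the more primitive fact.
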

\begin{proof}
By Theorem B in \cite{GP16}, $\omega_{(\epsilon,t)}$ is a strong conical K\"{a}hler-Einstein metric.
The regular part of $(M, \omega_{(\epsilon,t)})$, whose metric completion is $M$, is geodesically convex. The usual proof of Myer's theorem applies.
\end{proof}

As $\epsilon_i\rightarrow 0$, by possibly taking a subsequence, $(M, \omega_{(\epsilon_i, t)})$ converges to a compact metric space, which will be denoted by $(X_{(0,t)}, d_{(0,t)})$ or simplify by $(X_t, d_t)$, in the Gromov-Hausdorff topology. Moreover, we have:
\begin{enumerate}
\item
$M\setminus E$, $\omega_{(\epsilon_j,t)}$ converges in $C^{2,\alpha,\beta}$ norm to $\omega_{(0,t)}$ and $\omega_{(0,t)}$ is a strong conical K\"{a}hler-Einstein metric on $(M\setminus E, (1-t)m^{-1}H)$. We will denote by $g_{(0,t)}$ the associated metric tensor.
\item
 On the open set $M\setminus (E\cup H)$, $\omega_{(\epsilon_j,t)}$ converges to $\omega_{(0,t)}$ smoothly and $\omega_{(0,t)}$ is a smooth K\"{a}hler-Einstein metric.
\end{enumerate}

Let $\cS$ denote the set of points where at least one metric tangent cone is not $\bR^n$ and $\cS_k$ denote the set of points where no tangent cone splits $\bR^{k+1}$.
Denote by $\cR=X_t\setminus \cS$ the set of regular points.
By Theorem 1.6 in \cite{TW18} of the extension of Cheeger-Colding-Tian's theory to the conical K\"{a}hler
setting, we can get
\begin{enumerate}
\item $\dim_{\bR}(\cS)\le 2n-2$;
\item $\cS_{2k-1}=\cS_{2k}$.
\end{enumerate}

Define the set:
\[
S_B=\left\{x\in X_t; \text{ there exist } \{y_j\}_{j=1}^{+\infty}\subset \supp(B) \text{ with } y_j\stackrel{d_{\rm GH}}{\longrightarrow} x, \text{ as } j\rightarrow+\infty\right\}.
\]

\[
S_E=\left\{x\in X_t; \text{ there exist } \{y_j\}_{j=1}^{+\infty}\subset E \text{ with } y_j\stackrel{d_{\rm GH}}{\longrightarrow} p, \text{ as } j\rightarrow +\infty\right\}.
\]

\[
S_H=\left\{x\in X_t; \text{ there exists } \{y_j\}_{j=1}^{+\infty}\subset H \text{ with } y_j\stackrel{d_{\rm GH}}{\longrightarrow} p, \text{ as } j\rightarrow +\infty\right\}.
\]
The following almost gauge fixing theorem can be proved in the same way as the work of Rong-Zhang:
\begin{prop}[{\cite[Theorem 4.1]{RZ11}}]\label{prop-agf}
There is a continuous surjection $$f_t: \overline{(M\setminus \supp(B), d_{g_{(0,t)}})}\rightarrow (X_t, d_t)$$ such that $f_t: (M\setminus \supp(B), d_{g_{(0,t)}})\rightarrow (X_t\setminus S_B, d_t)$ is a homeomorphism and a local isometry. In other words, for any $y\in M\setminus \supp (B)$, there is an open neighborhood of $y$, $U\subset M\setminus \supp(B)$, such that $f_t: (U, d_{g_{(0,t)}}|_U)\rightarrow (f_t(U), d_t|_{f(U)})$ is an isometry.
\end{prop}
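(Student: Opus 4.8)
The plan is to follow the gauge--fixing argument of Rong--Zhang \cite{RZ11} (compare also \cite{Son14,NTZ15}), adapting it to the present mixed conical/degenerating situation; the extra ingredient beyond \cite{RZ11} is the uniform control coming from Step 3 and the local estimates of this section, which keeps the geometry non-collapsed away from $E$. The inputs I would use are: the $C^{2,\alpha,\beta}$-convergence $\omega_{(\epsilon_j,t)}\to\omega_{(0,t)}$ on $M\setminus E$ (smooth on $M\setminus(E\cup H)$); the uniform two-sided bound $C^{-1}\hat{\chi}_0\le\omega_{(\epsilon_j,t)}\le C\hat{\chi}_0$ on compact subsets of $M\setminus E$ from Proposition \ref{prop-C2lb} and its corollary; the uniform $L^\infty$-estimate $\|\vphi_{(\epsilon_j,t)}-\psi_{\epsilon_j}\|_\infty\le C$ of Proposition \ref{prop-C0est}; the uniform diameter bound; and $Ric(\omega_{(0,t)})\ge\frac{t}{2}\omega_{(0,t)}$, so that $(M\setminus E,d_{g_{(0,t)}})$ carries uniform local volume lower bounds. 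Fixing $\delta_j$-Gromov--Hausdorff approximations $\Phi_j: (M,\omega_{(\epsilon_j,t)})\to(X_t,d_t)$ with $\delta_j\to0$, I would first choose a countable dense set $\{x_k\}\subset M\setminus\supp(B)$ and, by compactness of $X_t$ and a diagonal argument, pass to a further subsequence so that $\Phi_j(x_k)\to\xi_k\in X_t$ for every $k$; this defines $f_t$ on $\{x_k\}$.

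The key local step is to show $f_t$ extends to $M\setminus\supp(B)$ as a continuous local isometry. For this I would establish a ``no short geodesics escape'' lemma: since on any compact $K\Subset M\setminus E$ the metrics $\omega_{(\epsilon_j,t)}$ are uniformly comparable to the fixed conical metric $\hat{\chi}_0$, each point $y\in M\setminus\supp(B)$ admits a radius $r=r(y)>0$ and an index $j_0(y)$ such that for $j\ge j_0(y)$ every minimizing $\omega_{(\epsilon_j,t)}$-geodesic joining points of the $\omega_{(0,t)}$-metric ball $B(y,r)$ stays inside a $j$-independent compact subset of $M\setminus\supp(B)$; together with the $C^\infty$-convergence this forces $d_{\omega_{(\epsilon_j,t)}}(z,z')\to d_{g_{(0,t)}}(z,z')$ uniformly on $B(y,r)$. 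Feeding this into the defining inequality of $\Phi_j$ gives $d_t(f_t(z),f_t(z'))=d_{g_{(0,t)}}(z,z')$ for $z,z'\in B(y,r)$, so that $f_t$ is well defined (independent of $\{x_k\}$), continuous, and an isometry on small balls. The identical argument in the conical $C^{2,\alpha,\beta}$-topology works across $H$, so $f_t$ in fact extends to a continuous local isometry on $(M\setminus E,d_{g_{(0,t)}})$; being a local isometry of length spaces, $f_t$ is $1$-Lipschitz.

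Next I would pass to the metric completion and prove surjectivity. Because $\omega_{(0,t)}$ collapses $E$ (the class $[L_0]=[\mu^*(-K_X)]$ is pulled back from $X$), a fixed neighbourhood of each connected component $E_i$ of $E$ has $d_{g_{(0,t)}}$-diameter tending to $0$, and near $H$ the conical metric is already complete; hence $\overline{(M\setminus\supp(B),d_{g_{(0,t)}})}$ is $M\setminus E$ together with one extra point $\bar x_i$ per component $E_i$, and the $1$-Lipschitz map $f_t$ extends continuously to it. For surjectivity, write $\xi\in X_t$ as $\lim_j\Phi_j(y_j)$ with $y_j\in M$; perturbing $y_j$ slightly into $M\setminus\supp(B)$ (possible by density, keeping $\Phi_j(y_j)\to\xi$) and passing to a subsequence, either $y_j\to y_\infty\in M\setminus E$ in the manifold topology, in which case $\xi=f_t(y_\infty)$, or $y_j$ leaves every compact subset of $M\setminus E$; in the latter case the uniform collapsing near $E$ forces $\{y_j\}$ to be $d_{g_{(0,t)}}$-Cauchy with limit some $\bar x_i$, and $\xi=f_t(\bar x_i)$.

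Finally, for the homeomorphism statement: $f_t$ is injective on $M\setminus\supp(B)$, for if $x\ne y$ there with $f_t(x)=f_t(y)$, then $d_{\omega_{(\epsilon_j,t)}}(x,y)\to0$, contradicting the uniform positive lower bound $d_{\omega_{(\epsilon_j,t)}}(x,\cdot)\ge\rho(x)>0$ outside a fixed compact neighbourhood of $x$, which comes from $\omega_{(\epsilon_j,t)}\ge c\,\hat{\chi}_0$ there. The image is exactly $X_t\setminus S_B$: the same lower bound shows $d_{\omega_{(\epsilon_j,t)}}(x,\supp(B))\ge\rho(x)>0$, so $f_t(x)$ is not a limit of images of $\supp(B)$, i.e.\ $f_t(M\setminus\supp(B))\cap S_B=\emptyset$; conversely, by the surjectivity above the remaining points of $X_t$ are images of $H$ and of the $\bar x_i$, and a constant-sequence (resp.\ shrinking-neighbourhood) argument identifies these images as lying in $S_H$ (resp.\ $S_E$), so $X_t\setminus f_t(M\setminus\supp(B))=S_H\cup S_E=S_B$. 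The resulting continuous bijection $M\setminus\supp(B)\to X_t\setminus S_B$ is then a homeomorphism by the openness of local isometries of locally compact geodesic spaces. I expect the main obstacle to be exactly the quantitative control near $\supp(B)$, uniform in $\epsilon_j$ --- the ``no short geodesics escape'' lemma, the uniform interior distance lower bounds, and the uniform shrinking of neighbourhoods of the $E_i$ --- since these simultaneously underlie the local isometry property, the injectivity on the interior, and the identification of the image; one also has to verify along the way that $S_B$, $S_H$, $S_E$ are closed, so that ``escaping to the divisor'' is detected at the level of the limit space.
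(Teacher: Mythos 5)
Your approach — take Gromov--Hausdorff approximations $\Phi_j$, define $f_t$ on a dense set as the limit of $\Phi_j$, establish a ``no short geodesics escape'' lemma from the uniform two-sided comparison with $\hat{\chi}_0$ and the $C^{2,\alpha,\beta}$-convergence, and deduce that $f_t$ is a local isometry onto $X_t\setminus S_B$ — is exactly the gauge-fixing scheme of Rong--Zhang that the paper invokes (the paper gives no proof, only the citation of \cite[Theorem 4.1]{RZ11}), and that part of your argument is sound. The local-isometry step correctly uses the global lower bound $\omega_{(\epsilon_j,t)}\ge C^{-1}\hat{\chi}_0$ of Proposition~\ref{prop-C2lb} to confine short minimizing geodesics, and the identification $f_t(M\setminus\supp(B))\cap S_B=\emptyset$ via the uniform interior distance lower bound is also correct.

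There is, however, a genuine gap in the extension/surjectivity step. You assert that ``a fixed neighbourhood of each connected component $E_i$ of $E$ has $d_{g_{(0,t)}}$-diameter tending to $0$'' and conclude that $\overline{(M\setminus\supp(B),d_{g_{(0,t)}})}$ is $M\setminus E$ with one extra point $\bar{x}_i$ per component $E_i$. This is false in general: since $\mu:M\to X$ is a resolution with $\mu(E)=X^{\sing}$, and $X^{\sing}$ need only have codimension $\ge 2$, a component $E_i$ is contracted to a (possibly positive-dimensional) subvariety $\mu(E_i)\subset X^{\sing}$, not to a point (take, e.g., $X=Y\times Z$ with $Y$ a surface with a quotient singularity and $Z$ smooth Fano, which is admissible by example (v)). Because $[\omega_{(0,t)}]=\mu^*c_1(-K_X)$, the metric degenerates only in the $\mu$-fiber directions along $E_i$, so the diameter of a shrinking tube around $E_i$ tends to $\diam(\mu(E_i))>0$, not to $0$. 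Consequently, in your surjectivity argument, a sequence $y_j$ leaving every compact subset of $M\setminus E$ need \emph{not} be $d_{g_{(0,t)}}$-Cauchy, and the completion adjoins a positive-dimensional boundary rather than finitely many points. To repair this one must instead establish (i) compactness of $\overline{(M\setminus\supp(B),d_{g_{(0,t)}})}$ (nontrivial, since the upper bound on $\omega_{(\epsilon,t)}$ by $\hat{\chi}_0$ is only local in $M\setminus E$), and (ii) density of $f_t(M\setminus\supp(B))=X_t\setminus S_B$, which ultimately rests on the volume non-collapsing / regularity input ($\cR$ dense, together with the identification $S_B=\cS$ that the paper proves in Proposition~\ref{prop-cSSB}) rather than on any pointwise collapsing of $E$; this is closer to what \cite{RZ11} and \cite{Son14} actually do.
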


Because $\omega_{(\epsilon_j,t)}$ converges to a smooth K\"{a}hler-Einstein metric $\omega_{(0,t)}$ on $M\setminus \supp(B)=M\setminus (E\cup H)$, we have $X_t\setminus S_B \subseteq X_t \setminus \cS$, or equivalently $S_B\supseteq \cS$. On the other hand,  we have: 
\begin{prop}\label{prop-cSSB}
$S_B \subseteq \cS$. As a consequence, $\cS=S_B$.
\end{prop}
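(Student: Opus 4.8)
We already know $\cS\subseteq S_B$, so it remains to prove $S_B\subseteq\cS$, and the plan is to do this by a volume--comparison argument: I would show that every $x\in S_B$ has small local volume and hence cannot lie in the smooth regular locus $\cR$. So fix $x\in S_B$, a sequence $\epsilon_j\to 0$, and points $y_j\in\supp(B_{(\epsilon_j,t)})=E\cup H$ with $y_j\to x$ in the Gromov--Hausdorff sense.

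The first step is to record a uniform gap in the cone angles. Reading off the coefficients of $B_{(\epsilon,t)}$ in \eqref{eq-Bept} --- namely $(1-t)/m$ along $H$ and $b_i+(1-t)\theta_i+t\epsilon\theta_i\ge(1-t)\theta_i$ along each $E_i$ --- one obtains a constant $c=c(t)\in(0,1)$, \emph{independent of $\epsilon$} and of the chosen component, such that every component of $\supp(B_{(\epsilon,t)})$ carries cone angle at most $2\pi(1-c)$. Since $\omega_{(\epsilon,t)}$ is a strong conical K\"ahler--Einstein metric, its metric tangent cone at any $y\in\supp(B_{(\epsilon,t)})$ is a product of Euclidean factors with at least one two--dimensional cone factor of angle $\le 2\pi(1-c)$ (the local picture near $y$ is not affected by the fact that $E$ collapses as $\epsilon\to0$), so the volume density there is $\le 1-c$.

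Next I would use volume comparison and pass to the limit. Because $Ric(\omega_{(\epsilon,t)})=t\,\omega_{(\epsilon,t)}>0$ and the regular part of $(M,\omega_{(\epsilon,t)})$ is geodesically convex, the Bishop--Gromov monotonicity holds (as in the proof of Proposition \ref{prop-Sob}; see also \cite{TW17b}): $r\mapsto \vol_{\omega_{(\epsilon,t)}}(B(y,r))/(\omega_{2n}r^{2n})$ is non--increasing with limit equal to the density at $y$ as $r\to0^+$, where $\omega_{2n}$ is the volume of the Euclidean unit ball in $\bR^{2n}$. Combined with the previous paragraph this gives $\vol_{\omega_{(\epsilon_j,t)}}(B(y_j,r))\le(1-c)\,\omega_{2n}\,r^{2n}$ for all $r>0$ and all $j$. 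The convergence $(M,\omega_{(\epsilon_j,t)},y_j)\to(X_t,d_t,x)$ is non--collapsed, since $\diam(M,\omega_{(\epsilon_j,t)})\le C$ and $\vol(\omega_{(\epsilon_j,t)})=(2\pi L_{\epsilon_j})^n\to(2\pi L_0)^n>0$; hence, by the measured Gromov--Hausdorff convergence for conical K\"ahler--Einstein limits from \cite{TW17b}, $\vol_{X_t}(B(x,r))\le\liminf_j\vol_{\omega_{(\epsilon_j,t)}}(B(y_j,r))\le(1-c)\,\omega_{2n}\,r^{2n}$ for all small $r$. If $x$ belonged to $\cR$, then by \cite{TW17b} a neighborhood of $x$ in $X_t$ would be a smooth Riemannian manifold on which the limit measure is the Riemannian volume, whence $\vol_{X_t}(B(x,r))=(1+o(1))\,\omega_{2n}\,r^{2n}$ as $r\to0$, contradicting the previous bound. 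Therefore $x\in\cS$; combined with $\cS\subseteq S_B$ this gives $\cS=S_B$.

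The coefficient bookkeeping in the first paragraph is routine; the real content, and the place that needs care, is ensuring that Bishop--Gromov monotonicity and Colding--type volume continuity hold \emph{uniformly} for this family of strong conical K\"ahler--Einstein metrics, whose conical loci $E\cup H$ partly collapse as $\epsilon\to0$. This is exactly where one invokes the geodesic convexity of the regular parts and the conical analogue of Cheeger--Colding theory developed in \cite{TW17b}, rather than any approximation of $\omega_{(\epsilon,t)}$ by smooth metrics; once those two inputs are granted, the argument is short.
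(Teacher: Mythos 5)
Your proof is correct and takes essentially the same approach as the paper's: a volume-comparison contradiction, using the uniform upper bound on the volume density at conical points (via Bishop--Gromov from a cone point of the strong conical K\"ahler--Einstein metric) against the Euclidean volume growth that would have to hold at a regular point of the limit. The only cosmetic difference is the direction in which you apply Colding's volume convergence: the paper transfers a volume lower bound from $x\in\cR$ back to the sequence at $y_j$, while you transfer the upper bound at $y_j$ forward to the limit at $x$; these are two equivalent ways of running the same argument.
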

\begin{proof}
Prove by contradiction. Suppose there exist $x\in \cS_B\cap \cR$ and $y_j\in B\subset (M, \omega_{(\epsilon_j,t)})$ converging to $x\in (X_t,d_t)$. Then for any sufficiently small $\epsilon>0$, there exists $r_0>0$ sufficiently small such that:
\[
\Vol(B_{d_t}(x, r_0))>(1-\epsilon) \Vol(B_{Euc}(0,r_0)).
\]
By Colding's volume convergence result, if $j$ is sufficiently large, then we have:
\begin{equation}\label{eq-volBzj}
\Vol(B_{g_j}(y_j, r))\ge \Vol(B_{Euc}(0,r))(1-2\epsilon).
\end{equation}
On the other hand, if $y_j\in \supp(B)$, then by the Bishop-Gromov volume comparison, for any $r>0$ we have:
\[
\Vol(B_{g_j}(y_j, r))\le \Vol(B_{Euc}(0,r)) \beta_B,
\]
where $\beta_B=\max\{1-(1-t)m^{-1}, 1-b_i-t\epsilon \theta_i-(1-t)\theta_i\}<1$ since $t\in (0,1)$ and $\theta_i>0$. This contradicts \eqref{eq-volBzj} if $\epsilon$ is sufficiently small.

\end{proof}

\begin{prop}
$(X_t, d_t)$ is isometric to the metric completion $\overline{(M\setminus B_{\red}, d_{g_{(0,t)}})}$.
\end{prop}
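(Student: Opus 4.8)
The plan is to upgrade the conclusion of Proposition~\ref{prop-agf}: we will show that the continuous surjection $f_t\colon\overline{(M\setminus\supp(B),d_{g_{(0,t)}})}\to(X_t,d_t)$ produced there is in fact a global isometry. Recall first that $B_{\red}$ and $\supp(B)=H\cup\bigcup_i E_i$ denote the same closed subset of $M$, so the statement is consistent with Proposition~\ref{prop-agf}. Since $f_t$ restricted to $M\setminus\supp(B)$ is a local isometry onto its image, it preserves the length of every rectifiable curve; consequently, for $p,q\in M\setminus\supp(B)$ and any curve $\gamma$ from $p$ to $q$ in $M\setminus\supp(B)$ one gets $d_t(f_t(p),f_t(q))\le L(f_t\circ\gamma)=L_{g_{(0,t)}}(\gamma)$, and taking the infimum over $\gamma$ yields $d_t(f_t(p),f_t(q))\le d_{g_{(0,t)}}(p,q)$. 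Here we use that $(M\setminus\supp(B),d_{g_{(0,t)}})$ is a genuine length space: as $H\cup\bigcup_iE_i$ has real codimension two, removing it does not alter the intrinsic distance. Thus $f_t$ is $1$-Lipschitz on the dense set $M\setminus\supp(B)$, hence on the whole completion.

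The reverse inequality is the heart of the matter. Fix $p,q\in M\setminus\supp(B)$; by Proposition~\ref{prop-cSSB} their images lie in $X_t\setminus S_B=X_t\setminus\cS=\cR$. The key input is that a minimizing geodesic of $(X_t,d_t)$ joining two points of $\cR$ can be taken to lie entirely in $\cR$ --- the weak convexity of the regular set. This is exactly where the codimension estimate $\dim_{\bR}\cS\le 2n-2$ enters, and it follows from the extension of Cheeger--Colding and Colding--Naber's structure theory to Gromov--Hausdorff limits of conical K\"ahler--Einstein metrics developed in \cite{TW17b}. (If one only has this for a full-measure set of pairs $(p,q)$, the argument below gives the reverse inequality on a dense set, and it then extends to all pairs by continuity of both sides.) Granting a minimizing geodesic $\sigma\subset\cR$ from $f_t(p)$ to $f_t(q)$, we use that $f_t^{-1}\colon(\cR,d_t)\to(M\setminus\supp(B),d_{g_{(0,t)}})$ is a homeomorphism and a local isometry, so $\tilde\sigma:=f_t^{-1}\circ\sigma$ is a rectifiable curve in $M\setminus\supp(B)$ with $L_{g_{(0,t)}}(\tilde\sigma)=L(\sigma)=d_t(f_t(p),f_t(q))$. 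Hence $d_{g_{(0,t)}}(p,q)\le d_t(f_t(p),f_t(q))$.

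Combining the two inequalities, $f_t$ carries the dense subset $M\setminus B_{\red}$ of $\overline{(M\setminus B_{\red},d_{g_{(0,t)}})}$ bijectively and isometrically onto the dense subset $\cR$ of $(X_t,d_t)$ (density of $\cR$ is clear, since $\cS$ is closed with empty interior). An isometry between dense subsets of complete metric spaces extends uniquely to an isometry of their completions; since $(X_t,d_t)$ is complete and the extension must agree with the continuous surjection $f_t$, we conclude that $f_t\colon\overline{(M\setminus B_{\red},d_{g_{(0,t)}})}\to(X_t,d_t)$ is an isometry, which is the assertion. The one genuine obstacle in this argument is the weak convexity of $\cR$ invoked in the second paragraph; once that structural fact is in hand, the rest is formal.
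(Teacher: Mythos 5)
Your argument is correct and follows essentially the same route as the paper's, which just chains the isometries $\overline{(M\setminus\supp(B),d_{g_{(0,t)}})}\cong\overline{(f_t(M\setminus\supp(B)),d_t)}=\overline{(X_t\setminus S_B,d_t)}=\overline{(\cR,d_t)}=(X_t,d_t)$ using Propositions~\ref{prop-agf} and~\ref{prop-cSSB} together with density of $\cR$ and completeness of $(X_t,d_t)$. What you do differently, and usefully, is unpack the first isometry in that chain: Proposition~\ref{prop-agf} literally only asserts that $f_t$ is a homeomorphism and \emph{local} isometry, and upgrading this to a global isometry is exactly the content of your two inequalities. The $1$-Lipschitz direction is elementary (length preservation plus the fact that $X_t$ is a length space); the reverse direction requires that the restriction of $d_t$ to $\cR$ agrees with the intrinsic length metric on $\cR$, i.e.\ that minimizing (or almost-minimizing) curves between regular points can be taken to avoid $\cS$. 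You correctly isolate this as the one nontrivial structural input; it comes from the Cheeger--Colding/Colding--Naber almost-everywhere-convexity machinery extended to the conical K\"ahler--Einstein setting in \cite{TW17b}, and in the paper it is absorbed into the citation to \cite[Theorem 4.1]{RZ11} behind Proposition~\ref{prop-agf}. Making that mechanism explicit, including the remark that a.e.\ convexity suffices by continuity of both distances, is a genuine clarification. Your observation that $B_{\red}$ and $\supp(B)$ are the same closed subset of $M$ is also correct and resolves the notational mismatch between the statement and Proposition~\ref{prop-agf}.
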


\begin{proof}
We have the following identities:
\begin{eqnarray*}
\overline{(M\setminus \supp(B), d_{g_{(0,t)}})} &\stackrel{\rm isom}{\cong}& \overline{(f_t(M\setminus \supp(B)), d_t|_{f_t(M)})} \quad (\text{Proposition } \ref{prop-agf})\\
&=&\overline{(X_t\setminus S_B, d_t)}\quad (\text{Proposition } \ref{prop-agf}) \\
&=&\overline{(X_t\setminus \cS, d_t)} \quad (\text{Proposition } \ref{prop-cSSB})   \\
&=&\overline{(\cR, d_t)}=(X_t, d_t).
\end{eqnarray*}

\end{proof}

By arguing as in \cite{NTZ15}, we get a gauge fixing theorem
\begin{prop}[see {\cite[Lemma 3.10]{NTZ15}}]\label{prop-gf}
The identity map ${\rm id}: (M\setminus \supp(B), d_{g_{(0,t)}}) \rightarrow (M, \omega_{(\epsilon_j,t)})$ gives a Gromov-Hausdorff approximation representing the convergence $(M, \omega_{(\epsilon_j,t)})\rightarrow (X_{t}, d_{t})$. As a consequence, the identity map ${\rm id}$ extends to an isometry denoted by $$\overline{\rm id}: \overline{(M\setminus \supp(B), d_{g_{(0,t)}})}\rightarrow (X_{(0,t)}, d_{(0,t)}). $$
Moreover, we have $\overline{{\rm id}}(M\setminus \supp(B))=\cR$.
\end{prop}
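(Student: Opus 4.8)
The plan is to reduce the statement to the single assertion that the identity map is a uniform almost-isometry between $M\setminus\supp(B)$ equipped with its two competing metrics, and to prove that assertion using uniform two-sided metric bounds away from the exceptional locus $E$ together with the volume-comparison technique already used in Proposition~\ref{prop-cSSB}.

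First I would record the reduction. Composed with the map $f_t$ of Proposition~\ref{prop-agf}, the identity on $M\setminus\supp(B)$ has image $X_t\setminus S_B=X_t\setminus\cS=\cR$ by Propositions~\ref{prop-agf} and~\ref{prop-cSSB}, and by the isometry $(X_t,d_t)\cong\overline{(M\setminus\supp(B),d_{g_{(0,t)}})}$ established just above one has $d_t(f_t(x),f_t(y))=d_{g_{(0,t)}}(x,y)$ for all $x,y\in M\setminus\supp(B)$. Since $M\setminus\supp(B)$ is dense in $(M,\omega_{(\epsilon_j,t)})$ for every $j$, and $\cR$ is dense in $(X_t,d_t)$ because $\dim_{\bR}\cS\le 2n-2$, the claim that $\id$ represents the Gromov--Hausdorff convergence $(M,\omega_{(\epsilon_j,t)})\to(X_t,d_t)$ is equivalent to
\[
\varepsilon_j:=\sup_{x,y\in M\setminus\supp(B)}\Big|\,d_{\omega_{(\epsilon_j,t)}}(x,y)-d_{g_{(0,t)}}(x,y)\,\Big|\;\longrightarrow\;0 ,
\]
and then the isometric extension $\overline{\id}$ and the identity $\overline{\id}(M\setminus\supp(B))=\cR$ are purely formal.

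The easy half of the estimate concerns the behaviour away from $E$: on every $V\Subset M\setminus E$ one has $C(V)^{-1}\hat\chi_0\le\omega_{(\epsilon_j,t)}\le C(V)\hat\chi_0$ with $C(V)$ independent of $j$ (Proposition~\ref{prop-C2lb} and its corollary), and $\omega_{(\epsilon_j,t)}\to\omega_{(0,t)}$ in the strong conical $C^{2,\alpha,\beta}$-topology on $M\setminus E$; hence the $\omega_{(\epsilon_j,t)}$- and $g_{(0,t)}$-lengths of any path that stays in a fixed compact subset of $M\setminus E$ agree up to $o_j(1)$. Thus $\varepsilon_j\to0$ reduces to showing that, in \emph{both} metrics, distances between points of $M\setminus\supp(B)$ are determined, up to an error vanishing as the neighbourhood shrinks, by paths that avoid a fixed tubular neighbourhood $U$ of $E$ — equivalently, that a minimizing $\omega_{(\epsilon_j,t)}$-geodesic cannot shorten distances appreciably by running through a small neighbourhood of $E$.

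This last point is the main obstacle. I would take a minimizing $\omega_{(\epsilon_j,t)}$-geodesic $\sigma_j$ between two fixed points of $M\setminus\supp(B)$ — it exists by the geodesic convexity of the regular part of the strong conical K\"ahler--Einstein metric — and bound the length of $\sigma_j\cap U$. As in the proof of Proposition~\ref{prop-cSSB}, the uniform non-collapsing $\Vol(M,\omega_{(\epsilon_j,t)})\ge c>0$, the uniform diameter bound, the uniform Sobolev constant of Proposition~\ref{prop-Sob} and Colding's volume-convergence theorem make the limit space non-collapsed, whereas the Bishop--Gromov inequality bounds the volume density of small balls centred on $\supp(B)$ by $\beta_B<1$; this forces $\sigma_j$ to spend only $o(1)$ of its length in $U$ once $U$ is small and $j$ is large. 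Rerouting these short excursions through $U$, treating a near-minimizing $g_{(0,t)}$-path symmetrically, and invoking the uniform comparison on $M\setminus U$ then yields $\varepsilon_j\to0$. The argument is modelled on \cite{NTZ15,Son14}; the reason it applies in the present setting is that the volume, Sobolev and potential estimates entering it — in particular the uniform $L^\infty$-bound $\|\vphi_{(\epsilon_j,t)}-\psi_{\epsilon_j}\|_\infty\le C$ of \textbf{Step~3} (Proposition~\ref{prop-C0est}) — have constants independent of $\epsilon_j$.
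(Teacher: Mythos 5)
Your reduction to the sup‑distortion statement $\varepsilon_j\to 0$ and the compact‑subset estimate away from $E$ (via Proposition~\ref{prop-C2lb} and the smooth convergence of $\omega_{(\epsilon_j,t)}$) are fine, and the overall architecture is the right one. The gap is in your ``main obstacle'' step. The Bishop--Gromov inequality used in Proposition~\ref{prop-cSSB} bounds the \emph{pointwise volume density} of small balls centred \emph{on} $\supp(B)$ by $\beta_B<1$; this is a local statement about measure ratios and carries no information about the length of a minimizing $\omega_{(\epsilon_j,t)}$-geodesic's excursion into a fixed tubular neighbourhood $U$ of $E$. A geodesic can traverse a region of low volume density unhindered (think of a long, thin tube of small volume but large extent), and the segment‑inequality machinery you would actually need gives only ``for most pairs of endpoints,'' not the uniform statement required to conclude $\varepsilon_j\to 0$. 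So the sentence ``this forces $\sigma_j$ to spend only $o(1)$ of its length in $U$'' does not follow from what you have quoted.

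What closes the gap — and what the paper silently relies on through Proposition~\ref{prop-agf}, citing \cite{RZ11,NTZ15} — is not a volume‑comparison argument but a \emph{uniform diameter smallness} of small tubular neighbourhoods of $\supp(B)$ in the metrics $\omega_{(\epsilon_j,t)}$. This comes from the \emph{strong conical} structure: near $E_i$ each $\omega_{(\epsilon_j,t)}$ is quasi‑isometric to the model cone metric with cone angle $2\pi\beta_i$, with $\beta_i$ uniformly bounded away from $0$, while the uniform potential bound of Proposition~\ref{prop-C0est} and the two‑sided $C^2$ comparison of Proposition~\ref{prop-C2lb} make the quasi‑isometry constants independent of $\epsilon_j$. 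Hence the $\omega_{(\epsilon_j,t)}$-diameter of the set $\{\operatorname{dist}_{\chi_0}(\cdot,E)<\delta\}$ tends to $0$ with $\delta$, uniformly in $j$. Once you have this, your rerouting scheme does work: any minimizing $\omega_{(\epsilon_j,t)}$-geodesic can be replaced, at a cost controlled by that diameter, by a path avoiding $U_\delta$, and the compact‑subset comparison finishes the argument. The paper's own proof is essentially a reduction to Propositions~\ref{prop-agf} and~\ref{prop-cSSB} together with the cited gauge‑fixing lemma, rather than the direct estimate you attempted, so replacing your Bishop--Gromov density step by the conical‑diameter step would both repair your argument and bring it in line with the technique actually being invoked.
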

\begin{proof}
Note that $\omega_{(\epsilon_j,t)}$ converges to $\omega_{(0,t)}$ smoothly on $M\setminus \supp(B)$. So $\id: (M\setminus \supp(B), d_{g_{(0,t)}})\rightarrow (M\setminus \supp(B), \omega_{(\epsilon_j,t)})$ gives a Gromov-Hausdorff approximation. In other words, there exists a metric structure $\mathfrak{d}_{(\epsilon_j,t)}$ on $(M\setminus \supp(B))\sqcup (M\setminus \supp(B))$ such that $\mathfrak{d}_{(\epsilon_j,t)}$ restricted to the two copies gives $d_{(g_{(0,t)})}$ and $d_{\omega_{(\epsilon_j,t)}}$ respectively and the Hausdorff distance between the copies of $M\setminus \supp(B)$ converges to 0 as $\epsilon_j\rightarrow 0$.

By Proposition \ref{prop-agf} and Proposition \ref{prop-cSSB} we know that $(M\setminus \supp(B), \omega_{(0,t)})$ is isometric to $(\cR, d_t)$, whose metric completion is $(X_t, d_t)$. Under this identification of isometry, $\id$ extends to give Gromov-Hausdorff approximation representing $(M, \omega_{(\epsilon_j,t)})\rightarrow (X_t, d_t)$ satisfying $\overline{\id}(M\setminus \supp(B))=\cR$, as claimed.
\end{proof}

The following is an immediate corollary as in \cite{NTZ15}.
\begin{cor}\label{cor-gauge}
Let $L'$ be an $\bR$-line bundle over $M$ with possibly singular Hermitian metric $h'$. Assume $h'$ is a smooth Hermitian metric on $(M\setminus B, L'|_{M\setminus B})$. Then for any $k\in \bZ$, the twisted line bundle $(L'\otimes K_{M}^{-\otimes k}), h'\otimes (\omega_{(\epsilon_i, t)})^k)$ converges smoothly to a limit Hermitian line bundle
$(L'\otimes K_{\cR}^{-k}, h'\otimes (\omega_{(0,t)})^k)$ on $\cR$.
\end{cor}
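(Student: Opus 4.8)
The plan is to read the statement off from the gauge-fixing Proposition~\ref{prop-gf} together with the $C^\infty_{\mathrm{loc}}$-convergence $\omega_{(\epsilon_j,t)}\to\omega_{(0,t)}$ away from $\supp(B)$ established earlier in this section. Note first that $\supp(B)=E\cup H$, so $M\setminus\supp(B)=M\setminus(E\cup H)$ is precisely the open set on which $\omega_{(\epsilon_j,t)}\to\omega_{(0,t)}$ smoothly and on which $\omega_{(0,t)}$ is a smooth K\"ahler--Einstein metric. By Proposition~\ref{prop-gf} the identity maps $\id\colon(M\setminus\supp(B),d_{g_{(0,t)}})\to(M,\omega_{(\epsilon_j,t)})$ represent the Gromov--Hausdorff convergence $(M,\omega_{(\epsilon_j,t)})\to(X_t,d_t)$, and $\overline{\id}$ is a diffeomorphism of $M\setminus\supp(B)$ onto $\cR$, compatible with the complex structures (on the image it is simply the ambient complex structure of $M$). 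Under this identification, ``convergence of the Hermitian line bundles $(L'\otimes K_M^{-k},\,h'\otimes(\omega_{(\epsilon_j,t)})^k)$ on $\cR$'' means, by definition, the $C^\infty_{\mathrm{loc}}$-convergence on $M\setminus(E\cup H)$ of the pull-backs of the metrics $h'\otimes(\omega_{(\epsilon_j,t)})^k$ along these maps.

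Since those maps are the identity on $M\setminus(E\cup H)$, the verification splits into two elementary points. The Hermitian metric that $\omega_{(\epsilon_j,t)}$ induces on $K_M^{-1}$ over $M\setminus(E\cup H)$ equals $\det(g_{i\bar j})$ in any holomorphic chart, where $g$ is the metric tensor of $\omega_{(\epsilon_j,t)}$; as $\omega_{(\epsilon_j,t)}\to\omega_{(0,t)}$ in $C^\infty_{\mathrm{loc}}(M\setminus(E\cup H))$ with $\omega_{(0,t)}$ a smooth K\"ahler metric there, these metrics, and hence their $k$-th tensor powers for every $k\in\bZ$ (the determinant stays positive, so negative powers are harmless), converge in $C^\infty_{\mathrm{loc}}$ to those induced by $\omega_{(0,t)}$. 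On the other hand $h'$ does not depend on $j$ and, by hypothesis, is smooth on $L'|_{M\setminus B}=L'|_{M\setminus(E\cup H)}$, so its pull-backs form a constant sequence. Tensoring the two convergences gives $h'\otimes(\omega_{(\epsilon_j,t)})^k\to h'\otimes(\omega_{(0,t)})^k$ in $C^\infty_{\mathrm{loc}}(M\setminus(E\cup H))$, which is exactly the asserted smooth convergence to $(L'\otimes K_\cR^{-k},\,h'\otimes(\omega_{(0,t)})^k)$ on $\cR$.

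I do not anticipate a real obstacle: all the analytic substance is already contained in Proposition~\ref{prop-gf} and in the smooth convergence away from $\supp(B)$, and this is simply the Hermitian-line-bundle form of the corresponding statement in \cite{NTZ15}. The one point deserving careful wording is the meaning of ``smooth convergence of Hermitian line bundles'' in the Cheeger--Colding setting, i.e. that it must be formulated through the diffeomorphisms (here, the identity maps on $M\setminus\supp(B)$) realising the Gromov--Hausdorff convergence; with that in place it is also transparent why the possible singularities of $h'$ along $B$ play no role, since $\cR$ is disjoint from $S_B$ and $\overline{\id}^{-1}(\cR)=M\setminus\supp(B)$ stays away from the locus where $h'$ is singular.
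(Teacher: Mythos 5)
Your proposal is correct and matches the approach the paper tacitly intends: the paper gives no explicit proof, merely noting the corollary is ``immediate'' following \cite{NTZ15}, and your argument is exactly the straightforward unwinding of that claim via Proposition \ref{prop-gf} (the gauge-fixing identity maps) together with the $C^\infty_{\mathrm{loc}}$ convergence of $\omega_{(\epsilon_j,t)}$ to $\omega_{(0,t)}$ on $M\setminus\supp(B)$. You correctly identify the only point requiring care — that ``smooth convergence of Hermitian line bundles'' in the Cheeger--Colding framework is phrased through the diffeomorphisms realizing the Gromov--Hausdorff convergence, which here are simply the identity maps on $M\setminus(E\cup H)$, so the positivity of $\det(g_{i\bar j})$ makes all integer powers $k$ (including negative ones) harmless, and the $\epsilon_j$-independence of $h'$ away from $B$ finishes the argument.
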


\subsection{Gradient estimate of the conical K\"{a}hler-Einstein potential}

If we write $\omega_{(0,t)}=\chi_0+\sddb u_{(0,t)}$, then $u_{(0,t)}$ is a bounded function determined up to a constant. Choosing a point $p\in X^{\rm reg}\setminus H_X$, for some small positive number $r$, we have
 $$B_{\omega_{(0,t)}}(p, 2r)\subset X^{\rm reg}.$$
 Recall we have the log resolution $\mu: M\rightarrow X$.
  Putting
 $$U=\mu^{-1}\left(X\setminus B_{\omega_{(0,t)}}(p, 2r)\right),$$ by the smooth convergence in the regular part, we know that
 $$B_{\omega_{(\epsilon,t)}}(p,r)\subset U^c.$$
Let $\Delta_{\epsilon}$ denote the Laplacian operator with respect to $\omega_{(\epsilon,t)}$. We solve the following Dirichlet problem:
\begin{equation}
\left\{
\begin{array}{l}
\Delta_\epsilon v_\epsilon=-\tr_{\omega_{(\epsilon,t)}}\chi_0+n \text{ on } U; \\
v_\epsilon=u_{(0,t)} \text{ on } \partial U.
\end{array}
\right.
\end{equation}

\begin{lem}\label{lem-C0vep}
$v_\epsilon$ is uniformly bounded with respect to $\epsilon$. In other words, there exists a constant $C>0$ independent of $\epsilon>0$ such that
$|v_\epsilon|_{L^\infty}\le C$.
\end{lem}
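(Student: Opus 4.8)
The plan is to split $v_\epsilon = v_\epsilon^{(1)} + v_\epsilon^{(2)}$, where $v_\epsilon^{(1)}$ solves the Dirichlet problem with the same boundary data $u_{(0,t)}$ but zero right-hand side (i.e.\ $\Delta_\epsilon v_\epsilon^{(1)} = 0$ on $U$, $v_\epsilon^{(1)} = u_{(0,t)}$ on $\partial U$), and $v_\epsilon^{(2)}$ solves the problem with zero boundary data and right-hand side $-\tr_{\omega_{(\epsilon,t)}}\chi_0 + n$. For the first piece, the maximum principle gives immediately $\|v_\epsilon^{(1)}\|_\infty \le \|u_{(0,t)}\|_{L^\infty(\partial U)} \le C$, where the bound on the boundary values is uniform because $\partial U$ is a fixed compact subset of $X^{\reg}$ (where $\omega_{(0,t)}$ is a smooth Kähler--Einstein metric and $u_{(0,t)}$ is a fixed bounded function, hence bounded on $\partial U$). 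So the content is all in $v_\epsilon^{(2)}$.

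For $v_\epsilon^{(2)}$, the first step is to observe that the right-hand side $n - \tr_{\omega_{(\epsilon,t)}}\chi_0$ is bounded \emph{above} by $n$ pointwise, and more importantly is bounded in $L^1(U, \omega_{(\epsilon,t)}^n)$ uniformly in $\epsilon$: indeed $\int_U \tr_{\omega_{(\epsilon,t)}}\chi_0 \,\omega_{(\epsilon,t)}^n = n\int_U \chi_0 \wedge \omega_{(\epsilon,t)}^{n-1} \le n \int_M \chi_0 \wedge \omega_{(\epsilon,t)}^{n-1}$, which is a cohomological pairing bounded uniformly since $[\omega_{(\epsilon,t)}] = 2\pi c_1(L_\epsilon)$ and $\{L_\epsilon\}_{\epsilon \in [0,1/2]}$ lies in a compact set of classes. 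Combined with the uniform Sobolev constant for $\omega_{(\epsilon,t)}$ coming from Proposition \ref{prop-Sob} (applicable since $Ric(\omega_{(\epsilon,t)}) = t\,\omega_{(\epsilon,t)} \ge \tfrac12\omega_{(\epsilon,t)}$ for $t \ge 1/2$), a Moser iteration argument on the Poisson-type equation $\Delta_\epsilon v_\epsilon^{(2)} = n - \tr_{\omega_{(\epsilon,t)}}\chi_0$ with zero boundary data yields a uniform $L^\infty$ bound on $v_\epsilon^{(2)}$ in terms of the $L^p$ norm of the right-hand side for $p$ slightly bigger than $1$; here one uses the sign of the source term (bounded above) together with the integral bound to control the iteration, exactly as in the derivation of \eqref{eq-infbd}. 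A clean alternative is to bound $v_\epsilon^{(2)}$ from above by $0$ directly (since the source is $\le n$... — more precisely compare with the solution of $\Delta_\epsilon w = n$, handled via the trace estimate $\tr_{\omega_{(\epsilon,t)}}\chi_0 \le C$ from Proposition \ref{prop-C2lb} if one is willing to invoke it), and from below by the Moser/Sobolev argument using only the $L^1$ bound.

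The step I expect to be the main obstacle is making the Moser iteration rigorous on the open set $U$ with conical singularities of $\omega_{(\epsilon,t)}$ along $H \cap U$ and with the Dirichlet boundary condition, uniformly in $\epsilon$. One must ensure the Sobolev inequality of Proposition \ref{prop-Sob} (stated on the closed manifold $M$) can be localized to $U$ — this is standard using a fixed cutoff function supported away from $\partial U$ and absorbing boundary contributions via the fixed, $\epsilon$-independent boundary data bound — and that the cutoff interacts harmlessly with the conical locus, which is fine because $M\setminus \supp(B)$ is geodesically convex and integration by parts is valid across the divisor for the conical metrics in question (as already used throughout Section \ref{sec-MA}). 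Once the localized Sobolev inequality and the uniform $L^1$ (or $L^p$) bound on the source are in hand, the iteration closes and gives $\|v_\epsilon\|_\infty \le \|v_\epsilon^{(1)}\|_\infty + \|v_\epsilon^{(2)}\|_\infty \le C$ with $C$ independent of $\epsilon$.
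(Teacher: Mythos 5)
Your overall strategy (split off the harmonic part, bound the Poisson part with zero boundary data) is reasonable, but it is more complicated than the paper's argument and the way you set it up has two genuine problems.

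First, and most importantly, the Moser-iteration step you lean on does not close with the inputs you allow yourself. You propose to bound $v_\epsilon^{(2)}$ using only (i) the one-sided pointwise bound $n-\tr_{\omega_{(\epsilon,t)}}\chi_0\le n$ and (ii) a uniform $L^1$ bound on the source. But for the equation $\Delta_\epsilon v_\epsilon^{(2)}=n-\tr_{\omega_{(\epsilon,t)}}\chi_0$, an $L^\infty$ bound on the solution via Moser/De\,Giorgi iteration requires the source to lie in $L^p$ for $p$ larger than half the real dimension (here $p>n$), not merely $L^1$; the argument behind \eqref{eq-infbd} is of a different nature (there the iterated quantity is the negative part of a normalized $\chi$-psh function, with an $L^1$ control on the \emph{solution}, not the source), and it does not transfer directly. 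Second, you have the comparison directions reversed: the inequality ``source $\le n$'' combined with a barrier $w$ solving $\Delta_\epsilon w=n$, $w=0$ on $\partial U$, gives $\Delta_\epsilon(v_\epsilon^{(2)}-w)\le 0$, so $v_\epsilon^{(2)}-w$ achieves its \emph{minimum} on $\partial U$, i.e.\ $v_\epsilon^{(2)}\ge w$ — a \emph{lower} bound, not the upper bound you claim. The upper bound is the direction that genuinely requires a source \emph{lower} bound, i.e.\ a uniform upper bound $\tr_{\omega_{(\epsilon,t)}}\chi_0\le C$.

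That uniform trace bound, which you treat as an optional ``alternative,'' is in fact the essential ingredient, and it is already available from Proposition \ref{prop-C2lb} ($\chi_0\le C\omega_{(\epsilon,t)}$). With it, the source satisfies the two-sided $L^\infty$ bound $n-C\le \Delta_\epsilon v_\epsilon\le n$ uniformly in $\epsilon$. The paper then dispenses with any Sobolev/Moser machinery: since the Ricci curvature of $\omega_{(\epsilon,t)}$ is pinned and $U$ lies in a fixed annulus $A_{\omega_{(\epsilon,t)}}(p;r,R)$, one builds a radial barrier $\phi=\phi(r(x))$ with $\Delta_\epsilon\phi\ge C'$ on that annulus (independently of $\epsilon$), and the uniform bound on $v_\epsilon$ follows from the maximum principle applied to $v_\epsilon\pm\phi$ together with the uniform bound on the boundary data $u_{(0,t)}|_{\partial U}$. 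No decomposition of $v_\epsilon$ and no localized Sobolev inequality are needed. If you revise, simply observe that the source is uniformly bounded in $L^\infty$ by Proposition \ref{prop-C2lb} and run the barrier argument; your worry about Sobolev localization across the conical locus then becomes moot.
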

\begin{proof}
This follows the existence of suitable barrier functions under the assumption of bounded Ricci curvature. Since there exists $R>0$ such that $U\subset A_{\omega_{(\epsilon,t)}}(p; r,R)$, we have
function $\phi(x)=\phi(r(x))$ such that $$\Delta_\epsilon \phi\geq C$$ on $A_{\omega_{(\epsilon,t)}}(p; r,R)$. From  $\tr_{\omega_\epsilon}\chi_0\le C$, we get $n-C\le \Delta_\epsilon v_\epsilon\le n$ and
$$\Delta_\epsilon (v_\epsilon+\phi)\geq 0, \Delta_\epsilon (v_\epsilon-\phi)\leq 0.$$ So we can apply the maximal principle to prove the lemma.
\end{proof}
\begin{prop}\label{gradientp}
For some constant $C$, we have $$|\nabla u_{(0,t)}|_{X^{\rm reg}}\leq C.$$
\end{prop}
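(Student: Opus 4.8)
The plan is to reduce Proposition \ref{gradientp} to a \emph{uniform} (in $\epsilon$) gradient bound for the functions $v_\epsilon$ on $U$, and then let $\epsilon\to0$. Indeed, $v_\epsilon$ is uniformly bounded (Lemma \ref{lem-C0vep}), solves $\Delta_\epsilon v_\epsilon=n-\tr_{\omega_{(\epsilon,t)}}\chi_0$ with $0\le\tr_{\omega_{(\epsilon,t)}}\chi_0\le C$ by Proposition \ref{prop-C2lb}, and $\omega_{(\epsilon,t)}\to\omega_{(0,t)}$ in $C^{2,\alpha,\beta}_{\rm loc}$ on $M\setminus E$. Hence, after passing to a subsequence, $v_\epsilon\to v_0$ on $(X\setminus B_{\omega_{(0,t)}}(p,2r))^{\rm reg}$, where $v_0$ solves $\Delta_{\omega_{(0,t)}}v_0=n-\tr_{\omega_{(0,t)}}\chi_0$ with $v_0=u_{(0,t)}$ on $\partial B_{\omega_{(0,t)}}(p,2r)$. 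Since $u_{(0,t)}$ solves the same equation with the same boundary values, $w=v_0-u_{(0,t)}$ is a bounded $\omega_{(0,t)}$-harmonic function vanishing on $\partial B_{\omega_{(0,t)}}(p,2r)$; because $\cS$ has real codimension $\ge 2$ (see Section \ref{sec-conicX}) and $\omega_{(0,t)}$ is conical along $H_X$, there are cut-off functions near $\cS\cup H_X$ with arbitrarily small Dirichlet energy, so that $\int|\nabla w|^2_{\omega_{(0,t)}}=0$ and $v_0\equiv u_{(0,t)}$. Therefore a uniform bound $\sup_U|\nabla v_\epsilon|_{\omega_{(\epsilon,t)}}\le C$ passes to the limit and gives $|\nabla u_{(0,t)}|_{\omega_{(0,t)}}\le C$ on $(X\setminus B_{\omega_{(0,t)}}(p,2r))^{\rm reg}$; as $\overline{B_{\omega_{(0,t)}}(p,2r)}\Subset X^{\rm reg}\setminus H_X$ is a fixed set on which $\omega_{(0,t)}$ is a smooth K\"{a}hler--Einstein metric, the gradient of $u_{(0,t)}$ is automatically bounded there, and Proposition \ref{gradientp} follows.

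\textbf{Uniform gradient bound for $v_\epsilon$.} First I would get a uniform $L^1$ bound: integrating $v_\epsilon\,\Delta_\epsilon v_\epsilon$ over $U$ by parts and using $|v_\epsilon|\le C$, $|\Delta_\epsilon v_\epsilon|\le C$, $\vol(M,\omega_{(\epsilon,t)})\le C$, and the uniform control of $v_\epsilon$ near $\partial U$ (boundary Schauder estimates, the coefficients being smooth and uniformly bounded there and $v_\epsilon|_{\partial U}=u_{(0,t)}$ fixed), one gets $\int_U|\nabla v_\epsilon|^2_{\omega_{(\epsilon,t)}}\,\omega_{(\epsilon,t)}^n\le C$, the integration by parts being legitimate since $\supp(B_{(\epsilon,t)})$ has zero capacity for the strong conical metric $\omega_{(\epsilon,t)}$. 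Next, on $M\setminus\supp(B_{(\epsilon,t)})$ we have $Ric(\omega_{(\epsilon,t)})=t\,\omega_{(\epsilon,t)}$, so the Bochner formula (Lemma \ref{lem-Bochner}) yields
\[
\Delta_\epsilon|\nabla v_\epsilon|^2_{\omega_{(\epsilon,t)}}\ \ge\ t\,|\nabla v_\epsilon|^2_{\omega_{(\epsilon,t)}}\ -\ C\,|\nabla v_\epsilon|_{\omega_{(\epsilon,t)}}\,\bigl|\nabla(\tr_{\omega_{(\epsilon,t)}}\chi_0)\bigr|_{\omega_{(\epsilon,t)}},
\]
while the Chern--Lu inequality used in the proof of Proposition \ref{prop-C2lb} gives $\Delta_\epsilon\log\tr_{\omega_{(\epsilon,t)}}\chi_0\ge t-C\,\tr_{\omega_{(\epsilon,t)}}\chi_0$ (the target $(\cP^N,\omega_{\FS})$ having holomorphic bisectional curvature bounded above). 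Combined with $0\le\tr_{\omega_{(\epsilon,t)}}\chi_0\le C$, this lets one dominate the cross term and choose a uniform constant $A>0$ so that $\Phi_\epsilon:=|\nabla v_\epsilon|^2_{\omega_{(\epsilon,t)}}+A\,\tr_{\omega_{(\epsilon,t)}}\chi_0\ge0$ satisfies $\Delta_\epsilon\Phi_\epsilon\ge\tfrac{t}{2}\Phi_\epsilon-C\ge-C$. Since $\Phi_\epsilon$ is uniformly bounded in $L^1(U,\omega_{(\epsilon,t)}^n)$ and the $\omega_{(\epsilon,t)}$ satisfy a uniform Sobolev inequality (Proposition \ref{prop-Sob}), a De Giorgi--Nash--Moser iteration --- testing the differential inequality against powers of $\Phi_\epsilon$ times cut-offs vanishing near $\supp(B_{(\epsilon,t)})$, and using the boundary control near $\partial U$ --- gives $\sup_U\Phi_\epsilon\le C$, hence $\sup_U|\nabla v_\epsilon|_{\omega_{(\epsilon,t)}}\le C$ uniformly in $\epsilon$.

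\textbf{Main obstacle.} The hard part is the cross term $\langle\nabla v_\epsilon,\nabla(\tr_{\omega_{(\epsilon,t)}}\chi_0)\rangle$ in Bochner's formula, which would a priori require a third-order (Calabi-type) estimate; the point is to absorb it via the Chern--Lu inequality for the holomorphic map $M\to\cP^N$ induced by $|-mK_X|$, at the cost of adding $A\,\tr_{\omega_{(\epsilon,t)}}\chi_0$ to the quantity one iterates. One must also be careful that $\chi_0$ degenerates along $E$ and that $\omega_{(\epsilon,t)}$ degenerates along $\supp(B_{(\epsilon,t)})$, which is why the argument is run through Moser iteration against the uniform Sobolev constant, with zero-capacity cut-offs, rather than through a pointwise maximum principle; keeping all constants independent of $\epsilon$ relies throughout on the uniform $C^0$ and $C^2$ estimates (Propositions \ref{prop-C0est} and \ref{prop-C2lb}) and the uniform Sobolev inequality (Proposition \ref{prop-Sob}).
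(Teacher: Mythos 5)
Your proposal follows the paper's structure closely: the same reduction to the Dirichlet problem for $v_\epsilon$ on $U$, the same Bochner identity applied to $|\nabla_\epsilon v_\epsilon|^2$, the same trick of absorbing the third-order cross term via Chern--Lu by adding $A\,\tr_{\omega_\epsilon}\chi_0$ to form an auxiliary function $\Phi_\epsilon$ satisfying $\Delta_\epsilon\Phi_\epsilon\ge\tfrac{t}{2}\Phi_\epsilon-C$, and the same limiting argument with zero-capacity cut-offs to identify the limit $v_0$ with $u_{(0,t)}$. The one genuine departure is at the final step: the paper applies the maximum principle directly to $f=|\nabla_\epsilon v_\epsilon|^2+C\,\tr_{\omega_\epsilon}\chi_0$, combined with boundary Schauder control on $\partial U$, whereas you propose an $L^1$ bound on $\Phi_\epsilon$ via integration by parts followed by Moser iteration against the uniform Sobolev constant of Proposition \ref{prop-Sob}. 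Both routes are valid. The Moser-iteration route is arguably more robust since it sidesteps the question of whether the maximum principle applies when the supremum of $f$ might sit on $\supp(B_{(\epsilon,t)})$ where $\omega_{(\epsilon,t)}$ is only conical; the maximum-principle route is shorter but implicitly relies on an extension of the maximum principle across the cone locus (standard for strong conical metrics with geodesically convex complement, but unremarked in the paper). One small imprecision: you quote Chern--Lu in the logarithmic form $\Delta_\epsilon\log\tr_{\omega_\epsilon}\chi_0\ge t-C\,\tr_{\omega_\epsilon}\chi_0$, but to absorb the cross term $|\nabla v_\epsilon|\,|\nabla\tr_{\omega_\epsilon}\chi_0|$ by Cauchy--Schwarz you actually need the non-logarithmic form $\Delta_\epsilon(\tr_{\omega_\epsilon}\chi_0)\ge\tfrac{|\nabla(\tr_{\omega_\epsilon}\chi_0)|^2}{\tr_{\omega_\epsilon}\chi_0}+t\,\tr_{\omega_\epsilon}\chi_0-C(\tr_{\omega_\epsilon}\chi_0)^2$ (the two are algebraically equivalent once $\tr_{\omega_\epsilon}\chi_0$ is bounded above and below near $\partial U$, but the gradient term is essential and is hidden by the log form as stated).
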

\begin{proof}
In the following calculation, the operators $\Delta_\epsilon$, $\nabla_\epsilon$ and the norms are with respect to $\omega_{(\epsilon,t)}$.
By Bochner's formula and Cauchy-Schwarz inequality, we have:
\begin{eqnarray*}
\Delta_\epsilon |\nabla_\epsilon v_\epsilon|^2&=&|\nabla_\epsilon \nabla v_\epsilon|^2+|\nabla_\epsilon \overline{\nabla_\epsilon} v_\epsilon|^2+2 \Re\big((\Delta_\epsilon v_\epsilon)_i (v_\epsilon)_{\bar{i}}\big)+Ric(\omega_{(\epsilon,t)})_{i\bar{j}}(v_\epsilon)_i(v_\epsilon)_{\bar{j}}\\
&\ge& \frac{t}{2}|\nabla_\epsilon v_\epsilon|^2-\frac{2}{t}|\nabla_\epsilon (\Delta_\epsilon v_\epsilon)|^2.
\end{eqnarray*}
Note that $Ric(\omega_{(\epsilon,t)}) \ge t\omega_{(\epsilon,t)}$ on $M$.
On the other hand, using Chern-Lu's inequality and the $C^2$-estimate in Proposition \ref{prop-C2lb}, we have:
\begin{eqnarray*}
\Delta_\epsilon (\tr_{\omega_\epsilon}\chi_0) &\ge & \frac{|\nabla_\epsilon (\tr_{\omega_\epsilon}\chi_0)|^2}{\tr_{\omega_\epsilon}\chi_0}+t\cdot  \tr_{\omega_\epsilon}(\chi_0)-C_1 \tr_{\omega_\epsilon}(\chi_0)^2\\
&=& C_2 |\nabla_\epsilon (\tr_{\omega_\epsilon}\chi_0)|^2- C_3=C_2 |\nabla_\epsilon (\Delta_\epsilon v_\epsilon)|^2-C_3.
\end{eqnarray*}
So we get the inequality:
\begin{eqnarray*}
\Delta_\epsilon (|\nabla_\epsilon v_\epsilon|^2)&\ge& \frac{t}{2}|\nabla_\epsilon v_\epsilon|^2-C\Delta_\epsilon (\tr_{\omega_\epsilon}\chi_0)-C\\
\end{eqnarray*}
If we let
\begin{equation}\label{eq-auxf}
f=|\nabla_\epsilon v_\epsilon|^2+C \tr_{\omega_\epsilon}\chi_0,
\end{equation}
since $\tr_{\omega_\epsilon}\chi_0$ is bounded, we get:
\begin{equation}\label{equ}
\Delta_\epsilon f\ge \frac{t}{2} f-C.
\end{equation}
At any boundary point $q\in \partial U$, we can choose a neighborhood $\Omega$ of $q$ such that $$\omega_{(0,t)}\leq C\omega_{(\epsilon,t)}\text{ in } \Omega$$

On the other hand, we have
$$ \left\{
\begin{aligned}
\Delta_\epsilon (v_\epsilon-u_{(0,t)})& =  h& \text{ in } U\bigcap \Omega\\
 (v_\epsilon-u_{(0,t)})|_{T} & =  0, &
\end{aligned}
\right.
$$
where $T=\partial U\bigcap \Omega$ and
$
h=-\tr_{\omega_{(\epsilon,t)}}\chi_0+n-\tr_{\omega_{(\epsilon,t)}}(\omega_{(0,t)}-\chi_0)
$
is uniformly bounded in $\Omega$. By the boundary estimate (Theorem 4.16 in [GT]), we know that $$|v_\epsilon-u|_{C^{1,\alpha}(\Omega')}\leq C$$ for any $\Omega'\subset\subset (\Omega\cap U)\bigcup T$. So we get
$|\nabla_\epsilon v_\epsilon|_{\partial U}\leq C$ and hence $f$ in \eqref{eq-auxf} satisfies $\left.f\right|_{\partial U}\le C$ with $C$ independent of $\epsilon$.
From (\ref{equ}), by Maximum principle, we get
\begin{equation}\label{eq-gradvep}
|\nabla_\epsilon v_\epsilon|_{U}\leq C.
\end{equation}
As $\epsilon\rightarrow 0$, by shrinking $U$ slightly, we can assume that $(U, \omega_{(\epsilon,t)})$ converges to an open set $\widehat{U}=X_{(0,t)}\setminus B_{\omega_{(0,t)}}(p, 2r)$ in $(X_{(0,t)},d_{(0,t)})$ in the Gromov-Hausdorff topology. Because of the uniform estimate \eqref{eq-gradvep}, the Lipschitz function $v_\epsilon$ converges to $v$, which satisfies
$$ \left\{
\begin{aligned}
\Delta v& =  -\tr_{\omega_{(0,t)}}\chi_0+n\text{ in } \widehat{U}\bigcap \mathcal{R}\\
 v  & =  u_{(0,t)} \text{ on } \partial \widehat{U}.
\end{aligned}
\right.
$$
Moreover, by Lemma \ref{lem-C0vep} and the uniform estimate \eqref{eq-gradvep}, we get the estimate:
\begin{equation}
\|v\|_{L^\infty(\widehat{U})}< \infty, \quad
\|\nabla v\|_{L^\infty(\widehat{U}\cap X^\reg)}<+\infty
\end{equation}
We now claim that $u_{(0,t)}=v$. If this is true, then the proposition is proved since $v$ is smooth in $B_{\omega_{(0,t)}}(p, 2r)$.

To verify the claim, take a cut-off function $\rho_\delta$ for the singular set $\cS\subset X_{(0,t)}$ and compute
\begin{align}
0&=\int_{\widehat{U}}{\rm div}\left(\rho_\delta^2(u_{(0,t)}-v)\nabla(u-v)\right)dv\\
&=2\int_{\widehat{U}}(u_{(0,t)}-v)\rho_\delta\langle \nabla \rho_\delta, \nabla (u_{(0,t)}-v)\rangle+\int_{\widehat{U}}\rho_\delta^2|\nabla (u_{(0,t)}-v)|^2\notag\\
&\geq -\frac{1}{2}\int_{\widehat{U}}\rho_\delta^2|\nabla(u_{(0,t)}-v)|^2-C\int_{\widehat{U}}|\nabla \rho_\delta|^2+\int_{\widehat{U}}\rho_\delta^2|\nabla (u_{(0,t)}-v)|^2 \notag\\
&=\frac{1}{2}\int_{\widehat{U}}\rho_\delta^2|\nabla(u_{(0,t)}-v)|^2-C\int_{\widehat{U}}|\nabla \rho_\delta|^2\notag.
\end{align}
Taking $\delta\rightarrow 0$, we get:
$$\int_{\widehat{U}\cap \cR}|\nabla (u_{(0,t)}-v)|^2=0.$$
So we indeed have $u_{(0,t)}=v$.
\end{proof}

\subsection{A priori estimates for holomorphic sections}

In this section, we prove some $L^\infty$ estimate and gradient estimate for holomorphic sections of $k L:=k (\mu^*K_X^{-1})$ for $k$ sufficiently divisible.
We will consider two Hermitian metrics on $k L$. The first one is a singular Hermitian metric constructed using $h_{(\epsilon,t)}$ on $L_\epsilon$. The second one is (up to a scaling) the pull back of the Fubini-Study metric under the composition $M\rightarrow X\rightarrow \bP^{N_m}$ which is morphism associated to the line bundle $m L=\mu^*(-mK_X)$.

We first consider the singular Hermitian metric. Write:
\begin{eqnarray*}
\mu^*K_{X}^{-k}&=&k(\mu^*K_X^{-1}- \sum_i \epsilon \theta_i E_i)+\sum_i k\epsilon \theta_i E_i.
\end{eqnarray*}

Recall that $h_{(\epsilon,t)}=e^{-\vphi_{(\epsilon,t)}}$ is the Hermitian metric on $L_\epsilon=L-\sum_i \theta_i E_i$ whose Chern curvature is equal to $\omega_{(\epsilon,t)}$.

{\bf Notation:} For all the discussions in this section, $t\in (0,1)$ is fixed. So for simplicity, we will just write the subscript $\epsilon$ for $(\epsilon,t)$.

We define the following singular Hermitian on $L:=\mu^*K_X^{-1}$:
\[
\hat{h}_\epsilon=e^{-\hvphi_\epsilon}:=h_\epsilon\frac{1}{\prod_i |s_i|^{2\epsilon \theta_i}}=\frac{e^{-\vphi_\epsilon}}{\prod_i |s_i|^{2\epsilon\theta_i}}
\]
Then the Chern curvature current $\Theta(\hat{h}_\epsilon)$ of $\hat{h}_\epsilon$ satisfies:
\begin{eqnarray}\label{eq-ThetaRic}
\Theta(\hat{h}_\epsilon^k)+Ric(\omega_\epsilon)&=&-\sddb \log \hat{h}_\epsilon^k+Ric(\omega_\epsilon)\nonumber\\
&=& k\omega_\epsilon+\sum_i k\epsilon \theta_i \{E_i\}+t \omega_\epsilon+\frac{1-t}{m}\{H\}+\sum_i (b_i+t\epsilon \theta_i+(1-t)\theta_i) \{E_i\}\nonumber\\
&\ge & (k+t)\omega_\epsilon.
\end{eqnarray}

Using the Weitzenboch formula \eqref{eq-Wentz}, it is easy to get:
\begin{lem}
Assume $k\epsilon \theta_i+b_i+t\epsilon \theta_i+(1-t)\theta_i<1$. Then for $\xi\in \Gamma(M, T^{0,1}M\otimes L^k)$, we have the inequality:
\begin{equation}
\int_M (|\bar{\partial}\xi|^2+|\bar{\partial}^*\xi|^2)\omega_\epsilon^n\ge (k+t)\int_M |\xi|^2\omega_\epsilon^n.
\end{equation}
\end{lem}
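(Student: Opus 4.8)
The plan is to integrate the Weitzenb\"ock formula \eqref{eq-Wentz} over the smooth locus $M\setminus\supp(B_{(\epsilon,t)})$ and then feed in the curvature lower bound \eqref{eq-ThetaRic}. First I would note that on $M\setminus\supp(B_{(\epsilon,t)})$ both $\omega_\epsilon$ and the Hermitian metric $\hat h_\epsilon^k$ on $L^k$ are smooth, and the distributional terms $\sum_i\{E_i\}$, $\{H\}$ occurring in $\Theta(\hat h_\epsilon^k)$ and in $Ric(\omega_\epsilon)$ are supported on $\supp(B_{(\epsilon,t)})$, hence vanish there; so applying \eqref{eq-Wentz} to $\xi$ viewed as a section of $T^{*(0,1)}M\otimes L^k$ with the metric $\hat h_\epsilon^k$ gives, pointwise on $M\setminus\supp(B_{(\epsilon,t)})$,
\[
\big\langle(\bar{\partial}^*\bar{\partial}+\bar{\partial}\bar{\partial}^*)\xi,\xi\big\rangle=|\bar{\nabla}\xi|^2+\big(\Theta(\hat h_\epsilon^k)+Ric(\omega_\epsilon)\big)(\xi,\bar\xi)\ \geq\ |\bar{\nabla}\xi|^2+(k+t)|\xi|^2,
\]
the last step being \eqref{eq-ThetaRic}. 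Discarding $|\bar{\nabla}\xi|^2\geq0$ and integrating against $\omega_\epsilon^n$ over $M\setminus\supp(B_{(\epsilon,t)})$ (which has full measure), it then remains to establish the integration-by-parts identity $\int_M\langle(\bar{\partial}^*\bar{\partial}+\bar{\partial}\bar{\partial}^*)\xi,\xi\rangle\,\omega_\epsilon^n=\int_M(|\bar{\partial}\xi|^2+|\bar{\partial}^*\xi|^2)\,\omega_\epsilon^n$, i.e. the absence of a boundary term along $\supp(B_{(\epsilon,t)})$.

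For this I would argue by a cutoff. One may assume the right-hand side is finite, so $\bar{\partial}\xi,\bar{\partial}^*\xi\in L^2(\omega_\epsilon)$; and since $\xi$ is a smooth bundle-valued form on all of $M$, a local computation using the quasi-isometry of $\omega_\epsilon$ with the model conical metric and the weight $\prod_i|s_i|^{-2k\epsilon\theta_i}$ of $\hat h_\epsilon^k$ shows that $|\xi|^2\omega_\epsilon^n$ is integrable under the hypothesis. The point is that the hypothesis says exactly that the positive $(1,1)$-current
\[
\Theta(\hat h_\epsilon^k)+Ric(\omega_\epsilon)-(k+t)\,\omega_\epsilon=\sum_i\big(k\epsilon\theta_i+b_i+t\epsilon\theta_i+(1-t)\theta_i\big)\{E_i\}+\tfrac{1-t}{m}\{H\}
\]
has all coefficients $<1$, so $(L^k,\hat h_\epsilon^k)$ over $(M\setminus\supp(B_{(\epsilon,t)}),\omega_\epsilon)$ fits the framework of strong conical K\"ahler metrics with strictly positive cone angles. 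I would then pick cutoffs $\rho_\delta$ equal to $1$ off a $\delta$-neighborhood of $\supp(B_{(\epsilon,t)})$, vanishing near it, with $\int_M|\nabla\rho_\delta|^2_{\omega_\epsilon}\,\omega_\epsilon^n\to0$ as $\delta\to0$ — available precisely because all cone angles are positive — replace $\xi$ by $\rho_\delta\xi$ in the integration by parts, bound the resulting commutator terms by $\big(\int_M|\nabla\rho_\delta|^2|\xi|^2\,\omega_\epsilon^n\big)^{1/2}\big(\int_M(|\bar{\partial}\xi|^2+|\bar{\partial}^*\xi|^2)\,\omega_\epsilon^n\big)^{1/2}$ plus a term $\int_M|\nabla\rho_\delta|^2|\xi|^2\,\omega_\epsilon^n$, and let $\delta\to0$. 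This gives the identity, and hence the asserted inequality.

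The hard part will be exactly this cutoff/density step: one must verify that the integration by parts loses no mass along any stratum of the simple normal crossing divisor $\supp(B_{(\epsilon,t)})$, where $\omega_\epsilon$ degenerates conically and $\hat h_\epsilon^k$ carries the extra weight from $\sum_i k\epsilon\theta_i E_i$. This is where the hypothesis $k\epsilon\theta_i+b_i+t\epsilon\theta_i+(1-t)\theta_i<1$ is genuinely needed: it ensures that, near the divisor, $(M\setminus\supp(B_{(\epsilon,t)}),\omega_\epsilon)$ is quasi-isometric to a product of two-dimensional cones of angle in $(0,2\pi)$ with a smooth factor, for which the divisor has zero capacity and the $W^{1,2}$ integration by parts is legitimate (as in the discussion of strong conical K\"ahler metrics above). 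With that settled, the curvature estimate is an immediate consequence of \eqref{eq-ThetaRic}.
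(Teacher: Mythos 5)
Your overall strategy — integrate the Weitzenb\"ock formula \eqref{eq-Wentz}, feed in the curvature lower bound \eqref{eq-ThetaRic}, and justify the integration by parts with a cutoff — is exactly what the paper's one-line hint ``using the Weitzenb\"och formula, it's easy to get'' points at, and the reduction of the lemma to the absence of a boundary contribution along $\supp B_{(\epsilon,t)}$ is the right way to frame it. There are, however, two problems with the proposal as written. The minor one is that the displayed ``pointwise'' identity is not correct pointwise: \eqref{eq-Wentz} gives $\langle(\bar\partial^*\bar\partial+\bar\partial\bar\partial^*)\xi,\xi\rangle=\langle\bar\nabla^*\bar\nabla\xi,\xi\rangle+(\Theta+\mathrm{Ric})(\xi,\bar\xi)$, and turning $\langle\bar\nabla^*\bar\nabla\xi,\xi\rangle$ into $|\bar\nabla\xi|^2$ requires its own integration by parts, so you cannot ``discard $|\bar\nabla\xi|^2$'' before integrating; both rough Laplacians have to be handled inside the cutoff argument.

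The serious gap is in the cutoff step itself. You invoke $\int_M|\nabla\rho_\delta|^2_{\omega_\epsilon}\omega_\epsilon^n\to 0$, but the error terms you yourself write down involve $\int_M|\nabla\rho_\delta|^2_{\omega_\epsilon}\,|\xi|^2_{\hat h_\epsilon^k\otimes\omega_\epsilon}\,\omega_\epsilon^n$, and the factor $|\xi|^2_{\hat h_\epsilon^k\otimes\omega_\epsilon}$ is \emph{unbounded} near $\supp B_{(\epsilon,t)}$. Indeed, $\hat h_\epsilon^k=e^{-k\vphi_\epsilon}\prod_i|s_i|^{-2k\epsilon\theta_i}$ blows up along $E_i$, and for a generic smooth $\xi$ the tangential components of $\xi$ do not vanish, so $|\xi|^2_{\hat h_\epsilon^k\otimes\omega_\epsilon}\sim |s_i|^{-2k\epsilon\theta_i}$ there. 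In local normal coordinates transverse to $E_i$, $|\nabla\rho_\delta|^2_{\omega_\epsilon}\omega_\epsilon^n$ reduces to $|\partial_z\rho_\delta|^2$ times Euclidean volume (the conical factors cancel), so the error term behaves like $\int |\partial_z\rho_\delta|^2\,|z|^{-2k\epsilon\theta_i}\,dA$. A Cauchy–Schwarz/capacity argument ($1=\int\partial_r\rho_\delta\,dr\le(\int|\partial_r\rho_\delta|^2 r^{1-2\alpha}dr)^{1/2}(\int r^{2\alpha-1}dr)^{1/2}$ with $\alpha=k\epsilon\theta_i>0$) shows that this quantity has a positive lower bound that in fact blows up as $\delta\to 0$, \emph{for any choice of cutoff}. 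So the asserted vanishing of the commutator terms fails whenever $k\epsilon\theta_i>0$, which is the generic case covered by the hypothesis. The condition $k\epsilon\theta_i+b_i+t\epsilon\theta_i+(1-t)\theta_i<1$ guarantees the triviality of the multiplier ideal sheaf and the local integrability of $|\xi|^2_{\hat h_\epsilon^k}\omega_\epsilon^n$, but it does \emph{not} give zero capacity of the divisor for the weighted gradient integral. To repair the argument one should not work directly with the singular Hermitian metric in the cutoff: either first replace $\hat h_\epsilon^k$ by a decreasing family of smooth approximations $\hat h_{\epsilon,\eta}^k=h_\epsilon^k\prod_i(\|s_i\|^2+\eta)^{-k\epsilon\theta_i}$ (for which $|\xi|^2$ is bounded, the cutoff estimate goes through, and the curvature has a uniform lower bound converging to the one in \eqref{eq-ThetaRic}), and pass to the limit $\eta\to 0$; or work in the Hilbert-space framework of Demailly's $L^2$ theory with singular weights, where the Bochner inequality for the closed extension of $\bar\partial$ is established precisely by such an approximation and not by the naive cutoff you describe.
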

Under the assumption $k\epsilon\theta_i+b_i+t\epsilon \theta_i+(1-t)\theta_i<1$, the singular Hermitian metric on $kL-K_M$ given by:
\[
e^{-k\hvphi_\epsilon}\omega_\epsilon^n= \frac{e^{-(k+t)\vphi_\epsilon}}{|s_H|^{2\frac{1-t}{m}}\prod_i |s_i|^{2(k\epsilon\theta_i+b_i+t\epsilon \theta_i+(1-t)\theta_i)}}
\]
has a trivial multiplier ideal sheaf and its curvature is a K\"{a}hler current on $M$ by \eqref{eq-ThetaRic}. So we get the solvability of $\bar{\partial}$-equation with an $L^2$-estimate:
\begin{prop}[$L^2$-estimate]\label{prop-L2est}
Assume $k\epsilon \theta_i+b_i+t\epsilon \theta_i+(1-t)\theta_i<1$. Then there exists a constant $C$ independent of $\epsilon$, such that for $\xi\in \Gamma(T^{0,1}M\otimes L^k)$ with $\bar{\partial}\xi=0$, we can find a solution
$\bar{\partial}\zeta=\xi$ which satisfies:
\begin{equation}
\int_M |\zeta|_{\hat{h}_\epsilon^k}^2 \omega_\epsilon^n\le \frac{C}{k}\int_M |\xi|^2_{\hat{h}_\epsilon^k\otimes \omega_\epsilon}\omega_\epsilon^n.
\end{equation}
\end{prop}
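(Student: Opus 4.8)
The plan is to obtain $\zeta$ from the H\"{o}rmander--Demailly $L^2$-existence theorem for $\bar\partial$, the required positivity being exactly the content of the Lemma preceding the statement. Concretely, I would rewrite $L^k:=k\mu^*K_X^{-1}$ as $K_M\otimes G$ with $G:=L^k\otimes K_M^{-1}$, view $\xi$ as a $\bar\partial$-closed $(n,1)$-form with values in $G$, and equip $G$ with the (singular) Hermitian metric obtained by tensoring $\hat{h}_\epsilon^k$ on $L^k$ with the metric on $K_M^{-1}$ induced by the volume form $\omega_\epsilon^n$, where $\omega_\epsilon:=\omega_{(\epsilon,t)}$. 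The curvature current of this metric on $G$ is precisely $\Theta(\hat{h}_\epsilon^k)+\mathrm{Ric}(\omega_\epsilon)$, which by \eqref{eq-ThetaRic} is $\ge(k+t)\,\omega_\epsilon$ as currents on $M$ and equals $(k+t)\,\omega_\epsilon$ on $U:=M\setminus\supp(B)=M\setminus(E\cup H)$; moreover, under the hypothesis $k\epsilon\theta_i+b_i+t\epsilon\theta_i+(1-t)\theta_i<1$ (together with $(1-t)/m<1$, which is automatic) its multiplier ideal sheaf is $\mathcal{O}_M$. Passing to $(n,1)$-forms in this way has the advantage that the Ricci contribution in the Bochner identity \eqref{eq-Wentz} gets absorbed into the bundle metric.

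With these facts in hand the argument is the standard one, the only delicate point being that $(U,\omega_\epsilon)$ is an \emph{incomplete} K\"{a}hler manifold. Since $U$ is the complement of a simple normal crossing divisor in the smooth projective variety $M$, it carries a complete K\"{a}hler metric $\hat\omega$ (of Poincar\'{e} / Carlson--Griffiths type), and Demailly's version of the $L^2$-estimate applies: approximating $\omega_\epsilon$ by the complete metrics $\omega_\epsilon+\delta\hat\omega$ while keeping the bundle metric on $G$ fixed, one solves $\bar\partial\zeta_\delta=\xi$ on $(U,\omega_\epsilon+\delta\hat\omega)$ with a bound controlled by the curvature operator $[i\Theta_G,\Lambda_{\omega_\epsilon+\delta\hat\omega}]$, and then lets $\delta\to 0$, using that (i) for $(n,1)$-forms the pairing $\langle[i\Theta_G,\Lambda_{\omega'}]^{-1}\xi,\xi\rangle\,dV_{\omega'}$ is monotone in $\omega'$, and (ii) the $L^2$-norm of the $(n,0)$-valued solution is independent of the K\"{a}hler form. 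Because $[i\Theta_G,\Lambda_{\omega_\epsilon}]=(k+t)\,\mathrm{Id}$ on $(n,1)$-forms (again by \eqref{eq-ThetaRic}), and assuming the right-hand side below is finite (otherwise there is nothing to prove), this yields on $U$
\[
\int_{U}|\zeta|^2_{\hat{h}_\epsilon^k}\,\omega_\epsilon^n\;\le\;\frac{1}{k+t}\int_{M}|\xi|^2_{\hat{h}_\epsilon^k\otimes\omega_\epsilon}\,\omega_\epsilon^n\;\le\;\frac{C}{k}\int_{M}|\xi|^2_{\hat{h}_\epsilon^k\otimes\omega_\epsilon}\,\omega_\epsilon^n,
\]
with $C=1$. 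The constant is manifestly independent of $\epsilon$: the only place $\epsilon$ enters is through \eqref{eq-ThetaRic}, which holds for every $\epsilon$.

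It remains to promote $\zeta$ to a global smooth section of $L^k$ on all of $M$. As the degenerating factors of $\hat{h}_\epsilon^k\,\omega_\epsilon^n$ along $E\cup H$ are bounded below there, finiteness of $\int_U|\zeta|^2_{\hat{h}_\epsilon^k}\,\omega_\epsilon^n$ forces $\zeta\in L^2_{\mathrm{loc}}$ across $E\cup H$; since $\bar\partial\zeta=\xi$ is smooth, $\zeta$ differs locally from a smooth particular solution by a holomorphic function lying in $L^2_{\mathrm{loc}}$, which extends holomorphically across the divisor (Riemann-type removable singularity, equivalently $\mathcal{I}(\hat{h}_\epsilon^k\otimes\omega_\epsilon^n)=\mathcal{O}_M$). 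Hence $\zeta$ is a smooth section of $L^k$ on $M$ and the displayed estimate holds over $M$. The step I expect to be the crux is the incomplete-metric issue together with the requirement that the $L^2$-constant be uniform in $\epsilon$ (and in the approximation parameter $\delta$): this works precisely because the curvature lower bound \eqref{eq-ThetaRic} is by $(k+t)\omega_\epsilon$ \emph{itself}, and because passing to $(n,q)$-forms makes the final estimate land on $\omega_\epsilon$ with all the inequalities pointing the right way. The Bochner input of \eqref{eq-Wentz}, the duality argument producing $\zeta$, and the removable-singularity extension are all classical.
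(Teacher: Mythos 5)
Your proof is correct and uses essentially the same ingredients as the paper: the curvature lower bound from \eqref{eq-ThetaRic}, the trivial multiplier ideal of the singular weight on $kL-K_M$, and the Bochner/H\"{o}rmander $L^2$-method (the paper records it on $(0,1)$-forms with values in $L^k$ via the Weitzenb\"{o}ch lemma; your passing to $(n,1)$-forms with values in $G=L^k\otimes K_M^{-1}$ is the same argument in Demailly's normalization). You are somewhat more careful than the text at the two points it glosses over — the incompleteness of $(M\setminus(E\cup H),\omega_\epsilon)$, handled by approximating with complete K\"{a}hler metrics, and the extension of $\zeta$ across the divisor via $L^2$ removable singularity — and your constant $C=1$ is sharp; otherwise the route is the same.
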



The following convergence follows from local elliptic estimates for holomorphic sections and the convergence with a fixed gauge (see Corollary \ref{cor-gauge}).
\begin{prop}
Assume $k\epsilon \theta_i+b_i+t\epsilon \theta_i+(1-t)\theta_i<1$.
Let $\zeta_j$ be a sequence of holomorphic sections of $L^k$, $k\ge 1$, satisfying:
\[
\int_M |\zeta_j|_{\hat{h}_{\epsilon_j}^k}^2\omega_{\epsilon_j}^n\le 1.
\]
Then as $\epsilon_j\rightarrow 0$, by possibly passing to a subsequence if necessary, $\zeta_j$ converges to a locally bounded holomorphic section
$\zeta_\infty$ of $L^k$ over $\cR=M\setminus \supp(B)$.
\end{prop}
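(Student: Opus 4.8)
The plan is to reduce the assertion to a local statement on compact subsets of $\cR=M\setminus\supp(B)=M\setminus(E\cup H)$ and then to extract a limit by a normal-families argument. The two ingredients are: (i) on $\cR$ the metrics $\omega_{\epsilon_j}$ converge in $C^\infty_{\mathrm{loc}}$ to the smooth K\"{a}hler-Einstein metric $\omega_{(0,t)}$, and, by Corollary \ref{cor-gauge} together with $k\epsilon_j\theta_i\to 0$, the singular Hermitian metrics $\hat h_{\epsilon_j}^k$ on $L^k=\mu^*K_X^{-k}$ converge in $C^\infty_{\mathrm{loc}}(\cR)$ to a smooth Hermitian metric $\hat h_0^k$; (ii) the uniform bound $\int_M|\zeta_j|^2_{\hat h_{\epsilon_j}^k}\,\omega_{\epsilon_j}^n\le 1$.

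First I would prove a uniform interior estimate: for every compact $K\Subset\cR$ there is $C_K>0$, independent of $j$, with $\sup_K|\zeta_j|^2_{\hat h_{\epsilon_j}^k}\le C_K$. Cover $K$ by finitely many coordinate balls $B$ over which $L^k$ is trivialized; on such a $B$ one has $|\zeta_j|^2_{\hat h_{\epsilon_j}^k}=|f_j|^2\,e^{-\Phi_j}$ with $f_j$ holomorphic and, by (i), the local weights $\Phi_j$ uniformly bounded on $B$ for $j$ large. Since $|f_j|^2$ is plurisubharmonic it satisfies the sub-mean-value inequality, and since by (i) the volume forms $\omega_{\epsilon_j}^n$ are, for $j$ large, uniformly comparable to Lebesgue measure on $B$, shrinking $B$ slightly gives, for $p$ in the smaller ball,
\[
|\zeta_j(p)|^2_{\hat h_{\epsilon_j}^k}\ \le\ C_B\int_B|\zeta_j|^2_{\hat h_{\epsilon_j}^k}\,\omega_{\epsilon_j}^n\ \le\ C_B ,
\]
which gives the claim. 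Alternatively, one may use the Bochner identity \eqref{eq-lapnorm}: on $\cR$ one has $\Theta(\hat h_{\epsilon_j}^k)=k\omega_{\epsilon_j}$ by \eqref{eq-ThetaRic}, hence $\tr_{\omega_{\epsilon_j}}\Theta(\hat h_{\epsilon_j}^k)=kn$ and $\Delta_{\omega_{\epsilon_j}}|\zeta_j|^2\ge -kn\,|\zeta_j|^2$; combined with the uniform Sobolev constant of Proposition \ref{prop-Sob} and Moser iteration this again bounds $\sup_K|\zeta_j|^2_{\hat h_{\epsilon_j}^k}$ by its $L^2$-norm.

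Granted the uniform interior bounds, the Cauchy estimates for holomorphic functions show that the $\zeta_j$ together with all their derivatives are uniformly bounded on compact subsets of $\cR$, so $\{\zeta_j\}$ is a normal family; exhausting $\cR$ by an increasing sequence of compacts and diagonalizing, a subsequence converges in $C^\infty_{\mathrm{loc}}(\cR)$ to a holomorphic section $\zeta_\infty$ of $L^k$ over $\cR$. Letting $j\to\infty$ in $\sup_K|\zeta_j|^2_{\hat h_{\epsilon_j}^k}\le C_K$ shows $\zeta_\infty$ is locally bounded, and Fatou's lemma gives $\int_{\cR}|\zeta_\infty|^2_{\hat h_0^k}\,\omega_{(0,t)}^n\le 1$. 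I do not expect a genuine obstacle: the only delicate point is the uniformity in $j$ of the constants above, which on any fixed compact subset of $\cR$ is automatic, since there all the relevant geometry (base metric, bundle metric, volume, Sobolev constant) is controlled uniformly in $j$ by (i) --- the degeneration is concentrated entirely near $\supp(B)$, which is never approached. Thus, once Corollary \ref{cor-gauge} is in hand, the argument is the routine mean-value plus normal-families argument indicated above.
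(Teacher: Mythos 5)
Your argument is correct and matches the paper's (extremely terse) proof, which simply cites ``local elliptic estimates for holomorphic sections and the convergence with a fixed gauge'': that is precisely the combination of the sub-mean-value estimate in local trivializations with the $C^\infty_{\mathrm{loc}}$ convergence of $\omega_{\epsilon_j}$ and $\hat h_{\epsilon_j}^k$ on compact subsets of $\cR$ that you spell out. The one minor caveat is that your alternative Bochner/Moser variant should be read as a localized iteration on compact subsets of $\cR$ (the identity $\Theta(\hat h_{\epsilon_j}^k)=k\omega_{\epsilon_j}$ only holds away from $\supp(B)$, so the global inequality $\Delta|\zeta_j|^2\ge -kn|\zeta_j|^2$ is only valid there); but your primary mean-value argument is clean and needs no such care, and the normal-families extraction together with the uniform local $L^\infty$ bound gives exactly the claimed locally bounded holomorphic limit $\zeta_\infty$.
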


A priori, it is not clear that $|\zeta_\infty|_{h_{(0,t)}^k}$ is globally bounded. In the following, we also need the boundedness of $|\nabla^{h^k_{(0,t)}} \zeta_\infty|$. To prove these boundedness,
we follow the approach of \cite{NTZ15} to replace $\hat{h}_\epsilon$ by $h_{\FS}$ and consider the norms $|\zeta|_{h_\FS}^2$ and $|\nabla^{h^k_\FS} \zeta|_{h_{\FS}^k\otimes \omega_\epsilon}$. Recall that, by Proposition \ref{prop-C2lb}, there exists $C$ independent of $\epsilon$, such that the curvature of $h_{\FS}^k$ satisfies:
\begin{equation}\label{eq-Thetaomega}
\Theta=\Theta(h_\FS^k)=k \omega_{\FS}=k\chi_0\le C k \omega_\epsilon.
\end{equation}
The following proposition is motivated by \cite[Proposition3.17]{NTZ15}.
\begin{prop}\label{prop-FSest}
There exists a constant $C>0$ independent of $\epsilon$, such that
for any $\zeta\in H^0(M, L^k)$, we have the following $L^\infty$ and gradient estimates:
\begin{equation}\label{eq-C0FS}
\sup_M |\zeta|^2_{h^k_\FS}\le C\; k^n \int_M |\zeta|^2_{h^k_\FS}\omega_\epsilon^n;
\end{equation}
\begin{equation}\label{eq-C1FS}
|\nabla^{h_\FS}\zeta|^2_{h^k_\FS\otimes\omega_\epsilon}\le C\; k^{n+1}\int_M |\zeta|^2_{h^k_\FS}\omega_\epsilon^n.
\end{equation}

\end{prop}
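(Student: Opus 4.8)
The plan is to derive both estimates from Bochner-type differential inequalities by Moser iteration, the essential point being that every constant involved is uniform in $\epsilon$. Two uniform inputs are available. First, the Sobolev inequality of Proposition \ref{prop-Sob} holds with a constant independent of $\epsilon$: indeed $Ric(\omega_{(\epsilon,t)})=t\,\omega_{(\epsilon,t)}\ge\tfrac t2\,\omega_{(\epsilon,t)}>0$ and $\vol(\omega_{(\epsilon,t)})=(2\pi L_\epsilon)^n$ is bounded above and below uniformly in $\epsilon$ since $L_\epsilon\to L_0$ through ample classes. Second, by \eqref{eq-Thetaomega} we have $0\le\Theta(h^k_\FS)=k\chi_0\le Ck\,\omega_\epsilon$, so $\tr_{\omega_\epsilon}\Theta(h^k_\FS)\le Ckn$ with $C$ independent of $\epsilon$; here and below $\omega_\epsilon:=\omega_{(\epsilon,t)}$. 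All computations are carried out on the geodesically convex regular part $M\setminus\supp(B)$, and the integrations by parts and maximum-principle steps are justified by the cutoff-function argument already used in the proof of Proposition \ref{prop-Sob}, using that $|\zeta|^2_{h^k_\FS}$ is a globally bounded smooth function on $M$ and that $|\nabla^{h_\FS}\zeta|^2_{h^k_\FS\otimes\omega_\epsilon}$ in fact tends to $0$ along the conical divisors, whose cone angles are $<2\pi$.

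For \eqref{eq-C0FS} I would start from the Bochner formula \eqref{eq-lapnorm}, which gives $\Delta_{\omega_\epsilon}|\zeta|^2_{h^k_\FS}=|\nabla\zeta|^2-|\zeta|^2\,\tr_{\omega_\epsilon}\Theta(h^k_\FS)\ge -Ckn\,|\zeta|^2_{h^k_\FS}$. Applying Moser iteration to this subsolution inequality with the uniform Sobolev constant, and keeping track of the dependence on the coefficient $Ckn$ (the iteration constant behaves like the fixed power $(Ckn)^n$ for $k\ge1$, after the usual $L^2\to L^\infty$ bootstrap combined with interpolation $\|u\|_2^2\le\|u\|_\infty\|u\|_1$), one obtains $\sup_M|\zeta|^2_{h^k_\FS}\le Ck^n\int_M|\zeta|^2_{h^k_\FS}\omega_\epsilon^n$ after absorbing $\vol(\omega_\epsilon)^{-1}$ and $n^n$ into $C$.

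For \eqref{eq-C1FS} I would first record, by integrating \eqref{eq-lapnorm} over $M$, the integral bound $\int_M|\nabla\zeta|^2\omega_\epsilon^n=\int_M|\zeta|^2\,\tr_{\omega_\epsilon}\Theta(h^k_\FS)\,\omega_\epsilon^n\le Ckn\int_M|\zeta|^2_{h^k_\FS}\omega_\epsilon^n$, and then apply the second Bochner formula \eqref{eq-lapgrad} to $u:=|\nabla\zeta|^2$. Discarding the nonnegative Hessian terms, using $R_{i\bar j}\zeta_{,i}\overline{\zeta_{,j}}=t\,u\ge0$, and bounding $2\Theta_{i\bar j}\zeta_{,i}\overline{\zeta_{,j}}+u\,\tr_{\omega_\epsilon}\Theta\le Ckn\,u$ via \eqref{eq-Thetaomega}, one is left with $\Delta_{\omega_\epsilon}u\ge -Ckn\,u-[(\tr_{\omega_\epsilon}\Theta)_i\zeta\overline{\zeta_{,i}}+\mathrm{c.c.}]$, where $|(\tr_{\omega_\epsilon}\Theta)_i\zeta\overline{\zeta_{,i}}|\le k\,|\nabla(\tr_{\omega_\epsilon}\chi_0)|\,|\zeta|\,u^{1/2}$. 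A second Moser iteration on $u$, combined with the pointwise bound $\sup_M|\zeta|^2_{h^k_\FS}\le Ck^n\int_M|\zeta|^2_{h^k_\FS}\omega_\epsilon^n$ from \eqref{eq-C0FS} and the integral bound just obtained, then yields $\sup_M u\le Ck^{n+1}\int_M|\zeta|^2_{h^k_\FS}\omega_\epsilon^n$, which is \eqref{eq-C1FS}.

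The hard part will be absorbing the first-order term $(\tr_{\omega_\epsilon}\Theta)_i\zeta\overline{\zeta_{,i}}$ in \eqref{eq-lapgrad}: since $\tr_{\omega_\epsilon}\Theta=k\,\tr_{\omega_\epsilon}\chi_0$, one must control $\nabla(\tr_{\omega_\epsilon}\chi_0)$ uniformly in $\epsilon$, and no uniform $C^3$-type bound is available at the conical points. The plan is to use the Chern--Lu inequality from the proof of Proposition \ref{prop-C2lb} together with the uniform bound $\tr_{\omega_\epsilon}\chi_0\le C$ established there, which yields $|\nabla(\tr_{\omega_\epsilon}\chi_0)|^2\le C\bigl(1+\Delta_{\omega_\epsilon}(\tr_{\omega_\epsilon}\chi_0)\bigr)$ with $C$ independent of $\epsilon$, hence in particular $\int_M|\nabla(\tr_{\omega_\epsilon}\chi_0)|^2\omega_\epsilon^n\le C$. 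In the Moser iteration one bounds $k\int_M u^{q-1/2}|\nabla(\tr_{\omega_\epsilon}\chi_0)||\zeta|\,\omega_\epsilon^n$ by Cauchy--Schwarz, replaces $|\nabla(\tr_{\omega_\epsilon}\chi_0)|^2$ by $1+\Delta_{\omega_\epsilon}(\tr_{\omega_\epsilon}\chi_0)$, integrates by parts once more to move the Laplacian onto $u^{q-1}|\zeta|^2$, and absorbs the resulting terms — which carry either a strictly lower power of $u$ or the good Dirichlet term $|\nabla u^{q/2}|^2$ — into the left-hand side. Making this bookkeeping work uniformly in $q$ and $\epsilon$ is the technical heart of the gradient estimate, and it follows the scheme of \cite[Proposition 3.17]{NTZ15}.
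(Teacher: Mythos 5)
Your treatment of \eqref{eq-C0FS} matches the paper exactly: Bochner's first formula \eqref{eq-lapnorm} gives $\Delta_{\omega_\epsilon}|\zeta|^2_{h^k_\FS}\ge -Ck|\zeta|^2_{h^k_\FS}$ using $\tr_{\omega_\epsilon}\chi_0\le C$ from Proposition \ref{prop-C2lb}, and the uniform Sobolev constant lets Moser iteration run. No issues there.

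For the gradient estimate \eqref{eq-C1FS} you have correctly identified that the first-order term $k\bigl[(\tr_{\omega_\epsilon}\chi_0)_i\zeta\overline{\zeta_{,i}}+\mathrm{c.c.}\bigr]$ is the crux, but the device you propose for it does not close. You want to Cauchy--Schwarz, invoke the Chern--Lu inequality $|\nabla(\tr_{\omega_\epsilon}\chi_0)|^2\le C\bigl(1+\Delta_{\omega_\epsilon}(\tr_{\omega_\epsilon}\chi_0)\bigr)$, and then integrate by parts to move $\Delta_{\omega_\epsilon}$ onto $\rho^2 u^{p-1}|\zeta|^2$. But that integration by parts reproduces $\nabla(\tr_{\omega_\epsilon}\chi_0)$ paired against $\nabla(\rho^2 u^{p-1}|\zeta|^2)$: the $\nabla\rho^2$ term can be absorbed at the cost of a harmless $|\nabla\rho|^2$ factor, but the $(p-1)u^{p-2}\nabla u$ and $u^{p-1}\nabla|\zeta|^2$ terms, after a fresh Cauchy--Schwarz, regenerate exactly the quantity $\int\rho^2 u^{p-1}|\zeta|^2|\nabla(\tr_{\omega_\epsilon}\chi_0)|^2$ with a coefficient that grows like $p^2$, so it cannot be absorbed back into the left-hand side uniformly in $p$. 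The companion Dirichlet-type pieces that remain are of the form $u^{p-2}|\nabla u|^2|\zeta|^2\approx u^{p-2}(|\nabla\nabla\zeta|^2+|\bar\nabla\nabla\zeta|^2)|\zeta|^2$, which carries an extra $|\zeta|^2/u$ relative to the good term $u^{p-1}(|\nabla\nabla\zeta|^2+|\bar\nabla\nabla\zeta|^2)$ and is therefore not absorbable either. So the argument is circular (or, at best, requires a very different and far more delicate bookkeeping than you describe). The global $L^2$ bound $\int_M|\nabla(\tr_{\omega_\epsilon}\chi_0)|^2\omega_\epsilon^n\le C$ by itself is also of no use inside a Moser loop, where one needs $L^p\to L^{p\nu}$ control.

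The paper's actual device is both simpler and genuinely different: it never estimates $\nabla(\tr_{\omega_\epsilon}\chi_0)$ at all. Instead, in the integral $k\int\rho^2 u^{p-1}(\tr_{\omega_\epsilon}\chi_0)_i\zeta\overline{\zeta_{,i}}$, one integrates by parts to move the single $\partial_i$ \emph{off of} $\tr_{\omega_\epsilon}\chi_0$ and onto $\rho^2\zeta\overline{\zeta_{,i}}u^{p-1}$. All resulting terms then carry only the undifferentiated factor $\tr_{\omega_\epsilon}\chi_0$, which is uniformly bounded by Proposition \ref{prop-C2lb}; a Young-type split against the Hessian terms $|\nabla\nabla\zeta|^2+|\bar\nabla\nabla\zeta|^2$ (with small coefficient $1/8$) then yields the reverse Poincar\'e inequality $\int|\nabla u^p|^2\le Cp^3k\int(u^{2p}+k|\zeta|^2 u^{2(p-1)})$. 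Note also that passing from this inequality to \eqref{eq-C1FS} is not a ``standard'' Moser iteration: the paper has to H\"older the term $k\int|\zeta|^2 u^{2(p-1)}$ against $\|u\|_{L^p}$ and $\||\zeta|^2\|_{L^p}$ and then run a case analysis depending on whether $\|u\|_{L^{p_j}}$ dominates $k\||\zeta|^2\|_{L^{p_j}}$ at each stage, with \eqref{eq-C0FS} feeding into the final bound. Your sketch, which treats this as a routine second iteration, omits that nontrivial step. Finally, the Chern--Lu gradient inequality you invoke is used in the paper not here but in Proposition \ref{gradientp}, and conflating the two arguments is the source of the misdirection.
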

\begin{proof}
For simplicity, we will denote by $|\zeta|^2$ (resp. $|\nabla\zeta|^2$) the norm $|\zeta|_{h_\FS}^2$ (resp. $|\nabla^{h^k_\FS} \zeta|_{h_{\FS}^k\otimes \omega_\epsilon}$).

Substituting $h=\hat{h}_\FS^k$ and $\omega=\omega_\epsilon$ into \eqref{eq-lapnorm}, we get:
\begin{equation}
\Delta_\epsilon|\zeta|^2=|\nabla \zeta|^2-k |\zeta|^2\cdot (\tr_{\omega_\epsilon}\chi_0)\ge -k C |\zeta|^2,
\end{equation}
where we used $\tr_{\omega_\epsilon}\chi_0\le C$. Because we have the uniform Sobolev constant by Proposition \ref{prop-Sob}, the estimate in \eqref{eq-C0FS} follows from the standard Moser iteration.

Substituting $h=\hat{h}_\FS^k$ and $\omega=\omega_\epsilon$ into \eqref{eq-lapgrad}, we get:
\begin{eqnarray}
\Delta_\omega |\nabla \zeta|^2&=&|\nabla \nabla \zeta|^2+|\bar{\nabla}\nabla \zeta|^2-\left[({\rm tr}_{\omega}\Theta)_i \zeta\overline{\zeta_{,i}}+c.c.\right]\nonumber\\
&&\hskip 5mm +R_{i\bar{j}}\zeta_{,i}\overline{\zeta_{,j}}-2\Theta_{i\bar{j}}\zeta_{,i}\overline{\zeta_{,j}}-|\nabla \zeta|^2 {\rm tr}_{\omega}(\Theta)\nonumber\\
&=&|\nabla \nabla \zeta|^2+|\bar{\nabla}\nabla \zeta|^2-k\left[(\tr_{\omega_\epsilon}\chi_0)_i\zeta\overline{\zeta_i}+c.c.\right]\nonumber\\
&&\hskip 5mm+ t|\nabla\zeta|^2-2 k (\chi_0)_{i\bar{j}} \zeta_i\overline{\zeta_j}-k |\nabla \zeta|^2 \tr_{\omega_\epsilon}(\chi_0)\nonumber\\
&\ge&|\nabla\nabla \zeta|^2+|\bar{\nabla}\nabla \zeta|^2-k \left[(\tr_{\omega_\epsilon}\chi_0)_i \zeta\overline{\zeta_i}+c.c.\right]-Ck |\nabla\zeta|^2.
\end{eqnarray}
The last inequality is because of \eqref{eq-Thetaomega}. Choose a cut-off function $\rho=\rho_\delta$ such that $\|\nabla \rho\|_{L^2(\omega_\epsilon)}\rightarrow 0$ as $\delta\rightarrow 0$. Then we can calculate:
\begin{eqnarray*}
\int_M \rho^2 \left|\nabla |\nabla \zeta|^p\right|^2&=&\frac{p^2}{4(p-1)}\int_M \rho^2 \nabla_i (|\nabla\zeta|^{2(p-1)}) \nabla_{\bar{i}}|\nabla \zeta|^2\\
&=&\frac{p^2}{4(p-1)}\int_M \rho^2 |\nabla \zeta|^{2(p-1)}(-\Delta) |\nabla \zeta|^2-2\rho |\nabla \zeta|^{2(p-1)}\nabla_i \rho \nabla_{\bar{i}}|\nabla \zeta|^2\\
&\le &\frac{p^2}{4(p-1)} \int_M \left[-\rho^2(|\nabla\nabla \zeta|^2+|\bar{\nabla}\nabla \zeta|^2)+k\rho^2 \left((\tr_{\omega}\chi_0)_i \zeta\overline{\zeta_i}+c.c.\right)\right]|\nabla \zeta|^{2(p-1)}\\
&& +Ck\rho^2 |\nabla \zeta|^{2p}-2\rho |\nabla \zeta|^{2(p-1)}\nabla_i \rho \nabla_{\bar{i}}|\nabla \zeta|^2.
\end{eqnarray*}
We estimate as follows:
\begin{eqnarray*}
&&k\int_M \rho^2 (\tr_{\omega}\chi_0)_i\zeta\overline{\zeta_i}|\nabla \zeta|^{2(p-1)}\\
&=&-k\int_M \rho^2 (\tr_{\omega}\chi_0)\left[ |\nabla \zeta|^{2p}+\zeta \overline{\zeta_{i\bar{i}}}|\nabla \zeta|^{2(p-1)}+(p-1)\zeta\overline{\zeta_i}|\nabla \zeta|^{2(p-2)}\nabla_i |\nabla\zeta|^2 \right]\\
&&-Ck \int_M 2\rho (\tr_{\omega}\chi_0) (\nabla_i \rho) \zeta \overline{\zeta_i} |\nabla \zeta|^{2(p-1)}\\
&\le&Ck\int_M \rho^2 |\nabla \zeta|^{2p}+ C(p-1)^2 k^2\int_M \rho^2 |\zeta|^2 |\nabla \zeta|^{2(p-1)}+\frac{1}{8}\int_M \rho^2 |\nabla\zeta|^{2(p-1)} (|\nabla\nabla \zeta|^2+|\bar{\nabla}\nabla \zeta|^2)\\
&&+ Ck \int \left(|\nabla \rho|^2 |\zeta|^2 |\nabla \zeta|^{2(p-1)}+\rho^2 |\nabla \zeta|^{2p}\right).
\end{eqnarray*}
Moreover, we have:
\begin{eqnarray*}
-2\int_M \rho |\nabla \zeta|^{2(p-1)}\nabla_i \rho\nabla_{\bar{i}}|\nabla \zeta|^2
&\le& \frac{1}{4}\int_M \rho^2 |\nabla \zeta|^{2(p-1)}(|\bar{\nabla}\nabla \zeta|^2+|\nabla\nabla \zeta|^2)+
\int_M |\nabla \rho|^2|\nabla \zeta|^{2p}.
\end{eqnarray*}
Summing up the estimates we get:
\begin{eqnarray*}
\int_M \rho^2 \left|\nabla |\nabla \zeta|^p\right|^2&\le& C p^3k \int_M \left(\rho^2 |\nabla \zeta|^{2p}+k\rho^2 |\zeta|^2 |\nabla\zeta|^{2(p-1)}\right.\\
&&\hskip 2cm+\left.  |\nabla \rho|^2 |\zeta|^2 |\nabla \zeta|^{2(p-1)}+|\nabla \rho|^2 |\nabla \zeta|^{2p}
\right).
\end{eqnarray*}
Since $|\zeta|^2$ and $|\nabla \zeta|^2$ are bounded, we let $\delta\rightarrow 0$ to get:
\begin{eqnarray*}
\int_M  \left|\nabla |\nabla \zeta|^p\right|^2&\le& C p^3k \int_M \left( |\nabla \zeta|^{2p}+k |\zeta|^2 |\nabla\zeta|^{2(p-1)}\right).
\end{eqnarray*}

Applying the Sobolev inequality for $(M, \omega_{(\epsilon,t)})$, we get:
\begin{equation}
\left(\int_M (|\nabla \zeta|^p)^{2n/(n-1)}\right)^{\frac{n-1}{n}}\le  Cp^3 k\int_M \left( |\nabla \zeta|^{2p}+k |\zeta|^2 |\nabla \zeta|^{2(p-1)}\right).
\end{equation}

By H\"{o}lder's inequality, we have:
\begin{eqnarray}\label{eq-Holder106}
k\int_M |\zeta|^2 |\nabla \zeta|^{2(p-1)}&\le& k\left( \int |\nabla \zeta|^{2p}\right)^{1-\frac{1}{p}} \left(\int |\zeta|^{2p} \right)^{\frac{1}{p}}\nonumber \\
&=& \left\| |\nabla \zeta|^2\right\|_{L^p}^p \frac{k\left\| |\zeta|^2\right\|_{L^p}}{\left\| |\nabla \zeta|^2\right\|_{L^p}}.
\end{eqnarray}
So we get the estimate:
\begin{equation}\label{eq-iteration*}
\left\| |\nabla \zeta|^2\right\|_{L^{\frac{np}{n-1}}}\le (C p^3 k)^{\frac{1}{p}}\left\| |\nabla \zeta|^2\right\|_{L^p} \left(1+\frac{k\left\| |\zeta|^2\right\|_{L^p}}{\left\| |\nabla \zeta|^2\right\|_{L^p}}\right)^{\frac{1}{p}}
\end{equation}
Now the estimate \eqref{eq-C1FS} can be proved by the following iteration argument (cf. \cite[Proof of Proposition 3.18]{NTZ15}): put $p_j=2\nu^{j}$, $j\ge 0$ where $\nu=\frac{n}{n-1}$. Write $\|\cdot\|_{p}$ for $\|\cdot \|_{L^p}$ $(p\in [1, \infty])$. Then we have:

{\it Case 1:} $\||\nabla \zeta|^2\|_{p_j}\ge k \|\zeta\|_{p_j}$ for all $j\ge 0$. Then \eqref{eq-iteration*} becomes:
\begin{equation}
\||\nabla \zeta|^2\|_{p_{j+1}}\le (C k )^{\frac{1}{p_j}} p_j^{\frac{3}{p_j}} \||\nabla \zeta|^2\|_{p_j}.
\end{equation}
By standard argument of Moser iteration, we get
\begin{eqnarray}\label{eq-Moser1}
\||\nabla \zeta|^2\|_{\infty}&\le& \left(\prod_{j=0}^{+\infty} (Cp_j^3)^{\frac{1}{p_j}}\right) k^{\sum_{j=0}^{+\infty} \frac{1}{p_j}}\||\nabla \zeta|^2\|_{2}\nonumber \\
&\le& Ck^{\frac{n}{2}} \||\nabla \zeta|^2\|_{2}.
\end{eqnarray}
On the other hand, we have:
\begin{eqnarray*}
\||\nabla \zeta|^2\|_2^2=\int_M |\nabla \zeta|^{4}\le \||\nabla\zeta|^2\|_{\infty}\int_M |\nabla \zeta|^2.
\end{eqnarray*}
Using \eqref{eq-Moser1}, we get:
\begin{eqnarray*}
\||\nabla \zeta|^2\|_{L^\infty}&\le& C k^{\frac{n}{2}}\||\nabla \zeta|^2\|_{2}\\
&\le& C k^{\frac{n}{2}}\||\nabla \zeta|^2\|_\infty^{\frac{1}{2}}\||\nabla \zeta|\|_{1}^{1/2}\\
&\le&\frac{1}{2}\||\nabla \zeta|^2\|_{L^\infty}+Ck^n \||\nabla \zeta|^2\|_1,
\end{eqnarray*}
which implies $\||\nabla \zeta|^2\|_{\infty}\le Ck^n \||\nabla \zeta|^2\|_1$.
On the other hand, we have
\begin{eqnarray*}
\||\nabla \zeta|^2\|_{1}=\int_M |\nabla \zeta|^2=\int_M k|\zeta|^2 \tr_{\omega_\epsilon}\chi_0\le Ck \int_M |\zeta|^2.
\end{eqnarray*}
So we get the wanted estimate $\||\nabla \zeta|^2\|_{L^\infty}\le C k^{n+1}\||\zeta|^2\|_1$.

{\it Case 2:} There exists $j_0\ge 0$ such that $\||\nabla \zeta|^2\|_{p_j}\ge k\||\zeta|^2\|_{p_j}$ for all $j>j_0$, but
$\||\nabla \zeta|^2|\|_{p_{j_0}}< k\||\zeta|^2\|_{p_{j_0}}$. Then from \eqref{eq-Holder106}, we have:
$$
\int k |\zeta|^2|\nabla \zeta|^{2(p_{j_0}-1)}\le k^{p_{j_0}}\||\zeta|^2\|^{p_{j_0}}_{p_{j_0}}.
$$
The same iteration as case 1 shows that:
\begin{eqnarray*}
\||\nabla \zeta|^2\|_{\infty}&\le & C k^{\frac{n}{p_{j_0+1}}}\||\nabla \zeta|\|_{p_{j_0+1}}\\
&\le & C k^{\frac{n}{p_{j_0+1}}}   \left(C p_{j_0}^3 k \int_M (|\nabla \zeta|^{2p_{j_0}}+k |\zeta|^2 |\nabla \zeta|^{2(p_{j_0}-1)}\right)^{\frac{1}{p_{j_0}}}\\
&\le & Ck^{\frac{n}{p_{j_0}}} k \cdot \||\zeta|^2\|_{p_{j_0}}\le C k^{\frac{n}{p_{j_0}}+1}\||\zeta|^2\|_{\infty}^{\frac{p_{j_0}-1}{p_{j_0}}}\||\zeta|^2\|_1^{\frac{1}{p_{j_0}}}\\
&\le & C k^{\frac{n}{p_{j_0}}+1}\left(k^n\||\zeta|^2\|_1\right)^{\frac{p_{j_0}-1}{p_{j_0}}}\||\zeta|^2\|_1^{\frac{1}{p_{j_0}}}\\
&=&C k^{n+1} \||\zeta|^2\|_1.
\end{eqnarray*}

{\it Case 3:} $\||\nabla \zeta|^2\|_{p_j}\le k \||\zeta|^2\|_{p_j}$ for infinitely many $j\ge 0$. Then by letting $p_j\rightarrow +\infty$ we get:
$\||\nabla \zeta|^2\|_{L^\infty}\le k \||\zeta|^2\|_{L^\infty}\le C k^{n+1}\||\zeta|^2\|_{1}$.
\end{proof}
From Proposition \ref{prop-C0est} and Proposition \ref{gradientp}, we know that $|\zeta|_{h^k_{(0,t)}}$ and $|\nabla^{h^k_{(0,t)}} \zeta|$ are both bounded.

\subsection{Isomorphism of GH limit with $X$}\label{sec-isom}

Choose $m\in \bZ_{>0}$ such that $mL=m\mu^*(-K_X)$ is a genuine line bundle. Let $\Phi^{\ell}: M\rightarrow \cP^N$ be the morphism defined by an orthonormal basis of $(L^{m\ell}, h^{m\ell}_\FS)$ for any $\ell$ sufficiently large. For any $t\in (0,1)$ and $\epsilon\in (0, \epsilon^*(t))$, the map
$$
\Phi^{\ell}_{(\epsilon, t)}=\Phi^\ell: (M, \omega_{(\epsilon, t)})\rightarrow (\Phi(M)\cong X, \omega_\FS)
$$
is Lipschitz with a uniform Lipschitz constant. As $\epsilon\rightarrow 0$ with $t$ fixed, by taking a subsequence $\Phi^{\ell}_{(\epsilon_i,t)}$ converges to a Lipschitz map
\begin{equation}
\Phi^{\ell}_{(0,t)}=\lim_{\epsilon_i\rightarrow 0} \Phi^{\ell}_{(\epsilon_i, t)}: (X_{(0,t)}, d_{(0,t)})\rightarrow (\Phi^\ell(M)\cong X, \omega_\FS).
\end{equation}

Given the estimates in the previous subsection, we can follow similar argument in \cite{NTZ15} to prove
\begin{prop}\label{prop-homeo}
$\Phi^{\ell^*}_{(0,t)}$ is a homeomorphism for some $\ell^*>n+1$. Hence $(M, \omega_{(\epsilon,t)})$ converges to $(X, d_{(0,t)})$ (in the Gromov-Hausdorff topology) which is the metric completion of the conical K\"{a}hler-Einstein metric $(X^\reg, \omega_{(0,t)})$.
\end{prop}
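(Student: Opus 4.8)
The plan is to follow the Donaldson--Sun type partial $C^0$-estimate argument as implemented in \cite{NTZ15,Son14}, establishing the estimate uniformly in $\epsilon$ and then passing to the limit. First I would record the elementary features of the map. For every $\epsilon$ the morphism $\Phi^\ell=\Phi^\ell_{(\epsilon,t)}$ is the \emph{fixed} composition $M\xrightarrow{\mu} X\hookrightarrow\bP^N$ attached to $L^{m\ell}=\mu^*(-m\ell K_X)$ with the fixed metric $h^{m\ell}_\FS$; only the source metric $\omega_{(\epsilon,t)}$ varies. By the gradient estimate \eqref{eq-C1FS} of Proposition~\ref{prop-FSest}, $\Phi^\ell_{(\epsilon,t)}$ is Lipschitz with a constant independent of $\epsilon$, so the limit $\Phi^\ell_{(0,t)}$ exists and is Lipschitz. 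Composing with the isometry $\overline{\id}$ of Proposition~\ref{prop-gf} gives $\Phi^\ell_{(0,t)}\circ\overline{\id}=\Phi^\ell$ on $M\setminus\supp(B)$; since $\Phi^\ell(M\setminus\supp(B))$ is dense in $X=\Phi^\ell(M)$ and $X_{(0,t)}$ is compact, $\Phi^\ell_{(0,t)}$ is surjective onto $X$, and on $\cR=\overline{\id}(M\setminus\supp(B))$ it equals the restriction of the birational morphism $\Phi^\ell$ to the locus over $X^\reg$, hence (for $\ell$ large) is injective there with image $X^\reg$. So the real content is, for a single sufficiently large $\ell^*>n+1$: (a) a uniform lower bound on the Bergman function of $L^{m\ell^*}$ at every point of $X_{(0,t)}$, and (b) separation of arbitrary pairs of points.

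For (a), fix $x\in X_{(0,t)}$ and a tangent cone $C(Y_x)$ at $x$; by Corollary~\ref{cor-gauge} the relevant powers of $L$ with the metrics $\hat h_\epsilon$ converge on $\cR$ to a limit Hermitian line bundle, and one produces a model peak holomorphic section on $C(Y_x)$ using its Ricci-flat Kähler cone structure (a blow-up limit of the conical Kähler--Einstein metrics). I would transplant this section to $(M,\omega_{(\epsilon_j,t)})$ at a small scale: multiply by a cutoff on a metric ball, regard the result as an almost-holomorphic section $\xi_j$ of $L^{m\ell^*}$ with $\bar\partial\xi_j$ supported in an annulus and of controlled $L^2$-norm, then solve $\bar\partial\zeta_j'=\bar\partial\xi_j$ by Proposition~\ref{prop-L2est}. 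Its hypothesis $m\ell^*\epsilon\theta_i+b_i+t\epsilon\theta_i+(1-t)\theta_i<1$ holds once $\epsilon$ is small enough (depending on $\ell^*$), since $\theta_i\le(1-b_i)/2$ forces the $\epsilon\to0$ limit $b_i+(1-t)\theta_i\le(1+b_i)/2<1$; the curvature bound $\Theta(\hat h^{m\ell^*}_\epsilon)+Ric(\omega_\epsilon)\ge(m\ell^*+t)\omega_\epsilon$ from \eqref{eq-ThetaRic} gives the $L^2$ estimate with the $(m\ell^*)^{-1}$ gain, so $\zeta_j=\xi_j-\zeta_j'$ is holomorphic with $\|\zeta_j'\|_{L^2}$ of size $O((m\ell^*)^{-1/2})$, hence small. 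Proposition~\ref{prop-FSest} together with Proposition~\ref{prop-C2lb} then bounds $\sup_M|\zeta_j|_{h^{m\ell^*}_\FS}$ and $|\nabla^{h_\FS}\zeta_j|$ uniformly, so a subsequence converges on $\cR$ to a holomorphic section $\zeta_\infty$ of $L^{m\ell^*}$; using $\operatorname{codim}_\bR\cS\ge2$ and the $L^\infty$/gradient bounds of Propositions~\ref{prop-C0est} and~\ref{gradientp}, $\zeta_\infty$ is bounded and extends across $\cS$, with $|\zeta_\infty|_{h^{m\ell^*}_{(0,t)}}(x)\ge c>0$ for a constant $c$ independent of $x$. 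This is the uniform Bergman lower bound; a variant in which the model section is made to vanish at a prescribed second point lying at unit distance from $x$ yields (b).

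Finally, once $\Phi^{\ell^*}_{(0,t)}$ separates points for a single $\ell^*>n+1$, it is a continuous bijection from the compact space $(X_{(0,t)},d_{(0,t)})$ onto $X$, hence a homeomorphism, and it restricts to a biholomorphism $\cR\cong X^\reg$ under which $\omega_\FS$ pulls back compatibly with $\omega_{(0,t)}$ on $X^\reg$. Combined with Proposition~\ref{prop-gf}, which identifies the Gromov--Hausdorff limit of $(M,\omega_{(\epsilon,t)})$ with $(X_{(0,t)},d_{(0,t)})$, this gives that $(X,d_{(0,t)})$ is the metric completion of $(X^\reg,\omega_{(0,t)})$. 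I expect the main obstacle to be the uniform-in-$\epsilon$ construction and estimation of the peak sections: keeping the Hörmander correction small and propagating the gradient estimate \eqref{eq-C1FS} through the $\epsilon\to0$ limit, which the authors flag as the new analytic input beyond \cite{Son14,NTZ15}; a secondary subtlety is verifying the weight hypothesis of Proposition~\ref{prop-L2est} and the removable-singularity step extending $\zeta_\infty$ across $\cS$.
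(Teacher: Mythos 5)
Your outline follows the same Donaldson--Sun/Tian partial $C^0$-estimate route as the paper: construct a model peak section on the tangent cone, transplant via cutoff, solve $\bar\partial$ with Proposition~\ref{prop-L2est}, verify the weight hypothesis (your check via $\theta_i\le(1-b_i)/2$ is correct), bound via Propositions~\ref{prop-FSest}, \ref{prop-C0est}, \ref{gradientp}, and pass to the limit using Corollary~\ref{cor-gauge}. That matches the paper's Steps~1--2. Two points, however.

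First, a genuine gap in your separation step. You assert that ``a variant in which the model section is made to vanish at a prescribed second point lying at unit distance from $x$'' yields separation for \emph{a single} $\ell^*>n+1$, and your final argument (continuous bijection from a compact space is a homeomorphism) crucially depends on that uniformity. But the peak-section construction, as you have set it up, naturally produces an $\ell=\ell_{p,q}$ depending on the pair $(p,q)$ (the cone scale $r_j$ at which you obtain good approximating sections depends on where you blow up). Passing from pairwise $\ell_{p,q}$ to a uniform $\ell^*$ that separates all pairs simultaneously is an additional step, and this is precisely why the paper invokes the effective finite generation result \cite[Proposition 7]{Li12} in its Step~3 (following \cite[p.1719--1721]{NTZ15}). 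You need either that, or some other compactness/effectivity argument, to close this step; as written your proof silently assumes the conclusion.

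Second, a small misattribution of where the smallness comes from: you write that $\zeta_j'$ has $L^2$-norm ``of size $O((m\ell^*)^{-1/2})$'' from the $1/k$ gain in the $L^2$-estimate. That factor by itself does not shrink; the relevant smallness comes from the cutoff function $\gamma_{\bar\epsilon}$ being chosen with $\int|\nabla\gamma_{\bar\epsilon}|^2\le\bar\epsilon$ (Lemma~5.8 of \cite{Tia15}), so that the source $\|\bar\partial\xi_j\|_{L^2}^2$ is $O(\bar\epsilon)$ before the $L^2$-estimate is applied. This is cosmetic but worth stating precisely, since the whole argument hinges on the correction being smaller than the peak value. Also a minor slip: on $\cR=\overline{\id}(M\setminus\supp(B))$ the image of $\Phi^\ell_{(0,t)}$ is $X^{\reg}\setminus H_X$, not all of $X^{\reg}$, since $\supp(B)\supset E\cup H$.
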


For the reader's convenience, we sketch the steps of the proof.

{\bf Step 1:} Construction of local approximating holomorphic sections. Let $p\in X_{(0,t)}$ be any point. Let $r_j\rightarrow 0$ be a decreasing sequence of radius and $\cC_p=\lim_{j\rightarrow+\infty} (X_{(0,t)},
r_j^{-1}d_{(0,t)}, p)$ be a tangent cone at $p$. By Theorem 1.6 in \cite{TW18}, we have:

\noindent
{$\bf T_1.$} $\cC_p$ is smooth outside a closed subcone $\cS_p$ of complex codimension at least $1$ which is the singular set of $\cC_p$;

\noindent
{$\bf T_2.$} There is a K\"{a}hler Ricci-flat cone metric $\omega_p$ of the form $\sddb \rho^2$ on $\cC_p\setminus \cS_p$, where $\rho$ denotes the distance function from the vertex $o$ of $\cC_p$;

\noindent
{$\bf T_3.$} Denote by $L_p$ the trivial bundle $\cC_p\times \bC$ over $\cC_p$ equipped with the Hermitian metric $e^{-k \rho^2}|\cdot|^2$. The curvature of this Hermitian metric is given by $\omega_p$.

For any $\epsilon>0$, define:
\[
V(p; \epsilon)=\{y\in \cC_p; y\in B_{\epsilon^{-1}}(o, \omega_p)\setminus \overline{B_\epsilon(o, \omega_p)}, d(y, \cS_p)>\epsilon \}.
\]
For any $\epsilon>0$ and $\delta>0$, we can choose $j_0=j_0(\epsilon, \delta)$ such that $r_{j_0}\le \epsilon^2$, and for each $j\ge j_0$, there is a diffeomorphism $\phi_j: V(p; \frac{\epsilon}{4})\rightarrow X_{(0,t)}\setminus \cS$ where $\cS$ is the singular set of $X_{(0,t)}$, satisfying:

(i) $d(p, \phi_j(V(p; \epsilon))< 10 \epsilon r_j$ and $\phi_j(V(p, \epsilon))\subset B_{(1+\epsilon^{-1})r_j}(p)$;

(ii) The K\"{a}hler metric $\omega_{(0,t)}$ on $X_{(0,t)}\setminus \cS$ satisfies
\[
\|r_j^{-2}\phi_j^*\omega_{(0,t)}-\omega_p\|_{C^6(V; \frac{\epsilon}{2})}\le \delta,
\]
where the norm is defined in terms of the metric $\omega_p$.

The following lemmas are crucial for us:
\begin{lem}[{\cite[Lemma 5.7]{Tia15}}]
Given $\epsilon>0$ and any sufficiently small $\delta>0$, there are a sufficiently large $j$, a diffeomorphism $\phi_j: V(p; \frac{\epsilon}{4})\rightarrow X_{(0,t)} \setminus \cS$ with with properties above, and an isomorphism $\psi_j$ from the trivial bundle $\cC_p\times\bC$ onto $\cL^{k_j}$ (for some $k_j$ sufficiently divisible) over $V(p; \epsilon)$ commuting with $\phi_j$ and satisfying:
\begin{equation}
|\psi_j(1)|_{h_{(0,t)}}^2=e^{- \rho^2} \text{ and } \|\nabla\psi_j\|_{C^6(V(p; \epsilon))}\le \delta,
\end{equation}
where $\nabla$ denotes the covariant derivative with respect to the metric $h_{(0,t)}$ and $e^{-\rho^2}|\cdot|^2$.
\end{lem}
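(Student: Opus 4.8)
The plan is to carry over essentially verbatim the proof of \cite[Lemma 5.7]{Tia15} (see also \cite{DS14,NTZ15}), the only genuinely new point being that one must argue throughout on the regular loci --- away from the singular set $\cS$ of $X_{(0,t)}$ and from the singular set $\cS_p$ of the tangent cone $\cC_p$ --- which is harmless because the shells $V(p;\epsilon)$ can be chosen to lie inside the smooth parts, using the codimension bounds quoted from \cite{TW17b}.

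First I would fix the power of the line bundle: choose $k_j\to\infty$ divisible by $m$ with $k_j r_j^2\to 1$ (possible since $r_j\to 0$). Since the Chern curvature of $(\cL^{k_j},h_{(0,t)}^{k_j})$ on $\cR$ is $k_j\omega_{(0,t)}$, property (ii) of the diffeomorphisms $\phi_j$ gives
\[
\big\|\,k_j\,\phi_j^*\omega_{(0,t)}-\omega_p\,\big\|_{C^6(V(p;\epsilon/2))}\longrightarrow 0 .
\]
Next consider the smooth line bundle $\cE_j:=\phi_j^*\cL^{k_j}$ over $V(p;\epsilon/4)$, equipped with the metric obtained by tensoring $\phi_j^*h_{(0,t)}^{k_j}$ with the inverse of the metric $e^{-\rho^2}|\cdot|^2$ on the trivial bundle $\cC_p\times\bC=L_p$; its Chern curvature is exactly $k_j\phi_j^*\omega_{(0,t)}-\omega_p$, which tends to $0$ in $C^6(V(p;\epsilon/2))$. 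Producing $\psi_j$ then reduces to producing a nowhere-vanishing smooth section $\tau_j$ of $\cE_j$ over $V(p;\epsilon)$: writing $|\tau_j|^2=e^{w_j}$ in this metric, the bundle map $\psi_j:\cC_p\times\bC\to\cL^{k_j}$ covering $\phi_j$ (i.e.\ commuting with $\phi_j$) determined by $\psi_j(1):=e^{-w_j/2}\,\tau_j$ automatically satisfies $|\psi_j(1)|^2_{h_{(0,t)}}=e^{-\rho^2}$, while $\nabla\psi_j=e^{-w_j/2}\big(\nabla\tau_j-\tfrac12\,dw_j\otimes\tau_j\big)$, where $\nabla$ is the Chern connection of $\cE_j$. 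So the two asserted properties become: (a) $\cE_j$ is smoothly trivial over $V(p;\epsilon)$; and (b) after normalizing $\tau_j$, the quantities $\|\nabla\tau_j\|_{C^6}$ and $\|w_j\|_{C^6}$ on $V(p;\epsilon)$ are $\le\delta$ for $j$ large.

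Granting (a), part (b) is routine: starting from any unit-norm section one parallel-transports it along the rays of $\cC_p$ using the Chern connection of $\cE_j$; since the curvature is $O(\delta)$ on $V(p;\epsilon/2)$, this radial gauge has $\nabla\tau_j=O(\delta)$ and $w_j=O(\delta)$ in the $C^6$ topology on $V(p;\epsilon)$ once $\tau_j$ is normalized to unit norm along one fixed ray (to reach $C^6$ rather than $C^5$ one simply takes the approximation order in property (ii) one higher, which \cite{TW17b} supplies). The genuinely delicate step is (a): the first Chern class $c_1(\cE_j)\in H^2(V(p;\epsilon);\bZ)$ is represented by closed $2$-forms whose $C^0$-norm tends to $0$, hence vanishes for $j$ large --- \emph{provided} $V(p;\epsilon)$, a spherical shell of $\cC_p$ with an $\epsilon$-neighbourhood of $\cS_p$ removed, has finitely generated cohomology and has been chosen inside the regular locus. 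In the Ricci-bounded situation of \cite{DS14} this is immediate from the complex codimension $\ge 2$ of $\cS_p$; in our conical setting \cite{TW17b} yields only complex codimension $\ge 1$, so this is precisely where one must repeat the more careful topological arguments of \cite[\S 5]{Tia15}, shrinking to suitable retracts and never leaving the regular part. I expect this to be the main obstacle; everything else is a transcription of the smooth case, the essential analytic inputs being the uniform estimate $\chi_0\le C\,\omega_{(\epsilon,t)}$ of Proposition \ref{prop-C2lb}, the gradient bound of Proposition \ref{gradientp}, and the $C^{2,\alpha}$-convergence of $\omega_{(\epsilon_j,t)}$ to $\omega_{(0,t)}$ on $\cR$.
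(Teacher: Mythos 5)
The paper does not prove this lemma itself: it is quoted verbatim as \cite[Lemma 5.7]{Tia15}, and the only new input the paper supplies is that the hypotheses (smoothness of the regular part of the tangent cone, the codimension bounds, the existence of the diffeomorphisms $\phi_j$) hold in the conical K\"ahler--Einstein setting thanks to the regularity theorem quoted from \cite{TW17b}. So there is no internal proof against which to compare.

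Your sketch is a faithful outline of the argument that is actually being cited. You pick $k_j$ with $k_j r_j^2\to 1$, twist $\phi_j^*\cL^{k_j}$ by the model metric $e^{-\rho^2}|\cdot|^2$ so that the Chern curvature becomes $k_j\phi_j^*\omega_{(0,t)}-\omega_p\to 0$ in $C^6$, and reduce the lemma to (a) topological triviality of this bundle on $V(p;\epsilon)$ and (b) construction of an almost-parallel unit-norm section; this matches \cite{DS14,Tia15,NTZ15}. You are also right that (a) is the genuinely delicate step here since $\cS_p$ need only have complex codimension $\ge 1$ (as the paper records in $\mathbf{T}_1$), so one must appeal to the finer topological control of the shells $V(p;\epsilon)$ established in \cite[\S 5]{Tia15} rather than the simpler codimension-$\ge 2$ situation of \cite{DS14}. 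One small caveat about your step (b): parallel transport along rays only produces a global section once $\cE_j$ has first been trivialized on a fixed cross-section $\{\rho=\rho_0\}\cap V(p;\epsilon)$ with a small connection form, which is itself part of the topological step (a), not a separate routine computation; and since parallel transport preserves the Hermitian norm, the normalization along one ray that you invoke actually forces $w_j\equiv 0$ rather than $w_j=O(\delta)$, so that part of the estimate is automatic. These are presentational rather than mathematical gaps, and the overall approach agrees with the source the paper relies on.
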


\begin{lem}[{\cite[Lemma 5.8]{Tia15}}]
Let $\cS_p$ be the singular set of tangent cone $\cC_p$. There is a smooth function $\gamma_{\bar{\epsilon}}$ on $B_o(\bep)\subset \cC_p$ such that the following hold:
\begin{enumerate}
\item $\gamma_\bep\equiv 1$ if ${\rm dist}(y, \cS_p)\ge \bep$;
\item $0\le \gamma_{\bep}\le 1$ and $\gamma_\bep\equiv 0$ near $\cS_p$;
\item the metrics converges smoothly in $B_o(\bep^{-1})\setminus \{\gamma_\bep\equiv 0\}$;
\item $|\nabla \gamma_\bep|\le C=C(\bep)$;
\item $\int_{B_o(\bep^{-1})}|\nabla \gamma_\bep|^2\omega_p^n\le \bep$.
\end{enumerate}
\end{lem}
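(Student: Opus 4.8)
The plan is to prove the lemma by the standard cutoff construction of Donaldson--Sun and \cite{Tia15}, adapted to the conical setting through the regularity theory of \cite{TW17b}: produce $\gamma_\bep$ as a smoothed logarithmic cutoff of the distance to $\cS_p$. By $\mathbf{T_1}$, $\cC_p$ is the metric cone over its link, is Ricci-flat and smooth away from the subcone $\cS_p$, and $\cS_p$ has complex codimension at least $1$, hence real codimension at least $2$. The one extra input we take from \cite{TW17b} is a tube-volume bound: writing $T_s(\cS_p)=\{y:\ d(y,\cS_p)<s\}$, for every $R>0$ there is $C_R>0$ with
\[
\Vol\bigl(T_s(\cS_p)\cap B_o(R)\bigr)\ \le\ C_R\,s^2\qquad(0<s\le s_0),
\]
equivalently, $\cS_p\cap B_o(R)$ has vanishing $2$-capacity. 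This follows from the structure of $\cS_p$ given by the regularity theorem (e.g.\ $\sigma$-finite $\mathcal H^{2n-2}$-measure, or a stratification into submanifolds of the expected codimensions) by a routine covering argument; the borderline real codimension $2$ is absorbed by the logarithm appearing below.

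Granting this, I would construct $\gamma_\bep$ as follows. Fix $R=\bep^{-1}$ and a small parameter $\delta=\delta(\bep)\le\bep$ to be chosen. Let $h_\delta\colon[0,\infty)\to[0,1]$ be smooth and nondecreasing with $h_\delta\equiv 0$ on $[0,\delta^2]$, $h_\delta\equiv 1$ on $[\delta,\infty)$ and $|h'_\delta(r)|\le 2\,(r\log(1/\delta))^{-1}$ on $[\delta^2,\delta]$ (a smoothing of $r\mapsto\log(r/\delta^2)/\log(1/\delta)$). Since $d(\cdot,\cS_p)$ is only Lipschitz, first replace it by a smooth function $r$ on $\cC_p\setminus\cS_p$ with $\tfrac12 d(\cdot,\cS_p)\le r\le 2\,d(\cdot,\cS_p)$ and $|\nabla r|\le 2$ (a routine mollification on the smooth locus), and set $\gamma_\bep:=h_\delta\circ r$, extended by $0$ across a neighbourhood of $\cS_p$. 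Then $\gamma_\bep$ is smooth, takes values in $[0,1]$, vanishes near $\cS_p$, and is non-constant only where $\delta^2/2\le d(\cdot,\cS_p)\le 2\delta$; this gives (1) and (2) (after shrinking $\delta$ so the threshold in (1) is met) and (4) with $C(\bep)=C\delta^{-2}(\log(1/\delta))^{-1}$. For (5), by the coarea formula for $d(\cdot,\cS_p)$ together with the tube bound,
\[
\int_{B_o(\bep^{-1})}|\nabla\gamma_\bep|^2_{\omega_p}\,\omega_p^n\ \le\ \frac{C}{(\log(1/\delta))^2}\int_{\{\,\delta^2/2\le d(\cdot,\cS_p)\le 2\delta\,\}\cap B_o(\bep^{-1})}\frac{\omega_p^n}{d(\cdot,\cS_p)^2}\ \le\ \frac{C'_R}{\log(1/\delta)},
\]
so it remains only to pick $\delta=\delta(\bep)$ small enough that $C'_R/\log(1/\delta)\le\bep$.

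Property (3) asserts that the rescaled metrics $r_j^{-2}\phi_j^*\omega_{(0,t)}$ converge smoothly on the support of $\gamma_\bep$, and I would obtain it by coordinating $\delta$ with the diffeomorphisms $\phi_j$ of the previous lemma. The open set $\{\gamma_\bep\neq 0\}\cap B_o(\bep^{-1})$ is contained in $B_{\bep^{-1}}(o)\setminus\overline{T_{\delta^2/2}(\cS_p)}$; since the vertex $o$ itself lies in $\cS_p$, this set also stays a definite distance from $o$, so for $\epsilon:=\delta^2/4$ it is contained in the region $V(p;\epsilon)$ on which $r_j^{-2}\phi_j^*\omega_{(0,t)}\to\omega_p$ in $C^6$ by $\mathbf{T_2}$ and property (ii) of the previous lemma. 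Hence the approximating metrics converge smoothly there, as required; all the matchings among $\bep,\delta,\epsilon,R$ are carried out after the fact and cost nothing.

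The only step that is not a routine computation is the tube-volume bound, equivalently the vanishing of the $2$-capacity of $\cS_p$: this is the point at which the structure theory of \cite{TW17b} for singular sets of limits of conical Kähler--Einstein metrics is genuinely used. Once that fact is quoted, the rest of the write-up is the elementary logarithmic-cutoff estimate above together with the bookkeeping needed for (3).
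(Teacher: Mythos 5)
The paper itself does not prove this lemma: it is stated as an import from \cite[Lemma~5.8]{Tia15}, with the regularity input deferred to \cite{TW17b} (this is spelled out in the proof of Proposition~\ref{partial}, where the existence of $\gamma_{\bar\epsilon}$ is obtained ``with the help of Theorem~\ref{structure}, as the proof of Lemma~5.8 in \cite{Tia15}''). So your task is a reconstruction, and your construction --- mollify $d(\cdot,\cS_p)$ and compose with a logarithmic cutoff running from scale $\delta^2$ to scale $\delta$ --- is the right tool. It is in fact \emph{necessary} here in a way that it is not in \cite{Tia15}: there the singular set of the tangent cone has real codimension $\ge 4$ and a linear cutoff with $|\nabla\gamma|\sim 1/\delta$ on $\{\delta<d<2\delta\}$ already gives $\int|\nabla\gamma|^2\lesssim\delta^2\to 0$; in the conical setting of Section~5, a point $p$ on the conical divisor has tangent cone of the form $\bR^{2n-2}\times C_\beta$ with singular set of real codimension exactly $2$, so the logarithm is essential to beat the borderline $2$-capacity. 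Your coarea/dyadic computation for (5) and the coordination of the parameters $\bar\epsilon,\delta,\epsilon$ for (3) are both correct.

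The genuine gap is in the justification of the tube-volume bound, which is the only substantive input. You assert $\Vol\bigl(T_s(\cS_p)\cap B_o(R)\bigr)\le C_R\,s^2$ and say it ``follows from $\sigma$-finite $\mathcal H^{2n-2}$-measure \dots\ by a routine covering argument.'' This is false as stated: $\sigma$-finiteness of $\mathcal H^{2n-2}$ (even vanishing $\mathcal H^{2n-2}$-measure) does not control the volume of tubular neighborhoods --- there are sets of Hausdorff dimension $2n-2$ with infinite $(2n-2)$-Minkowski content --- and your dyadic estimate at each scale $2^{-k}$ requires precisely the Minkowski-type bound $\Vol(T_{2^{-k}})\lesssim 4^{-k}$, not a Hausdorff measure bound. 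What is actually needed, and what should be quoted directly from \cite{TW17b} (or the Cheeger--Colding(-Tian)/Cheeger--Naber quantitative packing estimates on which it rests), is the $\epsilon$-regularity plus covering statement: $\cS_p\cap B_o(R)$ can be covered by at most $C_R\,s^{2-2n}$ balls of radius $s$. That packing estimate implies the tube bound and hence vanishing $2$-capacity; Hausdorff measure alone does not. You do correctly identify the tube bound as ``the one extra input,'' which is the right instinct, but the derivation of it as written would not close the argument --- it should be cited as the packing/Minkowski estimate from the conical regularity theory rather than inferred from Hausdorff measure.
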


Assuming the two lemmas, one can find for any $\bep$ with $0<\epsilon< \bep$ such that ${\rm Supp}(\gamma_\bep)\subset V(p; \epsilon)$. Then $\tau=\psi_j(\gamma_\bep 1)$ extends to a compactly supported smooth section $\cL^{k_j}$ on $\cR\subset X_{(0,t)}$ which satisfies that $\tau$ is holomorphic on $\phi_j(V(p; \bep))$ and
\[
\int_{X_{(0,t)}}|\bar{\partial}\tau|^2_{h_{(0,t)}^{k_j}\otimes k_j\omega_{(0,t)}} (k_j\omega_{(0,t)})^n\le \bep.
\]

{\bf Step 2:} Existence of holomorphic peak sections on $M$.

Let $p\in X_{(0,t)}$. Suppose $p_i\in M$ satisfies $p_i\stackrel{d_{\rm GH}}{\longrightarrow} p$ under the Cheeger-Gromov convergence. By the smooth convergence on the regular set $\phi_j(V(p; \epsilon))$, the approximating holomorphic section $\tau$ on $\cL^{k_j}$ is pulled back to a family of smooth section of $L^{k_j}$ on $M$, denoted $\tau_i$. By Proposition \ref{prop-gf}, this can be done via a family of smooth maps $f_i\equiv\id: \cR=X_{(0,t)}\setminus \cS=M\setminus \supp(B) \rightarrow M$ representing the Gromov-Hausdorff convergence $(M, \omega_{(\epsilon_i,t)})\rightarrow (X_{(0,t)}, d_{(0,t)})$. For simplicity, we will just denote $\omega_{\epsilon_i}=\omega_{(\epsilon_i,t)}$ and $\hat{h}_{\epsilon_i}=\hat{h}_{(\epsilon_i,t)}$ which satisfies:
\begin{equation}
{\rm Supp}(\tau_i)\subset f_i(\phi_j(V(p; \epsilon)))\subset B_{k_j\omega_{\epsilon_i}}(p_i, 2\sqrt{k_j}\bep);
\end{equation}

\begin{equation}
\left||\tau_i|^2_{\hat{h}_{\epsilon_i}^{k_j}}(x)-e^{- d^2_{k_j\omega_{\epsilon_i}}}\right|\le \bep, \text{ on } f_i(\phi_j(V(p; \bep))),
\end{equation}
and
\[
C^{-1}\le \int_M |\tau_i|^2_{\hat{h}_{\epsilon_i}^{k_j}}(k_j\omega_{\epsilon_i})^n\le C
\]
for some constant $C$ depending on the volume ratio of the tangent cone $\cC_p$ and
\[
\int_M |\bar{\partial} \tau_i|_{\hat{h}_{\epsilon_i}^{k_j}\otimes (k_j\omega_{\epsilon_i})}^2 (k_j\omega_{\epsilon_i})^n \le 2\bep
\]
for any $i$ sufficiently large. We can assume that
\[
f_i(\phi_j(V(p; 2\bep^{1/4n}))\cap B_{k_j\omega_{\epsilon_i}}(p_i, 4\bep^{1/4n})\neq \emptyset,
\]
\[
B_{k_j\omega_{\epsilon_i}}(p'_i, \frac{1}{2}\bep^{1/4n})\subset f_i(\phi_j(V(p; \bep))),
\]
for some $p'_i\in M$ with $d_{k_j\omega_{\epsilon_i}}(p'_i, p_i)\le 2\bep^{1/4n}$.

By the $L^2$-estimate in Proposition \ref{prop-L2est}, there exists a smooth section $v_i$ solving $\bar{\partial}v_i=\bar{\partial}\tau_i$ with
\begin{equation}
\int_M |v_i|_{\hat{h}_{\epsilon_i}^{k_j}}^2 (k_j\omega_{\epsilon_i})^n \le C\cdot \bep
\end{equation}
for some $C$ independent of $i$. Noticing that $v_i$ is holomorphic on $B_{k_j\omega_{\epsilon_i}}(p'_i, \frac{1}{2}\bep^{1/4n})$, by the standard elliptic estimate, we have:
\begin{equation}
|v_i|^2_{\hat{h}_{\epsilon_i}^k}(p'_i)\le C \bep^{-1/2}\int_{B_{k_j\omega_{\epsilon_i}}(p'_i, \frac{1}{2}\bep^{1/4n})}|v_i|^2_{\hat{h}_{\epsilon_i}^{k_j}}(k_j\omega_{\epsilon_i})^n\le C\bep^{1/2}.
\end{equation}
Therefore, $\sigma_i=\tau_i-v_i$ defines a holomorphic section of $L^{k_j}$.

By the elliptic estimate, we have:
\begin{equation}
|\sigma_i|^2_{\hat{h}_{\epsilon_i}^{k_j}}(p'_i) \ge e^{-k_j d^2_{k_j\omega_{\epsilon_i}}(p_i,p'_i)}-\bep-C \bep^{1/4}\ge \frac{1}{2}
\end{equation}
once $\bep$ is chosen sufficiently small, and
\begin{equation}
|\sigma_i|^2_{\hat{h}^{k_i}_{\epsilon_i}}\le C \bar{\epsilon}^{\frac{1}{2}} \text{ on } M\setminus B_{\omega_{\epsilon_i}}(p_i, 2\epsilon_i).
\end{equation}
Moreover, we have:
\begin{equation}
C^{-1}\le \int_M |\sigma_i|^2_{\hat{h}^{k_j}_{\epsilon_i}} (k_j\omega_{\epsilon_i})^n\le C
\end{equation}
and
\begin{equation}
\int_{M\setminus B_{\omega_{\epsilon_i}}(p_i, 2\bar{\epsilon})}|\sigma_i|^2_{\hat{h}^{k_j}_{\epsilon_i}} (k_j \omega_{\epsilon_i})^n\le C\cdot \bar{\epsilon}.
\end{equation}

Passing to a subsequence if necessary, the sequence of points $p'_i$ converge to a point $p'\in \cR$ with $d_t(p, p')\le 2 k_j^{-1/2}\bep^{1/4n}$, the section $\sigma_i\in H^0(M, L^k)$ converges
to a holomorphic section $\sigma_\infty\in H^0(\cR; \cL^k)$ such that:
\[
|\sigma_\infty|_{h_{(0,t)}^{k_j}}(p')\ge \frac{1}{2};
\]\begin{equation}
|\sigma_\infty|_{h^{k_j}_{(0,t)}}\le C\cdot \bar{\epsilon}^{\frac{1}{2}} \text{ on } M\setminus B_{d_{(0,t)}}(p, 2\epsilon);
\end{equation}
\begin{equation}
C^{-1}\le \int_{\cR} |\sigma_\infty|^2_{h_{(0,t)}^{k_j}}(k_j\omega_{(0,t)})^n\le C;
\end{equation}
and
\begin{equation}
\int_{\cR\setminus B_{d_{(0,t)}(p, 2\bar{\epsilon})}}|\sigma_\infty|^2_{h_{(0,t)}^{k_j}}(k_j\omega_{(0,t)})^n\le C\cdot \bar{\epsilon}.
\end{equation}
By the gradient estimate in Proposition \ref{prop-FSest}, $|\nabla^{\FS} \sigma_\infty|_{h_{\FS}^{k_j}\otimes\omega_{(0,t)}}\le C k_j^{1/2}$. By the gradient estimate of the potential $u_{(0,t)}$ from Proposition \ref{gradientp}, we get $|\nabla^{h_{(0,t)}}\sigma_\infty|_{h^{k_j}_{(0,t)}\otimes\omega_{(0,t)}}\le C k_j^{1/2}$. So if $\bar{\epsilon}$ is chosen sufficiently small, then we get:
\begin{equation}
|\sigma_\infty|_{h^{k_j}_{(0,t)}}(p)\ge \frac{1}{2}-C \bep^{1/4n}\ge \frac{1}{4}.
\end{equation}

{\bf Step 3:}
There exists $\ell^*>n+1$, such that $\Phi^{\ell^*}_{(0,t)}$ is injective and a local homeomorphism. Hence $\Phi^{\ell^*}_{(0,t)}$ is a homeomorphism. This part of the argument is exactly the same as that in \cite[p.1719-1721]{NTZ15} to which we refer for details. The idea is that for any pair of points $p,q\in X$, we can find peak sections almost centered at $p,q$ and show that $d_{\FS}(\Phi_{(0,t)}^{\ell}(p),\Phi_{(0,t)}^{\ell}(q))$ has a definite lower bound for some $\ell=\ell_{p,q}$. Then the effective finite generation from \cite[Proposition 7]{Li12} allows us to find a uniform $\ell^*$ that is independent of the pair.

\section{Algebraic limit and special test configuration}\label{sec-STC}

Up to this point, we have shown that there is a conical K\"{a}hler-Einstein metric $\omega_{(0,t)}$ on the pair $(X, \frac{1-t}{m}H_X)$ that is a Gromov-Hausdorff limit of $\omega_{(\epsilon, t)}$ on $(M, B_{(\epsilon, t)})$.  This allows us to get a Gromov-Hausdorff limit $X_\infty$ of a subsequence $(X, \omega_{(0,t_i)})$ by letting $t_i\rightarrow 1$. Using the regularity result in \cite{TW18} and similar arguments as in \cite{Tia15, CDS15}, we can get:
\begin{thm}[{see \cite[Theorem 4.3]{Tia15}}]\label{structure}
As $t_i\rightarrow 1$
$(X, \omega_{(0,t_i)})$ subsequentially converges to $(X_\infty,\omega_\infty)$ in the $C^\infty$ topology outside a closed subset $\bar{\cS}\cup H_\infty$, where $\bar{\cS}$ is of real codimension at least $4$, and $H_X$ converges to $H_\infty$ in the Gromov-Hausdorff topology. Moreover $\cS=\bar{\cS}$ and $H_\infty$ is a divisor of $K_{X_\infty}^{-m}$ in the regular part.
\end{thm}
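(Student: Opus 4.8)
The plan is to transplant the argument of \cite[Theorem 4.3]{Tia15} (see also \cite{CDS15}) to the present singular conical setting, using the regularity theory of \cite{TW17b} as the geometric input. The key point is uniformity as $t\to1$: the conical K\"ahler--Einstein metric $\omega_{(0,t)}$ on $\left(X,\frac{1-t}{m}H_X\right)$ satisfies $Ric(\omega_{(0,t)})=t\,\omega_{(0,t)}+\frac{1-t}{m}\{H_X\}\ge t\,\omega_{(0,t)}\ge\frac12\,\omega_{(0,t)}$ for $t\in[\frac12,1)$, so the whole family has a fixed positive Ricci lower bound on $X^{\reg}$, while the total volume $\int_X\omega_{(0,t)}^n=(2\pi c_1(-K_X))^n$ is independent of $t$. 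Together with the geodesic convexity of the regular part (already used in the proof of Proposition \ref{prop-Sob}), the usual Myers and Bishop--Gromov arguments give a uniform diameter upper bound and a uniform non-collapsing volume lower bound for $(X,\omega_{(0,t)})$. Hence by Gromov precompactness some sequence $(X,\omega_{(0,t_i)})$ with $t_i\to1$ converges in the Gromov--Hausdorff topology to a compact metric space $(X_\infty,d_\infty)$; one may also regard $X_\infty$ as a limit of the conical K\"ahler--Einstein metrics $\omega_{(\epsilon,t)}$ on the \emph{smooth} manifold $M$ as $(\epsilon,t)\to(0,1)$, which is what allows the Cheeger--Colding--Tian machinery to be applied as in the smooth case.

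Next I would feed this sequence into the conical version of Cheeger--Colding--Tian's structure theory developed in \cite{TW17b}: since the $\omega_{(0,t_i)}$ are conical K\"ahler--Einstein metrics whose cone locus $H_X$ has angle $2\pi t_i\to2\pi$, the limit splits as $X_\infty=\cR\cup\cS$ with $\cR$ open and carrying a smooth K\"ahler--Einstein metric $\omega_\infty$ ($Ric(\omega_\infty)=\omega_\infty$), and $\cS=\bigcup_{k\ge1}\cS_{2n-2k}$ with ${\rm codim}_{\bR}\cS_{2n-2k}\ge 2k$. The $C^\infty$ character of the convergence on $\cR\setminus H_\infty$ comes from the uniform local a priori estimates: off $H_X\cup X^{\sing}$ the metric $\omega_{(0,t)}$ solves a complex Monge--Amp\`ere equation with smooth data, so the Chern--Lu $C^2$-estimate of Proposition \ref{prop-C2lb} (now for the $t\to1$ family), followed by Evans--Krylov and Schauder, gives uniform $C^\infty_{\rm loc}$ bounds; combined with the $\varepsilon$-regularity theorem on $\cR$ this upgrades Gromov--Hausdorff convergence to smooth convergence on compact subsets of $\cR\setminus H_\infty$. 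Then the codimension-two part $\cS_{2n-2}$ must be analyzed: near a point of $H_X$ the metric $\omega_{(0,t)}$ is quasi-isometric to a product of a smooth metric with the two-dimensional model cone of angle $2\pi t$, and as $t_i\to1$ these models converge to $\bR^2$, so every point of $X_\infty$ that is a Gromov--Hausdorff limit of points of $H_X$ is a regular point; this defines $H_\infty\subset\cR$, forces $\cS_{2n-2}=\emptyset$ (the remaining exclusion of codimension-two strata uses the K\"ahler structure of the metric tangent cones together with Colding's volume convergence, exactly as in the smooth case), and hence $\bar{\cS}:=\cS=\bigcup_{k\ge2}\cS_{2n-2k}$ has real codimension $\ge4$. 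The convergence $H_X\to H_\infty$ in the Gromov--Hausdorff topology follows from the same diameter and volume bounds.

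It remains to see that $H_\infty$ is cut out by a section of $K_{X_\infty}^{-m}$. For this I would run the partial $C^0$-estimate of Section \ref{sec-conicX} for the family $t_i\to1$: the peak-section construction, the $L^2$-estimate, and the effective finite generation of \cite[Proposition 7]{Li12} identify $X_\infty$ with a normal $\bQ$-Fano variety and show that the Kodaira embeddings of $(X,\omega_{(0,t_i)})$ given by $|-\ell m K_X|$ converge to the corresponding embedding of $X_\infty$. Tracking the fixed section $s_{H_X}\in H^0(X,-mK_X)$ through these convergent embeddings, after normalization it converges to a nonzero holomorphic section $s_{H_\infty}\in H^0(X_\infty,-mK_{X_\infty})$ whose zero divisor is $H_\infty$; since $H_\infty\subset\cR$, this exhibits $H_\infty$ as a divisor of $K_{X_\infty}^{-m}$ lying in the regular part, and $\cS=\bar{\cS}$ records that the exceptional set of the convergence is exactly $\bar{\cS}\cup H_\infty$.

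The main obstacle is the interaction of the last two steps: making the partial $C^0$-estimate and the conical a priori estimates uniform as the cone angle along $H_X$ degenerates to $2\pi$, so that both the ``Fano'' and the ``conical'' sides of the estimates survive the limit, and simultaneously showing that the limit of the cone locus is a genuine smooth hypersurface inside $\cR$ rather than part of the singular set --- this is where the K\"ahler structure of the tangent cones and Cheeger--Colding's volume convergence enter decisively. The precompactness and interior regularity parts, by contrast, are routine given the estimates already established in Sections \ref{sec-MA}--\ref{sec-conicX}.
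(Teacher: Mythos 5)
The paper itself gives essentially no proof of this theorem—it simply cites \cite{Tia15} (Theorem 4.3), \cite{CDS15}, and the conical Cheeger--Colding--Tian theory of \cite{TW17b}, leaving the reader to assemble the argument from those sources. Your proposal reconstructs precisely that outline: uniform diameter/volume bounds from the Ricci lower bound and geodesic convexity of the regular set, Gromov precompactness, the [TW17b] regularity decomposition $X_\infty = \cR \cup \cS$ with $C^\infty$ convergence on $\cR \setminus H_\infty$ from local Monge--Amp\`ere estimates, exclusion of codimension-two singular strata via the K\"ahler structure of tangent cones plus Colding volume convergence (and the degeneration of the cone angle along $H_X$ to $2\pi$), and finally the partial $C^0$-estimate with \cite[Proposition 7]{Li12} to realize $H_\infty$ as the zero locus of a section of $K_{X_\infty}^{-m}$. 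This is the same route the paper implicitly takes, so the proposal is consistent in substance.

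One small inaccuracy: you write that the cone angle along $H_X$ is $2\pi t_i$. From the equation $Ric(\omega_{(0,t)}) = t\,\omega_{(0,t)} + \frac{1-t}{m}\{H_X\}$ the cone angle along $H_X$ is $2\pi\bigl(1 - \frac{1-t}{m}\bigr)$, not $2\pi t$; both tend to $2\pi$ as $t\to 1$, so your qualitative conclusion (the cone locus becomes regular in the limit) is unaffected, but the formula as written is off. You are also right to flag as the "main obstacle" the uniformity of the Chern--Lu/$C^{2,\alpha}$ and partial-$C^0$ estimates as the cone angle degenerates; that is exactly the technical content the paper delegates to \cite{Tia15} and \cite{TW17b} rather than rederiving.
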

For the tangent cone, we have
\begin{cor}
Let $(C_x,x,\omega_x)=\lim_{i\rightarrow \infty} (X,x,\frac{\omega_\infty}{r_i^2})$ be a tangent cone of $X$ at $x$ and $C_x=\mathcal R(C_x)\bigcup \mathcal S(C_x)$ be the decomposition of $C_x$ into regular part and singular part. Then $\mathcal S(C_x)$ is of real codimension at least 4, and $(X,x,\frac{\omega_\infty}{r_i^2})$ converges to $(C_x,x,\omega_x)$ in the $C^\infty$ topology in $\mathcal R(C_x)$.
\end{cor}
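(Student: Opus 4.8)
The plan is to deduce this from Theorem \ref{structure} together with the conical Cheeger--Colding--Tian theory of \cite{TW17b} (the structure theorem quoted in {\bf Step 4} of the Introduction) and the standard tangent cone analysis for limit spaces with Ricci bounded below. Throughout, $X$ here is the limit $X_\infty$ with its K\"ahler--Einstein metric $\omega_\infty$. By Theorem \ref{structure}, $(X_\infty,\omega_\infty)$ is a Gromov--Hausdorff limit of the conical K\"ahler--Einstein metrics $\omega_{(0,t_i)}$ on $\left(X,\frac{1-t_i}{m}H_X\right)$, whose cone angles $2\pi t_i$ along $H_X$ tend to $2\pi$, and each $\omega_{(0,t_i)}$ is itself a limit of the strong conical K\"ahler--Einstein metrics $\omega_{(\epsilon,t_i)}$ on $(M,B_{(\epsilon,t_i)})$ with $\mathrm{Ric}(\omega_{(\epsilon,t_i)})\ge t_i\,\omega_{(\epsilon,t_i)}$. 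All these metrics have uniformly bounded diameter and uniformly positive volume, so $X_\infty$ is a non-collapsed limit falling within the scope of \cite{TW17b}; in particular we already have $X_\infty=\mathcal R\cup\mathcal S$ with $\mathcal S=\bar{\cS}$ of real codimension $\ge 4$. If $x\in\mathcal R$ the tangent cone is $\bC^n$ and there is nothing to prove, so from now on assume $x\in\mathcal S$.

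First I would show that every tangent cone $C_x$ is a Ricci-flat metric cone. Using the geodesic convexity of the regular parts, the Bishop--Gromov argument with Ricci lower bound $t_i$ passes to the limit via Colding's volume convergence, so on $(X_\infty,\omega_\infty)$ the normalized volume ratio $r\mapsto r^{-2n}\Vol\big(B_r(x)\big)$ is non-increasing with a positive limit $\Theta(x)>0$ as $r\to 0$. Hence along any blow-up sequence $r_i\to 0$ the limit has constant volume ratio, and Cheeger--Colding's ``volume cone implies metric cone'' theorem shows $C_x$ is a metric cone $C(Y)$; since rescaling by $\lambda=r_i^{-1}\to\infty$ sends the Ricci lower bound to $t_i\lambda^{-2}\to 0$, the cone is Ricci-flat and $\omega_x=\sddb\rho^2$ on $\mathcal R(C_x)$, with $\rho$ the distance to the vertex.

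Next, a standard diagonal argument (iterating the Gromov--Hausdorff approximations $\omega_{(\epsilon,t_j)}\to\omega_{(0,t_j)}\to\omega_\infty$ against the blow-up scales $r_i\to 0$) realizes $C_x$ as a pointed Gromov--Hausdorff limit of rescalings $\big(M,p_k,\lambda_k^2\omega_{(\epsilon_k,t_k)}\big)$ with $\lambda_k\to\infty$, $\epsilon_k\to 0$, $t_k\to 1$. These are again conical K\"ahler--Einstein metrics with Ricci lower bound tending to $0$ and uniform non-collapsing, so \cite{TW17b} applies to $C_x$ and gives $C_x=\mathcal R(C_x)\cup\mathcal S(C_x)$ with $\mathcal S(C_x)=\cup_k\mathcal S_{2n-2k}$ and $\mathrm{codim}_\bR\mathcal S_{2n-2k}\ge 2k$. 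The only step that genuinely uses the K\"ahler--Einstein structure (rather than a mere Ricci bound) is the vanishing of the codimension-two stratum $\mathcal S_{2}$, which rules out cross-sectional angle defects and upgrades the bound to $\mathrm{codim}_\bR\mathcal S(C_x)\ge 4$; this is exactly the $\epsilon$-regularity and dimension-reduction input of \cite{TW17b} (the conical analogue of Cheeger--Colding--Tian), the same mechanism that produced $\mathrm{codim}_\bR\bar{\cS}\ge 4$ in Theorem \ref{structure}. This is the main obstacle in the sense that everything else is formal once it is granted.

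Finally, for $y\in\mathcal R(C_x)$ the tangent cone of $C_x$ at $y$ is $\bR^{2n}$, so small balls around $y$ have volume ratio arbitrarily close to the Euclidean one, and the $\epsilon$-regularity theorem (Anderson / Cheeger--Colding--Tian type, in the conical almost-K\"ahler--Einstein form of \cite{TW17b}) yields uniform two-sided bounds on curvature and on injectivity radius for the approximating metrics on a fixed-size ball $B_{s_0}(y)$. Consequently the convergence is $C^{1,\alpha}$ in harmonic coordinates, and bootstrapping the (almost-)Einstein equation — which in the limit is the Ricci-flat K\"ahler equation for $\omega_x$ — upgrades it to $C^\infty$. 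Under the blow-up the divisor $H_\infty$ (and, upstairs, the components of $B_{(\epsilon,t)}$) escape to infinity, since $x\in\cS$ forces $x\notin H_\infty\subset\mathcal R$, so the limiting metric near $y$ is genuinely a smooth Ricci-flat K\"ahler metric; this gives the asserted $C^\infty$ convergence on $\mathcal R(C_x)$ and completes the proof. By the isomorphism $X\cong X_\infty$ established in Section \ref{sec-STC}, the same statement holds verbatim for $X$ with its K\"ahler--Einstein metric.
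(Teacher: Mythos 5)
The paper gives no written proof for this corollary, treating it as a direct consequence of Theorem~\ref{structure} and the conical Cheeger--Colding--Tian theory of \cite{TW17b}; your argument supplies precisely the expected chain of reasoning (volume monotonicity $\Rightarrow$ metric cone, diagonal argument to place $C_x$ in the scope of \cite{TW17b}, $\epsilon$-regularity plus elliptic bootstrap for $C^\infty$ convergence on $\mathcal R(C_x)$), so it matches the paper's intended approach. One small slip: Theorem~\ref{structure} does \emph{not} assert $H_\infty\subset\mathcal R$ (it only says $H_\infty$ is a divisor of $K_{X_\infty}^{-m}$ \emph{on} the regular part, so $H_\infty$ may very well meet $\bar{\mathcal S}$), hence your clause ``$x\in\mathcal S$ forces $x\notin H_\infty$'' is unjustified; fortunately it is also unnecessary, since even if $x\in H_\infty$ the cone angles $2\pi t_k\to 2\pi$ along $H$ kill the conicality in the blow-up, and the limiting metric near any $y\in\mathcal R(C_x)$ is still a smooth Ricci-flat K\"ahler metric, which is all the bootstrap needs.
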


\subsection{Algebraic structrure on $X_\infty$}

On the singular variety $X$, we also have the estimates for the holomorphic sections.
\begin{lem}\label{gra}
There exists a constant $C>0$, such that
for any $\zeta\in H^0(X, K_X^{-k})$, we have the following $L^\infty$ and gradient estimates:
\begin{equation}\label{eq-C0}
\sup_X |\zeta|^2_{h^k_{(0,t)}}\le C\; k^n \int_X |\zeta|^2_{h^k_{(0,t)}}\omega_{(0,t)}^n;
\end{equation}
\begin{equation}\label{eq-C1}
|\nabla^{h_{(0,t)}}\zeta|^2_{h^k_{(0,t)}\otimes\omega_{(0,t)}}\le C\; k^{n+1}\int_X |\zeta|^2_{h^k_{(0,t)}}\omega_{(0,t)}^n.
\end{equation}
\begin{proof}
Let $\gamma_\epsilon$ be a cut-off function supported in $X\setminus (\bar{\mathcal{S}}\bigcup H_\infty)$ on $(X,\omega_{(0,t)})$ satisfying
 $$\int_X |\nabla \gamma_\epsilon|^2\leq \epsilon.$$ Since
  $$\Delta |\zeta|^2_{h^k_{(0,t)}}\geq -k|\zeta|^2_{h^k_{(0,t)}} \text{ outside } \bar{\mathcal S}\bigcup H_\infty, $$
   multiplying both sides by
$\gamma^2_\epsilon |\zeta|^{2(p-1)}_{h^k_{(0,t)}}$ we get
 $$-\gamma^2_\epsilon |\zeta|^{2(p-1)}_{h^k_{(0,t)}} \Delta |\zeta|^2_{h^k_{(0,t)}}\leq k\gamma^2_\epsilon |\zeta|^{2p}_{h^k_{(0,t)}}.$$

For $|\zeta|^2_{h^k_{(0,t)}}$ is bounded, as in Lemma 3.1 in \cite{TW19}, we get (\ref{eq-C0}) by the Moser iteration.
 From Proposition \ref{gradientp}, we know that $|\nabla^{h_{(0,t)}}\zeta|^2_{h^k_{(0,t)}}$ is bounded. So (\ref{eq-C1}) is proved similarly.
\end{proof}

\end{lem}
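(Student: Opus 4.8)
The plan is to prove both estimates by a Moser iteration carried out on the smooth locus $X^{\reg}\setminus H_X$, exactly in the spirit of Proposition \ref{prop-FSest} and of \cite[Lemma 3.1]{TW17a}; the only genuinely new point relative to the resolution case is that the cut-off functions must also be chosen so as to neutralize the conical divisor $H_X$, and this is made possible by the a priori $L^\infty$-bounds on $|\zeta|^2_{h^k_{(0,t)}}$ and $|\nabla^{h^k_{(0,t)}}\zeta|^2$ recorded just before the statement. First I would note that on $X^{\reg}\setminus H_X$ the metric $\omega_{(0,t)}$ is an honest K\"ahler--Einstein metric with $Ric(\omega_{(0,t)}) = t\,\omega_{(0,t)}$, and that, writing $\omega_{(0,t)} = \chi_0 + \sddb u_{(0,t)}$ with $u_{(0,t)}$ bounded, the metric $h_{(0,t)} = h_\FS\,e^{-u_{(0,t)}}$ on $K_X^{-1}$ has curvature $\omega_{(0,t)}$ there, so $h^k_{(0,t)}$ has curvature $k\,\omega_{(0,t)}$ (in any case $\le C k\,\omega_{(0,t)}$ by Proposition \ref{prop-C2lb}). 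Substituting $h=h^k_{(0,t)}$, $\omega=\omega_{(0,t)}$ into \eqref{eq-lapnorm} gives $\Delta_{\omega_{(0,t)}}|\zeta|^2_{h^k_{(0,t)}} = |\nabla\zeta|^2_{h^k_{(0,t)}} - kn\,|\zeta|^2_{h^k_{(0,t)}}\ge -C k\,|\zeta|^2_{h^k_{(0,t)}}$ pointwise on $X^{\reg}\setminus H_X$, and substituting into \eqref{eq-lapgrad} — where the factors $(\tr_{\omega_{(0,t)}}\Theta)_i$, $\Theta_{i\bar{j}}$, $R_{i\bar{j}}$ are now explicit multiples of $\omega_{(0,t)}$ — gives $\Delta_{\omega_{(0,t)}}|\nabla\zeta|^2 \ge |\nabla\nabla\zeta|^2 + |\bar{\nabla}\nabla\zeta|^2 - C k\,|\nabla\zeta|^2$ (if one prefers to work with $h_\FS^k$ instead, one recovers the extra cross terms of Proposition \ref{prop-FSest}, which are disposed of there).

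For \eqref{eq-C0} I would choose cut-off functions $\gamma_\delta$ supported in $X\setminus(\cS\cup H_X)$, equal to $1$ outside a shrinking neighbourhood, with $\int_X|\nabla\gamma_\delta|^2_{\omega_{(0,t)}}\,\omega_{(0,t)}^n\to 0$ as $\delta\to 0$: for the metric singular set $\cS$ this is immediate since it has real Hausdorff codimension $\ge 4$ away from $H_X$ (Theorem \ref{structure}), and along the conical divisor $H_X$ one uses logarithmic cut-offs $\gamma_\delta = \min\{1,\log(|z|/\delta^2)/\log(1/\delta)\}$, whose Dirichlet energy is $O(1/\log(1/\delta))$ precisely because the cone angle $2\pi(1-\tfrac{1-t}{m})$ is positive. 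Multiplying $\Delta|\zeta|^2\ge -Ck|\zeta|^2$ by $\gamma_\delta^2\,|\zeta|^{2(p-1)}_{h^k_{(0,t)}}$, integrating by parts and letting $\delta\to 0$ — here the \emph{boundedness} of $|\zeta|^2_{h^k_{(0,t)}}$ (via Proposition \ref{prop-C0est}) is exactly what forces the $\int_X|\nabla\gamma_\delta|^2|\zeta|^{2p}$ error to vanish — gives $\||\zeta|^2\|_{p\nu}\le (Cpk)^{1/p}\||\zeta|^2\|_p$ with $\nu=\tfrac{n}{n-1}$, using the uniform Sobolev inequality of Proposition \ref{prop-Sob} (uniform because $Ric(\omega_{(0,t)})\ge t\,\omega_{(0,t)}>0$). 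Iterating from $p_0=1$ over $p_j=\nu^j$, so that $\sum_{j\ge 0}p_j^{-1}=n$, yields $\sup_X|\zeta|^2_{h^k_{(0,t)}}\le C\,k^n\int_X|\zeta|^2_{h^k_{(0,t)}}\,\omega_{(0,t)}^n$ with $C$ independent of $k$.

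For the gradient bound \eqref{eq-C1} I would apply the identical cut-off-and-iteration scheme to the differential inequality for $|\nabla\zeta|^2$ above, using the \emph{boundedness} of $|\nabla^{h^k_{(0,t)}}\zeta|^2$ (via Proposition \ref{gradientp}) to justify the cut-offs, and then converting the final $\|\cdot\|_1$-norm into $\int_X|\zeta|^2_{h^k_{(0,t)}}\,\omega_{(0,t)}^n$ by combining the already-proven \eqref{eq-C0} with the identity $\int_X|\nabla\zeta|^2 = kn\int_X|\zeta|^2$ obtained by integrating $\Delta|\zeta|^2=|\nabla\zeta|^2-kn|\zeta|^2$ against the cut-offs; in the variant based on $h_\FS^k$ one instead reproduces verbatim the three-case iteration of \cite[Proof of Proposition 3.18]{NTZ15} used in Proposition \ref{prop-FSest}. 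Either way one reaches $\||\nabla\zeta|^2\|_{L^\infty}\le C\,k^{n+1}\int_X|\zeta|^2_{h^k_{(0,t)}}\,\omega_{(0,t)}^n$. The hard part — indeed the only step needing real care — is the cut-off step along the codimension-two divisor $H_X$: one must verify that $H_X$ (and $\cS$) is genuinely negligible for these integrations by parts, and it is here that the a priori $L^\infty$-bounds on $|\zeta|^2_{h^k_{(0,t)}}$, $|\nabla\zeta|^2$ and $\tr_{\omega_{(0,t)}}\chi_0$ (Propositions \ref{prop-C0est}, \ref{gradientp}, \ref{prop-C2lb}) play the role of a removable-singularity hypothesis, together with the geodesic convexity underlying the uniform Sobolev constant. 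Once these are in hand the remaining $k$-bookkeeping is routine and formally identical to the resolution case treated in Proposition \ref{prop-FSest}.
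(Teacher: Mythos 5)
Your proposal is correct and takes essentially the same approach as the paper: a Moser iteration on $X^{\reg}\setminus H_X$ driven by the Bochner inequality $\Delta|\zeta|^2\geq -Ck|\zeta|^2$, with cut-offs $\gamma_\delta$ supported away from the singular set and the conical divisor whose Dirichlet energy tends to zero, and with the a priori $L^\infty$-bounds on $|\zeta|^2_{h^k_{(0,t)}}$ and $|\nabla\zeta|^2$ from Propositions \ref{prop-C0est} and \ref{gradientp} used to discard the cut-off error terms. The paper's proof is far terser — it simply cites Lemma 3.1 of \cite{TW17a} and says ``similarly'' for \eqref{eq-C1} — whereas you have unpacked the log cut-offs along $H_X$ and the $p_j=\nu^j$ iteration explicitly, but the underlying argument is the same.
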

On $(X,\omega_{(0,t)})$, we have an $L^2$ metric on $H^0(X, K_X^{-k})$ and we can define the Bergman kernel $\rho_k(X,\omega_{(0,t)})$.
We have the following partial $C^0$ estimate.
\begin{prop}\label{partial}
There exist an integer $l>0$ and $c_l>0$ such that $\rho_l(X,\omega_{(0,t_i)})\ge c_{l}>0$ for $t_i$ as above.
\end{prop}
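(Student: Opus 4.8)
I would run the Donaldson--Sun partial $C^0$-argument along the sequence $t_i\to 1$ in the form of \cite{Tia15,CDS15}: argue by contradiction, produce at a ``bad'' point a holomorphic section of a fixed power of $K_X^{-1}$ with controlled $L^2$-norm and definite peak value, and conclude. The inputs replacing the smooth theory are Theorem \ref{structure} and its corollary --- which give, for every tangent cone $C_x$ of $X_\infty$, a decomposition $C_x=\cR(C_x)\cup\cS(C_x)$ with ${\rm codim}_{\bR}\cS(C_x)\ge 4$, a Ricci-flat K\"ahler cone metric $\omega_x=\sddb\rho^2$ on $\cR(C_x)$, and the trivial bundle $\cC_x\times\bC$ with a Hermitian metric of curvature $\omega_x$ --- together with the a priori bounds of Lemma \ref{gra} and Proposition \ref{gradientp} on $X$. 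Since all $\omega_{(0,t_i)}$ lie in the fixed class $2\pi c_1(-K_X)$ with fixed volume and satisfy $Ric(\omega_{(0,t_i)})\ge\tfrac12\omega_{(0,t_i)}$, Bishop--Gromov gives one constant $\kappa=\kappa(X)>0$ bounding below the Euclidean volume density of every iterated tangent cone at every point of every $(X,\omega_{(0,t_i)})$ and of $X_\infty$. Fix then a sufficiently divisible integer $k_0\ge N_0(\kappa,n)$, where $N_0(\kappa,n)$ is the Donaldson--Sun threshold, so that $K_X^{-k_0}$ is a genuine line bundle and, for every $k\ge N_0(\kappa,n)$, the model Gaussian section $|\cdot|^2=e^{-k\rho^2}$ on a Ricci-flat K\"ahler cone of density $\ge\kappa$, cut off near $\cS(C_x)$ by a cutoff of arbitrarily small Dirichlet energy --- possible since ${\rm codim}_{\bR}\cS(C_x)\ge 4>2$ --- has $L^2$-norm bounded above and below and $\dbar$-norm as small as desired. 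The claim is that the proposition holds with $l=k_0$.

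If not, then along a subsequence there are $x_i\in X$ with $\rho_{k_0}(X,\omega_{(0,t_i)})(x_i)\to 0$, and after a further subsequence $(X,x_i,\omega_{(0,t_i)})\to(X_\infty,x_\infty,\omega_\infty)$ in the pointed Gromov--Hausdorff topology, with $C^\infty_{\rm loc}$-convergence off $\bar\cS\cup H_\infty$ (Theorem \ref{structure}). As $t_i\to 1$ the cone angle $2\pi t_i$ of $\omega_{(0,t_i)}$ along $H_X$ tends to $2\pi$, so $\omega_\infty$ has no cone singularity along $H_\infty$ and only $\bar\cS$ (real codimension $\ge 4$) needs to be avoided by cutoffs; for each fixed $i$ the edge $H_X$ has angle bounded below and, as in \eqref{eq-ThetaRic}, contributes only positively to the relevant curvatures, so it is accommodated just as for conical metrics in \cite{Tia15,CDS15}. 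Choosing a small scale $r_j$ with $r_j^{-2}\omega_\infty$ $C^6$-close near $x_\infty$ to a tangent cone $C_{x_\infty}$ and transplanting the cut-off model section --- first to $X_\infty$, then to $(X,\omega_{(0,t_i)})$ --- using the smooth convergence on regular parts exactly as in Steps 1--2 of the proof of Proposition \ref{prop-homeo} (via the two lemmas of \cite{Tia15} quoted there), I obtain for $i$ large a smooth compactly supported section $\tau_i$ of $K_X^{-k_0}$ and a point $x_i'$ near $x_i$ with $\|\tau_i\|_{L^2}$ bounded above and below, $\int_X|\dbar\tau_i|^2_{h^{k_0}_{(0,t_i)}\otimes k_0\omega_{(0,t_i)}}(k_0\omega_{(0,t_i)})^n\le\bep$, and $|\tau_i|^2_{h^{k_0}_{(0,t_i)}}(x_i')\ge 1-\bep$.

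To correct $\tau_i$ I would use the curvature identity $\Theta(h^{k_0}_{(0,t_i)})+Ric(\omega_{(0,t_i)})=(k_0+t_i)\omega_{(0,t_i)}+\tfrac{1-t_i}{m}\{H_X\}\ge(k_0+t_i)\omega_{(0,t_i)}$ and the fact that $\omega_{(0,t_i)}^n$ has locally $L^p$-density for some $p>1$ (from \eqref{eq-sCMA} on $X$, where the $E_i$ do not appear), so the multiplier ideal is trivial and H\"ormander's theorem on $X$ yields $v_i$ with $\dbar v_i=\dbar\tau_i$ and $\int_X|v_i|^2_{h^{k_0}_{(0,t_i)}}(k_0\omega_{(0,t_i)})^n\le C\bep$. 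As $v_i$ is holomorphic on a ball about $x_i'$ whose rescaled geometry is uniformly close to that of $C_{x_\infty}$ near a regular point, the local mean-value inequality gives $|v_i|^2_{h^{k_0}_{(0,t_i)}}(x_i')\le C\bep^{1/2}$; hence $\sigma_i:=\tau_i-v_i\in H^0(X,K_X^{-k_0})$ satisfies $|\sigma_i|^2_{h^{k_0}_{(0,t_i)}}(x_i')\ge\tfrac12$ and $\|\sigma_i\|_{L^2}\le C$, so $\rho_{k_0}(X,\omega_{(0,t_i)})(x_i')\ge c>0$. Using the gradient bound \eqref{eq-C1} of Lemma \ref{gra} (which makes $\rho_{k_0}$ locally uniformly Lipschitz in $\omega_{(0,t_i)}$) and that $x_i'$ is close to $x_i$, I obtain $\rho_{k_0}(X,\omega_{(0,t_i)})(x_i)\ge c/2>0$ for $i$ large, contradicting $\rho_{k_0}(x_i)\to 0$. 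This proves the proposition with $l=k_0$, $c_l=c/2$.

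The step I expect to be the main obstacle is keeping all constants uniform as $t_i\to 1$: the volume non-collapsing that makes $N_0(\kappa,n)$ independent of $i$ and of the base point; the harmlessness of the degenerating edge term $\tfrac{1-t_i}{m}H_X$ both in the $\dbar$-correction and in the cutoffs near $H_\infty$ (using that its cone angle $2\pi t_i$ stays bounded below and tends to $2\pi$); and the uniform local elliptic estimate for $v_i$ near the peak point, which has to be carried out on the scale $r_j$ where the geometry is comparable to $C_{x_\infty}$. Modulo these, the argument is the same nested-limit bookkeeping as in \cite{Tia15,CDS15} and in Step 2 of Proposition \ref{prop-homeo}.
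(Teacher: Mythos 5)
Your proposal matches the paper's argument: the paper's proof of this proposition is precisely a four-ingredient list (compactness of $(X,\omega_{(0,t)})$ from Proposition \ref{prop-homeo} and \cite{TW17b}, $\dbar$-solvability with $L^2$-estimate, the gradient estimate of Lemma \ref{gra}, and cutoff functions with small Dirichlet energy from Theorem \ref{structure}) followed by the remark that the Tian/Donaldson--Sun peak-section scheme then applies verbatim, which is exactly what you carry out by contradiction. The one place you deviate mildly is the $\dbar$-estimate: you propose to apply H\"ormander directly on the singular $(X,\omega_{(0,t_i)})$ via an $L^p$-density and multiplier-ideal argument, whereas the paper obtains this estimate by taking the $\epsilon\to 0$ limit of Proposition \ref{prop-L2est}, which is proved on the smooth resolution $M$ — the latter route is safer, since it sidesteps justifying H\"ormander on a singular normal variety.
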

\begin{proof}
To prove the partial $C^0$ estimate, we need the following ingredients:
\begin{itemize}
\item compactness of $(X, \omega_{0,t})$: this follows from Proposition \ref {prop-homeo} and Theorem 1.6 in \cite{TW18}
\item solution of the $\bar\partial-$equation on $(X, \omega_{(0, t)})$: this is a consequence of Proposition \ref{prop-L2est} by taking $\epsilon\rightarrow 0$
\item gradient estimate of holomorphic sections: this is Lemma \ref{gra}
\item existence of the cut-off function $\gamma_{\bar \epsilon}$ with small $L^2$-norm of the gradient: with the help of Theorem  \ref{structure}, as the proof of Lemma 5.8 in \cite{Tia15} we can get the existence of $\gamma_{\bar \epsilon}$.
\end{itemize}
Given the above ingredients, the proposition can be proved, in a similar way to the proof of Proposition \ref{prop-homeo}, following the arguments in \cite[Section 5]{Tia15} and \cite[Proposition 4.18]{TW19}.
\end{proof}

Now, as a consequence of the partial $C^0$-estimates, the following proposition can be proved by using the same argument as in \cite{DS14, Tia15} (see also Section \ref{sec-isom}):
\begin{prop}
$X_\infty$ is a $\mathbb{Q}$-Fano variety.
\end{prop}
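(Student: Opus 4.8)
The plan is to follow the scheme of \cite{DS14, Tia15, CDS15}, now using the regularity result Theorem \ref{structure} and the a priori estimates of Lemma \ref{gra} in place of their smooth counterparts, so I only indicate the main steps. First I would invoke the partial $C^0$-estimate, Proposition \ref{partial}: it provides an integer $l$ and $c_l>0$ with $\rho_l(X,\omega_{(0,t_i)})\ge c_l$ for all $i$, so the maps $\Phi_{t_i}:X\to\bP^N$ defined by an orthonormal basis of $H^0(X,K_X^{-l})$ are embeddings, and by Lemma \ref{gra} they are uniformly Lipschitz. Since all the images $\Phi_{t_i}(X)$ have the fixed degree $l^n(-K_X)^n$, after passing to a subsequence they converge, as cycles (equivalently, in a Hilbert scheme), to a reduced projective scheme $W\subset\bP^N$. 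Then, exactly as in Section \ref{sec-isom} and \cite{NTZ15} --- constructing holomorphic peak sections as in Steps 1--2 of Section \ref{sec-isom}, solving the $\bar\partial$-equation via the $L^2$-estimate Proposition \ref{prop-L2est}, and invoking the effective finite generation of \cite{Li12} --- I would show that the limiting Lipschitz map $\Phi_\infty:(X_\infty,d_\infty)\to(\bP^N,\omega_\FS)$ is a homeomorphism onto $W$.

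Next I would establish the normality of $W$ and identify its anticanonical class. By Theorem \ref{structure} the convergence $(X,\omega_{(0,t_i)})\to(X_\infty,\omega_\infty)$ is smooth away from $\bar\cS\cup H_\infty$, with $\bar\cS=\cS$ of real codimension at least $4$, and $H_\infty$ a divisor of $K_{X_\infty}^{-m}$ over the regular part. Hence $\Phi_\infty$ restricts to a biholomorphism from $\cR=X_\infty\setminus\cS$ onto an open subset $W^\circ\subset W_{\reg}$ whose complement $W\setminus W^\circ$ has complex codimension at least $2$, and a Hartogs-type extension argument as in \cite{DS14} then shows that $W$ is a normal variety. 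Moreover $\cO_{\bP^N}(1)|_W$ agrees over $W^\circ$ with $K_{X_\infty}^{-l}$ by Theorem \ref{structure}; since $-lK_W$ and $\cO_W(1)$ are $\bQ$-Cartier divisorial sheaves on the normal variety $W$ that coincide outside a set of codimension at least $2$, they coincide everywhere, so $-K_W$ is an ample $\bQ$-Cartier divisor.

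Finally I would verify that $W$ has klt singularities, so that it is genuinely $\bQ$-Fano. The metric $\omega_\infty$ has bounded local potentials (being a Gromov--Hausdorff limit of metrics with uniformly bounded local potentials, cf.\ Proposition \ref{prop-C0est}) and satisfies $Ric(\omega_\infty)=\omega_\infty$ on $\cR$. Near any point of $W$, fix a local generator $s$ of the $\bQ$-line bundle $-lK_W$; the limiting K\"{a}hler--Einstein equation shows that $\omega_\infty^n=e^{-h}\,|s|^{-2/l}$ up to a positive constant, with $h$ bounded because $\omega_\infty$ has bounded local potentials. Then $\int_{X_\infty}\omega_\infty^n=(-K_{X_\infty})^n<\infty$ forces $|s|^{-2/l}$ to be locally integrable on $W$, which is precisely the klt condition --- equivalently, the density of $\omega_\infty^n$ against a smooth local volume form lies in $L^p$ for some $p>1$, by \cite{EGZ09, BBEGZ}. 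This yields that $X_\infty\cong W$ is a normal $\bQ$-Fano variety carrying a weak K\"{a}hler--Einstein metric.

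The hard part is the first two steps: showing that the set-theoretic limit $W$ is an honest normal projective variety and that $\Phi_\infty$ is injective. This requires the full Donaldson--Sun package --- tangent-cone analysis, the H\"{o}rmander $\bar\partial$-estimate of Proposition \ref{prop-L2est}, and the gradient estimates of Proposition \ref{prop-FSest} and Lemma \ref{gra} --- carried out as in \cite{DS14, Tia15, NTZ15}, with Theorem \ref{structure} supplying the needed regularity; it is lengthy but introduces no essentially new input in our setting. The other delicate point is that the klt property passes to the limit, i.e.\ that the Monge--Amp\`ere density stays uniformly bounded in $L^p$, which follows from the uniform potential estimate of Proposition \ref{prop-C0est} together with \cite{BBEGZ, EGZ09}.
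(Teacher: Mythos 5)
Your proposal follows the same overall scheme the paper itself invokes — it simply refers to \cite{DS14, Tia15} and Section \ref{sec-isom}, which is exactly the program you outline: use the partial $C^0$-estimate to get uniformly Lipschitz Kodaira maps, take the cycle/Hilbert-scheme limit $W$, run the peak-section and $\bar\partial$-estimate machinery to show $\Phi_\infty$ is a homeomorphism, then establish normality, ampleness of $-K_W$, and klt singularities. So in structure this is the paper's argument, expanded.

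One imprecision worth flagging: you justify boundedness of the local potentials of $\omega_\infty$ by appealing to Proposition \ref{prop-C0est}, but the constant there is $C=C(X,t)$ and is only uniform in $\epsilon$ for a \emph{fixed} $t\in(0,1)$; nothing in that proposition prevents it from blowing up as $t\to 1$. The paper obtains boundedness of the limiting potential differently, from the partial $C^0$-estimate (Proposition \ref{partial}) together with the gradient estimate (Lemma \ref{gra}): since $\omega_{(0,t_i)} = \frac{1}{l}\omega_{\FS} - \frac{1}{l}\sddb\log\rho_l(X,\omega_{(0,t_i)})$ and $\rho_l$ is bounded below by $c_l>0$, bounded above by Lemma \ref{gra}, and uniformly Lipschitz, the potentials $-\frac{1}{l}\log\rho_l$ converge to a bounded function, giving the control you need on $\omega_\infty$ for the klt step. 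You should route the boundedness of the potential through Proposition \ref{partial} and Lemma \ref{gra} rather than Proposition \ref{prop-C0est}. With that substitution the argument is the one the paper has in mind.
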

Indeed, using the orthogonal basis of $H^0(X,K_X^{-l})$, we have holomorphic maps:
$$\Phi_t: X \rightarrow \Phi_t(X) \subset \mathbb{CP}^N.$$ Let $X'_\infty$ be the Chow limit of $\Phi_t(X)$ as subvarieties of $\bP^N$. From Proposition \ref{partial}, we know that $\Phi_{t_i}$ is uniformly Lipschitz. As a consequence, by letting $t_i\rightarrow 1$, we get a Lipschitz map:
\[
\Phi_\infty=\lim_{i\rightarrow +\infty} \Phi_{t_i}: X_\infty \rightarrow X'_\infty \subset \bP^N.
\]
As pointed out in Section \ref{sec-isom}, one can then use the partial $C^0$-estimates to prove that $\Phi_\infty$ is indeed a homeomorphism.

\subsection{Weak K\"{a}hler-Einstein metric on $X_\infty$}
For each $t\in (0,1)$ and $\epsilon<\epsilon^*(t)$, the conical K\"{a}hler-Einstein metric on $(M, B_{(\epsilon,t)})$ corresponds to
the following complex Monge-Amp\`{e}re equation on $M$
\begin{equation}
(\sddb\psi)^n=\frac{e^{-t\psi}}{|s_H|^{2(1-t)/m}\prod_i |s_i|^{2(1-\beta_i)}},
\end{equation}
where $1-\beta_i=b_i+t\epsilon \theta_i+(1-t)\theta_i$. Equivalently, we have (see \eqref{eq-MAref}-\eqref{eq-Omegept})
\begin{equation}
(\chi_\epsilon+\sddb u)^n=e^{-t u}\Omega(\epsilon,t),
\end{equation}
where the volume form $\Omega(\epsilon, t)$ is given by:
\begin{eqnarray}
\Omega(\epsilon, t)
&=&\frac{e^{-t\psi_\epsilon}}{|s_H|^{2(1-t)/m}\prod_i |s_i|^{2(b_i+t\epsilon\theta_i+(1-t)\theta_i)}}\nonumber\\
&=&\frac{e^{-(\psi_0-\sum_i b_i\kappa_i)}}{\|s_H\|_{m\psi_1}^{2(1-t)/m}\|s_\theta\|_{\kappa_\theta}^{2(t \epsilon+1-t)}\prod_{i}\|s_i\|_{\kappa_i}^{2b_i}}.
\end{eqnarray}

As we have proved in Proposition \ref{prop-homeo}, as $\epsilon\rightarrow 0$, $(M, \omega_{(\epsilon,t)})$ converges to $(X, d_{(0,t)})$ in the Gromov-Hausdorff topology and $(X, d_{(0,t)})$ is compatible with the weak conical K\"{a}hler-Einstein metric $\omega_{(0,t)}$ on $X$ in the sense that $(X, d_{(0,t)})$ is the metric completion of $(X^{\reg}, \omega_{(0,t)})$. Over $X$, $\omega_{(0,t)}$ satisfies the following Monge-Amp\`{e}re equation:
\begin{equation}
(\sddb\psi)^n=\frac{e^{-t\psi}}{|s_{H_X}|^{2(1-t)/m}},
\end{equation}
or equivalently:
\begin{equation}
(\chi_0+\sddb u)^n=e^{-tu}\frac{e^{-t\psi_0}}{\|s_{H_X}\|_{m\psi_0}^{2(1-t)/m}}
\end{equation}

The potential function $u$ is Lipschitz and in particular bounded. As $t\rightarrow 1$, $\omega_{(0,t)}$ converges to $\omega_\infty$ on $X_\infty$. $\omega_\infty=\sddb\psi_\infty$ satisfies the K\"{a}hler-Einstein equation:
\[
(\sddb \psi_\infty)^n=e^{-\psi_\infty}.
\]

Now one can construct the special test configuration as in \cite{Tia15, CDS15}. On the one hand, we can embed $X$ and $X_\infty$ into a common projective space $\bP^N$ such that the Hilbert point of $X_\infty$ is in the closure of the orbit of the Hilbert point of $X$ under the action of $G:=PGL(N+1,\bC)$. On the other hand, the existence of the limit K\"{a}hler-Einstein metric implies that $Aut(X_\infty, -K_\infty)$ is reductive (see \cite{Tia15}). For the reader's convenience, we sketch an argument of the Matsushima type (see \cite{Mat57}) for proving reductivity that is due to Tian  and refer the details to \cite[pp. 44-53]{Tia15}. Note that an alternative approach (as pointed out in \cite{CDS15}) is to use the uniqueness result of Berndtsson and its possible generalization to the singular setting as exposed in \cite[Appendix C]{BBEGZ}.

By the previous section, we can already embed $X_\infty$ as a normal projective variety into a projective space $\bP^N$. $PGL(N+1, \bC)$ acts on the corresponding Hilbert scheme. Let $G_\infty$ be the stabilizer of $PGL(N+1, \bC)$-action at the Hilbert point corresponding to $X_\infty$.

Let $Z$ be a holomorphic vector field of $\bC\bP^N$ that is tangent to $X_\infty$. Let $W$ be the real or imaginary part of $Z$ and $\sigma_s$ be the one-parameter subgroup of automorphisms generated by $W$. Then we have:
\[
\sigma_s^*\omega_\infty=\omega_\infty+\sddb\xi_s.
\]
Let $\theta_\infty=u+\sqrt{-1}v$ where $u$ or $v$ is equal to $\left.\frac{\partial\xi_s}{\partial s}\right|_{s=0}$ according to whether $W$ is the real part or the imaginary part of $Z$. With the same argument as in \cite[pp. 45-50]{Tia15}, we know that $\theta_\infty$ is a bounded function on $X_\infty$ and moreover:
\[
\int_{X_\infty}|\nabla \theta_\infty|^2\omega_\infty^n<\infty \quad \text{ and } \int_{X_\infty} \theta_\infty \omega_\infty^n=0.
\]
Let $\zeta$ be any function on $X_\infty$ that can be extended to be a smooth function in a neighborhood of $X_\infty$ in $\bC\bP^N$. Then using the K\"{a}hler-Einstein equation and the change of variable formula, we have:
\[
\int_{X_\infty} \zeta\circ \sigma_t^{-1} \omega_\infty^n=\int_{X_\infty} \zeta e^{-\xi_t} \omega_\infty^n,
\]
which is equivalent to
\begin{equation}\label{eq-inteigen}
\int_{X_\infty}\left( \int_0^s W(\zeta)\circ\sigma_\tau d\tau\right)\omega^n_\infty=\int_{X_\infty} \zeta\left(\int_0^s \dot{\xi} d\tau\wedge \sigma_\tau^*\omega_\infty^n\right).
\end{equation}
Dividing both sides of \eqref{eq-inteigen} by $s$ and letting $s$ tends to $0$, we deduce:
\begin{equation}
\int_{X_\infty} Z(\zeta) \omega_\infty^n=\int_{X_\infty} \zeta\theta_\infty \omega_\infty^n.
\end{equation}
So we get, in the {\it weak} sense,
\begin{equation}\label{eq-eigen}
-\Delta_\infty \theta_\infty=\theta_\infty \text{ on } X_\infty.
\end{equation}
Using the diagonal approximation of $(X_\infty, d_\infty)$ by $(X_{(\epsilon_i,t_i)}, \omega_{(\epsilon_i,t_i)})$ (with $\epsilon_i=\epsilon_i(t_i)\rightarrow 0$), one can argue as in \cite[pp. 51-52]{Tia15} that there exist $\theta_i$ on $X_{(\epsilon_i,t_i)}$ such that
$\Delta_{(\epsilon_i,t_i)}\theta_i=\lambda_i\theta_i$ and $\lim_{i\rightarrow+\infty}\lambda_i=1$. Applying the Bochner identity, we get:
\[
\int_{X_\infty} |\nabla^{0,1}\bar{\partial}\theta_i|^2\omega_{(\epsilon_i,t_i)}^n\le (\lambda_i-1)\int_M |\partial \theta_i|^2\omega_{(\epsilon_i,t_i)}^n=\lambda_i(\lambda_i-1).
\]
It follows that $\nabla^{0,1}\bar{\partial}\theta_\infty=0$ and $\bar{\partial}\theta_\infty$ induces a holomorphic vector field $Z$ outside the singular set $\cS$ of $(X_\infty, d_\infty)$.
The imaginary part of $\theta_\infty$ corresponds to the imaginary part of $Z$ which is a Killing field. Because \eqref{eq-eigen} is a real equation, we conclude that the Lie algebra of $G_\infty$ is indeed the complexification of a Lie algebra of Killing fields. The rest of the argument is exactly as in \cite[pp. 53]{Tia15}.

Given the reductivity of $Aut(X_\infty, -K_\infty)$ the Luna Slice theorem (see \cite{Don12b}) there exists a one parameter subgroup of $G$ that specially degenerates $X$ to $X_\infty$. We will show below that the Futaki invariant of this test configuration is 0. But this contradicts the K-polystability of $X$ and completes the proof of Theorem \ref{thm-main}.

Finally we adapt the argument in \cite{DT92} to prove the vanishing of Futaki invariant (see also \cite{Tia15,CDS15}). At first we have
$$\omega_{t_i}=\frac{1}{l}\omega_{FS}-\frac{1}{l}\sqrt{-1}\partial\bar\partial\log \rho_l(X,\omega_{t_i}).$$ Since $\rho_l(X,\omega_{t_i})$ has a definite lower bound and uniform Lipschitz, we have a limit function $\rho_l(X_\infty,\omega_\infty)$ which is just the Bergman kernel of $-lK_\infty$ on $X_\infty$. We choose a bounded hermitian metric $h_\infty$ on $-K_\infty$ with $R(h_\infty)=\omega_\infty$ in the regular part.

For simplicity, denoting $$\bar\omega_0=\frac{1}{l}\omega_{FS}, \phi_\infty=-\frac{1}{l}\log \rho_l(X_\infty,\omega_\infty),$$ we define
$$\omega_s=\bar\omega_0+s\sqrt{-1}\partial \bar\partial \phi_\infty=s\omega_\infty+(1-s)\bar\omega_0.$$ The Ricci potential $f_0$ of $\bar\omega_0$ is given by
$$\sqrt{-1}\partial\bar\partial f_0={\rm Ric}\,\bar\omega_0-\bar\omega_0.$$ Denoting by $Z$ the holomorphic vector filed generating the $\mathbb{C}^*$ action of test configuration, we know that (see \cite{Fut83, DT92}):
$$Fut_{X_\infty}(Z)=\int_{X_\infty}Z(f_0)\omega_0^n.$$
Put
$$f_s=-\log\frac{\omega_s^n}{\bar\omega_0^n}-s\phi_\infty+f_0$$ which is the Ricci potential of $\omega_s$. Denoting by $\psi$ the potential of $Z$ with respect to $\bar\omega_0$: $$i_Z\bar\omega_0=\sqrt{-1}\bar\partial \psi, $$ then we know that
$$i_Z\omega_s=\sqrt{-1}\bar\partial\theta_s, \text{ for }\theta_s=(1-s)\psi+s\theta_\infty=\psi+sZ(\phi_\infty).$$
From $\frac{d}{ds}f_s=-\Delta_s \phi_\infty-\phi_\infty$, we have:
\begin{align}\label{invariant}
\frac{d}{ds}\int_{X_\infty}Z(f_s)\omega_s^n&=\int_{X_\infty}[Z(-\Delta_s\phi_\infty-\phi_\infty)
+Z(f_s)\Delta_s\phi_\infty]\omega_s^n\notag\\
&=\int_{X_\infty}[(\Delta_s\theta_s+Z(f_s))\Delta_s\phi_\infty-Z(\phi_\infty)]\omega_s^n\notag\\
&=\int_{X_\infty}[\Delta_s\theta_s+\theta_s+Z(f_s)]\Delta_s\phi_\infty\omega_s^n.
\end{align}
The integration by parts is guaranteed as in \cite{TW19}. A direct computation shows that $\bar\partial(\Delta_s\theta_s+\theta_s+Z(f_s))=0$, so $\Delta_s\theta_s+\theta_s+Z(f_s)$ is a bounded holomorphic function which must be constant. From (\ref{invariant}), we know that $$\frac{d}{ds}\int_{X_\infty}Z(f_s)\omega_s^n=0.$$
Since  $f_1=0$, we get $\int_{X_\infty}Z(f_0)\omega_0^n=0$.


\appendix
\section{Log version of Berman-Boucksom-Jonsson's result}\label{app-BBJ}

 A key result needed in the argument of \cite{BBJ15} is the following local regularization result of Demailly:
\begin{prop}[{\cite[Proposition 3.1]{Dem92}}]\label{prop-Demailly}
Let $\Phi$ be a psh function on a bounded pseudoconvex open set $U\subset \bC^{n+1}$.
For every $m>0$, let
$$\cH_{U}(m\Phi)=\left\{f\in \cO(U); \int_U |f|^2 e^{-m\Phi}d\lambda<+\infty\right\}$$
and let $\Phi_m=\frac{1}{m}\log\sum_l |f_l|^2$ where $\{f_l\}$ is an orthonormal basis of $\cH_U(m\Phi)$. Then there
are constants $C_1, C_2>0$ independent of $m$ such that for every $z\in U$

\begin{equation}\label{eq-PhivsPhim}
\Phi(z)-\frac{C_1}{m}\le \Phi_m(z).
\end{equation}

\end{prop}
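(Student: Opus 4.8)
The plan is to prove the displayed inequality \eqref{eq-PhivsPhim}, which is the substantive direction, by means of the Ohsawa--Takegoshi--Manivel $L^2$ extension theorem. (The complementary upper bound, which accounts for the constant $C_2$ in the statement, is the elementary direction: it follows by applying the sub-mean-value inequality to the plurisubharmonic function $\frac{1}{m}\log\sum_l|f_l|^2$ on a small ball and using the normalization $\int_U|f_l|^2e^{-m\Phi}\,d\lambda=1$ of an orthonormal basis, so I will concentrate on the lower bound.) Fix $z\in U$; we may assume $\Phi(z)>-\infty$, since otherwise the left-hand side of \eqref{eq-PhivsPhim} is $-\infty$ and there is nothing to prove.

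The first step is to record the extremal characterization of the Bergman-type kernel $K_m(z,z):=\sum_l|f_l(z)|^2$, namely
\[
K_m(z,z)=\sup\left\{\frac{|f(z)|^2}{\int_U|f|^2e^{-m\Phi}\,d\lambda}\;:\;f\in\mathcal H_U(m\Phi),\ f\not\equiv 0\right\},
\]
which is independent of the chosen orthonormal basis and reduces the problem to exhibiting one good competitor $f$. This follows from Cauchy--Schwarz applied to the expansion $f=\sum_l c_lf_l$, with equality when $c_l=\overline{f_l(z)}$.

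The second and main step is the construction of that competitor. Apply the Ohsawa--Takegoshi extension theorem on the bounded pseudoconvex open set $U$, with the plurisubharmonic weight $m\Phi$, extending from the $0$-dimensional subvariety $\{z\}$ with prescribed value $1$. This produces $f\in\mathcal O(U)$ with $f(z)=1$ and
\[
\int_U|f|^2e^{-m\Phi}\,d\lambda\;\le\;C\,e^{-m\Phi(z)},
\]
where the crucial point is that $C$ depends only on the diameter and dimension of $U$, and \emph{not} on $m$ or on $\Phi$. In particular $f\in\mathcal H_U(m\Phi)$, so inserting $f$ into the extremal characterization gives $K_m(z,z)\ge |f(z)|^2/\int_U|f|^2e^{-m\Phi}\ge e^{m\Phi(z)}/C$, and therefore
\[
\Phi_m(z)=\frac{1}{m}\log K_m(z,z)\ \ge\ \Phi(z)-\frac{\log C}{m}.
\]
Taking $C_1\ge\log C$ (and $C_1\ge 0$) completes the proof of \eqref{eq-PhivsPhim}.

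The only genuinely nontrivial ingredient is the Ohsawa--Takegoshi theorem with a constant uniform in the weight, so the main obstacle is invoking it correctly: one must perform the extension on a \emph{fixed} bounded pseudoconvex domain so that the constant is absolute, and check that the weight $m\Phi$, being plurisubharmonic for every $m$, satisfies the curvature hypothesis (no positivity beyond that of $m\Phi$ is required because the extension is from a point). Everything else — the extremal property of $K_m$ and the elementary upper bound giving $C_2$ — is routine.
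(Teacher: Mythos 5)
The paper does not prove this statement---it cites it directly from Demailly's paper \cite[Proposition~3.1]{Dem92}---so there is no in-paper proof to compare against. Your argument is correct and is in fact the standard proof given by Demailly himself: the lower bound via the extremal characterization of the Bergman kernel $K_m(z,z)=\sum_l|f_l(z)|^2$ combined with the Ohsawa--Takegoshi $L^2$ extension from the point $\{z\}$ (whose constant depends only on the dimension and the diameter of the fixed bounded pseudoconvex domain $U$, and in particular not on $m$ or $\Phi$), and the complementary upper bound from the sub-mean-value inequality for the psh function $\frac{1}{m}\log\sum_l|f_l|^2$.
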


\noindent {\bf Sketch of the proof of Theorem \ref{thm-BBJ}}\vskip 2mm

Fix $\ell \in \bZ_{>0}$ such that $L=-\ell (K_M+B)$ is an genuine line bundle over $M$ which is ample.
From now on, for simplicity, we denote $\cM_B(\ell^{-1}, \vphi)$ by $\cM_B(\vphi)$ and similarly for other functionals. By Remark \ref{rem-scaling}, we just need to prove Theorem \ref{thm-BBJ} with the parameter $t=\ell^{-1}$.

Because $\cM_B\ge \cD_B$ and $\CM_B^\NA\ge \cM_B^\NA\ge \cD_B^\NA$ we immediately get the implication $1 \Rightarrow 2$, and 3 $\Rightarrow$ 4 $\Rightarrow$ 5. The equivalence of 1 and 2, with $\delta_\cM$ and $\delta_\cD$ satisfying the first inequality in \eqref{eq-deltarel}, was proved in \cite[Corollary 3.5]{Berm13} (see \cite[Proposition 15]{Li12}).
The equivalence of 3 and 5 was proved in the case when $B=0$ in \cite{BBJ15} and the more general log version was proved in \cite{Fuj16}. The implication 1 $\Rightarrow$ 3, 2 $\Rightarrow$ 4 was proved in \cite{BHJ16}.

So one just needs to show $4 \Rightarrow 2$, which will be proved by contradiction. Denote $\delta=\delta^\NA$ and assume $2$ is not true. Then we can pick $\delta'\in (0, \delta)$ and a sequence $\{\vphi_j\}_1^\infty\in \cE^1$ such that:
\[
\cM_B(\vphi_j)\le \delta' J(\vphi_j)-j.
\]
We normalizes $\vphi_j$ such that $\sup (\vphi_j-\psi)=0$. The inequality $\cM_{B}\ge C-n J$ implies $J(\vphi_j)\rightarrow+\infty$, and hence $E(\vphi_j)\le -J(\vphi_j)\rightarrow-\infty$.

{\bf Step 1: Constructing a geodesic ray in $\cE^1$}

Connect $\psi$ and $\vphi_j$ by a geodesic segment $\{\vphi_{j,s}\}$ parametrized so that $S_j=-E(\vphi_j)\rightarrow +\infty$. For any $s\in (0, S_j]$, we have $E(\vphi_{j,s})=-s$ and $\sup(\vphi_{j,s}-\psi)=0$.
So $J(\vphi_{j,s})\le \sup(\vphi_{j,s}-\psi)-E(\vphi_{j,s})=s \le S_j$ and $\cM_B(\vphi_j)\le \delta' S_j-j\le \delta' S_j$.
By \cite{BDL16}, $\cM_B$ is convex along geodesic segment. So
\begin{equation}
\cM_B(\vphi_{j,s})\le \frac{S_j-s}{S_j}\cM_B(\psi)+\frac{s}{S_j}\cM_B(\vphi_j)\le \delta' s+C.
\end{equation}
Using $\cM_B\ge H_B-n J$, we get $H_B(\vphi_{j,s})\le (\delta'+n)s+C$. So for any fixed $S>0$ and $s\le S$, the metrics $\vphi_{j,s}$ lie in the set:
\[
\mathcal{K}_S:=\{\vphi\in \cE^1; \sup(\vphi-\psi)= 0 \text{ and } H_B(\vphi)\le (\delta'+n) s+C \}.
\]
This is a compact subset of the metric space $(\cE^1, d_1)$ by Theorem \ref{thm-BBEGZ} from \cite{BBEGZ}. So, by arguing as in \cite{BBJ15}, after passing to a subsequence, $\{\vphi_{j,s}\}$ converges to a geodesic ray
$\{\vphi_s\}_{s\ge 0}$ in $\cE^1$, uniformly for each compact time interval. $\{\vphi_s\}$ satisfies $\sup(\vphi_s-\psi_0)=0$ and $E(\vphi_s)=-s$. Moreover,
\begin{equation}\label{eq-Dvphiupb}
\cD_B(\vphi_s)\le \cM_B(\vphi_s)\le \delta' s+C  \text{ for } s\ge 0.
\end{equation}
Denote $\bD=\{\tau\in \bC; |\tau|\le 1\}$ and $\bD^*=\bD\setminus \{0\}$.
$\{\vphi_s\}$ defines an $S^1$-invariant metric $\Phi$ on $p_1^*L$ over $M\times \bD^*$ such that the restriction of $\Phi$ to $M\times \{\tau\}$ is $\vphi_{\log |\tau|^{-1}}$.
Moreover, $\Phi$ is plurisubharmonic because $\Phi_j$ converges to $\Phi$ locally uniformly in $L^1$ topology, where $\Phi_j$ is the psh metric on $p_1^*L\rightarrow M\times \{e^{-S_j}\le |\tau|<1\}$ defined by the geodesic segment $\{\vphi_{j,s}\}$.

{\bf Step 2: Approximate geodesic ray by test configurations}

Because $\sup(\vphi_s-\vphi)= 0$ for any $s$, $\Phi$ extends to a psh metric on $p_1^*L$ over $M\times\bD$. Consider the multiplier ideals $\cJ(m\Phi)\subset \cO_{M\times\bC}$
which is defined over any $U\subset M\times\bC$ as
\begin{eqnarray}\label{eq-JmPhi}
\cJ(m\Phi)(U)&:=&\left\{f\in \cO_{M\times\bC}(U);  w(f)+A_{M\times \bC}(w)-m\; w(\Phi)\ge 0 \right. \nonumber\\
&&\hskip 2cm \left. \text{ for any divisorial valuation } w \text{ on } M\times\bC \right\}.
\end{eqnarray}
$\cJ(m\Phi)$ are co-supported on $\bC^*$-invariant proper subvarieties of the central fiber $M\times\{0\}$. Fix a very ample line bundle $H$ on $M$. Choose $m_0$ large enough such that
$A:=m_0L-K_M-(n+1)H$ is ample on $M$ and write:
\[
\cF:=\cO((m+m_0)p_1^*L)\otimes \cJ(m\Phi)=\cO( p_1^*(K_M+m L+A+(n+1)H)\otimes \cJ(m\Phi)).
\]
Then for any $i>0$, $j>0$, we have:
\begin{eqnarray*}
&&R^i(p_2)_* \left(\cF\otimes p_1^*H^{\otimes(j-i)}\right)\\
&=&R^i(p_2)_*(\cO(p_1^*(K_M+mL+A+(j+(n+1)-i)H)\otimes \cJ(m\Phi)))\\
&=&0,
\end{eqnarray*}
by the Nadel vanishing theorem.
By the relative version of Castelnuovo-Mumford criterion, $\cO((m+m_0)p_1^*L)\otimes\cJ(m\Phi)$ is $p_2$-globally generated. Because $\bD$ is Stein, $\cO((m+m_0)p_1^*L)\otimes\cJ(m\Phi))$ is generated by global sections on $M\times\bD$ if $m\ge 1$ and $(m+m_0)$ is sufficiently divisible.

Let $\mu_m: \cM_m\rightarrow M\times \bC$ denote the normalized blow-up of $M\times\bC$ along $\cJ(m\Phi)$, with exceptional divisor $E_m$ and set $\cL_m=\mu_m^*p_1^* L-\frac{1}{m+m_0}E_m$. Let $\cB_m$ be the strict transform of $B\times\bC$. Then $(\cM_m, \cB_m, \cL_m)$ is a normal semi-ample test configuration for $(M, B, L)$, inducing a non-Archimedean metric $\phi_m\in \cH^\NA$ given by:
\[
\phi_m(v)=\frac{1}{m+m_0}v\left(\cJ(m\Phi)\right)
\]
for each $\bC^*$-invariant divisorial valuation $v$ on $M\times\bC$.

Choose any $S^1$-invariant smooth psh metric $\Psi_m$ on $\cL_m$. The corresponding geodesic ray $\vphi_{m,s}$ satisfies (see \cite{Berm15, BHJ16}):
\begin{equation}
\lim_{s\rightarrow+\infty}\frac{L_B(\vphi_{m,s})}{s}=L_B^\NA(\phi_m) \text{ and } \lim_{s\rightarrow+\infty}\frac{1}{s}E(\vphi_{m,s})=E^\NA(\phi_m)=-J^\NA(\phi_m).
\end{equation}
The psh metric $\Phi_m$ on $p_1^*L$ over $M\times\bD$ induced by $\Psi_m$ has analytic singularities of type $\cJ(m\Phi)^{1/(m+m_0)}$. By Proposition \ref{prop-Demailly}, $\Phi_m$ is less singular than $\Phi$. Note that here we used the smoothness of the ambient space $M\times\bC$. By monotonicity of $E$, we get:
\begin{equation}
E^\NA(\phi_m)=\lim_{s\rightarrow+\infty} \frac{E(\vphi_{m,s})}{s}\ge \lim_{s\rightarrow+\infty}\frac{E(\vphi_s)}{s}=-1.
\end{equation}

{\bf Step 3: asymptotic expansion of $L_B$ along the geodesic ray}

The following theorem is the log version of \cite[Theorem 3.1]{BBJ15}. It can be seen as the generalization of Berman's expansion of log-Ding energy in \cite{Berm15} along test configurations to the case of any subgeodesic rays.
\begin{thm}\label{thm-LBexpan}
Let $\vphi_s$ be a subgeodesic ray in $\cE^1$ normalized such that $\sup(\vphi_s-\vphi_0)=0$ and let $\Phi$ be the corresponding $S^1$-invariant psh metric on $p_1^*L \rightarrow M\times\bC$. Then
\begin{equation}\label{eq-LBexpan}
\lim_{s\rightarrow+\infty} \frac{L_B(\vphi_s)}{s}=\inf_{w} \left(A_{(M,B)\times\bC}(w)-1-w(\ell^{-1} \Phi)\right)
\end{equation}
where $w$ ranges over $\bC^*$-invariant divisorial valuations on $M\times\bC$ such that $w(\tau)=1$, and $A_{(M,B)\times\bC}(w)$ is the log discrepancy of $w$.
\end{thm}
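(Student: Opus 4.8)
The plan is to recognise $\lim_{s\to+\infty}s^{-1}L_B(\vphi_s)$ as a log canonical threshold of the psh metric $\Phi$ along the central fibre $M\times\{0\}$, and then to evaluate that threshold by a valuative formula; this follows the argument of \cite{Berm15} and of \cite{BBJ15} for the latter's Theorem~3.1, with every quantity replaced by its log version. Because the ambient space $M\times\bC$ is smooth, Demailly's regularisation (Proposition~\ref{prop-Demailly}), together with the Nadel vanishing and Ohsawa--Takegoshi $L^2$-estimates it depends on, applies verbatim, and this is essentially the only point where the log structure would interact with the ambient geometry.

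First I would record the elementary reductions. Writing $L_B(\vphi_s)=\log V-\log\int_M \frac{e^{-t\vphi_s}}{|s_B|^2}$ and noting that $t\Phi+\log|s_B|^2$ is a psh weight on $p_1^*L\to M\times\bD^*$ (the klt hypothesis making $|s_B|^{-2}$ the density of a finite positive measure), Berndtsson's theorem on the subharmonic variation of fibrewise integrals shows that $s\mapsto L_B(\vphi_s)$ is convex; since $\vphi_s\le\vphi_0$ forces $L_B(\vphi_s)\le L_B(\vphi_0)$, the limit $\lambda:=\lim_{s\to+\infty}s^{-1}L_B(\vphi_s)$ exists and lies in $(-\infty,0]$. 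Disintegrating over the disc by Fubini and using $S^1$-invariance, one obtains for $c\in\bR$ an identity of the shape
\[
\int_{M\times\bD}\frac{e^{-t\Phi}}{|s_B|^2}\,|\tau|^{2c}\;dV_M\wedge \tfrac{i}{2}\,d\tau\wedge d\bar\tau \;=\; 2\pi V\int_{0}^{\infty}e^{-(2c+2)s-L_B(\vphi_s)}\,ds ,
\]
whose finiteness is governed, by the convexity just established, exactly by the sign of $2c+2+\lambda$. Thus $\lambda$ is an affine function of the critical exponent for the local integrability of $\tfrac{e^{-t\Phi}}{|s_B|^2}$, twisted by powers of $|\tau|$, near $M\times\{0\}$; equivalently $\lambda$ is a normalisation of the log canonical threshold along $M\times\{0\}$ of the pair $(M\times\bC,\,B\times\bC)$ with respect to the psh metric $\ell^{-1}\Phi$.

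The central step is the valuative evaluation of this threshold. For thresholds of ideal sheaves this is the Jonsson--Musta\c{t}\u{a}/Boucksom--Favre--Jonsson formula $\lct=\inf_w A(w)/w(\cdot)$; for the psh metric $\Phi$ one realises $\Phi$ as a decreasing limit of the psh weights $\Phi_m$ with analytic singularities along the multiplier ideals $\cJ(m\Phi)$ of Proposition~\ref{prop-Demailly}, whose thresholds are computed by finitely many divisorial valuations, and one passes to the limit via $\tfrac1m w(\cJ(m\Phi))\to -w(\Phi)$. The $S^1$-invariance of $\Phi$ lets one restrict the infimum to $\bC^*$-invariant divisorial valuations $w$ on $M\times\bC$; after normalising $w(\tau)=1$ and isolating the contribution of $w=\ord_{M\times\{0\}}$ (which gives the shift $-1=-w(\tau)$), this produces $\inf_w\bigl(A_{(M,B)\times\bC}(w)-1-w(\ell^{-1}\Phi)\bigr)$.

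Finally I would split into the two inequalities. For ``$\le$'': since $\Phi_m\ge\Phi-C_m$, the test-configuration ray $\vphi_{m,s}$ associated to $\Phi_m$ satisfies $e^{-t\vphi_{m,s}}\le C_m'e^{-t\vphi_s}$, hence $\limsup_s s^{-1}L_B(\vphi_s)\le \lim_s s^{-1}L_B(\vphi_{m,s})=L_B^{\NA}(\phi_m)=\inf_v\bigl(A_{(M,B)}(v)+t\,v(\phi_m-\phi_{\triv})\bigr)$ by \cite{Berm15}'s expansion of the log-Ding energy along test configurations; letting $m\to\infty$ and using the correspondence between divisorial valuations $v$ on $M$ and $\bC^*$-invariant valuations $w$ on $M\times\bC$ with $w(\tau)=1$ (under which $A_{(M,B)\times\bC}(w)=A_{(M,B)}(v)+1$ and $w(\ell^{-1}\Phi)$ matches $\ell^{-1}v(\phi-\phi_\triv)$ up to sign) gives the bound. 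For ``$\ge$'': the valuative integrability estimate, fed back through the Fubini identity and the convexity of $s\mapsto L_B(\vphi_s)$, yields the uniform lower bound $L_B(\vphi_s)\ge s\cdot\inf_w\bigl(A_{(M,B)\times\bC}(w)-1-w(\ell^{-1}\Phi)\bigr)-C$; concretely, for each $w$ one pulls the integrand back to a log resolution extracting the divisor $E$ computing $w$ and, on a tube around the generic point of $E$, compares it with $|z_E|^{2(A_{(M,B)\times\bC}(w)-1-w(\ell^{-1}\Phi))}$ times a smooth measure in which $\tau$ vanishes to order $w(\tau)$. The step I expect to be the main obstacle is precisely the valuative formula for the log canonical threshold of a genuine (non-algebraic) psh metric along a divisor, which needs Demailly's regularisation and the smoothness of $M\times\bC$; beyond that, the delicate points are interchanging the double limit over the regularisation index $m$ and over valuations, and tracking the normalisations so that the bookkeeping yields exactly the shift $-1$.
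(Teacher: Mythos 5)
Your outline correctly identifies the load-bearing ideas — Berndtsson's subharmonicity of $L(\tau)$, the identification of $\lim_{s\to\infty}s^{-1}L_B(\vphi_s)$ with a log canonical threshold of $\Phi$ along $M\times\{0\}$, and a valuative evaluation of that threshold — and this is indeed the strategy of the paper, following \cite{BBJ15}. But the proposal has two genuine gaps, both in exactly the places where the log pair $(M,B)$ interacts with the valuative criterion.

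First, your argument for the inequality $\lim_{s}s^{-1}L_B(\vphi_s)\ge\inf_w(\cdots)$ is too thin. You propose to establish finiteness of $\int_{M\times U}e^{-(\ell^{-1}\Phi+\log|s_{B\times\bC}|^2+(1-c)\log|\tau|^2)}$ for $c$ just above the threshold by ``pulling back to a log resolution extracting $E$ for each $w$'' — but that only produces a necessary condition for integrability near a fixed divisor, not the sufficient condition you need. A psh metric $\Phi$ cannot be resolved by a single log modification; passing from a valuative bound $\sup_w(\cdot)<1$ to actual $L^1$-integrability is the content of the (nontrivial) valuative characterization in \cite[Theorem 5.5]{BFJ08}, which relies on the Demailly regularization/openness circle of ideas and which the paper invokes explicitly. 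Moreover, the infimum in \eqref{eq-LBexpan} need not be attained: the paper handles this by a contradiction argument that passes to a limit $w_\infty$ of a sequence of divisorial valuations $w_j$ in the Berkovich analytification, using the lower semicontinuity of $A_{(M,B)\times\bC}$ and the continuity of $w\mapsto w(\cJ(m\Phi))$ from \cite{JM12}. Your sketch does not account for this step at all.

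Second, and most importantly, the genuinely new technical ingredient the paper introduces to make \cite{BBJ15}'s argument work in the log setting — Lemma \ref{lem-eAMB} — is absent from your proposal. The BFJ criterion is stated with the unweighted log discrepancy $A_{M\times\bC}(w)$, while the statement of Theorem \ref{thm-LBexpan} requires the log discrepancy $A_{(M,B)\times\bC}(w)$. Lemma \ref{lem-eAMB} shows that the strict-inequality version of BFJ's condition, rewritten in terms of $A_{(M,B)\times\bC}$, admits a uniform $\epsilon$-gap; the proof of that equivalence is exactly where the klt hypothesis enters in an essential quantitative way, through $\lct(M\times\bC,B\times\bC)>1$, which lets one absorb $\epsilon\,w(B\times\bC)$ into $\epsilon\,A_{M\times\bC}(w)$. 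You note informally that ``the klt hypothesis makes $|s_B|^{-2}$ a finite density,'' but that does not give the bound you need; without Lemma \ref{lem-eAMB} the conversion between the two log discrepancies fails and the rest of the argument does not close.

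Finally, a smaller point: your ``$\le$'' direction routes through the test-configuration approximants $\phi_m$ and Berman's expansion $\lim_s s^{-1}L_B(\vphi_{m,s})=L_B^{\NA}(\phi_m)$, and then needs $\lim_m L_B^{\NA}(\phi_m)=\inf_w(\cdots)$. In the paper this last convergence is the \emph{subsequent} proposition, whose proof uses Theorem \ref{thm-LBexpan}; what the paper actually does to get ``$\le$'' is apply the ``integrability $\Rightarrow$ valuative bound'' direction of \cite[Theorem 5.5]{BFJ08} directly, which is cleaner and avoids any circularity concerns. If you want to insist on the test-configuration route, you must give an independent proof of $\lim_m L_B^{\NA}(\phi_m)\le\inf_w(\cdots)$ using only \eqref{eq-wJPhim}, which is doable but a detour.
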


For the reader's convenience, we write down the part of proof in \cite{BBJ15} that needs to be modified (see Lemma \ref{lem-eAMB} and identity \eqref{eq-winfJ}).

\begin{proof}
By \cite{Bern09}, the function
\begin{equation}
L(\tau):=L_B(\ell^{-1}, \vphi_{\log|\tau|^{-1}})=-\log\left(\frac{1}{V}\int_M \frac{e^{-\ell^{-1}\vphi}}{|s_B|^2}\right),
\end{equation}
is subharmonic on $\bD$ and its Lelong number $\nu$ at the origin coincides with the negative of the left-hand-side of \eqref{eq-LBexpan}.
We need to show that $\nu$ is equal to
\begin{equation}\label{eq-def-s}
s:=\sup_w \left(w(\ell^{-1}\Phi)+1-A_{(M,B)\times\bC}(w)\right).
\end{equation}
By \cite[Proposition 3.8]{Berm15}, $\nu$ is the infimum of all $c\ge 0$ such that:
\begin{equation}
\int_U e^{-(L(\tau)+(1-c)\log|\tau|^2)} id\tau\wedge d\bar{\tau}=\int_{M\times U}e^{-(\ell^{-1}\Phi+\log |s_{B\times\bC}|^2+(1-c)\log|\tau|^2)}id\tau\wedge d\bar{\tau}<+\infty.
\end{equation}
Locally near each point of $M\times\{0\}$, if $B=\sum_i \alpha_i \{f_i=0\}$, then we have
$$e^{-\ell^{-1} \Phi-\log|s_{B\times\bC}|^2}id\tau\wedge d\bar{\tau}=e^{-\ell^{-1}\vphi-\sum_i \alpha_i \log|f_i|^2} dV$$
with $\vphi$ psh and $dV$ a smooth volume form. Setting $p:=\lfloor c\rfloor$ and
$r=r-p\in [0,1)$, we get:
$$
e^{-(\ell^{-1} \Phi+\log|s_{B\times\bC}|^2+(1-c)\log|\tau|^2)}id\tau\wedge d\bar{\tau}=|\tau|^{2p}e^{-(\ell^{-1}\vphi+\sum_i\alpha_i\log|f_i|^2+(1-r)\log|\tau|)}dV.
$$
It follows from \cite[Theorem 5.5]{BFJ08} that
\begin{eqnarray*}
&&\int_{M\times U}e^{-(\ell^{-1}\Phi+\log|s_{B\times\bC}|^2+(1-c)\log|\tau|^2)}id\tau\wedge d\bar{\tau}<+\infty\\
&&\hskip 3cm \Longrightarrow\hskip 1cm \sup_w\frac{w(\ell^{-1}\Phi)+w(B\times\bC)+(1-r)w(\tau)}{p w(\tau)+A_{M\times\bC}(w)}\le 1,
\end{eqnarray*}
where $w$ ranges over all divisorial valuations on $M\times\bC$. By homogeneity and by the $S^1$-invariance of $\Phi$, it suffices to consider $w$ that are $\bC^*$-invariant and normalized by
$w(\tau)=1$. We then get:
\begin{equation}
w(\ell^{-1}\Phi)+1\le p+r+A_{M\times\bC}(w)-w(B\times\bC)=c+A_{(M,B)\times\bC}(w).
\end{equation}
So we get $s\le \nu$.

Conversely, \cite[Theorem 5.5]{BFJ08} shows that:
\begin{eqnarray}\label{eq-intsuff}
&&\sup_w \frac{w(\ell^{-1}\Phi)+w(B\times\bC)+1-r}{p+A_{M\times\bC}(w)}<1\\
&&\hskip 3cm \Longrightarrow \hskip 0.5cm \int_{M\times U}e^{-(\ell^{-1}\Phi+\log|s_{B\times\bC}|^2+(1-c)\log|\tau|)}i d\tau\wedge d\bar{\tau}<+\infty. \nonumber
\end{eqnarray}

\begin{lem}\label{lem-eAMB}
The left-hand-side of \eqref{eq-intsuff} is equivalent to the existence of $\epsilon>0$ such that
\begin{equation}\label{eq-eAMB}
w(\ell^{-1}\Phi)+1\le (1-\epsilon) A_{(M,B)\times\bC}(w)+c \quad \text{ for all } w.
\end{equation}
\end{lem}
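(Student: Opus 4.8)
The plan is to reduce the claimed equivalence to elementary bookkeeping with log discrepancies, using only that $B$ is effective, that $\{\tau=0\}$ is a smooth divisor, and that $(M,B)$ is klt. First I would record that, since the supremum in the left-hand side of \eqref{eq-intsuff} is a fixed real number, the condition $\sup_w(\cdots)<1$ is the same as saying that there is $\epsilon>0$ with
\[
w(\ell^{-1}\Phi)+w(B\times\bC)+1-r\le (1-\epsilon)\bigl(p+A_{M\times\bC}(w)\bigr)
\]
for all the $\bC^*$-invariant divisorial valuations $w$ (normalized by $w(\tau)=1$) occurring here; so on both sides of the asserted equivalence one is really comparing uniform inequalities. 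Throughout I will use $A_{(M,B)\times\bC}(w)=A_{M\times\bC}(w)-w(B\times\bC)$ and $c=p+r$, together with: (a) $w(B\times\bC)\ge 0$, since $B$ is effective; (b) $A_{M\times\bC}(w)\ge \lct(M\times\bC;\{\tau=0\})\, w(\tau)=w(\tau)=1$, since $\{\tau=0\}$ is a smooth divisor; and (c) $\lambda_0:=\lct(M\times\bC;B\times\bC)=\lct(M;B)>1$ because $(M,B)$ is klt, whence $w(B\times\bC)\le \lambda_0^{-1}A_{M\times\bC}(w)$ with $\lambda_0^{-1}<1$.

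For the forward direction, starting from the displayed uniform inequality I would isolate $w(\ell^{-1}\Phi)+1$ and regroup, obtaining
\[
w(\ell^{-1}\Phi)+1\le (1-\epsilon)A_{(M,B)\times\bC}(w)-\epsilon\, w(B\times\bC)+c-\epsilon p,
\]
and then simply drop the two nonpositive terms $-\epsilon\,w(B\times\bC)$ (by (a)) and $-\epsilon p$ (as $p\ge 0$) to arrive at \eqref{eq-eAMB} with the same $\epsilon$.

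For the converse I would start from \eqref{eq-eAMB}, subtract $r$, add $w(B\times\bC)$ to both sides, and use the identity $(1-\epsilon)A_{(M,B)\times\bC}(w)+w(B\times\bC)=(1-\epsilon)A_{M\times\bC}(w)+\epsilon\,w(B\times\bC)$ together with the bound $\epsilon\,w(B\times\bC)\le \epsilon\lambda_0^{-1}A_{M\times\bC}(w)$ from (c). Writing $\delta_0:=1-\lambda_0^{-1}\in(0,1)$ this gives
\[
w(\ell^{-1}\Phi)+w(B\times\bC)+1-r\le (1-\epsilon\delta_0)A_{M\times\bC}(w)+p,
\]
so dividing by $p+A_{M\times\bC}(w)>0$ and using $A_{M\times\bC}(w)\ge 1$ from (b) bounds the ratio in \eqref{eq-intsuff} by $1-\epsilon\delta_0 A_{M\times\bC}(w)/\bigl(p+A_{M\times\bC}(w)\bigr)\le 1-\epsilon\delta_0/(1+p)<1$, which is what is needed.

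I do not expect a conceptual obstacle; the points that need care are that in the converse the conclusion must be a \emph{uniform} gap below $1$ — this is exactly where the klt hypothesis enters through (c), since without it one would only obtain ``$\le 1$'' — and that the constants $1$, $r$, $p$ in \eqref{eq-intsuff} already encode the normalization $w(\tau)=1$, which is also what makes the lower bound $A_{M\times\bC}(w)\ge 1$ in (b) available.
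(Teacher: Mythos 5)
Your proof is correct, and it follows essentially the same route as the paper's: the forward direction is the identical regrouping using $A_{(M,B)\times\bC}=A_{M\times\bC}-w(B\times\bC)$ and $c=p+r$ followed by dropping the nonpositive terms $-\epsilon\,w(B\times\bC)$ and $-\epsilon p$; the converse uses the same klt observation $w(B\times\bC)\le\lct^{-1}A_{M\times\bC}(w)$ with $\lct(M\times\bC,B\times\bC)>1$, together with $A_{M\times\bC}(w)\ge 1$ (coming from $w(\tau)=1$), to absorb the extra $\epsilon\,w(B\times\bC)$ term and bound the ratio in \eqref{eq-intsuff} uniformly below $1$. The only cosmetic difference is that you bound the ratio directly by $1-\epsilon\delta_0/(1+p)$ while the paper rewrites the conclusion in the form $(1-\epsilon')(p+A_{M\times\bC}(w))-w(B\times\bC)+r$ with $\epsilon'=(1-\lct^{-1})\epsilon/(1+C_1)$; these are equivalent bookkeeping choices.
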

\begin{proof}
If the left-hand-side of \eqref{eq-intsuff} holds, then there exists $\epsilon>0$ such that:
\begin{eqnarray*}
w(\ell^{-1}\Phi)+1&<& (1-\epsilon)(p+ A_{M\times\bC}(w))-w(B\times\bC)+r\\
&=&(1-\epsilon)A_{(M,B)\times\bC}(w)+c-\epsilon w(B\times\bC)-\epsilon p\\
&\le&(1-\epsilon) A_{(M,B)\times\bC}(w)+c.
\end{eqnarray*}
Conversely, assume \eqref{eq-eAMB} holds. Notice that $(M\times\bC, B\times\bC)$ is log terminal, and hence
$$ \frac{A_{M\times\bC}(w)}{w(B\times\bC)}\ge \lct:=\lct(M\times\bC, B\times\bC)>1. $$ We then have:
\begin{eqnarray*}
w(\ell^{-1}\Phi)+1&\le&(1-\epsilon)A_{(M,B)\times\bC}(w)+c\\
&=& (1-\epsilon)A_{M\times\bC}(w)-w(B\times\bC)+\epsilon w(B\times\bC)+c\\
&\le &(1-\epsilon)A_{M\times\bC}(w)-w(B\times\bC)+\epsilon\cdot \lct^{-1} A_{M\times\bC}(w)+c\\
&=& (1-(1-\lct^{-1})\epsilon)A_{M\times\bC}(w)-w(B\times\bC)+c.
\end{eqnarray*}
On the other hand, since $p$ is bounded and $A_{M\times\bC}(w)\ge 1$, there exists a constant $C_1>0$ such that $p\le C_1\cdot A_{M\times \bC}(w)$. Then if we let $\epsilon'=\frac{1}{C_1+1}(1-\lct^{-1})\epsilon$, then the left-hand-side of \eqref{eq-eAMB} indeed holds:
\[
w(\ell^{-1}\Phi)+1\le (1-\epsilon')(A_{M\times\bC}(w)+p)-w(B\times\bC)+r
\]

\end{proof}
With the above lemma, the inequality $\nu\le s$ can be proved in the same way as in \cite{BBJ15} using proof by contradiction. If $\nu>s$ then there exists $\rho>0$ and a sequence of $\bC^*$-invariant divisorial valuations with $w_j(\tau)=1$ such that:
\[
w_j(\ell^{-1}\Phi)\ge\left(1-\frac{1}{j}\right)A_{(M,B)\times\bC}(w_j)-1+s+\rho.
\]
Let $W$ be the subset of the Berkovich analytification $(M\times\bC)^{\rm an}$ consisting of semivaluations $w$ that are $\bC^*$-invariant and satisfy $w(\tau)=1$. We can take limit $w_j$ converges to a semivaluation $w_\infty$. For each $m\ge 1$, the multiplier ideal sheaf $\cJ(m\Phi)$ defined in \eqref{eq-JmPhi} satisfies:
\begin{eqnarray*}
\frac{1}{m}w_j(\ell^{-1}\cJ(m\Phi))&\ge& w_j(\ell^{-1}\Phi)-\frac{1}{m}A_{M\times\bC}(w_j)\\
&\ge& (1-\frac{1}{j}-\frac{1}{m})A_{(M,B)\times\bC}(w_j)-\frac{1}{m}w_j(B\times\bC)-1+s+\rho,
\end{eqnarray*}
for all $j\ge 1$. By \cite{JM12}, $w\rightarrow w(\ell^{-1}(m\Phi))$ and $w\rightarrow w(B\times\bC)$ are continuous, while $w\mapsto A_{(M,B)\times\bC}(w)$ is lower semicontinuous on $W$.
Letting $j\rightarrow+\infty$, we get:
\begin{equation}\label{eq-winfJ}
w_\infty(\ell^{-1}\cJ(m\Phi))\ge \left(1-\frac{1}{m}\right)A_{(M,B)\times\bC}(w_\infty)-\frac{1}{m}w_\infty(B\times\bC)-1+s+\rho.
\end{equation}
In particular, $A_{(M,B)\times\bC}(w_\infty)<+\infty$.
On the other hand, using the definition of $s$ in \eqref{eq-def-s} and the inequality \eqref{eq-wJPhim}, we get:
\begin{equation}
\frac{1}{m}w(\ell^{-1}\cJ(m\Phi))\le w(\ell^{-1}\Phi)\le A_{(M,B)\times\bC}(w)-1+s
\end{equation}
for all divisorial valuations $w$. Using density of divisorial valuations in $W$ and semicontinuity properties, we get:
\begin{equation}
A_{(M,B)\times\bC}(w_\infty)-1+s\ge \left(1-\frac{1}{m}\right)A_{(M,B)\times\bC}(w_\infty)-\frac{1}{m}w_\infty(B\times\bC)-1+s+\rho,
\end{equation}
which gives a contradiction by letting $m\rightarrow+\infty$.

\end{proof}
In the above argument, we used the inequality:
\begin{equation}\label{eq-wJPhim}
 w(\cJ(m\Phi))\le  m\; w(\Phi)\le w(\cJ(m\Phi))+ A_{M\times\bC}(w).
\end{equation}
The first inequality holds because of \eqref{eq-PhivsPhim}. The second inequality follows from the definition of multiplier ideal $\cJ(m\Phi)$ in \eqref{eq-JmPhi}. With Theorem \ref{thm-LBexpan} and \eqref{eq-wJPhim}, we get the following result
\begin{prop}[\cite{BBJ15}]
We have the identity:
\begin{equation}\label{eq-limLNA}
\lim_{m\rightarrow+\infty} L_B^\NA(\phi_m)=\lim_{s\rightarrow+\infty} \frac{L_B(\vphi_s)}{s}.
\end{equation}
\end{prop}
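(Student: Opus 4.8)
\emph{Plan of proof.}\quad By Theorem \ref{thm-LBexpan}, applied to the geodesic ray $\vphi_s$ constructed in Step 1 (normalized so that $\sup(\vphi_s-\psi)=0$, with associated $S^1$-invariant psh metric $\Phi$ on $p_1^*L\rightarrow M\times\bC$), the right-hand side of \eqref{eq-limLNA} equals
\begin{equation}\label{eq-plan-RHS}
\lim_{s\to+\infty}\frac{L_B(\vphi_s)}{s}=\inf_{w}\Bigl(A_{(M,B)\times\bC}(w)-1-w(\ell^{-1}\Phi)\Bigr),
\end{equation}
the infimum running over $\bC^*$-invariant divisorial valuations $w$ on $M\times\bC$ with $w(\tau)=1$. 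The plan is to rewrite the left-hand side as an infimum over the \emph{same} set of valuations and then compare the two expressions term by term, using the two-sided comparison \eqref{eq-wJPhim}. To this end I would unwind the valuative definition of $L_B^\NA$: via the Gauss extension $v\mapsto G(v)$, which identifies divisorial valuations $v$ on $M$ with those $\bC^*$-invariant divisorial valuations $w$ on $M\times\bC$ satisfying $w(\tau)=1$ and which obeys $A_{(M,B)\times\bC}(G(v))=A_{(M,B)}(v)+1$, together with the description of $\phi_m$ from Step 2 through $\cJ(m\Phi)$, one gets
\begin{equation}\label{eq-plan-LHS}
L_B^\NA(\phi_m)=\inf_{w}\Bigl(A_{(M,B)\times\bC}(w)-1-\tfrac{1}{m+m_0}\,w(\ell^{-1}\cJ(m\Phi))\Bigr),
\end{equation}
the infimum being over the same valuations $w$ as in \eqref{eq-plan-RHS}.

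Next I would insert the inequalities \eqref{eq-wJPhim}, $w(\cJ(m\Phi))\le m\,w(\Phi)\le w(\cJ(m\Phi))+A_{M\times\bC}(w)$, into \eqref{eq-plan-LHS}. The left inequality yields $\tfrac{1}{m+m_0}w(\cJ(m\Phi))\le\tfrac{m}{m+m_0}w(\Phi)$, hence term by term and then taking the infimum, $L_B^\NA(\phi_m)\ge\inf_{w}\bigl(A_{(M,B)\times\bC}(w)-1-\tfrac{m}{m+m_0}w(\ell^{-1}\Phi)\bigr)$; since $\tfrac{m}{m+m_0}\le 1$ and $w(\Phi)\ge 0$ (monotonicity of $\Phi$), the right-hand side is $\ge$ the quantity in \eqref{eq-plan-RHS}, so $L_B^\NA(\phi_m)\ge\lim_s L_B(\vphi_s)/s$ for \emph{every} $m$. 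The right inequality in \eqref{eq-wJPhim} gives $\tfrac{1}{m+m_0}w(\cJ(m\Phi))\ge\tfrac{m}{m+m_0}w(\Phi)-\tfrac{1}{m+m_0}A_{M\times\bC}(w)$; here I would use the klt hypothesis on $(M\times\bC,B\times\bC)$ in the form $A_{M\times\bC}(w)\le C_0\,A_{(M,B)\times\bC}(w)$, with $C_0=\bigl(1-\lct(M\times\bC,B\times\bC)^{-1}\bigr)^{-1}<\infty$, to obtain
\begin{equation*}
L_B^\NA(\phi_m)\le\inf_{w}\Bigl(\bigl(1+\tfrac{\ell^{-1}C_0}{m+m_0}\bigr)A_{(M,B)\times\bC}(w)-1-\tfrac{m}{m+m_0}\,w(\ell^{-1}\Phi)\Bigr).
\end{equation*}

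Finally I would pass to the limit in $m$. The displayed lower bound already gives $\liminf_{m}L_B^\NA(\phi_m)\ge\lim_s L_B(\vphi_s)/s$. For the matching $\limsup$: given $\delta>0$, since the right-hand side of \eqref{eq-plan-RHS} is finite (it is bounded above, e.g.\ by \eqref{eq-Dvphiupb}) and is an infimum over divisorial valuations, which have \emph{finite} log discrepancy, one may choose a divisorial valuation $w^*$ (normalized by $w^*(\tau)=1$) with $A_{(M,B)\times\bC}(w^*)-1-w^*(\ell^{-1}\Phi)<\lim_s L_B(\vphi_s)/s+\delta$; evaluating the last display at $w^*$ and letting $m\to+\infty$ — so that $\tfrac{\ell^{-1}C_0}{m+m_0}\to 0$ and $\tfrac{m}{m+m_0}\to 1$ while the fixed finite quantities $A_{(M,B)\times\bC}(w^*)$ and $w^*(\Phi)$ do not change — yields $\limsup_m L_B^\NA(\phi_m)\le\lim_s L_B(\vphi_s)/s+\delta$; letting $\delta\to 0$ and combining with the $\liminf$ estimate proves \eqref{eq-limLNA}. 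The one step demanding genuine care, beyond the bookkeeping above, is precisely this interchange of the limit in $m$ with the infimum over valuations; this is exactly where the klt bound on $(M\times\bC,B\times\bC)$ and the finiteness of the right-hand side of \eqref{eq-plan-RHS} are used, and when $B=0$ it degenerates to the argument of \cite{BBJ15}.
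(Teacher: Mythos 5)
Your argument is correct and follows essentially the same route as the paper: both sides are rewritten via valuations, the lower bound is obtained by applying the left inequality of \eqref{eq-wJPhim} uniformly in $w$ and taking infima, and the upper bound is obtained by evaluating at a near-optimal divisorial valuation $w^*$ and letting $m\to+\infty$.

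One small deviation: the paper does \emph{not} invoke the klt comparison $A_{M\times\bC}(w)\le C_0\,A_{(M,B)\times\bC}(w)$ in this step. After evaluating at the fixed $w^*$ it simply keeps the error term $\tfrac{\ell^{-1}}{m}A_{M\times\bC}(w^*)+\tfrac{m_0\ell^{-1}}{m+m_0}w^*(\Phi)$, both summands of which vanish as $m\to\infty$ because a \emph{fixed} divisorial valuation has finite log discrepancy and finite generic Lelong number; no uniformity in $w$ is needed once $w^*$ is frozen. Your use of $C_0$ — which in the paper appears only inside Lemma \ref{lem-eAMB}, in the proof of Theorem \ref{thm-LBexpan} — is therefore a harmless detour rather than a necessary ingredient, and your closing remark that the ``interchange of limit and infimum'' is the delicate point slightly overstates matters: as you yourself then execute it, there is no interchange, just an $\epsilon$-approximation by a fixed $w^*$ followed by $m\to\infty$. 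Everything else (including the sign/finiteness observations $w(\Phi)\ge 0$ and $A_{(M,B)\times\bC}(w^*)<\infty$, and reading $L_B^\NA(\phi_m)$ as an infimum over Gauss extensions) matches the paper's proof.
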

\begin{proof}
For simplicity, denote the right-hand-side of \eqref{eq-limLNA} by $T'$ and
\[
T_+:=\limsup_{m\rightarrow+\infty} L_B^\NA(\phi_m)\quad \ge  \quad T_-:= \liminf_{m\rightarrow+\infty} L_B^\NA(\phi_m).
\]
Using \eqref{eq-PhivsPhim}, we get, for any $\bC^*$-invariant valuation $w$ on $M\times\bC$ with $w(\tau)=1$,
\begin{eqnarray*}
A_{(M,B)\times\bC}(w)-1-\frac{\ell^{-1}}{m+m_0}w(\cJ(m\Phi))&\ge& A_{(M,B)\times\bC}(w)-1-\frac{m\ell^{-1}}{m+m_0} w(\Phi)\\
&\ge& A_{(M,B)\times\bC}(w)-1-w(\ell^{-1}\Phi).
\end{eqnarray*}
Taking infimum on both sides, we get $L_B(\phi_m)\ge T'$ for any $m$ and hence $T_-\ge T'$. On the other hand, for $\epsilon>0$, there exists $v$ such that
$$
A_{(M,B)\times\bC}(v)-1-v(\ell^{-1}\Phi)\le T'+\epsilon.
$$
Then we use both inequalities \eqref{eq-wJPhim} to get:
\begin{eqnarray*}
L^\NA_B(\phi_m)&\le&A_{(M,B)\times\bC}(v)-1-\frac{\ell^{-1}}{m+m_0}v(\cJ(m\Phi))\\
&=& A_{(M,B)\times\bC}(v)-1-\frac{\ell^{-1}}{m}v(\cJ(m\Phi))+\frac{m_0 \ell^{-1}}{m(m+m_0)}v(\cJ(m\Phi)) \\
&\le& A_{(M,B)\times\bC}(v)-1- v(\ell^{-1}\Phi)+\frac{\ell^{-1}}{m+m_0}A_{M\times\bC}(v)+\frac{m_0 \ell^{-1}}{m+m_0}v(\Phi)\\
&\le& T'+\epsilon+\frac{\ell^{-1}}{m+m_0}A_{M\times\bC}(v)+\frac{m_0 \ell^{-1}}{m+m_0}v(\Phi).
\end{eqnarray*}
Taking $\limsup$ as $m\rightarrow+\infty$, we get $T_+\le T'+\epsilon$. Since $\epsilon>0$ is arbitrary, we get $T_+\le T'$ and hence $T_+=T_-=T'$ as wanted.
\end{proof}

{\bf Step 4: Completion of the proof}

With the above preparations, the last step of the proof is exactly the same as the argument in \cite{BBJ15}. Indeed, on the one hand,
\begin{equation}
\ell\cdot \lim_{s\rightarrow+\infty} \frac{L_B(\vphi_s)}{s}
=
\lim_{s\rightarrow+\infty} \frac{\cD_B(\vphi_s)}{s}+\lim_{s\rightarrow+\infty}\frac{E(\vphi_s)}{s}\le \delta'-1.
\end{equation}
On the other hand, because of the uniform K-stability, we have:
\begin{eqnarray*}
\ell\cdot L_B^\NA(\phi_m)&=&\cD_B^\NA(\phi_m)+E^\NA(\phi_m) \ge \delta J^\NA(\phi_m)+E^\NA(\phi_m)\\
&=&(1-\delta)E^\NA(\phi_m)\ge \delta-1.
\end{eqnarray*}
Because of \eqref{eq-limLNA}, these two identities contradict $\delta'<\delta$.

\vskip 3mm

\noindent
Department of Mathematics, Purdue University, West Lafayette, IN 47907-2067

\noindent
{\it E-mail address:} li2285@purdue.edu

\vskip 2mm

\noindent
School of Mathematical Sciences and BICMR, Peking University, Yiheyuan Road 5, Beijing, P.R.China, 100871

\noindent {\it E-mail address:} tian@math.princeton.edu

\vskip 2mm

\noindent
School of Mathematical Sciences, Zhejiang University, Zheda Road 38, Hangzhou, Zhejiang,
310027, P.R. China

\noindent {\it E-mail address:} wfmath@zju.edu.cn

\end{document}